\documentclass[a4, reqno]{amsart}
\usepackage{amssymb}

\makeatletter
\def\@tocline#1#2#3#4#5#6#7{\relax
  \ifnum #1>\c@tocdepth % then omit
  \else
    \par \addpenalty\@secpenalty\addvspace{#2}%
    \begingroup \hyphenpenalty\@M
    \@ifempty{#4}{%
      \@tempdima\csname r@tocindent\number#1\endcsname\relax
    }{%
      \@tempdima#4\relax
    }%
    \parindent\z@ \leftskip#3\relax \advance\leftskip\@tempdima\relax
    \rightskip\@pnumwidth plus4em \parfillskip-\@pnumwidth
    #5\leavevmode\hskip-\@tempdima
      \ifcase #1
       \or\or \hskip 1em \or \hskip 2em \else \hskip 3em \fi%
      #6\nobreak\relax
    \dotfill\hbox to\@pnumwidth{\@tocpagenum{#7}}\par
    \nobreak
    \endgroup
  \fi}
\makeatother

\setcounter{secnumdepth}{3}
\setcounter{tocdepth}{4}

\usepackage[left=3cm,right=3cm,top=2.9cm,bottom=2.9cm]{geometry}
\usepackage{enumitem}%[leftmarg in=0pt]\itemsep2pt \parskip1pt \parsep1pt%[leftmargin=*]\itemsep2pt \parskip3pt \parsep0pt
\usepackage{amsmath}
\usepackage{mathtools}
\usepackage{amsfonts}
\usepackage{color}
\usepackage{graphicx,tikz}
\usepackage[bookmarksnumbered,colorlinks, linkcolor=blue, citecolor=red, pagebackref, bookmarks, breaklinks]{hyperref}
\usepackage{dsfont}

\newtheorem{thm}{Theorem}[section]
\newtheorem{cor}[thm]{Corollary}
\newtheorem{lema}[thm]{Lemma}
\newtheorem{prop}[thm]{Proposition}
\theoremstyle{definition}
\newtheorem{defn}[thm]{Definition}
\theoremstyle{remark}
\newtheorem{exam}[thm]{Example}

\newtheorem{rem}[thm]{Remark}
\numberwithin{equation}{section}
\newcommand{\R}{\mathbb R}
\newcommand{\N}{\mathbb N}
\newcommand{\Z}{\mathbb Z}

\newcommand{\J}{\mathcal{L}}
\newcommand{\E}{\mathcal{E}_{p,\Omega}}

\DeclareMathOperator{\supp}{supp}
\DeclareMathOperator{\osc}{osc}
\DeclareMathOperator{\sign}{sign}
\DeclareMathOperator{\Lip}{Lip}

\def\LL{\mathrm{L}}

%\definecolor{darkblue}{rgb}{0.05, .05, .65}
%\definecolor{darkgreen}{rgb}{0.1, .65, .1}
%\definecolor{darkred}{rgb}{0.8,0,0}

%\usepackage{refcheck }
%\usepackage{showkeys}

\title[{Sharp regularity estimates  for $0$-order $p$-Laplacian evolution problems}]{Sharp regularity estimates \\ for $0$-order $p$-Laplacian evolution problems}

\author{Matteo Bonforte}
\address{Matteo Bonforte: Departamento de Matemáticas, Universidad Autónoma de Madrid, Campus de
Cantoblanco, 28049 Madrid, Spain}
\email{mailto:matteo.bonforte@uam.es}
\urladdr{http://verso.mat.uam.es/~matteo.bonforte}

\author{Ariel Salort}
\address{Ariel  Salort: Departamento de Matemáticas, FCEN - Universidad de Buenos Aires, and\\
$0+\infty$ Building, Ciudad Universitaria (1428), Buenos Aires, Argentina.}
\email{asalort@dm.uba.ar}
\urladdr{http://mate.dm.uba.ar/~asalort}

\begin{document}
\subjclass[2010]{45G10, 35B65, 35R11}

%45G10  	Other nonlinear integral equations
%35B65  	Smoothness and regularity of solutions to PDEs
%35R11  	Fractional partial differential equations

\keywords{Nonlinear evolutions, Nonlocal parabolic equations, Fractional $p$-Laplacian, Zero order operator, Higher regularity}

\begin{abstract}
We study regularity properties of solutions to nonlinear and nonlocal evolution problems driven by the so-called \emph{$0$-order fractional $p-$Laplacian} type operators:
$$
\partial_t u(x,t)=\J_p u(x,t):=\int_{\R^n} J(x-y)|u(y,t)-u(x,t)|^{p-2}(u(y,t)-u(x,t))\,dy\,,
$$
where  $n\ge 1$, $p>1$, $J\colon\R^n\to\R$ is a bounded nonnegative function with compact support, $J(0)>0$ and normalized such that $\|J\|_{\LL^1(\R^n)}=1$, but not necessarily smooth. We deal with Cauchy problems on the whole space, and with Dirichlet and Neumann problems on bounded domains. Beside complementing the existing results about existence and uniqueness theory, we focus on sharp regularity results in the whole range $p\in (1,\infty)$. When $p>2$, we find an unexpected $\LL^q-\LL^\infty$ regularization: the surprise comes from the fact that this result is false in the linear case $p=2$.

We show next that bounded solutions automatically gain higher time regularity, more precisely that $u(x,\cdot)\in C^p_t$. We finally show that solutions preserve the regularity of the initial datum up to certain order, that we conjecture to be optimal ($p$-derivatives in space). When $p>1$ is integer we can reach $C^\infty$ regularity (gained in time, preserved in space) and even analyticity in time.  The regularity estimates that we obtain are quantitative and constructive (all computable constants), and have a local character, allowing us to show further properties of the solutions: for instance, initial singularities do not move with time.  We also study the asymptotic behavior for large times of solutions to Dirichlet and Neumann problems. Our results are new also in the linear case and are sharp when $p$ is integer. We expect them to be optimal for all $p>1$, supporting this claim with some numerical simulations.
\end{abstract}

\vspace*{-1cm}
\maketitle
\vspace{-1cm}
\setlength{\parskip}{0.06em}\small
\tableofcontents
\setlength{\parskip}{0.5em}
\normalsize
\vspace{-1.5cm}

\newpage

\section{Introduction}

In this paper we study nonlinear and nonlocal evolution problems whose diffusion part is governed by the so-called \emph{$0$-order fractional $p-$Laplacian} type operators, which take the form
$$
\J_p u(x,t):=\int_{\R^n} J(x-y)|u(y,t)-u(x,t)|^{p-2}(u(y,t)-u(x,t))\,dy\,,
$$
where $p>1$, $J\colon\R^n\to\R$,  with $n\ge 1$ is a nonnegative function, which we assume to be bounded and with compact support, $J(0)>0$ and normalized such that $\|J\|_{\LL^1(\R^n)}=1$, but not necessarily smooth.

When $p=2$, $\J_p$ corresponds to a subclass of zero order L\'evy operators, which are particularly nasty to treat due to their lack of regularization properties, which L\'evy operator of positive order possess. When $p>1$, such operators model a special family of jump processes and have many applications: modeling diffusion processes and phase transitions, image processing, populations dynamics, etc. see for instance \cite{BC99-2, BC99,  BHZ06,FW98, KOJ05}. A more detailed discussion can be found in Section \ref{ssec.model} below.  

The evolutionary equation $u_t=\J_p(u)$  has been studied by many authors in the last years, see the monograph  \cite{AMRT} where the state of the art up to 2010 is presented in an excellent way. The authors of \cite{AMRT}   consider also problems with different boundary conditions, see also \cite{AMRT08a, FR15, GMR09,  K19,MRT17}, and they study the asymptotic behavior of solutions, see also \cite{CCR,  CEQW16, CEQW16a,CEQW12, IR08}.

In this paper, we shall study different evolution problems\footnote{We shall focus our main attention to the Cauchy problem, but we also provide a quite complete set of results also for the Dirichlet and Neumann problems on bounded domains, for which we also analyze the asymptotic behavior and long time decay. See the end of this section and Section \eqref{sec.nd.intro} for more details. } driven by the above $0$-order fractional $p-$Laplacian type operators, for all $p>1$. Although we provide sharp results in the whole range $p\in (1,\infty)$, including the linear case, we shall pay more attention to the nonlinear range of parameters $p>2$, where a surprising $\LL^q-\LL^\infty$ smoothing effect happens to be true: the surprise comes from the fact that analogous smoothing effect are false when $p\in (1,2]$, especially in the linear case $p=2$, as we shall discuss below. We also show that bounded solutions automatically gain higher time regularity and preserve the regularity of the initial datum up to certain order, that we conjecture to be optimal. When $p>1$ is integer (with $p=2$ as a particular case) we can reach $C^\infty$ regularity (gained in time, preserved in space) and even analyticity (in time). All of our regularity results are supported by quantitative and constructive estimates (i.e. all the constants are computable).  

We first consider solutions of the following \emph{Cauchy problem}
\begin{align} \label{eqC} \tag{C}
\begin{cases}
u_t(x,t)=\J_p u(x,t), &x\in \R^n, t\in (0,\infty)\\
u(x,0)=u_0(x),&x\in \R^n,
\end{cases}
\end{align}
starting from initial data $u_0\in \LL^q(\R^n)$, with $q\in [1,\infty]$. The definition of solution is particularly simple in this framework: \textsl{we say that $u$ is a solution to the Cauchy problem \eqref{eqC}, if it satisfies the equation almost everywhere in space and time. }We shall be more precise in Definition \ref{Def.Soln}, but in order to fix ideas, we say that $u\in W^{1,1}((0,\infty); \LL^p(\R^n))$ is a solution to the Cauchy problem \eqref{eqC} if $u_t(x,t)=\J_p u(x,t)$ for almost every $x\in \R^n$ and all $t> 0$,  and takes the initial datum in the strong $\LL^q$ topology.

\noindent\textit{Existence and uniqueness of solutions, }as well as some other useful basic properties, have been first obtained via nonlinear semigroup theory based on a version of the celebrated Crandall-Liggett Theorem, when $u_0\in \LL^q$  for some, but not all, $q\in[1,\infty)$, see \cite{AMRT} and Section \ref{section3} for further details.  We shall complement those result with a quite complete $\LL^q$ theory, based on approximations by means of suitable $\LL^2$ solutions constructed using the simpler gradient flow on Hilbert spaces approach, that is, via Brezis-Komura Theorem in $\LL^2$. This has been done in Theorem \ref{existencia}. 

\noindent Once a basic theory of existence and uniqueness is established, the natural question to be addressed is
\begin{center}
  \textit{Which initial data produce bounded solutions? }
\end{center}
After answering to this question,  we shall address the next natural one, that is space-time regularity: % of bounded solutions:
\begin{center}
  \textit{In which case bounded solutions are also continuous or even smoother?\\ Are there classical solutions?}
\end{center}
These questions are hard to investigate for the equation at hand, since the diffusion operator is of order zero, hence, a priori no regularization in space has to be expected. Nonetheless, we can prove that integrable data produce bounded solutions when $p>2$, something false even for $p=2$. In turn, this  allows to show higher time regularity, and conservation of the local modulus of continuity of the initial datum, up to order $p$. We shall also clarify what we mean by classical solutions, but the short message is that  \textit{bounded solutions are classical}.

In this paper we answer the above questions, by providing a quite complete theory that besides settling some existence, uniqueness and contractivity estimates in $\LL^q$, it ensures some surprising new smoothing effects, and higher regularity estimates (up to $C^\infty$, and even analyticity when $2\le p\in \N$), that we conjecture to be sharp. Our results extend  in many unexpected directions the known ones, see the monograph \cite{AMRT}, the more recent survey \cite{R20} and references therein. To the best of our knowledge, the regularity results are unexpectedly new also in the linear case $p=2$, as we detail below.

\noindent\textbf{The linear case. }When $p=2$  solutions to the Cauchy Problem \eqref{eqC} do not regularize as expected from a linear heat-type flow. For this reason $\J_2$ can be considered a ``black sheep'' among Levy operators.  In Theorem 1.4 and Lemma 1.6  of \cite{AMRT}
(see also \cite[Section 5]{BE}), an integral representation of the unique solution is obtained, and it is shown that solutions do not exhibit any smoothing effect. Indeed, it is not difficult to show that solutions to the Cauchy problem have the following form:
\begin{equation}\label{lin.sol.form}
u(x,t)= e^{-t} u_0(x) + W(x,t)\,,
\end{equation}
where $W(x,t)$ is a suitable (possibly smooth, depending on $J$) function given by
\[
W(x,t):=\int_{0}^{t}e^{-(t-\tau)} \int_{\R^n} u(y,\tau)J(x-y)dy\,d\tau\,.
\]
On the one hand, it is quite clear that a priori time regularity can improve, on the other hand, it is also quite clear that the spatial regularity of $u( \cdot,t)$ cannot be better than the one of the initial datum, due to the first term of \eqref{lin.sol.form}. For this reason, smoothing effects ($\LL^q$--$\LL^\infty$ regularization in space) are \textit{not} possible in this case (when $q<\infty$).   The basic properties (existence, uniqueness, comparison, mass conservation, etc.) of this linear flow are proven in \cite{AMRT} via Fourier transform methods, exploiting a representation formula, similar to the one above (in spirit, yet ``less practical'' to use).

In this case, we are able to prove an \textit{optimal regularity result}: we show that \textit{bounded solutions are $C^{\infty}$ in time, even analytic } (we  provide quantitative and explicit bounds), while \textit{in space they preserve the regularity of the initial datum as well as of all its derivatives of arbitrary order}. This result is new, to the best of our knowledge, and follows from elementary proofs, see Section \ref{ssec.Reg.LIN}, Theorem \ref{thm.c.conservation} and Corollary \ref{Coro1.c.intro}. Our approach allows to reach sharp regularity results and it avoids the use of Fourier transform, a key tool in the analysis performed in \cite[Chapter 1]{AMRT}.  Our method  has the advantage of being completely local, hence ready to be used also in the case of problems posed on bounded domains with different boundary conditions, where Fourier analysis gets even more involved. It also lay a path to attack the same issues in the more delicate nonlinear case.

\noindent\textbf{The nonlinear case. Regularity in space and time. }When $p\ne 2$, a priori, nothing can be expected to improve spatial regularity, but there is hope for the time regularity, inspired by the linear case. Let us provide a short overview of our new main results.  \begin{itemize}[leftmargin=*]\itemsep2pt \parskip2pt \parsep0pt
\item  When $p>2$ there is not an explicit formula for the solutions, as \eqref{lin.sol.form} in the case $p=2$. A priori this provides an extra difficulty. However, exploiting the convexity of the nonlinearity, we are able to show remarkable regularization properties of the solutions. First we prove quantitative $\LL^q$--$\LL^\infty$ smoothing effects, namely, that for all $q\ge 1$  we have
\[
u_0\in \LL^q(\R^n)\qquad\mbox{implies}\qquad \|u(t)\|_{\LL^\infty(\R^n)} \lesssim  t ^{-\frac{1}{p-2}}   +  \|u_0\|_{\LL^q(\R^n)}\qquad\mbox{for all  $t>0$}\,.
\]
We then show that solutions are regular in time, namely $u(x,\cdot)\in C^{1,1}_t((0,\infty))$ for \text{a.e.} $x\in \R^n$\,. Indeed, we can bootstrap the time regularity to get that $u(x,\cdot)\in C^{[p],p-[p]}_t((0,\infty))$ for \text{a.e.} $x\in \R^n$, and we conjecture that \textit{this regularity is optimal}: it is indeed sharp \textit{when $p\ge 2$ is an integer}, since we can reach $C^\infty$ regularity with quantitative estimates that allow to \textit{conclude even analyticity.}\\
As far as spatial regularity is concerned, we show that solutions keep the same modulus of continuity of the initial datum and of all its derivatives up to order $p$ (at least when the kernel $J$ is sufficiently regular). When $p>1$ is integer, we can extend this result to derivatives of arbitrary order.

Summing up, when $p>2$, merely integrable solutions are bounded, have continuous time derivatives up to order $p$, and preserve the smoothness of the initial data up to order $p$. When $p$ is integer, this can be extended to derivatives of arbitrary order, and we obtain analyticity in time.  Rigorous statements are formulated in Theorems \ref{thm.exist.intro}, \ref{thm.C.reg.intro} and \ref{thm.c.conservation}.  
\item As we have already discussed, when $p=2$, our results show that bounded solutions are $C^{\infty}$ and analytic in time and they preserve the regularity of the initial datum as well as of all its derivatives. See Theorem \ref{thm.c.conservation} and Corollary \ref{Coro1.c.intro}. These results are new to the best of our knowledge.
\item When $p\in (1,2)$ the situation is similar to the case $p=2$, but solutions have H\"older continuous time derivative, namely
    $u_t\in C^{p-[p]}_t$, and they preserve the continuity of the initial datum up to order $p$.  See the precise statements in Theorem \ref{thm.c.conservation}.
\end{itemize}

\noindent\textbf{Classical solutions. }Another important point in the study of the regularity of solutions to the zero-order $p$-Laplacian, is to identify classical solutions, i.e. solutions that satisfy the equation at all points in the sense of continuous functions: as a consequence of the above discussion we deduce that \textit{bounded solutions, starting from continuous initial data, are classical for all $p>1$.}

\noindent\textbf{Nonlinear preservation of regularity, analyticity and optimality. }When $p>1$, we have seen that regularity in time of bounded solutions improves up to order $p$ \footnote{  ``up to order $p$'' has to be understood in the sense of all the $k^{th}$-order derivatives  up to the integer order $k=[p]$ ($[p]$ here is the integer part of $p$), plus the finite H\"older norm with $p-[p]\in (0,1]$ of all the $[p]^{th}$-order derivatives.}, while the spatial regularity is only preserved up to order $p$. However, when $p\ge 2$ is integer, we obtain analyticity in time and preserve the spatial continuity of the initial data and all its derivatives of arbitrary order. We conjecture that this regularity is optimal in all cases: on the one hand, at least in the linear case $p=2$ it corresponds to the regularity expressed by the ``representation formula'' \eqref{lin.sol.form}, hence is optimal. For integer $p>2$ we see no reason to think that our result is not optimal, even if there is not a representation formula.  On the other hand, when $p\ne 2$, we do not have explicit examples of solutions at hand, but we have done some numerical simulations that suggest that also in this case, our result should be optimal, especially in the fine analysis of the loss of regularity due to the lack of regularity of the kernel.

\noindent\textbf{Local regularity estimates for nonlocal problems and evolution of singularities. }We can also consider a dual point of view: analyze the evolution of the initial singularities. This is possible due to the local nature of our estimates (something remarkable for a nonlocal problem). A first consequence in this direction is, roughly speaking,  that \textit{spatial singularities do not move in time}, or to be more precise, the singular set of $u(t)$ (where $u(\cdot,t )$ is not continuous in the spatial variables)  will always be contained in the singular set of the initial datum $u_0$. This also implies that \textit{new discontinuities cannot be created in future times}. A third consequence of the locality of the regularity estimates is that it allows to \textit{deal with different boundary conditions, }we shall focus on two important cases, \textit{the Dirichlet and Neumann problems on bounded domains. }See Remark \ref{Rem.Locality} for more details.

\newpage

\noindent\textbf{Sharpness of the results: numerical evidence. }It is quite clear that our results are sharp when $p>1$ is integer. We believe that our regularity results are sharp also for non-integer $p$: on the one hand, to the best of our knowledge, there do not exist explicit examples of solutions in the literature, which could support or disprove the sharpness of our results.
On the other hand, we have some numerical examples that somehow confirm our beliefs, see figures\footnote{These plots have been obtained using a modification of a MatLab code developed in the paper \cite{DTL3}, courtesy of the authors.}  \ref{fig.1} and \ref{fig.2}. Some comments are in order.

\begin{figure}[h]
  %\center{
  \includegraphics[scale=0.105]{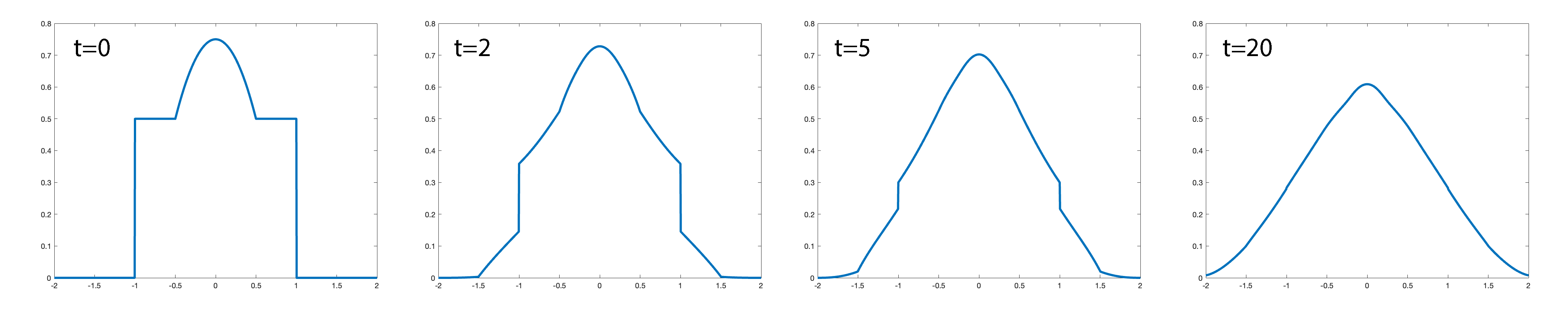}
  %}
  \caption{$p=3$, $J(x)=1$ in $[-\tfrac12,\tfrac12]$, $J(x)=0$ otherwise.}\label{fig.1}
\end{figure}

\begin{figure}[h]
  %\center{
  \includegraphics[scale=0.105]{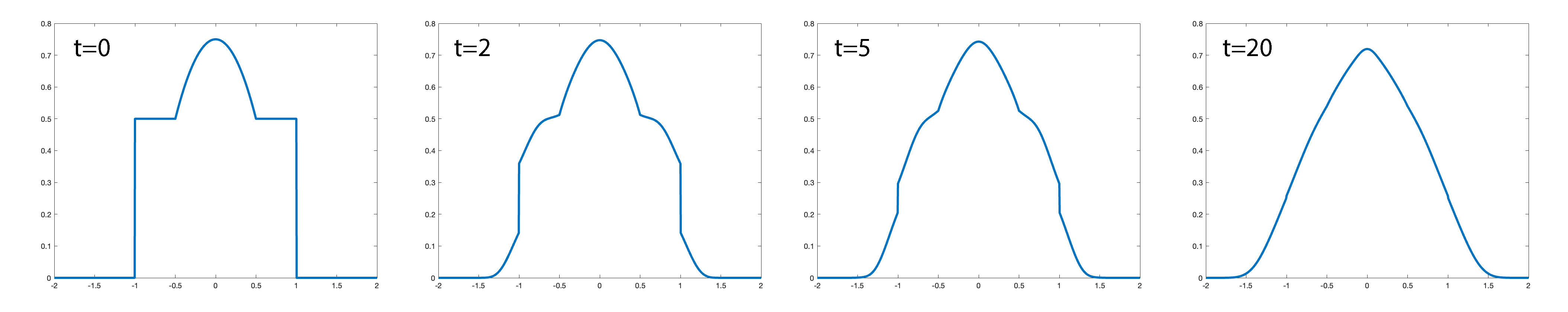}
  %}
  \caption{$p=3$, $J(x)=630(\tfrac14-x^2)^4$ in $[-\tfrac12,\tfrac12]$, $J(x)=0$ otherwise.}\label{fig.2}
\end{figure}

\begin{itemize}[leftmargin=*]\itemsep2pt \parskip3pt \parsep0pt
\item We shall now comment about differences in regularity between Figure \ref{fig.1}, where $J$ is less regular (a step function), and \ref{fig.2} where $J$ is smooth. What we expect from our regularity results is that, locally, when $J$ is smoother than $u_0$, the initial regularity is preserved. On the contrary,when $J$ is less regular than $u_0$, we can only ensure the preservation of the worst modulus of continuity: indeed if $J\in C^{\omega_J}(B_1)$, even if $u_0\in C^1(B_1)$, we can only expect $u(t)\in C^{\omega_J}(B_1)$. This is also clear comparing Figures \ref{fig.1} and \ref{fig.2} in a small interval $I(x_0)$ around $x_0= 1.5$ (the same happens around $x_0=-1.5$)\,, where the initial data is set to be zero in both cases, hence $u_0\in C^\infty(I(x_0))$.  In Figure \ref{fig.1}, a (Lipschitz or H\"older type) corner appears in  $I(x_0)$, while in Figure \ref{fig.2} the solution is still smooth in  $I(x_0)$.

\item In both cases it is quite clear that \textit{the singularities in space   do not evolve in time, }and these are the only possible singularities starting from bounded initial data: these are the ``jumps'' of $u_0$, and  we have set them  at $x=\pm 1$ in both simulations. From our theoretical results it is quite clear that  the  local oscillation is preserved in time, provided $J$ is (locally) more regular. In any case, the worst modulus of continuity (between $J$ and $u_0$) is preserved. The difference can be appreciated looking at the difference between Figure \ref{fig.1}, where $J$ is less regular (hence some corners may appear) and \ref{fig.2} where $J$ is smooth (and all the corners are immediately smoothed out). This local preservation of continuity implies that whenever there is an initial closed discontinuity set, then outside that set the solution has to be regular, as a consequence of our local estimates. Therefore such discontinuity sets are stationary in time. We also observe that the size of the jump decreases with time, and that bounded solutions have the tendency to be continuous after a certain amount of time, which agrees with the oscillation estimates, valid both in local and global versions, which are essentially non-increasing in time. This first observed in the case of the Total Variation Flow (1-Laplacian) by Figalli and the first author in \cite{BFJDE}. The same remarks can be applied to the modulus of continuity of all higher derivatives: in connection to the previous comment, when a Lipschitz corner appears at $x_0=1.5$ in Figure \ref{fig.1} it is quite clear from the simulation that it does not move in time, the reason being the same as for the modulus of continuity of $u$. Note that a Lipschitz corner for $u$, is nothing but a jump discontinuity for the first derivative, to which our previous reasoning applies, supported by the same local estimates. This shows a different behavior from the smooth situation of Figure \ref{fig.2}, where no corners appear.
\end{itemize}

\subsection{\textbf{The case of bounded domains.}}We also consider problems posed in bounded open sets $\Omega\subset \R^n$, and show analogous results: existence, uniqueness, contractivity and higher regularity   (in the interior, we do not study the boundary regularity).   The proofs in this case are essentially the same as in the Cauchy problem, in particular we stress that the proofs of the regularity estimates are ``local in essence'', hence they can be adapted to these cases as well (under some extra assumptions on $J$ and $\Omega$ in the Neumann case), as we shall explain in Section \ref{sec.nd.intro}. In addiction, in this case, we study the asymptotic behavior of solutions for large times, which differs according to the different boundary conditions.

More specifically, given $u_0\in \LL^q(\Omega)$ with $q\in [1,\infty]$ we consider the  homogeneous \emph{Dirichlet} problem
\begin{align} \label{eqD} \tag{D}
\begin{cases}
u_t(x,t)=\J_p u(x,t), &x\in \Omega, t>0,\\
u(x,t)=0, &x\in \R^n\setminus  \Omega, t>0,\\
u(x,0)=u_0(x), &x\in \Omega.
\end{cases}
\end{align}
Sometimes it is useful to rewrite this problem as follows
\begin{align*}
\begin{cases}
 u_t(x,t)=\widetilde \J_{p,\Omega}u(x,t)  , &x\in \R^n, t>0,\\
u(x,0)=u_0(x), &x\in \Omega,
\end{cases}
\end{align*}
where, writing $\tilde u(x,t)$ to mean that $u(x,t)$ when $x\in \Omega$ and to $\tilde u(x,t)=0$ when $x\not\in \Omega$, we define
$$
\widetilde \J_{p,\Omega}u(x,t):=\int_{\R^n} J(x-y)|\tilde u(y,t)-u(x,t)|^{p-2}(\tilde u(y,t)-u(x,t))\,dy.
$$
We consider next the  homogeneous \emph{Neumann} problem
\begin{align} \label{eq} \tag{N}
\begin{cases}
u_t(x,t)=\J_{p,\Omega} u(x,t), &x\in \Omega, t\in (0,\infty),\\
u(x,0)=u_0(x),&x\in \Omega,
\end{cases}
\end{align}
Here, we need to define the operator in a slightly different way, as follows
$$
\J_{p,\Omega} u(x,t):=\int_\Omega J(x-y)|u(y,t)-u(x,t)|^{p-2}(u(y,t)-u(x,t))\,dy.
$$

\subsection{Relevance of the model and possible applications. }\label{ssec.model}The model under consideration, has a simple probabilistic interpretation when $p=2$, being a zero-order Levy operator. For instance, if we define $u(x,t)$ to be the density of a population at point $x$ and at time $t$, and $J(x-y)$ as the probability of jumping from location $y$ to location $x$, then the convolution $\int_{\R^n} J(x-y)u(y,t)\,dy$ represents the rate at which individuals arrive at $x$ from other locations. Similarly, $-\int_{\R^n} J(x-y)u(x,t)\,dy$ represents the rate at which individuals leave location $x$ to travel to other sites. The absence of external sources leads to the density $u$ satisfying \eqref{eqC} (with $p=2$).   When $p\ne 2$, we shall think that the jumping probability depends on the density itself, and the model becomes more involved. Still, in the case of classical (normalized) $p$-Laplacian or of the fractional one, there are probabilistic interpretations in terms of tug-of-war-like games, see \cite{CCF12,MPR10, PS09}. When the diffusion occurs in an open bounded set $\Omega\subset \R^n$, and there is no flux of individuals across the boundary, we consider the homogeneous Neumann equation \eqref{eq}. Finally, when diffusion occurs in the whole $\R^n$ and $u$ is assumed to vanish outside $\Omega$, this means that any individuals leaving $\Omega$ shall die, which gives rise to the homogeneous Dirichlet problem \eqref{eqD}. For further information, we refer to \cite{AMRST08, AMRT, AMRT2, BC99-2, FW98, R20} and the references therein.

These type of nonlocal problems were extensively studied in the last decade. The simplest linear model (i.e. $p=2$) and variations of it, have been widely used to model diffusion processes in many contexts \cite{BC99,BHZ06}, phase transitions \cite{BC99-2,FW98}, image processing \cite{BCM06,KOJ05}, etc. See also \cite{AMRST08,AMRT,  R20}.

\noindent\textbf{Connection with the classical $p$-Laplacian and its fractional variants. }
One of the reasons why we call these operators ``zero-order $p$-Laplacians'' is the following. Consider the family of rescaled kernels:
\[
J_\varepsilon(x)=\frac{c_{J,p}}{\varepsilon^{n+p}}J\left(\frac{x}{\varepsilon}\right)
\]
for some explicit $c_{J,p}>0$. In \cite{AMRT} it is shown (under some assumptions on the kernel $J$) that solutions to the Cauchy problem with the nonlocal operators $\mathcal{L}_{J_\varepsilon}u$ converge in $\LL^\infty((0,\infty): \LL^1(\R^n))$  to solutions to the Cauchy problem for the classical $p$-Laplacian operator $\Delta_p u=\nabla\cdot(|\nabla u|^{p-2}\nabla u)$.  Also, in \cite{DTL} the authors prove that also the operator with kernel $J_\varepsilon$ converge to the $p$-Laplacian operator (not only the solution) in their natural topology. Similar results are obtained in \cite{DTMO} for the fractional $(s,p)$-Laplacian by considering the approximations with the kernel $J_\varepsilon(x)=c\, \varepsilon^{-(n+sp)}J (x/\varepsilon)$.

It is also worth noticing that  our zero-order operators do not include directly numerical approximations for local or nonlocal $p$-Laplacians, see \cite{DTL2,DTL3, DTMO}, which would require the kernel $J$ to be an atomic measure, and which is not directly admissible in our setting. But an important point is that many of our (new) ideas and techniques, which are based on elementary estimates, can be useful also in the numerical setting, and could lead to improved stability or consistency estimates (in a stronger localized version). On the one hand, the smoothing effects and the time regularity can easily be adapted to hold also in the numerical schemes, since they do not depend on the regularity of $J$. On the other hand, for instance in \cite{DTL3, DTMO}, the authors show that solutions preserve the same global H\"older constant as the initial datum:  we obtain something of a different nature, more local. Indeed, we show that any local modulus of continuity is ``preserved'', in the sense that it can differ by a multiplicative constant (which we estimate precisely), but it stays in the same class: for instance if $u_0\in C^{\alpha}(B_{r_0}(x_0))$, then also $u(t)\in C^{\alpha}(B_{r_0}(x_0))$, and this class can change from ball to ball, but it is preserved in time (and the same happens for every other modulus). Contrarily to \cite{DTL3, DTMO}, where only the ``first'' modulus of continuity is preserved, we also obtain the same results for higher derivatives (at least up to order $p$), something that would be extremely useful in numerical approximations and has its own interest. We refer to Section \ref{Sec.1.4} for more details on the state of the art, related results and the ideas of the proof.

\medskip

\subsection{Precise statement of the main results} ~

We present a more detailed description of our results below.   With respect to the existing literature, see for instance \cite{AMRT,R20}, we complete  the existence and uniqueness theory allowing data in arbitrary $\LL^q$ spaces. Then we deal with regularity issues, starting from boundedness of solutions. On the one hand, when $p\in (1,2]$, the only way to obtain bounded solutions is to start from bounded initial data, as already mentioned. On the other hand, when $p>2$ we show surprising smoothing effects that guarantee that merely integrable data produce bounded solutions. We show next the sharp regularity of bounded solutions: roughly speaking,  these solutions automatically gain $p$ bounded derivatives in time, and maintain the moduli of continuity of the initial data, up to $p$ derivatives in space. We can reach $C^\infty$ both in space and time, when $p$ is integer. We support our regularity results with quantitative and constructive (i.e. all constants are computable) estimates.

\subsubsection{Main results for the Cauchy problem}

We shall begin by stating the basic results.
\begin{thm}[Cauchy problem: Existence, uniqueness, $\LL^q$--contractivity] \label{thm.exist.intro}
Let $p>1$, $n\ge 1$, and let  $J\colon\R^n\to\R$ be a bounded nonnegative function with compact support, $J(0)>0$ and normalized such that $\|J\|_{\LL^1(\R^n)}=1$.
When $p \ge 2$, let the initial datum $u_0\in \LL^q(\R^n)$, for any $q\in [1,\infty)$. Then, there exists a unique solution $u$ to the Cauchy problem \eqref{eqC}. Moreover, letting $u_0,v_0 \in \LL^q(\R^n)$ and $u,v$ the corresponding solutions of \eqref{eqC},  the following contractivity estimate holds
\begin{equation}\label{Thm.1.Cauchy.i}
\|u(t)-v(t)\|_{\LL^q(\R^n)}\le \|u_0-v_0\|_{\LL^q(\R^n)}\qquad\mbox{for all }t\ge 0\,.
\end{equation}
When $p\in (1,2)$, all the above results hold for data in $\LL^q(\R^n)$, with $q\in \{p, 2\}$.

If moreover $u_0,v_0\in \LL^\infty(\R^n)\cap\LL^q(\R^n)$ then inequality \eqref{Thm.1.Cauchy.i} holds for any $p>1$ and all $q\in [1,\infty]$.
%\end{enumerate}
\end{thm}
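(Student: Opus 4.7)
My plan is to build the $\LL^q$-theory on top of an $\LL^2$-Hilbertian gradient-flow construction, exploiting the variational structure of $\J_p$, and then to propagate contractivity via a Stampacchia--type symmetrization.

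\textbf{Step 1 (Gradient flow in $\LL^2$).} Introduce the energy functional
\[
\mathcal{F}(u):=\frac{1}{2p}\iint_{\R^n\times\R^n} J(x-y)\,|u(y)-u(x)|^p\,dx\,dy.
\]
Using $\|J\|_{\LL^1}=1$ together with convexity of $s\mapsto|s|^p$, one checks that $\mathcal{F}$ is convex, proper and lower semicontinuous on $\LL^2(\R^n)$, with $\mathcal{F}(u)\le \tfrac{2^{p-1}}{p}\|u\|_{\LL^p}^p$, so its effective domain contains the $\LL^2$-dense subspace $\LL^2\cap\LL^p$. A direct computation identifies $-\J_p u$ with the Fr\'echet subgradient of $\mathcal{F}$ (here one uses the evenness of $J$, which may be assumed by symmetrizing the integrand in $(x,y)$). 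The Brezis-Komura theorem then yields, for every $u_0\in\LL^2$, a unique strong solution $u\in C([0,\infty);\LL^2)$ with $u_t\in\LL^\infty_{\mathrm{loc}}((0,\infty);\LL^2)$ satisfying $u_t=\J_p u$ almost everywhere.

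\textbf{Step 2 ($\LL^q$-contractivity of $\LL^2$-solutions).} Given two such solutions $u,v$ with data in $\LL^2\cap\LL^q$, the formal computation
\[
\frac{1}{q}\frac{d}{dt}\|u-v\|_{\LL^q}^q=\int|u-v|^{q-2}(u-v)\bigl(\J_p u-\J_p v\bigr)\,dx,
\]
followed by a symmetrization in $(x,y)$, rewrites the right-hand side as
\[
\frac{1}{2}\iint J(x-y)\bigl[\Phi_p(u(y)-u(x))-\Phi_p(v(y)-v(x))\bigr]\bigl[\Psi(x)-\Psi(y)\bigr]\,dx\,dy,
\]
where $\Phi_p(s):=|s|^{p-2}s$ and $\Psi:=|u-v|^{q-2}(u-v)$. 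Since both $\Phi_p$ and $s\mapsto|s|^{q-2}s$ are nondecreasing, the first bracket carries the sign of $(u-v)(y)-(u-v)(x)$ while the second carries the opposite sign, so the integrand is pointwise nonpositive and \eqref{Thm.1.Cauchy.i} follows. The $\LL^\infty$-statement is then obtained either by sending $q\to\infty$ or by a Kato-type argument at the essential supremum of $u-v$, and the same symmetrization works for any $q\in[1,\infty]$ once the $\LL^\infty$ hypothesis rules out integrability obstructions, producing the full range claimed in the last assertion.

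\textbf{Step 3 (Extension to general $\LL^q$ data).} For $u_0\in\LL^q$ in the asserted ranges, approximate $u_0$ in $\LL^q$ by a sequence $u_0^k\in\LL^2\cap\LL^q$ (e.g.\ via truncation and smooth cut-off). By Step 2 the corresponding Brezis-Komura solutions $u^k$ form a Cauchy sequence in $C([0,\infty);\LL^q)$, whose limit $u$ is the required solution of \eqref{eqC}; uniqueness and the contractivity inequality pass to the limit.

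\textbf{Main obstacle.} The delicate point is the rigorous justification of the differentiation in Step 2, since the Brezis-Komura solution is \emph{a priori} only known to lie in $\LL^2$. The cleanest remedy is to establish the estimate first for bounded, compactly supported initial data (for which all the relevant integrals converge absolutely and $\J_p u\in\LL^q$ is automatic), and only then extend by density, using the inequality itself to propagate the $\LL^q$ bound along the approximation. The restriction to $q\in\{p,2\}$ when $p\in(1,2)$ reflects the fact that in this range neither the natural energy domain $\LL^p$ nor the Hilbertian space $\LL^2$ is directly compatible with arbitrary $q$; this is precisely why the theorem then requires the additional $\LL^\infty$ hypothesis to recover the full range $q\in[1,\infty]$.
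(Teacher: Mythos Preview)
Your overall architecture coincides with the paper's: Brezis--Komura gradient flow in $\LL^2$ (your Step~1 is exactly the paper's Theorem~\ref{existencia}, Step~1), then the $\LL^q$-contractivity via symmetrization (your Step~2 is the paper's Proposition~\ref{norm.decreasing.difference}, phrased in the equivalent ``opposite sign'' form), then density. Steps~1 and~2 are fine.

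The genuine gap is in Step~3, and it is \emph{not} the obstacle you flagged. You assert that the $\LL^q$-limit $u$ of the Cauchy sequence $(u^k)$ ``is the required solution of \eqref{eqC}'', but you give no argument for passing to the limit in the nonlinear term $\J_p u^k$. When $p>2$ and $q$ is small (say $q=1$), an $\LL^q$-bound alone does not control $|u^k(y)-u^k(x)|^{p-1}$, so neither dominated convergence nor a continuity estimate for $\Phi_p$ is available; likewise you have no uniform control on $(u^k)_t$. The paper closes precisely this gap by invoking the a~priori $\LL^q$--$\LL^\infty$ smoothing effect (Theorem~\ref{main}): for $p>2$ one has $\|u^k(t)\|_{\LL^\infty}\le \mathsf{\tilde K}_p t^{-1/(p-2)}+\mathsf{K}_{p,q,J}\|u_0\|_{\LL^q}$ uniformly in $k$, which immediately yields a $k$-independent dominating function for $J(x-y)|u^k(y)-u^k(x)|^{p-1}$ and (via Theorem~\ref{smooth.ut}) for $(u^k)_t$, after which dominated convergence identifies the limit equation. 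So the existence part of the theorem for $p>2$ and arbitrary $q\in[1,\infty)$ is not a pure density argument: it relies logically on the smoothing effect proved later in the paper. Your ``Main obstacle'' paragraph (justifying the differentiation in Step~2 for bounded compactly supported data) is a real but secondary issue; the missing idea is the smoothing.
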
\vspace{-.5cm}
\begin{proof}
  The existence and uniqueness of solutions in the cases $p>2$ and initial datum $u_0\in \LL^q(\R^n)$ with  $q\in [1,\infty)$, and $p\in (1,2]$ and $u_0\in \LL^q(\R^n)$ with $q\in \{2,\infty\}$  is proved in Theorem \ref{existencia}. The case $p\in (1,2]$ and $u_0\in \LL^p(\R^n)$ is proved in Theorem 6.3.7 of \cite{AMRT}. The $\LL^q$--contractivity \eqref{Thm.1.Cauchy.i} is proved in Proposition \ref{norm.decreasing.difference}.
\end{proof}\vspace{-4mm}

As far as the gain of regularity is concerned, this is our first sharp result.

\begin{thm}[Cauchy problem: boundedness and time regularity] \label{thm.C.reg.intro}
Let $p>1$, $n\ge 1$, and let  $J\colon\R^n\to\R$ be a bounded nonnegative function with compact support, $J(0)>0$ and normalized such that $\|J\|_{\LL^1(\R^n)}=1$. Let  $u_0 \in \LL^q(\R^n)$ for some $q\ge 1$ and $u $ be the corresponding solutions of \eqref{eqC}.\vspace{-2mm}
\begin{enumerate}[leftmargin=*, label=(\alph*)]\itemsep1pt \parskip1pt \parsep0pt
\rm \item \it {\rm Boundedness of solutions. }The solution $u(t)$  is   bounded in one of the following cases.
\begin{itemize}[leftmargin=*]
\item[$\circ$] {\rm $\LL^q$--$\LL^\infty$ smoothing effects when $p>2$. }Let $u_0\in \LL^q(\R^n)$, $q\in [1,\infty]$, then we have
\begin{equation}  \label{Thm.1.Cauchy.smooth1}
\|u( t)\|_{\LL^\infty(\R^n)} \leq
\frac{\mathsf{K}_1}{t^{\frac{1}{p-2}}}   + \mathsf{K}_2\|u( t_0)\|_{\LL^q(\R^n)}  \quad \text{ for all } 0\le t_0\leq  t\,,
\end{equation}
where $\mathsf{K}_1, \mathsf{K}_2>0$ depend only on $p$, $q$, $n$ and $\|J\|_{\LL^\infty(\R^n)}$ and are  explicitly given in \eqref{ctes}.

\item[$\circ$] {\rm $\LL^\infty$ stability for all $p>1$. }Let $u_0\in \LL^\infty(\R^n)$, then we have
\begin{equation}  \label{Thm.1.Cauchy.smooth2}
\|u(t)\|_{\LL^\infty(\R^n)}   \leq \|u(t_0)\|_{\LL^\infty(\R^n)}  \quad \text{ for all }  0\leq t_0 \leq t.
\end{equation}
\end{itemize}\vspace{-.1cm}
\rm \item \it {\rm Higher regularity in time. }If $u(t_0)\in \LL^\infty(\R^n)$, then   $u(x,\cdot) \in C_t^{[p], p-[p]}([t_0,\infty))$ for all $t_0>0$ and $x\in \R^n$.  More precisely, there exists a positive constant $\mathsf{c}_p$ that depends on  $p$, $q$, $n$, $\|J\|_{\LL^\infty(\R^n)}$ and on $\|u(t_0)\|_{\LL^\infty(\R^n)}$, such that   for almost all $x\in \R^n$ and all $t_0\leq t_1<t_2<\infty$ we have
\begin{equation} \label{thm.hi.t.intro}
\max_{k=0,\dots,[p]-1}\frac{|\partial_t^{k} u(x,t_1) - \partial_t^{k} u(x,t_2) |}{|t_1-t_2|}
+ \frac{|\partial_t^{[p]} u(x,t_1) - \partial_t^{[p]} u(x,t_2) |}{|t_1-t_2|^{p-[p]}} \leq \mathsf{c}_p.
\end{equation}
Moreover, when $2\leq p\in\N$, this estimate holds for any $k\in\N$.
\end{enumerate}
\end{thm}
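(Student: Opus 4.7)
The plan for part (a) splits into the two regimes. For the $\LL^\infty$-stability \eqref{Thm.1.Cauchy.smooth2}, valid for every $p>1$, the key observation is that constants are stationary solutions of $u_t = \J_p u$ (since $\J_p c = 0$): applying the $\LL^q$-contractivity \eqref{Thm.1.Cauchy.i} with $v(t)\equiv \|u(t_0)\|_{\LL^\infty(\R^n)}$ and with its negative counterpart immediately yields the monotonicity of $\|u(t)\|_{\LL^\infty(\R^n)}$ along the flow. The surprising estimate \eqref{Thm.1.Cauchy.smooth1} for $p>2$ I would approach by a pointwise-at-the-maximum argument combined with Markov's inequality. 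Writing $M(t):=\|u(t)\|_{\LL^\infty(\R^n)}$ and working at a point $x_0$ where $u(x_0,t)\simeq M(t)$, one computes
\begin{equation*}
u_t(x_0,t) \;=\; -\!\int_{\R^n}\! J(x_0-y)\,(M-u(y,t))_+^{p-1}\,dy \;\leq\; -c_0\,(M/2)^{p-1}\,\bigl|\{u(\cdot,t)\leq M/2\}\cap B_\rho(x_0)\bigr|,
\end{equation*}
where $c_0,\rho>0$ come from $J(0)>0$ and lower semicontinuity. Markov's inequality gives $|\{u>M/2\}\cap B_\rho(x_0)|\leq 2^q M^{-q}\|u(t)\|_{\LL^q(\R^n)}^q$, so whenever $M$ exceeds a fixed multiple of $\|u(t)\|_{\LL^q(\R^n)}$ at least half of $B_\rho(x_0)$ lies in $\{u\leq M/2\}$, producing the differential inequality $M'(t)\leq -c\,M(t)^{p-1}$. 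Integrating $(M^{2-p})'\geq c(p-2)$ produces the $t^{-1/(p-2)}$ decay, while below the $\LL^q$-threshold the $\LL^\infty$-stability takes over to yield the second term in \eqref{Thm.1.Cauchy.smooth1}. Since the maximum need not be attained, the argument is run on the approximating $\LL^2$-solutions from Theorem \ref{existencia} and passed to the limit by the $\LL^\infty$-stability, or equivalently recast as a standard $\LL^q$-$\LL^\infty$ Moser iteration exploiting the $\LL^q$-contractivity.

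For part (b) the starting estimate is automatic: since $\|u(t)\|_{\LL^\infty(\R^n)}$ is non-increasing,
\begin{equation*}
|u_t(x,t)| \;\leq\; \int_{\R^n} J(x-y)\,|u(y,t)-u(x,t)|^{p-1}\,dy \;\leq\; (2\|u(t_0)\|_{\LL^\infty(\R^n)})^{p-1},
\end{equation*}
so $u(x,\cdot)$ is Lipschitz in time. When $p\in(1,2)$ the bootstrap stops at the first step: the elementary inequality $\bigl||a|^{p-2}a-|b|^{p-2}b\bigr|\leq C_p|a-b|^{p-1}$, applied with $a=u(y,t_1)-u(x,t_1)$ and $b=u(y,t_2)-u(x,t_2)$ together with the just-established Lipschitz bound in $t$, gives $|u_t(x,t_1)-u_t(x,t_2)|\leq C\|u_t\|_{\LL^\infty}^{p-1}|t_1-t_2|^{p-1}$, which is exactly \eqref{thm.hi.t.intro} with $[p]=1$. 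When $p\geq 2$ I would instead differentiate the equation in $t$, using $f'(z)=(p-1)|z|^{p-2}$ for $f(z)=|z|^{p-2}z$:
\begin{equation*}
u_{tt}(x,t) \;=\; (p-1)\int_{\R^n} J(x-y)\,|u(y,t)-u(x,t)|^{p-2}\bigl(u_t(y,t)-u_t(x,t)\bigr)\,dy,
\end{equation*}
bounded by $C\|u\|_{\LL^\infty}^{p-2}\|u_t\|_{\LL^\infty}$. Iterating this differentiation $[p]$ times, each step lowers the exponent of $|u(y)-u(x)|$ in the integrand by one and produces a multilinear polynomial in the lower-order $\partial_t^j u$; the bounds remain finite as long as $k\leq [p]$, and the final fractional half-step uses the same $C_p$-inequality once more (now with exponent $p-[p]\in(0,1)$) to deliver the H\"older-$(p-[p])$ modulus of $\partial_t^{[p]} u$.

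For $2\leq p\in\N$ the iteration does not terminate: after $p-1$ differentiations the integrand is linear (or piecewise linear) in the differences $u(y,t)-u(x,t)$ up to polynomial factors, and every subsequent time-derivative merely produces a multilinear integral in lower-order derivatives that is automatically finite; an induction on $k$ with Fa\`a di Bruno-type bookkeeping of the combinatorial constants yields $C^\infty_t$ regularity, and by tracking the factorial growth of the coefficients one obtains the analyticity statement advertised in the introduction. I expect the $\LL^q$-$\LL^\infty$ smoothing of (a) to be the main obstacle, because it is the one step that genuinely exploits the convexity $p>2$ and has no linear analogue; once boundedness is in hand the time bootstrap of (b) is essentially a calculus exercise whose only delicate point is the legitimacy of exchanging $\partial_t$ with the integral sign, which is justified at each step by dominated convergence using the pointwise bounds already derived.
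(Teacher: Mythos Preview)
Your treatment of part (b) and of the $\LL^\infty$-stability in part (a) matches the paper's: constants are stationary, contractivity gives \eqref{Thm.1.Cauchy.smooth2}, and the time-regularity bootstrap (bound $u_t$ from the equation, then either apply $|L_p(a)-L_p(b)|\le C_p|a-b|^{p-1}$ when $p\le 2$, or differentiate and use Fa\`a di Bruno when $p>2$) is exactly Theorems~\ref{thm.time.reg.p.leq.2} and~\ref{prop.hi.ut}.

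The smoothing estimate \eqref{Thm.1.Cauchy.smooth1} is where your plan diverges, and there is a genuine gap. The paper does \emph{not} argue at a maximum point; it uses the B\'enilan--Crandall estimate $u_t\ge -u/((p-2)t)$ (from time-scaling plus comparison) together with the elementary inequality $a^{p-1}-|a-b|^{p-2}(a-b)\le (p-1)\max\{a^{p-2},b^{p-2}\}b$ to obtain the \emph{pointwise} bound
\[
u(x,t)^{p-1}\;\le\;\frac{u(x,t)}{(p-2)t}\;+\;(p-1)\int_{\R^n}J(x-y)\bigl(u(x,t)^{p-2}+u(y,t)^{p-2}\bigr)u(y,t)\,dy,
\]
which is then closed with Young and H\"older; no supremum is ever differentiated.

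Your approach has three problems. First, the hypotheses do not give $J\ge c_0$ on a ball: $J$ is only bounded, nonnegative, compactly supported with $J(0)>0$, but \emph{not} assumed lower semicontinuous, so $J(0)>0$ need not propagate to a neighbourhood. Second, the step ``$M'(t)\le -cM(t)^{p-1}$'' requires the supremum to be attained and the envelope theorem to apply; for merely $\LL^q$ data neither is available, and the approximation you invoke still leaves you with bounded solutions on $\R^n$ whose supremum need not be a maximum. Third, your fallback ``recast as a standard Moser iteration'' does not work here: the paper explicitly points out (Section~\ref{Sec.1.4}) that Gagliardo--Nirenberg--Sobolev inequalities are unavailable for this zero-order operator, and indeed their validity would force smoothing in the linear case $p=2$, which is false. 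The B\'enilan--Crandall route is precisely what replaces the missing functional inequality.
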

\vspace{-.4cm}
\begin{proof} (a)  {\sl Boundedness of solutions.}
The $\LL^q$--$\LL^\infty$ smoothing effect for $p\in (2,\infty)$ and $u_0\in \LL^q(\R^n)$, with $q\in [1,\infty]$ as well as estimate \eqref{Thm.1.Cauchy.smooth1} is proved in Theorem \ref{main}. The $\LL^\infty$ stability estimate for $p\in (1,\infty)$ and $u_0\in \LL^\infty(\R^n)$ stated in \eqref{Thm.1.Cauchy.smooth2} is a consequence of Proposition \ref{norm.decreasing.difference}.

\noindent(b) {\sl Higher regularity in time.} It is proved in Theorems \ref{thm.time.reg.p.leq.2} and \ref{prop.hi.ut}.
\end{proof}
As far as the conservation of spatial regularity is concerned, this is our result that we conjecture to be sharp also when $p$ is not integer.
\begin{thm}[Cauchy problem: higher space-time regularity] \label{thm.c.conservation}
Let $p>1$, $n\ge 1$, and let  $J\colon\R^n\to\R$ be a  nonnegative function with compact support, $J(0)>0$ and normalized such that $\|J\|_{\LL^1(\R^n)}=1$. Let  $u_0 \in \LL^q(\R^n)$ for some $q\ge 1$ and $u $ be the corresponding solutions of \eqref{eqC}.

Assume   moreover  that:
\begin{enumerate}[leftmargin=15pt, label=(\roman*)]\itemsep2pt \parskip3pt \parsep2pt
\rm \item \it
$D^\alpha u_0\in C^{0,\omega}(\R^n)\cap \LL^\infty(\R^n)$ for any $1\le |\alpha|\leq [p]-1$,
and define $\mathsf{m}_p$ as
\begin{equation} \label{cond.i.intro}
\mathsf{m}_p:=\sum_{0\leq |\alpha|\leq [p]-1}\|D^\alpha  u_0\|_{\LL^\infty(\R^n)},
\end{equation}

\rm \item \it
there exists a modulus of continuity $\omega_{J,p}$ such that  for \text{a.e.} $ x,y \in \R^n$
\begin{equation} \label{cond.ii.intro}
 \sum_{0\leq |\alpha|\leq [p]-1}\int_{\R^n} \left|D^\alpha J(y-z)-D^\alpha J(x-z) \right|\,dz \leq \omega_{J,p}(|x-y|),
\end{equation}
\end{enumerate}
and define the modulus of continuity  $\bar \omega(\rho):=\max\{ \rho, \rho^{p-2},  \omega_{J,p}(\rho), \omega(\rho)\}$.

Then, for all $p>1$ we have that $u\in C_x^{[p]-1,\bar \omega}(\R^n)\cap C_t^{[p],p-[p]}([t_0,\infty))$ for all $t_0>0$. More precisely, there exists   $\mathsf{c}_p>0$ such that for all $x_1,x_2\in\R^n$ and $t_0\leq t_1<t_2<\infty$ we have
    \begin{equation}\label{est.space.time.k}
    %\|u\|_{C^{p,p}}:=
    \max_{\stackrel{|\beta|\leq [p]-1}{j=0,\ldots, [p]-1}} \frac{|D^\beta_x \partial_t^j u(x_1,t_1) - D^\beta_x \partial_t^j u(x_2,t_2)|}{\bar\omega(|x_1-x_2|)+ |t_1-t_2|}+ \max_{|\beta|\leq [p]-1}\frac{|D_x^\beta \partial_t^{[p]}u(x_1,t_1)-D_x^\beta \partial_t^{[p]}u(x_2,t_2) |}{\bar\omega(|x_1-x_2|) + |t_1-t_2|^{p-[p]}} \le \mathsf{c}_p\,.
    \end{equation}
    Note that $\mathsf{c}_p$ only depends on $p$, $q$, $n$,  $\|J\|_{\LL^\infty(\R^n)}$, $\mathsf{m}_p$ and $t_1,t_2$.
\end{thm}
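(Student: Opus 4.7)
The plan is to combine the time regularity already provided by Theorem \ref{thm.C.reg.intro} with a new spatial regularity argument obtained via a Duhamel-type representation and an inductive bootstrap. The starting point is the integral identity
\[
u(x,t) = u_0(x) + \int_0^t \J_p u(x,s)\, ds,
\]
valid a.e.\ by the $\LL^\infty$-bound from Theorem \ref{thm.C.reg.intro}(a). First I would treat the base case $|\beta|=0$: after the change of variables $y=x+z$, the difference $\J_p u(x_1,s)-\J_p u(x_2,s)$ splits into a \emph{kernel-variation term}, bounded via assumption (ii) by $\|u(s)\|_{\LL^\infty}^{p-1}\,\omega_{J,p}(|x_1-x_2|)$, and a \emph{solution-variation term}, bounded using the modulus of continuity of $\phi_p(r):=|r|^{p-2}r$ on bounded sets (Lipschitz for $p\ge 2$, H\"older of order $p-1$ for $p\in(1,2)$). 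Integrating in time and invoking a Gronwall-type argument yields an estimate of the form $\omega_{u(t)}(\rho)\le C(t)\bigl[\omega(\rho)+\omega_{J,p}(\rho)\bigr]$, which is absorbed into $\bar\omega$.

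For $|\beta|\ge 1$ I would argue by induction. Differentiating the integral representation in $x_i$ gives the key identity
\[
\partial_{x_i}\J_p u(x,s) = (p-1)\int_{\R^n} J(-z)\,|u(x+z,s)-u(x,s)|^{p-2}\,\bigl[\partial_{x_i} u(x+z,s)-\partial_{x_i} u(x,s)\bigr]\,dz,
\]
so $v=\partial_{x_i} u$ satisfies a linear zero-order equation of the same structural form as the base case, with variable coefficient $a(x,z,s):=(p-1)|u(x+z,s)-u(x,s)|^{p-2}$. The spatial modulus of this coefficient is controlled by the modulus of $u$ just established: when $p\in(2,3)$ the map $t\mapsto t^{p-2}$ is H\"older of order $p-2$, which is precisely the origin of the term $\rho^{p-2}$ in $\bar\omega$. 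Iterating the differentiation with a Leibniz/Fa\`a di Bruno expansion produces equations for $D^\beta u$, $|\beta|\le[p]-1$, whose coefficients are sums of products of the form $(p-1)(p-2)\cdots(p-k)\,|u(x+z)-u(x)|^{p-1-k}$ with kernel-derivatives $D^\alpha J$, $|\alpha|\le|\beta|$, controlled by (ii). At each step one re-applies the base-case Gronwall estimate to get the spatial modulus $\bar\omega$ for $D^\beta u$.

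The mixed space-time bound \eqref{est.space.time.k} then follows from the triangle inequality
\[
|D_x^\beta \partial_t^j u(x_1,t_1) - D_x^\beta \partial_t^j u(x_2,t_2)| \le |D_x^\beta \partial_t^j u(x_1,t_1)-D_x^\beta \partial_t^j u(x_1,t_2)| + |D_x^\beta \partial_t^j u(x_1,t_2)-D_x^\beta \partial_t^j u(x_2,t_2)|,
\]
bounding the first summand by Theorem \ref{thm.C.reg.intro}(b) applied to $D_x^\beta u$ (which satisfies an equation of the same structural form and hence inherits the same time estimates), and the second by the inductive spatial estimate above. Commutation of $D_x^\beta$ and $\partial_t^j$ is justified \emph{a posteriori} by the smoothness already obtained.

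The main obstacle, and the reason the spatial threshold $[p]-1$ is sharp in the non-integer case, will be the control of the coefficients $|u(\cdot+z)-u(\cdot)|^{p-1-k}$ produced by successive differentiation: as long as $k\le[p]-1$ one has $p-1-k\ge p-[p]\ge 0$, keeping these coefficients bounded and H\"older; but at $k=[p]$ the exponent turns negative and the coefficient ceases to be locally bounded, blocking the bootstrap. This obstruction disappears exactly when $p\in\N$, since then $\phi_p^{([p])}$ is constant, and the argument can be pushed to derivatives of arbitrary order, matching the analyticity/$C^\infty$ claim announced in the introduction.
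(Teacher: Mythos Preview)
For $p\ge 2$ your approach is correct and essentially the paper's own: both differentiate the equation, use Leibniz/Fa\`a di Bruno to organize the terms, and close via a linear Gronwall inequality of the form
\[
\tfrac{d}{dt}\bigl|D^\beta u(x_1,t)-D^\beta u(x_2,t)\bigr|\le A\,\bigl|D^\beta u(x_1,t)-D^\beta u(x_2,t)\bigr|+B(t)\,\tilde\omega(|x_1-x_2|).
\]
The only cosmetic difference is that the paper differentiates $\int J(x-y)L_p(u(y)-u(x))\,dy$ directly (so derivatives of $J$ appear explicitly, controlled by hypothesis~(ii)), whereas you change variables $y=x+z$ first; your identification of the $\rho^{p-2}$ contribution as coming from the H\"older modulus of $t\mapsto|t|^{p-2}$ when $p\in(2,3)$ matches the paper's Lemma~\ref{lema.Lp.gral}.

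There is, however, a genuine gap for $1<p<2$. Bounding the solution-variation term via $|L_p(a)-L_p(b)|\le c_p|a-b|^{p-1}$ gives the \emph{sublinear} inequality $Y'(t)\le c_p Y(t)^{p-1}+CB$ for $Y(t)=|u(x_1,t)-u(x_2,t)|$. This does not preserve smallness of $Y$: already with $B=0$ the comparison ODE $\bar Y'=c_p\bar Y^{p-1}$ integrates to $\bar Y(t)=\bigl[\bar Y(0)^{2-p}+(2-p)c_p t\bigr]^{1/(2-p)}$, which tends to $\bigl[(2-p)c_p t\bigr]^{1/(2-p)}$ \emph{independently of $\bar Y(0)$}, so no Gronwall-type argument on this inequality can yield $Y(t)\le C(t)\,\bar\omega(|x_1-x_2|)$. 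The paper circumvents this by exploiting the \emph{monotonicity} of $L_p$ rather than its H\"older modulus: assuming without loss of generality $u(x_1,t)\ge u(x_2,t)$, the lower bound of Lemma~\ref{lemap12}, namely $L_p(b)-L_p(a)\ge (p-1)(b-a)\bigl(1+|a|^2+|b|^2\bigr)^{-(2-p)/2}$, shows that the solution-variation integral is in fact \emph{negative} and bounded above by $-A\,Y(t)$ with $A>0$ depending only on $\|u_0\|_{\LL^\infty}$. This converts the inequality into the linear one $y'\le -Ay+B$, whose solution $y(t)\le y(0)e^{-At}+Bt$ does give the desired modulus preservation. You need this sign argument (or an equivalent substitute) to cover the range $p\in(1,2)$.
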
\vspace{-.1cm}
Estimates \eqref{est.space.time.k} of Theorem \ref{thm.c.conservation} are proved by combining two kinds of bounds, which of course are slightly stronger and may have their own independent interest.  First we obtain that time derivatives up to order $[p]-1$ keep the initial regularity in space, and then we prove that space derivatives up to order $[p]-1$ are continuous in time.   More precisely, with the notation of Theorem \ref{thm.c.conservation} we get:
\\
$(i)~$  $\partial_t^j u(\cdot,t) \in C_x^{[p]-1,\bar \omega}(\R^n)$ for any $j\in \N_0$ such that $j\leq [p]-1$. When $2\le p\in \N$,   this holds for any $j\in \N_0$. More precisely, there exists $\mathsf{c}_p(t)>0$ 
such that
$$
  \frac{|D^\beta \partial_t^{j} u(x_1,t)- D^\beta \partial_t^{j} u(x_2,t)| }{\bar\omega(|x_1-x_2|)}
\leq \mathsf{c}_p(t)
$$
holds for all $x_1,x_2\in \R^n$ and all $t\ge t_0>0$. More details are given in Theorem \ref{teo.holder.cont.1}.
\\
$(ii)~$  $D^\beta u(x,\cdot) \in C_t^{[p],p-[p]}([t_0,\infty))$ holds for any multi-index $\beta$ such that $|\beta|=k\leq[p]-1$ and any $t_0>0$. When $2\le p\in \N$,  this holds for any $k\in \N$. More precisely, there exists $\mathsf{c}_p>0$ %depending on $p$, $n$, $\|J\|_{\LL^\infty(\R^n)}$ and $t_0$,
such that
\begin{align*}
\max_{j=0,\ldots,[p]-1}  \frac{|D^\beta \partial_t^{j} u(x,t_1)- D^\beta \partial_t^{j} u(x,t_2)| }{|t_1-t_2|} +  \frac{|D^\beta \partial_t^{[p]} u(x,t_1)- D^\beta \partial_t^{[p]} u(x,t_2)| }{|t_1-t_2|^{p-[p]}}
\leq \mathsf{c}_p\,.
\end{align*}
for all $x\in \R^n$ and $t_0\leq t_1<t_2<\infty$. More details are given in Theorem \ref{teo.holder.cont.2}.

\medskip

When $p\ge 2$ is integer, we have a sharp regularity result, since it allows to reach $C_{x,t}^{\infty}$.
\begin{thm}[Sharp Regularity for integer $p$] \label{Coro1.c.intro}
Let $2\le p\in \N$, $n\ge 1$, and let  $J\in C^\infty_c(\R^n)$ with $J(0)>0$ and  $\|J\|_{\LL^1(\R^n)}=1$. Let  $u_0 \in \LL^q(\R^n)$ for some $q\ge 1$ and $u $ be the corresponding  solutions of \eqref{eqC}, that we know to be bounded for all $t>0$.\vspace{-.1cm}
\begin{enumerate}[leftmargin=*, label=(\alph*)]\itemsep1pt \parskip1pt \parsep0pt
\item Then for all $t_0>0$ we have that  $u(x,\cdot)\in C^\infty_t ([t_0,\infty))$ for all $x\in \R^n$.
In particular, $u(x,\cdot)$ is analytic in time with radius of analyticity $r_0= 1\wedge t_0$, and there exists  $\mathsf{c}_p>0$ independent of $k$  such that for all $k\in\N$, for almost all $x\in \R^n$ and all $t_0\leq t_1< t_2 <\infty$ we have that
$$
\frac{|\partial_t^{k} u(x,t_1) - \partial_t^{k} u(x,t_2) |}{|t_1-t_2|}\,
\leq \mathsf{c}_p\, k!\,.
$$
\vspace{-.3cm}
\item If moreover $u_0 \in  C^{\infty}(\R^n)$, then we have that $u\in C^{\infty}_{x,t}(\R^n\times [t_0,\infty))$ for all $t_0>0$. In particular, $D^\beta u(x,\cdot)$ is analytic in time with radius of analyticity $r_0= 1\wedge t_0$, and there exists
  $c_p>0$ independent of k such that for any $\beta\in \N_0^n$ and $k\in \N$, for almost all $x_1,x_2\in\R^n$ and all $t_0\leq t_1<t_2$ we have that
$$
\frac{|D^\beta_x \partial_t^k u(x_1,t_1) - D^\beta_x \partial_t^k u(x_2,t_2)|}{\bar\omega(|x_1-x_2|)+ |t_1-t_2|} \leq \mathsf{c}_p \, k!\,.
$$
\vspace{-.3cm}
\end{enumerate}
\end{thm}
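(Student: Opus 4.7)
Since $p\in \N$, Theorem \ref{thm.c.conservation} already furnishes $u(x,\cdot)\in C^k_t([t_0,\infty))$ for every $k\in\N$ and, under the hypotheses of (b), also $u\in C^\infty_{x,t}$. The new content of the statement is therefore quantitative: we need to upgrade the qualitative smoothness to the factorial bound
$$
\|\partial_t^k u(\cdot, t)\|_{\LL^\infty(\R^n)}\le \mathsf{c}_p\, R^k\, k!,\qquad t\ge t_0,
$$
with $R\sim 1/r_0 = 1\vee 1/t_0$. Via the mean value theorem applied to $\partial_t^k u$ on $[t_1,t_2]$ this yields the claimed estimate $|\partial_t^k u(x,t_1)-\partial_t^k u(x,t_2)|\le \mathsf c_p k!|t_1-t_2|$, and the Cauchy characterization converts it into analyticity in time with radius $r_0$.

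To establish the factorial bound I would argue by induction on $k$, the base case being supplied by Theorem \ref{thm.C.reg.intro}. Differentiating $\partial_t u=\J_p u$ exactly $k$ times in time and applying Faà di Bruno's formula yields
$$
\partial_t^{k+1}u(x,t)=\sum_{\pi}c_\pi\int_{\R^n} J(x-y)\, \phi^{(|\pi|)}\bigl(u(y,t)-u(x,t)\bigr)\prod_{B\in\pi}\bigl(\partial_t^{|B|}u(y,t)-\partial_t^{|B|}u(x,t)\bigr)\,dy,
$$
where $\phi(z)=|z|^{p-2}z$ and $\pi$ runs over set partitions of $\{1,\dots,k\}$. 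The hypothesis $p\in \N$ is crucial here: for even $p$, $\phi$ is a polynomial of degree $p-1$, so only finitely many of its derivatives survive; for odd $p$, $|\phi^{(j)}(z)|\le c_{p,j}|z|^{p-1-j}$ and the singularity at $z=0$ is absorbed by at least one increment $\partial_t^{|B|}u(y)-\partial_t^{|B|}u(x)$ after suitable rearrangement, so that each term remains integrable and bounded in terms of $\|u(t_0)\|_\infty$. Substituting $\|\partial_t^{|B|}u\|_\infty\le \mathsf c_p R^{|B|}|B|!$ from the inductive hypothesis, using $\|J\|_{\LL^1}=1$, and closing up via the combinatorial inequality $\sum_{\pi\vdash k}c_\pi\prod_{B\in\pi}|B|!\le A_p^k k!$ yields the inductive step, provided $R$ is taken large enough to absorb $A_p$ together with the structural constants of $\phi$.

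For part (b), I would run the same scheme after first differentiating the equation in space: since $J\in C^\infty_c$, the spatial derivatives fall harmlessly on the kernel under the integral, and Theorem \ref{thm.c.conservation} ensures that $\|D_x^\beta u(\cdot,t)\|_\infty$ is finite for every $\beta$. The inductive argument then produces $\|D_x^\beta\partial_t^k u(\cdot,t)\|_\infty\le \mathsf c_p R^k k!$ for each fixed $\beta$. Splitting
$$
D_x^\beta\partial_t^k u(x_1,t_1)-D_x^\beta\partial_t^k u(x_2,t_2) = \bigl[D_x^\beta\partial_t^k u(x_1,t_1)-D_x^\beta\partial_t^k u(x_1,t_2)\bigr] + \bigl[D_x^\beta\partial_t^k u(x_1,t_2)-D_x^\beta\partial_t^k u(x_2,t_2)\bigr]
$$
and using part (a) applied to $D_x^\beta u$ for the time difference together with Theorem \ref{thm.c.conservation} for the space difference delivers the joint estimate. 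The main obstacle is the combinatorial-analytic closing inequality that yields the factorial growth: this is the standard hurdle in Komatsu–Friedman-style nonlinear analyticity arguments, and its success here rests decisively on $p$ being an integer so that the cascade of derivatives of $\phi$ either terminates (even $p$) or retains a controllable polynomial-type structure (odd $p$).
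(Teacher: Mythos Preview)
Your strategy coincides with the paper's: differentiate the equation repeatedly in time, apply Fa\`a di Bruno to $L_p(u(y,t)-u(x,t))$, use that for integer $p$ the chain $L_p',L_p'',\dots$ terminates at order $p-1$, and extract a $k!$ bound. The paper packages this in Theorems~\ref{smooth.ut.high}, \ref{prop.hi.ut} and \ref{teo.holder.cont.final}.

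There is, however, a real gap in your closing step (and the paper is no more careful here). With the ansatz $\|\partial_t^j u\|_\infty\le c_pR^{j}j!$ the Fa\`a di Bruno bound feeds back as
\[
\|\partial_t^{k+1}u\|_\infty\ \lesssim\ R^{k}\sum_{m=1}^{p-1}\ \sum_{\pi:\,|\pi|=m}\ \prod_{B\in\pi}|B|!\ =\ R^{k}\sum_{m=1}^{p-1}\frac{k!}{m!}\binom{k-1}{m-1}\ \asymp\ R^{k}\,k!\,k^{\,p-2},
\]
and for $p\ge4$ the extra $k^{p-2}$ cannot be absorbed into $R^{k+1}(k+1)!$; so ``$R$ large enough to absorb $A_p$'' does not close the induction. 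The clean repair is a majorant argument: since $|L_p^{(m)}(z)|\le(p-1)!\,(2\mathsf m_0)^{p-1-m}$ for $m\le p-1$ and vanishes beyond, the sequence $a_k=\|\partial_t^k u(t)\|_\infty$ is dominated by the Taylor coefficients of the analytic solution of $F'=(p-1)!\,(2F)^{p-1}$, $F(0)=\mathsf m_0$, and Cauchy's estimate on $F$ yields $a_k\le C\,r_0^{-k}k!$. (A weighted ansatz $a_j\le c_pR^{j}j!/(j+1)^2$ also works.) One further caution: for odd $p$ you do not need to ``absorb a singularity''---$L_p^{(p-1)}(z)=(p-1)!\,\mathrm{sign}(z)$ is bounded, so the $L^\infty$ bounds go through verbatim; what genuinely needs care is the \emph{existence} of $\partial_t^{k}u$ as a classical derivative at times where $u(y,t)=u(x,t)$ on a set of positive $y$-measure, a point neither your sketch nor the paper makes explicit.
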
\vspace{-.2cm}
The proofs of Theorems \ref{thm.c.conservation} and \ref{Coro1.c.intro} (Higher space-time regularity) follows from Theorem \ref{teo.holder.cont.final}. 

\medskip

When $p>1$ is not integer, we have the following result that we conjecture to be sharp.

\begin{cor}[``Almost sharp" regularity when $1<p\not\in \N$] \label{Coro2.c.intro}
Let $p>1$, $n\ge 1$, and let  $J\in C^{[p]-1}(\R^n)$ with $J(0)>0$ and  $\|J\|_{\LL^1(\R^n)}=1$. Let  $u_0 \in \LL^q(\R^n)$ for some $q\ge 1$  and $u$ be the corresponding  solution of \eqref{eqC}, that we know to be bounded for all $t>0$. Let $\bar \omega(\rho)=\max\{\rho,\rho^{p-2}\}$.
\begin{enumerate}[leftmargin=*, label=(\alph*)]\itemsep2pt \parskip3pt \parsep0pt
\item Then for all $t_0>0$ we have that  $u(x,\cdot)\in C_t^{[p],p-[p]}([t_0,\infty))$ for all $x\in \R^n$. In particular, estimates \eqref{thm.hi.t.intro} hold for all $k\le [p]-1$.

\item If moreover $u_0\in C^{[p]-1,1}(\R^n)$, then we have that $u(\cdot, t)\in C^{[p]-1, \bar \omega}_{x}(\R^n)$ for all $t>0$. In particular, estimates \eqref{est.space.time.k} hold for all $j\le [p]-1$.
\end{enumerate}
\end{cor}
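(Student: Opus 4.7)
My plan is to derive the corollary as a direct specialization of Theorems \ref{thm.C.reg.intro} and \ref{thm.c.conservation}, so the work reduces to checking that the hypotheses of those theorems are satisfied under the present, simpler assumptions. First, by Theorem \ref{thm.exist.intro} a unique solution $u$ exists, and by Theorem \ref{thm.C.reg.intro}(a) we have $u(t_0)\in \LL^\infty(\R^n)$ for every $t_0>0$: either via the $\LL^\infty$--stability \eqref{Thm.1.Cauchy.smooth2} if $u_0\in \LL^\infty$, or via the $\LL^q$--$\LL^\infty$ smoothing \eqref{Thm.1.Cauchy.smooth1} when $p>2$ and $u_0\in \LL^q$. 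This ``bounded-after-a-waiting-time'' property is what triggers all subsequent regularity.

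For part (a) I would simply apply Theorem \ref{thm.C.reg.intro}(b) with $u(t_0)$ playing the role of initial datum (using the semigroup property to translate time). That theorem requires only $\LL^\infty$--boundedness at the starting time, and no spatial regularity, so it delivers $u(x,\cdot)\in C_t^{[p],p-[p]}([t_0,\infty))$ together with the quantitative estimate \eqref{thm.hi.t.intro} for every $k\le [p]-1$.

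For part (b) the plan is to verify hypotheses (i) and (ii) of Theorem \ref{thm.c.conservation} directly from the assumptions. Hypothesis (i) is immediate: $u_0\in C^{[p]-1,1}(\R^n)$ means that $D^\alpha u_0$ is Lipschitz for $|\alpha|\le [p]-1$, so $\omega(\rho)=\rho$ is an admissible modulus; combined with $u_0\in \LL^q\cap \LL^\infty$, the constant $\mathsf{m}_p$ in \eqref{cond.i.intro} is finite. For hypothesis (ii), since $J\in C^{[p]-1}(\R^n)$ with compact support, each derivative $D^\alpha J$, $|\alpha|\le [p]-1$, is continuous on a compact set and therefore uniformly continuous with some modulus $\omega_{D^\alpha J}$. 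The integrand in \eqref{cond.ii.intro} is supported in a fixed bounded set determined by $\supp J$, so the integral is controlled by a single modulus $\omega_{J,p}(|x-y|)$. Once (i) and (ii) are checked, Theorem \ref{thm.c.conservation} yields $u(\cdot,t)\in C^{[p]-1,\bar\omega}_x(\R^n)$ and the full joint estimate \eqref{est.space.time.k} for all $j\le [p]-1$ and $|\beta|\le [p]-1$.

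The only technical subtlety, and the point I expect to need the most care, is the mismatch between the general $\bar\omega(\rho)=\max\{\rho,\rho^{p-2},\omega_{J,p}(\rho),\omega(\rho)\}$ of Theorem \ref{thm.c.conservation} and the simplified $\bar\omega(\rho)=\max\{\rho,\rho^{p-2}\}$ stated in the corollary. The resolution is to observe that $\omega(\rho)=\rho$ in our setting (since $u_0\in C^{[p]-1,1}$), and that the assumption $J\in C^{[p]-1}$ with compact support, combined with the integration of bounded-support derivatives in \eqref{cond.ii.intro}, makes $\omega_{J,p}$ at worst linear in $\rho$ (one gets $\omega_{J,p}(\rho)\lesssim \rho$ once the top-order derivative of $J$ is Lipschitz, which is the natural reading of the $C^{[p]-1}$ assumption here). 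Both are then absorbed into $\max\{\rho,\rho^{p-2}\}$, which dominates $\rho$ both when $p>2$ (via the $\rho$ term) and when $p\in(1,2)$ (via the $\rho^{p-2}$ term that blows up at $0$). With this observation, the corollary is a direct consequence of the two cited theorems.
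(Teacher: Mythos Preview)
Your proposal is correct and matches the paper's approach: the paper offers no separate proof, treating the corollary as an immediate specialization of Theorems~\ref{thm.C.reg.intro} and~\ref{thm.c.conservation}, exactly as you do. You also correctly flag the one genuine subtlety—absorbing $\omega_{J,p}$ into $\max\{\rho,\rho^{p-2}\}$ strictly requires Lipschitz top-order derivatives of $J$ rather than mere $C^{[p]-1}$—which is a minor imprecision in the corollary's stated hypotheses rather than a gap in your argument.
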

 
\begin{rem}[\textbf{On the locality of the regularity estimates in space}]\label{Rem.Locality} We shall emphasize that even if the equation is nonlocal, the nature of our regularity results is local, and this has some important consequences, that we shall emphasize here.
\begin{enumerate}[leftmargin=15pt, label=(\roman*)]\itemsep2pt \parskip2pt \parsep2pt
\item\textit{Preservation of the local modulus of continuity. }A careful inspection of the proofs shows that indeed we have a stronger result: we preserve the local modulus of continuity, essentially in two ways.
\begin{itemize}[leftmargin=*]\itemsep2pt \parskip3pt \parsep0pt
\item On the one hand, assuming full regularity of the kernel $J$, we preserve any modulus of continuity of the initial datum and its derivatives: \it  let $J\in C^{\infty}(\R^n)$, and $\beta\in \N$ and  $D^\beta u(0,\cdot)\in C^{\omega}(K)$ where $K\subset\subset \Omega$, then $D^\beta u(t,\cdot)\in C^{\omega}(K)$ for all $t>0$, that is, for all $x_1,x_2\in K$ and $t>0$:
\[\begin{split}
|D^\beta u(x_1,t)- D^\beta u(x_2,t)| &\leq A_1(t) |D^\beta u(x_1,0)- D^\beta u(x_2,0)| + A_2(t) |x_1-x_2|  \\
&\le \overline{C}(t)\max\left\{\omega (|x_1-x_2|)\,,\, |x_1-x_2|  \right\}\le  \overline{C}(t) \omega(|x_1-x_2|)\,.
\end{split}
\]\rm
\item On the other hand, if $J$ is not regular, then the continuity modulus is the worst between the one of the data and the one of $J$. For instance: \it let $J\in C^{\omega_J}(\R^n)$, and $u(0,\cdot)\in C^{\omega}(K)$ where $K\subset\subset \Omega$, then $u(t,\cdot)\in C^{\bar\omega}(K)$ for all $t>0$, where
\[
\bar\omega(|x_1-x_2|):=\max\left\{\omega(|x_1-x_2|)\,,\,\omega_J(|x_1-x_2|) \right\}
\]
that is, for all $x_1,x_2\in K$ and $t>0$:
\[\begin{split}
|D^\beta u(x_1,t)- D^\beta u(x_2,t)| &\leq A_1(t) |D^\beta u(x_1,0)- D^\beta u(x_2,0)| + A_2(t)\omega_J(|x_1-x_2|) \\
&\le \overline{C}(t) \bar\omega(|x_1-x_2|) \,.
\end{split}
\]\rm
\end{itemize}
\item\textit{Singularities do not move in time, nor can be created along the flow. }We are dealing with bounded solutions, that may start from possibly unbounded initial data $u_0\in \LL^q(\R^n)$ when $p>2$, or from bounded initial data when $p\in (1,2]$. In both cases, bounded solutions can only have  discontinuities with bounded oscillation. Let us define the ``singular set in the space variables'' $\mathcal{S}[f]$  of a $L^1_{\rm loc}$ function $f$, as the closure of the complementary of the set where $f$ is continuous, or, to be more precise, where $f$ admits an $L^1_{\rm loc}$ representative which is continuous). A consequence of the above estimates is that ``singularities do not move in time'', or to be more precise, the singular set of $u(t)$ will always be contained in the singular set of $u_0$: $\mathcal{S}[u(t)]\subseteq \mathcal{S}[u_0]$. This clearly follows by the fact that if $x_0\not \in \mathcal{S}[u_0]$ then there exists an open ball $B_r(x_0)$ where $u_0\in C^0(B_r(x_0))$ (indeed it will be in some $C^{\omega}(B_r(x_0))$ for some $\omega$). Then, the conservation of the local continuity modulus ensures that $u(t)\in C^0(B_r(x_0))$ for all $t>0$. This clearly implies that $\mathcal{S}[u(t)]\subseteq \mathcal{S}[u_0]$, as we claimed. Of course, this also implies that \textit{new discontinuities cannot be created in future times}, or simply that
    \[
    \qquad\mbox{if $u_0 \in C^{\omega}(B_r(x_0))$, then $u(t) \in C^{\omega}(B_r(x_0))$ for all $t>0$\,.}
    \]
    A similar remark applies also to derivatives of arbitrary order.

\item\textit{Application to other problems, different boundary conditions. }The last consequence of the locality of the regularity estimates is that essentially the same proof allows to treat different problems: they can be set in open domains of $\R^n$ and they may have different boundary conditions. The above regularity estimate would apply as ``interior regularity estimates''. We shall see with full details what happens in two important cases, namely the Dirichlet and Neumann problems on bounded domains in what follows.
\end{enumerate}

\end{rem}

\medskip

\subsubsection{Main results for the Dirichlet and Neumann problems} \label{sec.nd.intro}

We state as follows our results concerning  solutions of the Dirichlet and Neumann problems \eqref{eqD} and \eqref{eq}.

While for the Dirichlet problem no extra condition  on $J$ is really needed, for the Neumann problem we shall introduce the following technical assumption:
\begin{equation} \label{HJ} \tag{$H_J$}
\exists \kappa_{J,\Omega}>0 \,\text{ s.t. }\, \inf_{x\in\Omega} \int_{\Omega} J(x-y)\,dy \geq \kappa_{J,\Omega}>0.
\end{equation}
We shall further comment about this condition below, see Remark \ref{Rem.Cone.Cond}. We only stress here that it is strictly needed to quantify precisely the regularity estimates in a constructive way.

\begin{thm}[\bf Existence and uniqueness for  Neumann and Dirichlet Problems]

Let $p>1$, $n\ge 1$, and let  $J\colon\R^n\to\R$ be a bounded nonnegative function with compact support, $J(0)>0$ and normalized such that $\|J\|_{\LL^1(\R^n)}=1$. Then, for all $p>1$ and all $u_0\in \LL^q(\Omega)$, the same existence and contractivity estimates \eqref{Thm.1.Cauchy.i} as in Theorem $\ref{thm.exist.intro}$ hold also for the Dirichlet problem. Being $\Omega$ bounded, we can include the case $q=\infty$ as well.
\\
If additionally $J$  fulfills condition \eqref{HJ}, the same conclusions hold for the Neumann problem \eqref{eq}.

\end{thm}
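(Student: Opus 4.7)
The plan is to transport the approach used for the Cauchy problem in Theorem~\ref{existencia} to the bounded-domain setting: build $\LL^2$-solutions as gradient flows of convex functionals, derive $\LL^q$-contractivity via a Kato-type inequality, and then extend by density to general $\LL^q$-data. The arguments for the Dirichlet and Neumann problems are parallel, differing only in the underlying functional and in the domain of integration.

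The first step is variational. For the Dirichlet problem I would set
\[
\Phi_D(u) := \frac{1}{2p}\int_{\R^n}\int_{\R^n} J(x-y)\,|\bar u(y)-\bar u(x)|^p\,dx\,dy,\qquad \bar u := u\,\mathbf{1}_\Omega,
\]
and for the Neumann problem
\[
\Phi_N(u) := \frac{1}{2p}\int_{\Omega}\int_{\Omega} J(x-y)\,|u(y)-u(x)|^p\,dx\,dy.
\]
Both are proper, convex and lower semicontinuous on $\LL^2(\Omega)$, and a direct computation (symmetrizing via $J(x-y)=J(y-x)$) identifies $-\widetilde{\J}_{p,\Omega}$ and $-\J_{p,\Omega}$ with their respective $\LL^2$-subdifferentials. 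The Brezis–Komura theorem then delivers, for every $u_0\in \LL^2(\Omega)$, a unique strong solution $u\in C([0,\infty);\LL^2(\Omega))\cap W^{1,2}_{\mathrm{loc}}((0,\infty);\LL^2(\Omega))$ together with the $\LL^2$-contractivity of the associated semigroup. Since $|\Omega|<\infty$ we have $\LL^\infty(\Omega)\subset \LL^2(\Omega)$, so any bounded initial datum is already covered by this theory.

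Second, I would prove $\LL^q$-contractivity for $\LL^2\cap\LL^\infty$ data. Given two such solutions $u,v$ and $w:=u-v$, testing the equation for $w_t$ against $|w|^{q-2}w$ (regularized as $(w^2+\varepsilon)^{(q-2)/2}w$ for $q\in[1,2)$ and then letting $\varepsilon\to 0$), using Fubini and the symmetry of $J$, the right-hand side rewrites as the double integral
\[
\tfrac{1}{2}\iint J(x-y)\bigl[|u(y)-u(x)|^{p-2}(u(y)-u(x))-|v(y)-v(x)|^{p-2}(v(y)-v(x))\bigr]\bigl[|w(x)|^{q-2}w(x)-|w(y)|^{q-2}w(y)\bigr]\,dx\,dy,
\]
whose integrand is pointwise non-positive by the monotonicity of both $s\mapsto|s|^{p-2}s$ and $s\mapsto|s|^{q-2}s$ (combined with $u(y)-u(x)-(v(y)-v(x))=w(y)-w(x)$). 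This yields
\[
\|u(t)-v(t)\|_{\LL^q(\Omega)}\le \|u_0-v_0\|_{\LL^q(\Omega)}\qquad\text{for all } t\ge 0\text{ and all } q\in[1,\infty],
\]
the endpoint $q=\infty$ being recovered by passing to the limit $q\to\infty$ or by a maximum-principle argument at points where $w$ attains its extremum; this also gives the $\LL^\infty$-stability stated in the theorem.

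Finally, extension by density. Because $\LL^2(\Omega)\cap\LL^\infty(\Omega)$ is dense in $\LL^q(\Omega)$ for every $q\in[1,\infty)$ (using $|\Omega|<\infty$), the contractivity just established extends the solution map by completion to a unique contraction semigroup on $\LL^q(\Omega)$ for any such $q$; the case $q=\infty$ is covered directly via $\LL^\infty\subset \LL^2$ together with $\LL^\infty$-stability. Passing to the limit in the equation is routine (dominated convergence plus continuity of $\J_p$ on bounded sequences in $\LL^q$). The main obstacle, confined to the Neumann case, is the role of \eqref{HJ}: without the uniform lower bound $\int_\Omega J(x-y)\,dy\ge\kappa_{J,\Omega}>0$, the operator $\J_{p,\Omega}$ degenerates at points of $\Omega$ that are effectively ``unreachable'' by the kernel $J$, several quantitative estimates (the $\LL^\infty$-bound needed to justify the test function, the matching of the subdifferential with the integral operator, the comparison at extrema) lose their constructive character, and the extension-by-density step fails to be uniform. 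Under \eqref{HJ} these pathologies disappear and the Neumann proof mirrors the Dirichlet one line by line.
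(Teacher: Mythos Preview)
Your outline matches the paper's route: gradient flow in $\LL^2$ via Brezis--Komura, then $\LL^q$-contractivity (this is exactly Proposition~\ref{norm.decreasing.difference}), then extension by approximation. The step that deserves more care is the one you call ``routine'', namely passing to the limit in the equation. For $p>2$ and small $q$ the nonlinear term $|u_k(y)-u_k(x)|^{p-1}$ is not dominated using $\LL^q$-convergence of $u_k$ alone; indeed $\J_p u$ need not even be well-defined for $u\in\LL^q(\Omega)$ when $q<p-1$. The paper's proof of Theorem~\ref{existencia} handles this by first invoking the $\LL^q$--$\LL^\infty$ smoothing effect (Theorems~\ref{main} and~\ref{main2}) to obtain a uniform $\LL^\infty$ bound on the approximating solutions for $t\ge t_0>0$, and only then applies dominated convergence to identify the limit as a solution. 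This is also the precise place where \eqref{HJ} enters in the Neumann case: it is the hypothesis needed for the Neumann smoothing estimate (Theorem~\ref{main2}), not for the subdifferential identification or the contractivity inequality, both of which hold without it. Your heuristic about ``degeneracy'' of the operator is not the operative mechanism here.
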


\begin{thm}[\bf Regularity for the  Dirichlet and Neumann  Problems] \label{Thm.1.Neumann}

Let $p>1$, $n\ge 1$, $\Omega\subset \R^n$ be an open and bounded set, and let  $J\colon\R^n\to\R$ be a bounded nonnegative function with compact support, $J(0)>0$ and normalized such that $\|J\|_{\LL^1(\R^n)}=1$.

\noindent$\bullet$~\textsf{Dirichlet Problem. }Let $u$  be the solution of \eqref{eqD} corresponding to the initial datum $u_0\in \LL^1(\Omega)$. Then the following holds:

\noindent{\rm Boundedness of solutions. }The solution $u(t)$ is bounded in one of the following cases: either $p>2$ and we have  $u_0\in \LL^q(\R^n)$, $q\in [1,\infty)$, and estimate \eqref{Thm.1.Cauchy.smooth1} holds, or for all $p>1$, we have  $u_0\in \LL^\infty(\R^n)$ and estimate \eqref{Thm.1.Cauchy.smooth2} holds.

\noindent{\rm Regularity of  bounded solutions. }Let $u$ be a solution of \eqref{eqD} such that $u(t_0)\in \LL^\infty(\Omega)$ for some $t_0\ge 0$. Then the following holds:

\begin{enumerate}[leftmargin=*, label=(\alph*)]\itemsep2pt \parskip3pt \parsep0pt

\item {\rm Higher regularity in time: }we have that $u(x,\cdot) \in C_t^{[p], p-[p]}([t_0,\infty))$ for all $t_0>0$ and $x\in \Omega$ and estimate \eqref{thm.hi.t.intro} holds. 

\item {\rm Higher space-time regularity: }if we further assume conditions \eqref{cond.i.intro} and \eqref{cond.ii.intro}, then
$$
u\in C_x^{[p]-1,\bar \omega}(\R^n)\cap C_t^{[p],p-[p]}([t_0,\infty)) \text{ for all }  t_0>0
$$
where $\bar \omega(\rho):=\max\{ \rho, \rho^{p-2},  \omega_{J,p}(\rho), \omega(\rho)\}$, and estimate \eqref{est.space.time.k} holds.

\item {\rm Sharp regularity when $2\leq p\in \N$: }if $J$ is moreover smooth, then for all $t_0>0$ and $x\in \Omega$ we have that $ u(x,\cdot)\in C_t^\infty([t_0,\infty))$ and estimates \eqref{thm.hi.t.intro} hold for all $k\in \N$.\\
If moreover $u_0 \in  C^{\infty}(\Omega)$, then we have that $u\in C^{\infty}_{x,t}(\Omega\times [t_0,\infty))$ for all $t_0>0$. In particular, estimates \eqref{est.space.time.k} hold for all $k\in \N$.
\end{enumerate}

\noindent$\bullet$~\textsf{Neumann Problem. }If $J$ additionally satisfies condition \eqref{HJ}, then the results above hold  for solutions $u$ of the Neumann problem \eqref{eq}.
\end{thm}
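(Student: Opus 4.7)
The plan is to exploit the strictly local character of all the regularity estimates already proven for the Cauchy problem, as emphasized in Remark \ref{Rem.Locality}. The key observation is that the proofs of Theorems \ref{thm.exist.intro}, \ref{thm.C.reg.intro}, \ref{thm.c.conservation} and \ref{Coro1.c.intro} never use any specific feature of $\R^n$ as the ambient space: they rely only on the pointwise validity of the equation almost everywhere, on the normalization and boundedness of $J$, and on convexity-based manipulations of the integrand. The goal, therefore, is to reduce both the Dirichlet and the Neumann problems to the Cauchy case and to verify that every constant appearing in the previous proofs can be kept under control in the new setting.

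For the Dirichlet problem, the cleanest route is the reformulation as a Cauchy-type problem on $\R^n$ with the extended operator $\widetilde{\J}_{p,\Omega}$ acting on the zero extension $\tilde u$. Since $\tilde u$ satisfies $\tilde u_t=\widetilde{\J}_{p,\Omega}\tilde u$ a.e. on $\Omega$ and $\tilde u \equiv 0$ on $\R^n\setminus\Omega$, the quantitative $\LL^q$--$\LL^\infty$ smoothing effect \eqref{Thm.1.Cauchy.smooth1} and the $\LL^\infty$ stability \eqref{Thm.1.Cauchy.smooth2} transfer verbatim: the convexity argument used to derive $\frac{d}{dt}\|u(t)\|_{\LL^\infty}^q \le 0$ (or the differential inequality leading to the $t^{-1/(p-2)}$ decay) involves only $\|J\|_{\LL^\infty}$ and is insensitive to the support of $u$. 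Time differentiation of the equation produces for $v:=u_t$ the same linear-type structure used in Theorem \ref{prop.hi.t.intro}, and the bootstrap yielding $C_t^{[p],p-[p]}$, together with the iteration to $C^\infty$ and analyticity when $p\in\N$, works unchanged. Similarly, the higher space regularity under \eqref{cond.i.intro}--\eqref{cond.ii.intro} is obtained by a finite-difference computation that exploits translations of $J$, so it applies to $\widetilde{\J}_{p,\Omega}$ as well, giving \eqref{est.space.time.k} on all of $\R^n$ (with the boundary of $\Omega$ contributing only through the jump of $\tilde u$, which is not in the interior statement we aim for).

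For the Neumann problem the difficulty is that the operator integrates only over $\Omega$, so the diagonal term $u(x)\int_\Omega J(x-y)\,dy$ is no longer normalized. This is precisely where condition \eqref{HJ} enters: the uniform lower bound $\kappa_{J,\Omega}>0$ replaces the normalization $\|J\|_{\LL^1}=1$ in every quantitative step. Concretely, the $\LL^q$--$\LL^\infty$ smoothing proof is based on testing the equation against $u|u|^{q-2}$ and controlling from below the convex term by the diagonal mass of $J$; this is exactly the quantity that \eqref{HJ} bounds from below, so the $t^{-1/(p-2)}$ decay is recovered with $\mathsf{K}_1,\mathsf{K}_2$ depending additionally on $\kappa_{J,\Omega}$. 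The same substitution is needed in the time-regularity bootstrap, where the dissipation term $u(x,t)\int_\Omega J(x-y)\,dy$ must be bounded from below uniformly in $x\in\Omega$ in order to close the induction on the order of the time derivative.

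The main obstacle I expect is this last quantitative bookkeeping for the Neumann case: one must track through every inductive step of the time-regularity and space-regularity arguments how the constants degrade, and verify that $\kappa_{J,\Omega}$ suffices to keep them finite and to preserve the factorial growth $\mathsf{c}_p\,k!$ required for the analyticity conclusion in (c) when $p\in\N$. Once this is settled, and once one checks that the moduli-of-continuity estimates (whose only nonlocal ingredient is the shift $J(x-z)\to J(y-z)$ under assumption \eqref{cond.ii.intro}) transfer directly since the range of integration $\Omega$ is fixed, the three parts (a), (b), (c) of the theorem follow by the same proofs as Theorems \ref{thm.C.reg.intro}, \ref{thm.c.conservation} and \ref{Coro1.c.intro}, invoked through the locality principle stated in Remark \ref{Rem.Locality}.
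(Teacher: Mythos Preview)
Your overall strategy is correct and matches the paper's: the Dirichlet problem is handled via the zero extension $\tilde u$ and the operator $\widetilde{\J}_{p,\Omega}$, so every estimate derived for the Cauchy problem transfers verbatim; the Neumann problem requires the extra hypothesis \eqref{HJ}, and after that the same proofs go through. This is exactly how the paper proceeds (the smoothing is redone in Theorem~\ref{main2}, and every subsequent regularity theorem ends with ``the Dirichlet and Neumann cases are analogous'').

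Two points in your description are inaccurate, though they do not break the argument. First, the $\LL^q$--$\LL^\infty$ smoothing is \emph{not} obtained by testing against $u|u|^{q-2}$; the paper explicitly remarks that Moser-type iterations fail here because no Sobolev-type inequality is available for the zero-order form. The actual mechanism is the B\'enilan--Crandall estimate combined with the pointwise numerical inequality of Lemma~\ref{pointw.ineq}, and \eqref{HJ} enters precisely to bound from below the diagonal coefficient $\int_\Omega J(x-y)\,dy$ multiplying $u_k^{p-1}(x,t)$ in that argument (see the proof of Theorem~\ref{main2}). Second, \eqref{HJ} is \emph{not} needed again in the time-regularity bootstrap: once $u(t_0)\in\LL^\infty$ is secured, the inductive estimates on $\partial_t^k u$ use only the \emph{upper} bound $\int_\Omega J(x-y)\,dy\le\int_{\R^n}J=1$, never a lower bound. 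So the constants in parts (a)--(c) do not carry an extra $\kappa_{J,\Omega}^{-1}$ beyond what already appears in $\mathsf{\bar M}_0$ via the smoothing step.
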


When $p>2$ we can also obtain the decay in time of the solutions in $\LL^\infty$, obtaining an improvement on the existing results, see for instance \cite{AMRT, R20}.

\begin{thm}[\bf Asymptotic Behavior of solutions] \label{asint.intro}
Let  $p> 2$,   $n\ge 1$, $\Omega\subset \R^n$ be an open and bounded set, and let  $J\colon\R^n\to\R$ be a bounded nonnegative function with compact support, $J(0)>0$ and normalized such that $\|J\|_{\LL^1(\R^n)}=1$.

\noindent$\bullet$~\textsf{Dirichlet Problem. }Let $u$  be the solution of \eqref{eqD} corresponding to the initial datum $u_0\in \LL^1(\Omega)$. Then we have that
$$
\|u(t)\|_{\LL^\infty(\Omega)} \leq \mathsf{c} t^{-\frac1p} \quad   \forall t\gg 1,
$$
where $\mathsf{c}$ is a positive constant   depending on $J$, $n$, $p$,  $|\Omega|$ and $\|u_0\|_{\LL^1(\Omega)}$.

\noindent$\bullet$~\textsf{Neumann Problem. }If $J$ additionally satisfies  \eqref{HJ} and $u$ is the  solution of \eqref{eq}   corresponding to  $u_0\in \LL^1(\Omega)$, then it holds that
$$
\|u(t)-\overline u_0\|_{\LL^\infty(\Omega)} \leq \mathsf{c} t^{-\frac1p} \quad   \forall t\gg 1,
$$
where $\overline u_0=\frac{1}{|\Omega|}\int_\Omega u_0(x)\,dx$  and $\mathsf{c}$ is a positive constant depending on $J$, $n$, $p$, $\kappa_{J,\Omega}$, $|\Omega|$ and $\|u_0\|_{\LL^1(\Omega)}$.

\end{thm}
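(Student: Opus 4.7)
The plan is to combine an $\LL^2$--energy decay estimate with the $\LL^q$--$\LL^\infty$ smoothing effect of Theorems \ref{thm.C.reg.intro} and \ref{Thm.1.Neumann}, which upgrade $\LL^2$--decay to $\LL^\infty$--decay. The starting point is the standard nonlocal energy identity. Multiplying the equation by $u$ and symmetrising using $J(x-y)=J(y-x)$, I expect to obtain, for the Dirichlet problem,
$$
\frac{d}{dt}\frac{1}{2}\|u(t)\|_{\LL^2(\Omega)}^2 \,=\, -\frac{1}{2}\iint_{\R^n\times\R^n} J(x-y)|\tilde u(y,t)-\tilde u(x,t)|^p\,dx\,dy,
$$
with $\tilde u$ the extension by zero outside $\Omega$; for the Neumann problem the identity is the same but with the double integral restricted to $\Omega\times\Omega$ and $u$ in place of $\tilde u$. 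For the Neumann problem I also use that mass is conserved by $\J_{p,\Omega}$ (a direct consequence of the symmetry of $J$, so $\overline{u}(t)\equiv \overline{u}_0$) and that the equation is invariant under addition of constants; hence I work with $v:=u-\overline{u}_0$, which satisfies the same Neumann equation and has zero mean.

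The next ingredient is a nonlocal Poincaré-type inequality. For the Dirichlet problem, exploiting that $J$ has compact support with $J(0)>0$ together with the boundedness of $\Omega$, a chain/covering argument based on the positivity of $J$ near the origin yields
$$
\|v\|_{\LL^p(\Omega)}^p \,\leq\, C\iint_{\R^n\times\R^n} J(x-y)|\tilde v(y)-\tilde v(x)|^p\,dx\,dy
$$
for all $v\in \LL^p(\Omega)$ extended by zero. For the Neumann problem, under hypothesis \eqref{HJ}, the analogous Poincaré--Wirtinger inequality
$$
\|v-\overline{v}\|_{\LL^p(\Omega)}^p \,\leq\, C\iint_{\Omega\times\Omega} J(x-y)|v(y)-v(x)|^p\,dx\,dy
$$
holds; condition \eqref{HJ} is precisely what guarantees enough connectivity in the interaction graph induced by $J$ inside $\Omega$, and I expect this Poincaré step to be the main technical obstacle, especially in tracking the explicit dependence of the constant on $\kappa_{J,\Omega}$, $|\Omega|$ and $n,p$.

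Combining the energy identity, the Poincaré inequality, and the elementary Hölder bound $\|w\|_{\LL^2(\Omega)}\leq |\Omega|^{\frac{1}{2}-\frac{1}{p}}\|w\|_{\LL^p(\Omega)}$ (valid since $p>2$), I obtain the closed Bernoulli-type differential inequality
$$
\frac{d}{dt}\|u(t)\|_{\LL^2(\Omega)}^2 \,\leq\, -C\,\|u(t)\|_{\LL^2(\Omega)}^p,
$$
for the Dirichlet problem, and the same with $u$ replaced by $v$ in the Neumann case. Integrating gives $\|u(t)\|_{\LL^2(\Omega)}^2\leq \mathsf{c}\, t^{-2/(p-2)}$ for $t\gg 1$, where the constant depends only on $\|u_0\|_{\LL^1(\Omega)}$ after a short preliminary smoothing step (used to pass from $\LL^1$ to $\LL^2$ data via Theorem \ref{thm.C.reg.intro}). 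Finally, applying the smoothing effect once more with $t_0=t/2$ yields
$$
\|u(t)\|_{\LL^\infty(\Omega)} \,\leq\, \frac{\mathsf{K}_1}{(t/2)^{1/(p-2)}} + \mathsf{K}_2\|u(t/2)\|_{\LL^2(\Omega)} \,\leq\, \mathsf{c}\,t^{-1/(p-2)},
$$
and since $1/(p-2)\geq 1/p$ for $p>2$, this already implies the claimed bound $\|u(t)\|_{\LL^\infty(\Omega)}\leq \mathsf{c}\,t^{-1/p}$ for $t\gg 1$; the Neumann estimate for $\|u(t)-\overline{u}_0\|_{\LL^\infty(\Omega)}$ follows identically by the same chain of inequalities applied to $v=u-\overline{u}_0$.
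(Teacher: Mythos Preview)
Your argument is correct and in fact delivers a sharper rate than the statement asks for. Both your approach and the paper's rely on the same two ingredients: the nonlocal Poincar\'e inequality (Proposition~\ref{poincare}) and the $\LL^q$--$\LL^\infty$ smoothing of Theorem~\ref{main2}. The difference lies in how the preliminary $\LL^2$/$\LL^p$ decay is extracted. The paper quotes the black-box estimate $\|u(t)-\overline u_0\|_{\LL^p(\Omega)}^p\le \mathsf{c}\,t^{-1}\|u_0\|_{\LL^2(\Omega)}^2$ from \cite{AMRT} (Proposition~\ref{decay2p}, obtained via monotonicity of $t\mapsto\mathcal{E}_p(u(t),u(t))$), then feeds this $t^{-1/p}$ decay of the $\LL^p$ norm into the smoothing inequality to get $\|u(t)\|_{\LL^\infty}\lesssim t^{-1/(p-2)}+t^{-1/p}$, the second term dominating for large $t$. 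You instead close the differential inequality $\tfrac{d}{dt}\|u\|_{\LL^2}^2\le -C\|u\|_{\LL^2}^p$ directly via Poincar\'e plus H\"older, solve the Bernoulli ODE to get $\|u(t)\|_{\LL^2}\lesssim t^{-1/(p-2)}$, and then smoothing yields $\|u(t)\|_{\LL^\infty}\lesssim t^{-1/(p-2)}$, strictly stronger than $t^{-1/p}$ for $t\gg1$. Your route is more self-contained and sharper; the paper's route is shorter because it outsources the decay step to \cite{AMRT}. One small correction: hypothesis~\eqref{HJ} is not needed for the Neumann Poincar\'e inequality (Proposition~\ref{poincare} holds on any open bounded $\Omega$); it enters only through the Neumann smoothing estimate in Theorem~\ref{main2}.
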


\begin{rem}\label{Rem.Cone.Cond}\rm
Some comments on condition \eqref{HJ} are in order.
\begin{enumerate}[leftmargin=15pt, label=(\roman*)]\itemsep2pt \parskip3pt \parsep2pt
\rm \item
Hypothesis \eqref{HJ} is always satisfied when the set $\Omega$ is sufficiently regular. Indeed, when $\Omega$ satisfies the so-called \emph{weak cone condition} (see Section \ref{sect.weak.cone}), i.e., for every $x\in \overline{\Omega}$, the cone $\Gamma(x)$ satisfies for some $\delta$
$$
|\Gamma(x)| = |\{y\in R(x)\colon |y-x|<1\}| \geq \delta>0,
$$
being $R(x)$ the union of line segments emanating from $x$ contained in $\Omega$. In this case, given $x\in \partial\Omega$, there exists $\eta>0$ such that the cone $\Gamma(x)$ is contained in the ball $B_\eta(x)\subset \supp J$ with center in $x$ and radius $\eta$. This yields
\[
\int_{\Omega} J(x-y)\,dy \geq \int_{\Omega\cap B_\eta (x)} J(x-y)\,dy  \geq \varepsilon_\eta \int_{\Gamma(x)}\,dy = \varepsilon_\eta |\Gamma(x)|\geq  \varepsilon_\eta \delta>0
\]
where we have used that $\inf \{ J(z) \colon z\in B_\eta(x) \} \geq \varepsilon_\eta$ for some $\varepsilon_\eta>0$, which is always true since $J$ is continuous, nonnegative and $B_\eta(x)\subset \supp J$.

\rm \item
The prototypical example of nonnegative  radial kernel $J$ with compact support is given by
$$
J(z)=\mathsf{c}_{n,a,R}^{-1}(R-|z|)^a, \qquad \text{ where } \mathsf{c}_{n,a,R}=n\omega_n R^{n+a} B(n,a+1)
$$
with $a>0$, $\supp J = B_R(0)$ and where $B$ denotes the  Beta function. In this case the constant $\kappa_{J,\Omega}$ can be computed explicitly. Indeed, given $x\in \Omega$
\begin{align*}
\int_{B_R(x)} J(x-y)\,dy &= \int_{B_R(0)} (1-|z|)^{a}\,dz = n\omega_n \int_0^R (1-r)^a r^{n-1}\,dr\\
&= n\omega_n R^{n+a}\int_0^1 (1-r)^a r^{n-1}\,dr
= \mathsf{c}_{n,a,R},
\end{align*}
being $\omega_n$ the measure of the unit ball in $\R^n$.
\end{enumerate}
\end{rem}

\noindent\textbf{Proof of the results for Dirichlet and Neumann problems. }The existence, uniqueness, and regularity results have been proven with slight modifications, similar to those used in the Cauchy problem. The proof of the smoothing effect, however, is separated and can be found in Theorem \ref{main2}. The remaining proofs are presented in the same theorems as in the Cauchy case.

\noindent The asymptotic results of Theorem \ref{asint.intro} are proved in Theorems \ref{decay.D} and \ref{decay.N}.\hfill \qed

\subsection{Related results, novelties and main ideas of the proofs}\label{Sec.1.4} In the local case there exist a huge literature about the celebrated p-Laplacian evolution equations, whose prototype is $u_t=\Delta_p(u)=\nabla\cdot(|\nabla u|^{p-2}\nabla u)$. Without any aim of completeness we quote here some related results: Local smoothing effect were known since the pioneering work of DiBenedetto \cite{D93}, with DeGiorgi method, Smoothing effects via nonlinear adaptation of Gross' method were proven in the 2000's,
see \cite{CG02,BG06}, and via Moser iteration in \cite{BIV10}. There has been an intense work, and the state of the art of higher integrability estimates can be found in the more recent contributions \cite{BDL21,BDKS20,BMS18, IJS17, KM11, KM12}, even for a more general case of nonlinear evolution equations. We refer to \cite{DGV,D93,BSS22,BIV10} for a complete account of local and global Harnack inequalities. Higher regularity estimates, the maximum possible being $C^{1,\alpha}$ regularity in space and $C^{1,\beta}$ in time, were obtained for bounded solutions by DiBenedetto and Friedman, see the monograph \cite{D93}.

Our first objective is to establish the boundedness of solutions ($\LL^q-\LL^\infty$-smoothing effects), since this kind of estimate is the first step towards further regularity properties. The Moser iteration approach \cite{M64, M67} is a standard method to obtain such results, but the quadratic form associated to the operator has to satisfy some Gagliardo-Nirenberg-Sobolev and Stroock-Varopoulos type inequalities. This has been used in the linear case, for a class of nonlocal operators including the standard fractional Laplacian $(-\Delta)^s$ with $s\in (0,1)$ in \cite{ K1,K3,K4,K2,S06}.

An alternative to the Moser iteration is the Green function method introduced by Vazquez and the first author in \cite{BV15} for the Dirichlet problem for Fractional Porous Medium Type Equation (FPME) $u_t= -\mathcal{L} u^m$, where $\mathcal{L}$ is a linear nonlocal operator, typically a fracional Laplacian with Dirichlet boundary conditions (there are three possible different choices!). This method has been exploited in several directions in \cite{BFV18,BFR17,BV15a, BE} for more general nonlinear nonlocal (degenerate and singular) diffusions in bounded domains and on the entire $\R^n$, and even on manifolds \cite{BBGG,BBGM}.  The key point of this method is to have good estimates of the kernel of $(-\mathcal{L})^{-1}$, that is, the Green function of $\mathbb{G}_{-\mathcal{L}}$.  This approach allows to obtain $\LL^1$--$\LL^\infty$ smoothing results for a quite wide class of linear operators, including Levy operators and much more, see \cite{BE} for a complete account on the equivalences between: existence of green functions and heat kernels for $\mathcal{L}$, the validity of suitable GNS inequalities and smoothing effects for nonlocal PME-type equations. See also \cite{BII} where a comparison between the Moser iteration and the Green function method is made in the case of Fast Diffusion equations on bounded domains. Unfortunately, the Green function method seems not to be compatible with the structure of p-Laplacian operators: In the problem under consideration, the nonlinear nature of the operator makes the Green function method unsuitable for our purposes. On the other hand, the DeGiorgi-Nash-Moser iteration is flexible enough to be adapted to the fractional $p-$Laplacian case $(-\Delta_p)^s$, with $s\in (0,1)$, see for instance \cite{CH21, GT18, V20, V21}. For higher regularity estimates one can rely again on classical DeGiorgi iterations, see \cite{D93}, or also can use nonlinear potential estimates, see \cite{DM10, KM11,KM12, KM13, KM13-2, KM14}.

Our problem presents extra difficulties, since the diffusion operator is both nonlinear and of order zero: first we observe that Gagliardo-Nirenberg-Sobolev type inequalities are not available in this setting, preventing us from implementing a Moser or DeGiorgi method. Also, we observe that it is not possible to have useful GNS for the nonlinear case. Indeed, if we had some GNS, say for $p>2$, these would imply GNS for the quadratic form of the linear operator ($p=2$), which are known to be equivalent to $\LL^q-\LL^\infty$-smoothing effects, which we know to be not true for the linear case (as we have seen above), see \cite{BE}. This may suggest that $\LL^q-\LL^\infty$-smoothing effects are simply not true in the nonlinear case. We surprisingly show the contrary.

The only result existing in literature which is comparable to our new smoothing effect, to the best of our knowledge, has been obtained for nonlocal porous medium type equations of ``zero order'': unexpected smoothing effects of the form similar to \eqref{Thm.1.Cauchy.smooth1} hold for merely integrable data, see \cite[Theorem 3.5]{BE}. The proof exploits the dual equation (that in the present case we do not have), and the strict convexity of the nonlinearity. As already explained, the linear case ($p=2$ here) does not satisfy smoothing effects. In our case, when $p>2$, we can show that merely integrable initial data produce bounded solutions, but we go way further: bounded solutions turn out to be smooth in time and even smooth (classical) in space when the kernel and the data allows it. We also perform a delicate analysis, where we compare the loss of regularity due to the ``low regularity'' of the kernel, versus the high regularity of the data, which is always preserved (at least) up to order $p$.

We propose here a new approach to regularity for nonlinear zero order operators, that exploits \textit{the strict convexity and homogeneity of the nonlinearity }(that we have when $p>2$) together with \emph{elementary numerical inequalities}. A key tool in our arguments, is the so-called \emph{B\'enilan-Crandall estimate} (time-monotonicity) that holds for nonnegative solutions:
$$
u_t(x,t)\geq -\frac{u(x,t)}{(p-2)t} \quad \text{ a.e. } x \text{ and } t>0.
$$
This follows by comparison and time scaling. With this, we prove that solutions of the Cauchy, Neumann and Dirichlet problems corresponding to $u_0\in\LL^q$, $1\leq q <\infty$ are indeed bounded, and satisfy precise $\LL^q$--$\LL^\infty$ smoothing estimates.

We shall give a flavour of the main ideas in our proofs in the simplest possible scenarios.

\subsubsection{Smoothing effects when $p>2$} In this case, we want to sketch the proof of the smoothing effect \eqref{Thm.1.Cauchy.smooth1} of Theorem \ref{thm.C.reg.intro}, that reads: \it Let $u(x,t)$ be a positive solution of \eqref{eqC} corresponding to the positive initial datum $u_0\in \LL^1(\R^n)$, then we have that
\begin{equation}\label{Intro.C.se}
\|u( t)\|_{\LL^\infty(\R^n)} \leq
\frac{\mathsf{K}_1}{t^{\frac{1}{p-2}}}   + \mathsf{K}_2\|u( t_0)\|_{\LL^q(\R^n)}  \quad \text{ for all } 0\le t_0\leq  t\,,
\end{equation}
for some constants $\mathsf{K}_1$ and $\mathsf{K}_2$. \rm Let us fix a time $t\in (0,T]$. The following inequality (Lemma \ref{pointw.ineq})
$$
a^{p-1}-|a-b|^{p-2}(a-b)\leq (p-1)\max\{a^{p-2},b^{p-2}\}b, \qquad a,b>0,
$$
applied to $a=u(x,t)$, $b=u(y,t)$  together to the fact that $\|J\|_{\LL^1(\R^n)}=1$ yields
$$
u(x,t)^{p-1} +\J_p u(x,t) \leq (p-1) \int_{\R^n} J(x-y)(u(x,t)^{p-1}+u(y,t)^{p-1}) u(y,t)\,dy:=\mathcal{I}_{p,J}u(x,t).
$$
Using the B\'enilan-Crandall estimate and the fact that $u$ is a solution, we obtain the inequality
$$
- \frac{u(x,t)}{(p-2)t} \leq  u_t(x,t)  = \J_p u(x,t) .
$$
Then, the last two relations lead to the following:
$$
u(x,t)^{p-1} \leq \frac{u(x,t)}{(p-2)t} + \mathcal{I}_{p,J}u(x,t).
$$
Repeated application of Young's inequality gives that for any $\varepsilon>0$ there exists $\mathsf{c}_{p,J,\varepsilon}>0$ such that
$$
u(x,t)^{p-1} \leq \varepsilon u^{p-1}(x,t) + \mathsf{c}_{p,J, \varepsilon} \left( t^{-\frac{p-1}{p-2}}   +    \|u(t)\|_{\LL^{p-1}(\R^n)}^{p-1} \right),
$$
Choosing $\varepsilon$ sufficiently small gives a $\LL^{p-1}$-$\LL^\infty$ smoothing effect: we find an explicit $\mathsf{\bar c}_{J,p,\varepsilon}>0$ such that
\begin{equation} \label{eqq1.intro}
\|u(t)\|_{\LL^\infty(\R^n)}^{p-1} \leq   \mathsf{\bar c}_{p,J,\varepsilon}  \left( t^{-\frac{p-1}{p-2}}   +     \|u(t)\|_{\LL^{p-1}(\R^n)}^{p-1} \right).
\end{equation}
This is the basic smoothing, which is a self-improving inequality: using again Young's inequality, we obtain that
$$
\|u(t)\|_{\LL^{p-1}(\R^n)}^{p-1} \leq \varepsilon_1 \|u(t)\|_{\LL^\infty(\R^n)}^{p-1} +  \mathsf{\tilde c}_{\varepsilon_1,J, p} \|u(t)\|_{\LL^1(\R^n)}^{p-1},
$$
where $\mathsf{\tilde c}_{\varepsilon_1,J,p}>0$ and $\varepsilon_1>0$. Choosing $\varepsilon_1$ small and combining this inequality with \eqref{eqq1.intro} yields the $\LL^1$--$\LL^\infty$ smoothing \eqref{Intro.C.se}. \qed

\noindent\textbf{Remark. }As already observed before, here \textit{the smoothing surprisingly depends only on the strict convexity of the nonlinearity}, indeed, when $p=2$ this result is false, see \eqref{lin.sol.form}.

\subsubsection{Regularity of solutions}\label{ssec.Reg.LIN} In this case, we want to sketch the proof of the regularity results in the simplest possible case, which is the Cauchy problem for the linear case $p=2$. This will provide the basic  ideas of the proof of the nonlinear case $p>1$, which of course is technically much more involved. Once we have identified a class of bounded solutions, we would like to show that they possess indeed higher regularity in time and, that  they preserve  the initial modulus of continuity. 

Let $u$ be a solution of the Cauchy problem, with $u_0\in \LL^\infty(\R^n)$.

\noindent$\circ~$ \emph{Higher regularity in time}.  From the equation it is immediate the following estimate,
$$
|u_t(x,t)|\leq \int_{\R^n} J(x-y)|u(y,t)-u(x,t)|\,dy  \leq 2\|u_0\|_{\LL^\infty(\R^n)}
$$
which gives the boundedness of $u_t$ as follows
\begin{equation} \label{intro.cota.1}
\|u_t(t)\|_{\LL^\infty(\R^n)}\leq 2\|u_0\|_{\LL^\infty(\R^n)}.
\end{equation}
We can bootstrap the argument: given positive numbers $t_1<t_2$, using \eqref{intro.cota.1} we get
\begin{align*}
|u_t(x,t_2)-u_t(x,t_1)|&\leq
\int_{\R^n} J(x-z)|(u(z,t_2)-u(z,t_1))-(u(x,t_2)-u(x,t_1))|\,dz\\
&\leq
2|t_1-t_2| \|u_t(t)\|_{\LL^\infty(\R^n)} \int_{\R^n} J(x-z) \,dz
\leq  4|t_1-t_2| \|u_0\|_{\LL^\infty(\R^n)}\,.
\end{align*}
Hence $u_t(x,\cdot)\in C_t^{0,1}([0,\infty))$. Moreover, using mean value theorem and \eqref{intro.cota.1} we get
\begin{align} \label{intro.cota.2}
\begin{split}
u_{tt}(x,t)&=\lim_{h\to0^+} \frac{u_t(x,t+h)-u_t(x,t)}{h}\\
&=\lim_{h\to0^+} \frac{1}{h} \int_{\R^n}  J(x-y)\left\{ (u(y,t+h)-u(y,t))-(u(x,t+h)-u(x,t)) \right\}\,dy\\
&=\int_{\R^n}  J(x-y) \left\{ u_t(y,t_1^*)-u_t(x,t_2^*)) \right\}\,dy\\
&\leq
\int_{\R^n}  J(x-y)\left\{\|u_t(t_1^*)\|_{\LL^\infty(\R^n)}+ \|u_t(t_2^*)\|_{\LL^\infty(\R^n)}   \right\}   \,dy
\leq 4 \|u_0\|_{\LL^\infty(\R^n)}
\end{split}
\end{align}
where $t_i^*\in (t_i,t_i+h)$, $i=1,2$,  and therefore
\begin{equation} \label{intro.cota.3}
\|u_{tt}(t)\|_{\LL^\infty(\R^n)}\leq 4\|u_0\|_{\LL^\infty(\R^n)}.
\end{equation}
We can bootstrap a second time: using \eqref{intro.cota.2}, \eqref{intro.cota.3} and the mean value theorem we can write
\begin{align*}
u_{tt}(x,t_1)-u_{tt}(x,t_2)&= \int_{\R^n} J(x-y) \left\{ (u_t(y,t_{1,1}^*)-u_t(y,t_{2,1}^*)) -(u_t(x,t_{1,2}^*)-u_t(x,t_{2,2}^*)) \right\}\,dy\\
&=
\int_{\R^n} J(x-y)\{  u_{tt}(y,\tau_1) |t_{1,1}^* - t_{2,1}^*| - u_{tt}(y,\tau_2) |t_{1,2}^* - t_{2,2}^*| \} \,dy\\
%&\leq |t_1-t_2| \int_{\R^n} J(x-y)\{  \|u_{tt}(\tau_1)\|_{\LL^\infty(\R^n)} + \|u_{tt}(\tau_2)\|_{\LL^\infty(\R^n)}\}\,dy\\
&\leq 8 |t_1-t_2|  \|u_0\|_{\LL^\infty(\R^n)}
\end{align*}
where $\tau_i\in (t^*_{i,1},t^*_{i,2})$, $i=1,2$,  which gives that $u_{tt}\in C^{0,1}_t([0,\infty))$.

\noindent We can repeat the argument inductively,  to get that for any $k\in \N$ it holds
$$
\|\partial_t^k u(t)\|_{\LL^\infty(\R^n)} \leq 2^k \|u_0\|_{\LL^\infty(\R^n)} \qquad \text{ and } \qquad \partial_t^k u(x,\cdot)\in  C^{0,1}_t([0,\infty)).
$$
As a consequence, a bounded solution $u$ of the Cauchy problem in the linear case possesses arbitrary continuous time derivatives, that is,
$
u(x,\cdot)\in C^\infty([0,\infty))
$
for almost all $x\in \Omega$\,.  Here it clearly appears that solutions are analytic in time, since the above estimates clearly implies that
\[
\|\partial_t^k u(t)\|_{\LL^\infty(\R^n)}\le M_0\, k!\,,  \qquad\mbox{with }M_0:= 2 \|u_0\|_{\LL^\infty(\R^n)}\,,
\]
and the radius of convergence $r_0$ of the Taylor series is uniform, indeed, $r_0= 1\wedge t$.
 
\noindent$\circ~$ \emph{Conservation of the initial continuity}. To simplify the exposition, let us assume that the kernel $J$ has a H\"older modulus of continuity: there exists $\alpha_J \in (0,1]$ such that for all $x_1,x_2\in \R^n$ we have
$$
\int_{\R^n} |J(x_1-y)-J(x_2-y)|\,dy \leq |x_1-x_2|^{\alpha_J}.
$$
Under these assumptions, we can ensure that the modulus of continuity of the initial datum is ``preserved", meaning that is does not change its class. Indeed, suppose that $u_0\in C^{0,\alpha}(\R^n)$ for some $\alpha\in (0,\alpha_J]$. For a solution $u$, expression \eqref{lin.sol.form} holds and the $\LL^\infty$-norm is decreasing in time, so that
\begin{align*}
u(x_1,t)-u(x_2,t)&=e^{-t}(u_0(x_1)-u_0(x_2)) + \int_{0}^{t}e^{-\tau} \int_{\R^n}  u(y,\tau)(J(x_1-y)-J(x_2-y))  dy\,d\tau\\
&\leq
e^{-t}|x_1-x_2|^\alpha + \|u_0\|_{\LL^\infty(\R^n)} |x_1-x_2|^{\alpha_J} (1-e^{-t}).
\end{align*}
As a consequence, since $\alpha\le \alpha_J\le 1$, we have that $u(\cdot,t)\in C^{0,\alpha}(\R^n)$ (the same as $u_0$) and we have
$$
\frac{|u(x_1,t)-u(x_2,t)|}{|x_1-x_2|^\alpha} \leq  e^{-t} +
(1-e^{-t}) \|u_0\|_{\LL^\infty(\R^n)}.
$$
Indeed we show a finer result: the same proof allows to show that for all $t>0$  \it the solution $u(\cdot, t)$ preserves the worst modulus of continuity between the initial data and the kernel $J$, \rm indeed, if $\alpha_J< \alpha$ the above estimate would hold with $\alpha_J$.

\noindent\textbf{Remark. }Also, the local nature of this estimates allows to deduce easily that: initial discontinuity points do not move in time, and no new discontinuities are created.

\subsection{Organization of the paper. }

The paper is structured as follows. Section \ref{section2} introduces the notation we will use along the paper.  Section \ref{section3} is dedicated to extend the existence and uniqueness of solutions to equations more general initial data; we also prove some qualitative properties of solutions, including a comparison principle, contractivity and time monotonicity properties. Section \ref{section4} contains the proof of our smoothing effect results. In Section \ref{section6}, we establish a time regularity result as well as a result regarding the preservation of the initial regularity. Finally, in Section \ref{section7}, we establish some consequences of our main results. We collect in the appendix some basic facts about gradient flows in Hilbert spaces. 

\section{Notations} \label{section2}
The following notations will be used throughout the paper.
\begin{itemize}[leftmargin=*]\itemsep2pt \parskip2pt \parsep0pt

\item[] For all $p>1$ and $\tau\in \R$ we define the functions
$$
L_p(\tau)=|\tau|^{p-2}\tau, \qquad M_p(\tau)=|\tau|^{p-2}.
$$

\item[]  For any $u,v\in \LL^p(\Omega)$ we consider the \emph{energy functional} $\E$ as
$$
\E(u,v):=\frac12\int_\Omega\int_\Omega J(x-y)|u(x)-u(y)|^{p-2}(u(x)-u(y))(v(x)-v(y))\,dxdy.
$$
When $\Omega=\R^n$ we just write $\mathcal{E}_p$ instead of $\E$.

\item[] $[x]$ denotes the integer part of $x\in \R$.

\item[] $B_R(x)$ denotes the ball in $\R^n$ with radius $R>0$ and center $x\in \R^n$. The unit ball unit ball $B_1(0)$ in $\R^n$ is denoted by $\mathcal{S}^{n-1}$ and its measure by  $\omega_n:=\frac{\pi^\frac{n}{2}}{\Gamma(\tfrac{n}{2}+1)}$, where $\Gamma(\cdot)$ stand for the Gamma function.

\item[]  $\LL^p(0,T;X)$ denotes the  set of measurable functions $u\colon [0,T]\to X$ such that:
if $1\leq p<\infty$
$$
\|u\|_{\LL^p(0,T;X)}:=\left(\int_0^T \|u(t)\|^p_X \,dt\right)^\frac1p <\infty;
$$
if $p=\infty$
$$
\|u\|_{\LL^\infty(0,T;X)}:=\text{ess} \sup_{0\leq t\leq T} \|u(t)\|_X<\infty.
$$

\item[] $W^{1,1}(0,T;X)$ denotes the set of functions $u\in \LL^1(0,T;X)$ such that $u'(t) \in \LL^1(0,T;X)$.

\item[] $AC(0,T;X)$ denotes the set of functions $u\colon [0,T] \to X$ such that are absolutely continuous on $[0,T]$.

\item[] $C^0(\Omega)$ denotes the set of continuous functions from $\Omega$ to $\R$.

\item[] $C^k(\Omega)$ denotes the set of  functions from $\Omega$ to $\R$ with $k\in \N$ continuous derivatives.

\item[] $C^{0,\omega}(\Omega)$ denotes the set of  functions with a modulus of continuity $\omega$ in $\Omega$ (see Section \ref{sec.holder}).

\item[] $C^{0,\alpha}(\Omega)$ denotes the spaces of H\"older $\alpha$ functions in $\Omega$ (see Section \ref{sec.holder}).

\item[] $C^{k,\alpha}(\Omega)$ denotes the spaces of functions having continuous derivatives up to order $k$ and such that the $k$-th partial derivatives are H\"older continuous with exponent $\alpha\in (0,1]$(see Section \ref{sec.holder}).

\item[] $C^p(\Omega):=C^{[p],p-[p]}(\Omega)$ for any $p\in \R_+$.

\item[] $\Lip((t_0,t_1))$ denotes the space of Lipschitz functions in $(t_0,t_1)$ (see Section \ref{sec.holder}).

\item[] $D^\beta$ denotes the  derivative with respect to the multi-index $\beta=(\beta_1,\ldots, \beta_n)$ (see Section \ref{sec.holder}).

\end{itemize}

\section{Properties of solutions} \label{section3}
We establish some useful yet basic properties of solutions in this section. We provide a general result about existence and uniqueness of solutions, which complements the previous ones \cite{AMRT}. Once existence in ensured, we prove time-monotonicity estimates for these solutions, often called Benilan-Crandall estimates  \cite{BC81}, that constitute a key ingredient in the proof of the smoothing effects. The time-monotonicity estimates are a consequence of the homogeneity of the nonlinearity, together with a weak comparison principle, that we also prove in this Section. Indeed, we prove T-contractive estimates (which imply weak comparison) in $\LL^1$ and also $\LL^q$-contractivity properties for all $q\in [1,\infty]$.

\begin{defn}[Definition of solutions to different problems]\label{Def.Soln}
A \emph{solution}  of \eqref{eq} in $[0,T]$ is a function $u\in W^{1,1}(0,T;\LL^q(\Omega))$ that satisfies
\begin{align*}
\begin{cases}
u_t(x,t)=\J_{\Omega,p} u(x,t) &\text{ a.e. } x\in\Omega \text{ and } t\in(0,T),\\
u(x,0)=u_0(x) &\text{ a.e. }x\in\Omega.
\end{cases}
\end{align*}
When $\Omega=\R^n$ this gives a  \emph{solution} of \eqref{eqC} in $[0,T]$.

Similarly, a \emph{solution} of  \eqref{eqD} in $[0,T]$ is defined as a function $u\in W^{1,1}(0,T;\LL^q(\Omega))$ that satisfies
\begin{align*}
\begin{cases}
u_t(x,t)=\J_p u(x,t), &\text{ a.e. }x\in \Omega \text{ and }  t>0,\\
u(x,t)=0, &\text{ a.e. }x\in \R^n\setminus  \Omega \text{ and }  t>0,\\
u(x,0)=u_0(x), &\text{ a.e. } x\in \Omega.
\end{cases}
\end{align*}
\emph{Supersolutions} and \emph{subsolutions} of \eqref{eq}, \eqref{eqC} and \eqref{eqD} are defined by replacing $=$ with $\geq$ or $\leq$, respectively.
\end{defn}

\begin{rem} The diffusion operator $\J_p$ is of order zero (no weak derivatives involved, not even fractional) hence the concept of solution is particularly simple: the equation is satisfied almost everywhere in space and time. In the theory of nonlinear parabolic PDEs, this usually corresponds to the class of the so-called \textit{strong solutions }(i.e. $u_t$ is a $\LL^q$ function, a regular distribution).

The difference between weak, mild, strong and classical solutions, usually a technical obstacle in nonlinear parabolic theories, in this case is somehow easier: for instance, \textit{classical solutions }are just solutions according to the above definition, which are $C^1$ in time and $C^0$ in space.

The drawback is that we do not have useful functional inequalities (of Sobolev type for instance) naturally associated to the operator, hence is it is not possible, to the best of our knowledge, to prove regularity results through a DeGiorgi-Nash-Moser approach.

Note that for a given subset $\Omega\subseteq \mathbb{R}^n$ and $1\leq q \leq \infty$, we have $W^{1,1}(0,T;\LL^1(\Omega))\subset AC([0,T];\LL^q(\Omega))$. Moreover, if $w\in W^{1,1}(0,T;\LL^q(\Omega))$, then $w$ is almost everywhere equal to a function that is absolutely continuous on $[0,T]$ with values in $\LL^q(\Omega)$. In particular, $u$ is defined almost everywhere for $x\in \Omega$ and $t\in [0,T]$.
\end{rem}

By studying the accretivity and range of the operators using nonlinear semigroup theory,  the following existence and uniqueness result is obtained in \cite{AMRT} when the initial data belong to $\LL^p$.

\begin{prop}[Theorems 6.2, 6.24, and 6.37 in \cite{AMRT}]
Suppose $p>1$ and  let $u_0\in \LL^p(\Omega)$. Then for any $T>0$ there exists a unique solution of \eqref{eqC}.

Suppose that $\Omega\subset \R^n$ is open and bounded and let $u_0\in \LL^p(\Omega)$. Then for any $T>0$ there exists a unique solution of \eqref{eqD} and \eqref{eq}.
\end{prop}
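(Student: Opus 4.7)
The plan is to establish existence and uniqueness via nonlinear semigroup theory in $\LL^p(\Omega)$. I would verify that the operator $A_p := -\J_p$, with the boundary-condition-dependent modifications $\widetilde{\J}_{p,\Omega}$ or $\J_{p,\Omega}$ for \eqref{eqD} and \eqref{eq}, is $m$-accretive on $\LL^p(\Omega)$, then invoke the Crandall--Liggett theorem to produce a strongly continuous contraction semigroup of mild solutions, and finally upgrade these to strong solutions of class $W^{1,1}(0,T;\LL^p(\Omega))$. An equivalent and more concrete route, closer to what the authors themselves use in Theorem \ref{existencia}, is to view $-\J_p$ as the subdifferential in $\LL^2(\Omega)$ of the proper, convex, lower semicontinuous energy
$$
\mathcal{F}_p(u) := \frac{1}{2p}\iint_{\Omega\times\Omega} J(x-y)|u(y)-u(x)|^p\, dy\, dx,
$$
with the extension-by-zero convention for the Dirichlet case, apply Brezis--Komura gradient-flow theory in $\LL^2$ to obtain existence and uniqueness for data in $\LL^p\cap\LL^2$, and then extend by $\LL^p$-contractivity and density to all of $\LL^p$.

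The first key step is accretivity: test $A_p u_1 - A_p u_2$ against the duality pairing $|u_1-u_2|^{p-2}(u_1-u_2)$, symmetrize in $(x,y)$ using $J(x-y)=J(y-x)$, and reduce the resulting double integral to the pointwise monotonicity of $L_p(\tau)=|\tau|^{p-2}\tau$. The second step is the range condition: solve the resolvent equation $u - \lambda\J_p u = f$ for $f\in \LL^p(\Omega)$ and $\lambda>0$, which I would obtain by minimizing a strictly convex, coercive functional built from the $\LL^p$-distance to $f$ and a scalar multiple of $\mathcal{F}_p$, via the direct method of calculus of variations; strict convexity yields uniqueness, and coercivity comes from the fact that $\|J\|_{\LL^1(\R^n)}=1$ together with the identity term. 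Third, once the semigroup is constructed, the elementary pointwise bound
$$
|\J_p u(x)| \le \|J\|_{\LL^\infty(\R^n)}\int_{\supp J}\bigl(|u(x-z)|+|u(x)|\bigr)^{p-1}\,dz
$$
combined with Young's convolution inequality shows that $\J_p u\in \LL^p$ whenever $u\in \LL^p\cap \LL^\infty$, so that mild solutions starting from data in a dense subset of $\LL^p$ are Lipschitz in time with values in $\LL^p$ and therefore lie in $W^{1,1}(0,T;\LL^p(\Omega))$; the $\LL^p$-contractivity of the semigroup then extends this to all $u_0\in \LL^p(\Omega)$.

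The central difficulty is the range condition in the genuine $\LL^p$ framework for $p\neq 2$: the operator $\J_p$ naturally maps $\LL^p$ into the dual $\LL^{p'}$ rather than back into $\LL^p$, so one cannot set up the variational problem entirely inside $\LL^p$ without extra care. The standard workaround, which I would follow, is to first solve the resolvent problem in $\LL^p\cap \LL^2$ exploiting the Hilbert inner product and the convexity of $\mathcal{F}_p$, then transfer the conclusion to $\LL^p$ by a density and contractivity argument. A secondary technicality, specific to the Dirichlet problem \eqref{eqD}, is the consistent treatment of the zero extension outside $\Omega$ in both the accretivity computation and the variational formulation; for the Neumann problem \eqref{eq} the same scheme applies by replacing $\mathcal{F}_p$ with its analogue on $\Omega\times\Omega$.
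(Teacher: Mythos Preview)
The paper does not supply its own proof of this proposition: it is stated as a citation of Theorems 6.2, 6.24, and 6.37 in \cite{AMRT}, with the one-line description ``By studying the accretivity and range of the operators using nonlinear semigroup theory, the following existence and uniqueness result is obtained in \cite{AMRT}.'' Your outline --- verify accretivity of $-\J_p$ in $\LL^p$ via the monotonicity of $L_p(\tau)=|\tau|^{p-2}\tau$ and symmetrization in $(x,y)$, establish the range condition variationally, invoke Crandall--Liggett, then upgrade mild to strong solutions --- is exactly the route taken in \cite{AMRT} and is correctly summarized; the alternative Brezis--Komura route you mention is what the present paper uses for its own extension in Theorem \ref{existencia}.
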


We extend the previous result to initial data in $\LL^q$ with $q\in [1,\infty)$. Our proof is based on the \emph{evolution variational inequality} formulation of gradient flows (see Appendix \ref{appendix.1} for further details) and the use of the  a priori $\LL^q$--$\LL^\infty$ smoothing effect obtained in the next section.

\begin{thm}[Existence and uniqueness] \label{existencia}
Suppose $p>2$ and  let  $u_0\in \LL^q(\R^n)$ with $q\in[1,\infty)$. Then for any $T>0$ there exists a unique solution of \eqref{eqC}. When $p \in (1,2]$ then we can take $u_0 \in \LL^q(\R^n)$ with $q\in \{2,p\}$.

Suppose that $\Omega\subset \R^n$ is an open and bounded set and $u_0\in \LL^q(\Omega)$ with $q\in [1,\infty]$. Then for any $T>0$ there exists a unique solution of \eqref{eqD}. If $\Omega$ additionally satisfies \eqref{HJ}, then there exists a unique solution of \eqref{eq}.  When $p\in (1,2]$ we can take $u_0\in \LL^p(\Omega)$.

\end{thm}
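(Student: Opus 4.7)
The plan is to first build solutions for initial data in $\LL^2$ via the Brezis-Komura gradient-flow theory, and then extend to arbitrary $\LL^q$ data by combining $\LL^q$-contractivity with the a priori $\LL^q$-$\LL^\infty$ smoothing effect of Theorem \ref{main}. I would introduce the energy functional
$$
\Phi(u):=\frac{1}{2p}\int_{\R^n}\!\!\int_{\R^n}J(x-y)|u(x)-u(y)|^p\,dx\,dy,
$$
set to $+\infty$ outside its natural domain. Since $t\mapsto|t|^p$ is convex and $J$ is bounded with compact support, $\Phi$ is convex, proper and lower semicontinuous on $\LL^2(\R^n)$ for every $p>1$. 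A direct computation of its subdifferential, using the symmetry $J(-z)=J(z)$ after a standard change of variables, yields $\partial\Phi(u)=-\J_p u$ in $\LL^2$. The Brezis-Komura theorem (reviewed in Appendix \ref{appendix.1}) then provides, for every $u_0$ in the $\LL^2$-closure of the effective domain of $\Phi$, a unique solution $u\in AC_{\mathrm{loc}}((0,\infty);\LL^2(\R^n))$ of $u_t=\J_p u$ which takes the datum strongly in $\LL^2$. This settles the case $q=2$ and provides a rich class of approximating solutions.

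For $u_0\in\LL^q(\R^n)$ with $q\neq 2$, I would take a sequence $u_0^{(k)}\in\LL^1\cap\LL^\infty\subset\LL^2$ with $u_0^{(k)}\to u_0$ in $\LL^q$, construct the corresponding gradient-flow solutions $u^{(k)}$ from the first step, and invoke the $\LL^q$-contractivity of Proposition \ref{norm.decreasing.difference} to conclude that $\{u^{(k)}\}$ is Cauchy in $C([0,T];\LL^q(\R^n))$. Call $u$ its limit; uniqueness will be an immediate consequence of the same contractivity estimate applied to any two such limits.

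The identification of $u$ as a solution in the sense of Definition \ref{Def.Soln} is the main obstacle, and it is exactly here that the smoothing effect of Theorem \ref{main} plays an essential role, being an a priori estimate on the already-constructed approximants rather than a consequence of any existence result. When $p>2$ it supplies the uniform bound
$$
\|u^{(k)}(t)\|_{\LL^\infty(\R^n)}\leq \mathsf{K}_1\,t^{-\frac{1}{p-2}}+\mathsf{K}_2\,\|u_0^{(k)}\|_{\LL^q(\R^n)} \qquad\text{for all }t>0,
$$
uniformly in $k$ on any interval $[t_0,T]$ with $t_0>0$. Combined with the equation and with the $\LL^q$-contractivity applied to time-translates (which controls $u_t^{(k)}$ in $\LL^q$), this allows passing to the limit in the nonlinear operator $\J_p u^{(k)}(x,t)$ both pointwise and in $\LL^q$, yielding $u_t=\J_p u$ a.e.\ and $u\in W^{1,1}_{\mathrm{loc}}((0,T];\LL^q(\R^n))$, as required.

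For the Dirichlet and Neumann variants on a bounded $\Omega$, the same scheme applies with $\Phi$ replaced by its natural restriction: for Dirichlet, extension by zero outside $\Omega$ is used, and since $\LL^q(\Omega)\subset\LL^r(\Omega)$ whenever $q\geq r$, only $q<p$ genuinely requires the approximation procedure above; the case $q=\infty$ follows directly from $\LL^\infty(\Omega)\subset\LL^p(\Omega)$ on bounded domains. For Neumann, the functional is defined by integrating only over $\Omega\times\Omega$, and condition \eqref{HJ} is what ensures both the identification $\partial\Phi_\Omega=-\J_{p,\Omega}$ and the needed coercivity of $\Phi_\Omega$ modulo constants. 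Finally, the cases $p\in(1,2]$ with $q\in\{2,p\}$ split cleanly: $q=p$ is the Crandall-Liggett result already recorded in \cite{AMRT}, while $q=2$ is covered by the first step verbatim, since $\Phi$ remains convex, proper and l.s.c.\ on $\LL^2$ in this range as well.
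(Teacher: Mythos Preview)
Your proposal is correct and follows essentially the same route as the paper: Brezis--Komura gradient-flow theory for $\LL^2$ data, followed by approximation combined with the $\LL^q$-contractivity of Proposition~\ref{norm.decreasing.difference} and the a priori smoothing of Theorem~\ref{main} to pass to the limit. Two small technical points where the paper is more explicit: it chooses \emph{monotone} approximants $u_{0,k}=\min\{k,\tilde u_{0,k}\}\chi_{B_k}$ so that the comparison principle (Proposition~\ref{weak.comp}) yields pointwise monotone convergence of $u_k$, which simplifies the identification of the limit; and rather than controlling $u_t^{(k)}$ via time-translates, it uses the direct bound $\|u_t\|_{\LL^\infty}\le 2^p\|u(t_0)\|_{\LL^\infty}^{p-1}$ from Theorem~\ref{smooth.ut} together with Steklov averages to identify $\lim_k (u_k)_t$ with $u_t$. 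Also, condition~\eqref{HJ} enters only through the Neumann smoothing estimate (Theorem~\ref{main2}), not through the subdifferential identification.
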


\begin{proof}
We will prove the result for equation \eqref{eqC}. Using similar arguments, we can also establish the same result for the Neumann and  Dirichlet problems. Note the in these problem we can approximate the initial data with simple functions when $u_0\in \LL^\infty(\Omega)$.

\noindent$\circ~$\textsc{Step 1}. Let us prove first that for every initial datum $u_0\in \LL^{2}(\R^n)$ and all $p>1$, there exists a unique gradient flow solution $u\in \LL^{2}(\R^n)$ of \eqref{eqC}.\\
Consider   the energy functional $\mathcal{I}_{p}\colon \LL^{2}(\R^n)\to [0,\infty]$  defined as
\begin{align*}
\mathcal{I}_{p}(u):=
\begin{cases}
\frac1p \mathcal{E}_p(u,u) &\quad \text{ if } u \in \LL^{2}(\R^n)\\
+\infty &\quad \text{ otherwise},
\end{cases}
\end{align*}
which  is   convex and lower semicontinuous, moreover, its gradient flow   coincides with the equation
\begin{equation*} %\label{eq.gf}
u_t(x,t) = \J_p u(x,t), \qquad\mbox{for a.e. $x\in \R^n$, and all $t>0$.}
\end{equation*}
Let us prove that $u\in \LL^2(\R^n)$ is a gradient flow solution of \eqref{eqC} by using the equivalence with EVI solutions stated in Proposition \ref{equivalence}.  Indeed, suppose that $w\in \LL^2(\R^n)$. By applying Proposition \ref{parts} and Young's inequality, we obtain the following inequality for $t>0$:
 \begin{align*}
\int_{\R^n} &(u(x,t)-w(x)) \J_{p} u(x,t) \,dx = -\mathcal{E}_p(u(x,t),u(x,t)) + \mathcal{E}_p(u(x,t),w(x))\\
&\leq
-\mathcal{E}_p(u(x,t),u(x,t))\\
& \qquad + \frac12\iint_{\R^n\times\R^n} J(x-y)^\frac{p-1}{p} |u(x,t)-u(y,t)|^{p-1} J(x-y)^\frac{1}{p} |w(x)-w(y)|\,dxdy\\
&\leq
-\mathcal{E}_p(u(x,t),u(x,t)) + \frac{1}{p}\mathcal{E}_p(w(x),w(x)) + \frac{p-1}{p}\mathcal{E}_p(u(x,t),u(x,t))\\
&=\mathcal{I}_{p}(w)- \mathcal{I}_{p}(u),
\end{align*}
from which we deduce that
$$
\frac12 \frac{d}{dt} \|u( t)-w\|_{\LL^2(\R^n)}^2 = \int_{\R^n} (u(x,t)-w(x)) \J_{p}(u)\,dx \leq \mathcal{I}_{p}(w)- \mathcal{I}_{p}(u),
$$
and hence  the curve $u(\cdot,t)$ satisfies \eqref{es.evi}.

Therefore, due to   the Brezis-Komura Theorem \ref{BK}, the gradient flow solution $u(\cdot, t)$ is unique and  satisfies the contractivity property
$$
\| u(t) - v(t)\|_{\LL^2(\R^n)} \leq \|u_0-v_0\|_{\LL^2(\R^n)} \quad \text{for all }t>0,
$$
where $u$ and $v$ are gradient flow solutions of \eqref{eqC} corresponding to the initial data $u_0,v_0\in \LL^{2}(\R^n)$.

\noindent$\circ~$\textsc{Step 2}. We extend  the existence and uniqueness of solution for more general initial data.

Given $u_0\in \LL^q(\R^n)$ with  $q\in [1,\infty)$ (when $p>2$) or $u_0\in \LL^\infty(\R^n)$ in the case $p\in (1,2]$, let $\{\tilde u_{0,k}\}_{k\in\N}\subset  \LL^{2}(\R^n)$ be a sequence of simple functions such that $\tilde u_{0,k}\to u_0$ strongly in $\LL^q(\R^n)$.   Consider the monotone sequence $
\{u_{0,k}\}_{k\in\N}$ given by
$$
u_{0,k}:=\min\{k, \tilde u_{0,k}\} \chi_{B_k(0)}.
$$
Then, as $k\to \infty$ we have that $ u_{0,k}(x) \nearrow u_0(x)$ for \text{a.e.} $x\in\R^n$ and $t>0$ and  $u_{0,k} \to u_0$ strongly in $\LL^q(\R^n)$. Moreover, for each $k\in \N$
\begin{equation} \label{tilde.0}
\|u_{0,k}\|_{\LL^q(\R^n)}\leq  \|u_0\|_{\LL^q(\R^n)}.
\end{equation}
Due to Step 1, for each $k\in \N$ there exists a  gradient flow  solution $u_k( t)\in \LL^{2}(\R^n)$ of \eqref{eqC} that  corresponds to the  initial datum $u_{0,k}$. According to Proposition \ref{weak.comp}, $\{u_k(t)\}_{k\in\N}$ is a monotone sequence, so its pointwise limit  always exists and then we can  define the candidate to limit solution as
\begin{equation} \label{tilde.ae}
u(x,t):=\liminf_{k\to\infty} u_k(x,t).
\end{equation}
Observe that $\{ u_k( t)\}_{k\in\N}$ is a Cauchy sequence in $\LL^q(\R^n)$:  by using Proposition \ref{norm.decreasing.difference} we have that
\begin{align*}
\| u_k(t)- u_m(\cdot,t)\|_{\LL^q(\R^n)} &\leq \| u_{0,k}- u_{0,m}\|_{\LL^q(\R^n)}\\ &\leq \| u_{0,k}- u_0\|_{\LL^q(\R^n)} + \| u_{0,m}- u_0\|_{\LL^q(\R^n)} \to 0 \; \text{ as }m,k\to\infty.
\end{align*}
Therefore $u_k(t) \to u(t)$  strongly in  $\LL^q(\R^n)$ for any $t>0$.

\medskip

We will now show that this limit function is a solution of \eqref{eqC} that corresponds to the initial datum $u_0$. For \text{a.e.} $x\in \R^n$ and $t>0$ let us denote
\begin{align*}
I(x,t)&:=\lim_{k\to \infty} \int_{\R^n} f_k(x,y,t) \,dy, \quad f_k(x,y,t):= J(x-y)| u_k(y,t)-  u_k (x,t)|^{p-2}( u_k(y,t)-   u_k (x,t)),\\
v(x,t)&:=\lim_{k\to \infty} (  u_k)_t(x,t).
\end{align*}
Due to \eqref{tilde.ae} we have the following pointwise limit:
$$
\lim_{k\to \infty} f_k(x,y,t) = f(x,y,t):= J(x-y)| u(y,t)-  u(x,t)|^{p-2}(  u(y,t)-  u(x,t)).
$$
Moreover, from Theorem \ref{main}  and \eqref{tilde.0} we get that
\begin{align*}
\int_{\R^n} |f_k|\,dy \leq 2^p\|  u_k(t)\|^{p-1}_{\LL^\infty(\R^n)} \int_{\R^n} J(x-y)\,dy\leq
\begin{cases}
\mathsf{c}_{p,q,J} \left(t^{-\frac{1}{p-2}}  + \| u_{0}\|_{\LL^q(\R^n)} \right)^{p-1} &\text{ when } p>2\\
2^p \|u_0\|_{\LL^\infty(\R^n)}^{p-1} &\text{ when } p\in (1,2],
\end{cases}
\end{align*}
where $\mathsf{c}_{p,q,J}$ is a constant independent of $k$. As a consequence, by the Dominated Convergence Theorem we get that
$$
I(x,t)=\int_{\R^n} f(x,y,t)\,dy = \J_p   u(x,t) \qquad \text{ a.e. } x\in\R^n \,\text{ and } t>0.
$$
Let us identify the function $v$. First, observe that by the lower semicontinuity of the norm, Theorem \ref{smooth.ut} and \eqref{tilde.0}, $v$ is uniformly bounded for any $t>0$, indeed,
\begin{align*}
\|v(t)\|_{\LL^\infty(\R^n)} \leq \liminf_{k\to\infty} \| ( u_k)_t(\cdot,t)\|_{\LL^\infty(\R^n)}
\leq
\begin{cases}
\mathsf{c}_{p,q,J}  \left(   t^{-\frac{1}{p-2} }  + \|u_0\|_{\LL^q(\R^n)} \right)^{p-1} &\text{ when } p>2\\
 2^p\|u_0\|_{\LL^\infty(\R^n)}^{p-1} &\text{ when } p\in (1,2].
\end{cases}
\end{align*}
Moreover, $\{(u_k)_t\}_{k\in\N}$ is  integrable as function of the time: from Proposition \ref{smooth.ut}, for $t,h>0$ we have that
\begin{equation} \label{bound.t}
\int_t^{t+h} |(  u_k)_t(\cdot,\tau)|\,d\tau
\leq
2^p \int_{t}^{t+h}\|u(t_0)\|_{\LL^\infty(\R^n)}^{p-1} \,d\tau <\mathsf{c}
\end{equation}
where $t> 0$ and  $\mathsf{c}$ is independent of $k$. We write now the difference quotient for $u_k$ in terms of  a Steklov average as
$$
\frac{  u_k(x,t+h) -   u_k(x,t)}{h} = \frac{1}{h} \int_t^{t+h} (u_k)_t (x,\tau)\,d\tau, \qquad \text{ a.e. }x\in \R^n \text{ and } t>0.
$$
Using \eqref{tilde.ae}, \eqref{bound.t}, and the Dominated Convergence Theorem,  as $k\to\infty$ we get
$$
\frac{  u(x,t+h) -   u(x,t)}{h} = \frac{1}{h} \int_t^{t+h} v (x,\tau)\,d\tau, \qquad \text{ a.e. }x\in \R^n \text{ and } t>0.
$$
Finally, as $h\to 0^+$ gives $u_t=v$. Hence, $u$ solves $u_t(x,t)= \J_p u(x,t)$ for \text{a.e.} $x\in \R^n$ and $t>0$, with initial datum $u(x,0)=u_0(x)$, \text{a.e.} $x\in \R^n$, that is, $u$ is a solution of \eqref{eqC}.
\end{proof}

The following contraction principles for solutions are stated in  \cite{AMRT}.

\begin{prop}[Theorems 6.2, 6.24 and 6.37 in \cite{AMRT}] %\label{contrac}
Given $p>1$, let $u$ and  $v$ be solutions in $[0,T]$ to \eqref{eqC} with initial data $u_0, v_0 \in \LL^1(\R^n)$, respectively. Then
\begin{equation} \label{t.decay.1}
\int_{\R^n} (u(x,t)- v(x,t))^+\,dx \leq \int_{\R^n} (u_0(x)- v_0(x))^+\,dx \quad \forall \in [0,T].
\end{equation}
Moreover, if $u_0,v_0\in \LL^p(\R^n)$, then
$\|u( t)-v( t)\|_{\LL^p(\R^n)}\leq \|u_0-v_0\|_{\LL^p(\R^n)}$ for all $t\in[0,T]$.

Given an open bounded set $\Omega\subset \R^n$, the same contraction holds for solutions
$u$ and  $v$ in $[0,T]$ of \eqref{eqD} and \eqref{eq} with initial data $u_0, v_0 \in \LL^1(\Omega)$, respectively.
\end{prop}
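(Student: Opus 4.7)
My plan is to derive both contractions from a single symmetrization identity, combined with a sign analysis of the resulting integrand. Since by Theorem~\ref{existencia} the solutions $u,v$ lie in $W^{1,1}(0,T;\LL^q(\R^n))$ for the relevant $q$, both $t\mapsto \|u(t)-v(t)\|_{\LL^p}^p$ and $t\mapsto \int \Phi_\varepsilon(u(t)-v(t))\,dx$ (for any Lipschitz $\Phi_\varepsilon$) are absolutely continuous, so it suffices to show that the corresponding time derivatives are nonpositive. I would treat the $\LL^p$ and $\LL^1$ cases in parallel, using the same tool.

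\textbf{Key symmetrized identity.} The starting point is the following: using the tacit symmetry $J(-z)=J(z)$ of the kernel (implicit in the standard variational formulation of $\J_p$) and the oddness of $L_p(\tau)=|\tau|^{p-2}\tau$, Fubini gives, for any $w\in \LL^\infty(\R^n)$,
\begin{equation*}
\int_{\R^n} w(x)\bigl[\J_p u(x)-\J_p v(x)\bigr]\,dx
= \tfrac{1}{2}\iint_{\R^n\times\R^n} J(x-y)\bigl[w(x)-w(y)\bigr]\bigl[L_p(u(y)-u(x))-L_p(v(y)-v(x))\bigr]\,dy\,dx.
\end{equation*}
For the $\LL^p$ contraction I would take $w=L_p(u-v)$: the first bracket then has the sign of $(u(x)-v(x))-(u(y)-v(y))$, whereas the second bracket, by strict monotonicity of $L_p$, has the sign of $(u(y)-u(x))-(v(y)-v(x))$, which is the \emph{opposite} one. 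Hence the integrand is pointwise $\le 0$, and using $u_t-v_t=\J_p u-\J_p v$ one obtains $\tfrac{1}{p}\tfrac{d}{dt}\|u-v\|_{\LL^p}^p \le 0$, which integrated on $[0,t]$ yields the stated $\LL^p$ contraction.

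\textbf{$\LL^1$ contraction and bounded domains.} For the $T$-contraction \eqref{t.decay.1} I would use a Kato-type argument. Let $\{\Phi_\varepsilon\}_{\varepsilon>0}\subset C^1(\R)$ be a family of nondecreasing convex smooth approximations of $r\mapsto r^+$ with $0\le \Phi'_\varepsilon\le 1$ and $\Phi'_\varepsilon(r)\to \chi_{\{r>0\}}$ pointwise, and plug $w=\Phi'_\varepsilon(u-v)$ into the identity above. Monotonicity of $\Phi'_\varepsilon$ makes the first bracket have the sign of $(u(x)-v(x))-(u(y)-v(y))$, the second bracket has the opposite sign as before, so the integrand is again $\le 0$ and $\tfrac{d}{dt}\int \Phi_\varepsilon(u-v)\,dx\le 0$; passing to the limit $\varepsilon\to 0^+$ by dominated convergence yields \eqref{t.decay.1}. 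For the Dirichlet problem I would extend $u,v$ by zero outside $\Omega$ so that the double integral runs over $\R^n\times\R^n$ and the test function vanishes outside $\Omega$; for the Neumann problem the double integral is simply over $\Omega\times\Omega$ and the computation is verbatim, with no extra assumption on $J$ needed at this step. The main technical obstacle is justifying the differentiation under the integral sign in the $\LL^1$ case, i.e.\ that $t\mapsto \int \Phi_\varepsilon(u-v)\,dx$ is absolutely continuous with derivative $\int \Phi'_\varepsilon(u-v)(u_t-v_t)\,dx$; this is standard given the global Lipschitz character of $\Phi_\varepsilon$ and the fact that $u_t-v_t\in \LL^1(0,T;\LL^1)$ provided by the existence theory.
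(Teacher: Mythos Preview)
The paper does not actually prove this proposition: it is quoted verbatim from \cite{AMRT} (Theorems 6.2, 6.24 and 6.37) as a known result, with no argument given. Your proof is correct and is the standard one; in particular, your $\LL^p$ contraction argument via the symmetrization identity and the sign analysis of $[L_p(a)-L_p(b)](a-b)\ge 0$ is exactly the special case $q=p$ of what the paper itself carries out a few lines later in Proposition~\ref{norm.decreasing.difference} (there written with the notation $H_q$, $G_p$). Your Kato-type approximation of $r\mapsto r^+$ for the $T$-contraction is likewise the textbook route.

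One minor remark: invoking Theorem~\ref{existencia} to justify the $W^{1,1}$ time regularity is both unnecessary and mildly circular, since the proof of Theorem~\ref{existencia} in the paper relies on \eqref{t.decay.1} (via Proposition~\ref{weak.comp}) and on Proposition~\ref{norm.decreasing.difference}. The regularity you need is already contained in Definition~\ref{Def.Soln}, so you can simply drop that citation. Also, you correctly flag the symmetry $J(-z)=J(z)$ as tacit; it is not stated among the hypotheses on $J$ but is used throughout the paper (e.g.\ in the proof of Proposition~\ref{norm.decreasing.difference}, ``used the symmetry of $J$'').
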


We extend the above result to initial data in  $\LL^q$ for $1\leq q\leq \infty$.

\begin{prop} [$\LL^q$-contractivity] \label{norm.decreasing.difference}
Given $p>1$,  let $u,v$ be   solutions of \eqref{eqC} corresponding to the initial data $u_0,v_0\in \LL^q(\R^n)$ with $1\leq q\leq \infty$. Then
$$
\|u(t)-v(t)\|_{\LL^q(\R^n)} \leq \|u(t_0)-v(t_0)\|_{\LL^q(\R^n)} \leq \|u_0-v_0\|_{\LL^q(\R^n)}
$$
for all $0< t_0\leq t \leq T$.

The same holds for solutions of \eqref{eqD} and \eqref{eq} when $\Omega\subset \R^n$ is open and  bounded.
\end{prop}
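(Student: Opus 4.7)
The plan is to differentiate $\|u(t)-v(t)\|_{\LL^q}^q$ in time and to exploit the evenness of $J$ together with the monotonicity of $L_p(s)=|s|^{p-2}s$, a Kato-type computation for nonlocal $p$-Laplacian operators. The cases $q=1$ and $q=p$ are already at our disposal from the cited proposition from \cite{AMRT}; here I extend the range to every $q\in[1,\infty]$, covering $q=\infty$ separately through a comparison argument with constant shifts.

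Set $w:=u-v$, so that $w_t=\J_p u-\J_p v$ a.e. I would first work in the regular class $u_0,v_0\in\LL^q(\R^n)\cap\LL^\infty(\R^n)$, where the solutions are bounded by Theorem \ref{thm.C.reg.intro} and consequently $w_t\in\LL^\infty$, which legitimises the calculus below. For $q\in(1,\infty)$, multiplying the equation for $w$ by $L_q(w)=|w|^{q-2}w$ and integrating in $x$ gives
\[
\tfrac{1}{q}\tfrac{d}{dt}\int_{\R^n}|w|^q\,dx
=\iint_{\R^n\times\R^n}J(x-y)\bigl[L_p(u(y)-u(x))-L_p(v(y)-v(x))\bigr]L_q(w(x))\,dy\,dx.
\]
Exchanging $x\leftrightarrow y$ in one copy of the double integral (using $J(x-y)=J(y-x)$) and averaging rewrites this as
\[
-\tfrac{1}{2}\iint J(x-y)\bigl[L_p(u(y)-u(x))-L_p(v(y)-v(x))\bigr]\bigl[L_q(w(y))-L_q(w(x))\bigr]\,dx\,dy.
\]
Since $(u(y)-u(x))-(v(y)-v(x))=w(y)-w(x)$ and both $L_p$ and $L_q$ are strictly increasing on $\R$, the two bracketed factors share the sign of $w(y)-w(x)$ pointwise; hence the integrand is non-negative and $\tfrac{d}{dt}\|w(t)\|_{\LL^q}^q\le 0$. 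Thus $\|w(\cdot)\|_{\LL^q}$ is non-increasing, yielding both chained inequalities of the statement on $[0,t_0]$ and on $[t_0,t]$.

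The case $q=1$ is recovered by letting $q\to 1^+$ (so that $L_q\to\sign$), and is in any case already part of the proposition borrowed from \cite{AMRT}. For $q=\infty$ the cleanest route is the comparison principle: constants are stationary solutions of $u_t=\J_p u$, so with $M:=\|u_0-v_0\|_{\LL^\infty}$ the bounds $v_0-M\le u_0\le v_0+M$ propagate to the same bounds at time $t$, giving $\|w(t)\|_{\LL^\infty}\le M$. The very same computation, interpreting $w$ as extended by zero outside $\Omega$ for the Dirichlet problem or restricting the integrals to $\Omega\times\Omega$ for the Neumann one, covers the bounded-domain cases, since the symmetrization step only uses $J(x-y)=J(y-x)$. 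Finally, to pass from the regular class $\LL^q\cap\LL^\infty$ to general $u_0,v_0\in\LL^q$, one invokes the monotone truncation $u_{0,k}:=\min\{k,\tilde u_{0,k}\}\chi_{B_k(0)}$ already used in the proof of Theorem \ref{existencia}, exploits the strong $\LL^q$ convergence of the corresponding solutions, and concludes via lower semicontinuity of the norm in $\|u_k(t)-v_k(t)\|_{\LL^q}\le\|u_{0,k}-v_{0,k}\|_{\LL^q}$.

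The main technical obstacle is the rigorous justification of the differentiation under the integral and of the Fubini/symmetrization step when $q$ is large and the solutions are only known to be in $W^{1,1}(0,T;\LL^q)$. Restricting first to bounded data circumvents this obstacle, because Theorem \ref{main} and the $\LL^\infty$ stability of Theorem \ref{thm.C.reg.intro} provide quantitative uniform bounds on $w$ and $w_t$ that make every step of the calculation routine; the general case then falls by approximation.
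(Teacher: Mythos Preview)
Your proof is correct and follows essentially the same Kato-type approach as the paper: differentiate $\|u-v\|_{\LL^q}^q$, symmetrize via the evenness of $J$, and exploit the identity $(u(y)-u(x))-(v(y)-v(x))=w(y)-w(x)$ to show the integrand is non-negative. Your justification of the sign via the bare monotonicity of $L_p$ and $L_q$ is slightly cleaner than the paper's, which invokes quantitative inequalities of the form $(L_r(\alpha)-L_r(\beta))(\alpha-\beta)\ge c_r|\alpha-\beta|^r$ (unnecessary here, since only the sign is used); and your $q=\infty$ case via comparison with the shifted solutions $v\pm M$ is a different but equally valid alternative to the paper's limit $q\to\infty$.
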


\begin{proof}
We will prove the result for solutions of \eqref{eqC}. A similar reasoning can be applied for solutions of \eqref{eqD} and \eqref{eq} and when $\Omega\subset \R^n$ is open and bounded.

Suppose we are given solutions $u$ and $v$ of \eqref{eqC} with initial data $u_0,v_0\in \LL^q(\R^n)$, where $1\leq q\leq \infty$.  First, we assume that $1< q<\infty$  and we obtain the following:	
\begin{align*}
\frac{d}{dt} &\|u(t)- v(t)\|_{\LL^q(\R^n)}^q =q\int_{\R^n} L_q(u(x,t)-v(x,t))(u_t(x,t)-v_t(x,t))\,dx\\
&=
-q\iint_{\R^n\times\R^n} J(x-y) L_q(u(x,t)-v(x,t)) G_p(u,v)\,dxdy\\
&=
q\iint_{\R^n\times\R^n} J(x-y) L_q(u(y,t)-v(y,t)) G_p(u,v)\,dxdy
\end{align*}
where  we interchanged the  variables $x$ and $y$ and used the symmetry of $J$. Here we have denoted
$$
G_p(u,v):=L_p(u(x,t)-u(y,t))) -  L_p(v(x,t)-v(y,t)).
$$
Therefore, denoting $H_q(u,v):=L_q(u(x,t)-v(x,t)) - L_q(u(y,t)-v(y,t))$ we get
\begin{align*}
\frac{d}{dt} &\|u(t)- v(t)\|_{\LL^q(\R^n)}^q =
-\frac{q}{2}\iint_{\R^n\times\R^n} J(x-y) H_q(u,v) G_p(u,v)\,dxdy
\end{align*}
We denote $A=u(x,t)-u(y,t)$, $B=v(x,t)-v(y,t)$, $a=u(x,t)-v(x,t)$, $b=u(y,t)-v(y,t)$. Since $A-B=a-b$, when  $A=B$ or $a=b$, we have that $
\frac{d}{dt} \|u(t)- v(t)\|_{\LL^q(\R^n)}=0
$.

Consider now the case $A\neq B$ and $a\neq b$. We use  the following well-known numerical inequality for $\alpha,\beta\in \R$ and $r>1$:
\begin{align*} % \label{des.num}
(|\alpha|^{r-2} \alpha - |\beta|^{r-2})(\alpha-\beta)  \geq
\begin{cases}
\mathsf{c}_r |\alpha-\beta|^r & \text{ if } r\geq 2\\
\mathsf{c}_r \frac{|\alpha-\beta|^2}{|\alpha|^{2-r} + |\beta|^{2-r}} & \text{ if } 1<r<2\\
\end{cases}
\end{align*}
where $\mathsf{c}_r\sim (r-1)$. See Lemma 4.4 in  \cite{D93} and Appendix A.3 in \cite{BIV10} for a proof of it. When $p,q\geq 2$
$$
H_q(a,b)(a-b) \geq \mathsf{c}_q |a-b|^q>0, \quad G_p(A,B)(A-B) \geq \mathsf{c}_p |A-B|^p> 0,
$$
from where, since $|A-B|=|a-b|$ we obtain that
$$
- H_q(a,b) G_p(A,B) = - \frac{1}{|a-b|^2} H_q(a,b)(a-b) G_p(A,B)(A-B) < 0,
$$
from where, since $J\geq 0$, we get that $\frac{d}{dt} \|u(t)- v(t)\|_{\LL^q(\R^n)}^q \leq 0$. A similar argument can be applied when $1<p<2$ or  $1<q<2$ to reach the same conclusion.

Moreover, the same estimate still true when $q\to 1$ since for all $\alpha,\beta\in \R$
$$
\left(\frac{\alpha}{|\alpha|}- \frac{\beta}{|\beta|} \right)(\alpha-\beta) \geq
\begin{cases}
0 & \text{ if } \alpha \text{ and } \beta \text{ have the same sign}\\
2|\alpha-\beta| & \text{ if } \alpha \text{ and } \beta \text{ have different sign},
\end{cases}
$$
and therefore $H_1(a,b)(a-b)\geq 0$.

Finally, from the previous computations, for any $p>1$ and any $q\ge 1$
$$
\frac{d}{dt}\|u(t)-v(t)\|_{\LL^q(\R^n)} = -\frac12 \|u(t)-v(t)\|_{\LL^q(\R^n)}^{1-q} \iint_{\R^n\times\R^n} J(x-y) H_q(u,v) G_p(u,v)\,dxdy \leq 0
$$
Letting $q\to\infty$ in the above expression gives the result for the $L^\infty$ norm. 
\end{proof}

The following comparison principle is a consequence of the  contractivity for the difference of solutions stated in \eqref{t.decay.1}.

\begin{prop}[Weak comparison] \label{weak.comp}
Given $p>1$, let $u$ and $v$ be solutions in $[0,T]$ of \eqref{eqC} with initial data $u_0,v_0\in \LL^q(\R^n)$, $1\leq q < \infty$,  respectively. Hence, if $u_0 \leq v_0$ \text{a.e.} in $\Omega$, then $u(x,t)\leq v(x,t)$ \text{a.e.} in $\R^n$ and $t\in (0,T)$.

When $\Omega\subset \R^n$ is open and bounded, the same conclusion holds for solutions $u$ and $v$ in $[0,T]$ of \eqref{eqD} and \eqref{eq} with initial data $u_0,v_0\in \LL^q(\Omega)$, $1\leq q\leq \infty$, respectively.
\end{prop}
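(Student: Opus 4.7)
The plan is to obtain the comparison $u\leq v$ a.e.\ as a direct consequence of the $\LL^1$ $T$-contractivity \eqref{t.decay.1}. The core observation is that if $u_0\leq v_0$ a.e., then $(u_0-v_0)^+\equiv 0$, so $\int_{\R^n}(u_0-v_0)^+\,dx=0$, and \eqref{t.decay.1} gives $\int_{\R^n}(u(t)-v(t))^+\,dx=0$ for every $t\in[0,T]$. Since the integrand is nonnegative, this forces $(u(x,t)-v(x,t))^+=0$ for a.e.\ $x$, i.e.\ $u(x,t)\leq v(x,t)$ a.e., which is exactly the claim.

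First I would dispatch the settings in which the data already lie in $\LL^1$. For the Dirichlet problem \eqref{eqD} and the Neumann problem \eqref{eq}, $\Omega$ is bounded and of finite measure, so any $u_0,v_0\in \LL^q(\Omega)$ with $1\leq q\leq \infty$ automatically belong to $\LL^1(\Omega)$; the argument of the previous paragraph, applied to the $\LL^1$ $T$-contractivity on $\Omega$ recalled earlier in this section, concludes the proof in both cases. For the Cauchy problem the same immediate argument settles the case $u_0,v_0\in \LL^q(\R^n)\cap \LL^1(\R^n)$, in particular when $q=1$.

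The remaining situation is the Cauchy problem with $u_0,v_0\in \LL^q(\R^n)$ and $q>1$, when the data need not be integrable. Here I would pass to the limit along an order-preserving approximation. Set
$$
u_{0,k}(x):=\max(-k,\min(k,u_0(x)))\chi_{B_k(0)}(x), \qquad v_{0,k}(x):=\max(-k,\min(k,v_0(x)))\chi_{B_k(0)}(x).
$$
Since $\max$ and $\min$ are monotone, $u_0\leq v_0$ a.e.\ transfers to $u_{0,k}\leq v_{0,k}$ a.e.\ for every $k$, and each truncated datum lies in $\LL^1\cap \LL^\infty\cap \LL^q(\R^n)$ (so Theorem \ref{existencia} applies in all the admissible regimes of $p$). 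The corresponding solutions $u_k,v_k$ therefore satisfy $u_k(t)\leq v_k(t)$ a.e.\ by the first step. Because $u_{0,k}\to u_0$ and $v_{0,k}\to v_0$ strongly in $\LL^q(\R^n)$ by dominated convergence, the $\LL^q$-contractivity just proved in Proposition \ref{norm.decreasing.difference} gives $u_k(t)\to u(t)$ and $v_k(t)\to v(t)$ strongly in $\LL^q$ for every $t$; extracting an a.e.\ convergent subsequence and letting $k\to\infty$ in $u_k\leq v_k$ delivers $u\leq v$ a.e.

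There is essentially no serious obstacle: the argument is bookkeeping once \eqref{t.decay.1} and Proposition \ref{norm.decreasing.difference} are available. The only point requiring care is that the approximation preserve the pointwise ordering of the data, which is automatic thanks to the monotonicity of $\max$ and $\min$.
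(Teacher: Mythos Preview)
Your argument is correct and follows the same strategy as the paper's proof: truncate the data to lie in $\LL^1\cap\LL^\infty$, apply the $\LL^1$ $T$-contractivity \eqref{t.decay.1} to the approximations, and pass to the limit. The only tactical differences are that you handle bounded domains directly via $\LL^q(\Omega)\hookrightarrow\LL^1(\Omega)$ without any approximation, and for the Cauchy limit you invoke the $\LL^q$-contractivity of Proposition~\ref{norm.decreasing.difference} to get $u_k\to u$, $v_k\to v$ in $\LL^q$ (hence a.e.\ along a subsequence), whereas the paper uses monotonicity of the approximating sequences together with Fatou and dominated convergence applied to $(u_k-v_k)_+$.
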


\begin{proof}
Let $u_0,v_0\in \LL^q(\R^n)$ with $1\leq q\leq \infty$   be such that $u_0\leq v_0$ \text{a.e.}  $x\in \R^n$. Consider the sequences
of approximations $\{u_{0,k}\}_{k\in\N},\{v_{k,0}\}_{k\in\N}\in \LL^\infty_c(\R^n)$ given by $u_{0,k}:=\min\{u_0,k\}\chi_{B_{k}(0)}$, $v_{0,k}:=\min\{v_0,k\}\chi_{B_{k}(0)}$. Therefore, $\{u_{0,k}\}_{k\in\N}$ and $\{v_{0,k}\}_{k\in\N}$ are monotone sequences and
\begin{align*}
&u_{0,k} \to u_0, \quad v_{0,k}\to v_0 \quad \text{strongly in }\LL^q(\R^n) \text{ and a.e. in } \R^n,\\
&\|u_{0,k}\|_{\LL^q(\R^n)} \leq \|u_0\|_{\LL^q(\R^n)}, \qquad\|v_{0,k}\|_{\LL^q(\R^n)} \leq \|v_0\|_{\LL^q(\R^n)}.
\end{align*}
Then, it follows that
\begin{equation} \label{e.cd}
\liminf_{k\to\infty} (u_{0,k}-v_{0,k})_+ =(u_0-v_0)_+=0,\quad  \text{ and } \quad  \|(u_{0,k}-v_{0,k})_+\|_{\LL^q(\R^n)} \leq \|u_0\|_{\LL^q(\R^n)}+ \|v_0\|_{\LL^q(\R^n)}.
\end{equation}

Due to Theorem \ref{existencia}, for each $k\in\N$ there exist $u_k$, $v_k$ solutions of \eqref{eq} corresponding to the initial data $u_{0,k}$, $v_{0,k}$, respectively. By \eqref{t.decay.1}, $\{u_k\}_{k\in \N}$ and $\{v_k\}_{k\in\N}$ are also monotone, so for \text{a.e.} $x\in \R^n$ and $t>0$ we can define the limits
$$
u(x,t):=\liminf_{k\to\infty} u_k(x,t), \qquad v(x,t):=\liminf_{k\to\infty} v_k(x,t),
$$
and $u_k(t) \to u(t)$, $v_k(\cdot,t) \to v(t)$ strongly in $\LL^q(\R^n)$ for $t>0$. Moreover,
$$
\liminf_{k\to\infty}(u_k(x,t)-v_k(x,t))_+ = (u(x,t)-v(x,t))_+.
$$
Then, by Fatou's Lemma, \eqref{t.decay.1}, and the Dominated Convergence Theorem with \eqref{e.cd}, we get that
\begin{align*}
\| (u(t) - v(t) )_+\|_{\LL^1(\R^n)} \leq
\liminf_{k\to\infty} \| (u_k(t) - v_k(\cdot,t) )_+\|_{\LL^1(\R^n)}
\leq \lim_{k\to\infty}
\| (u_{k,0} - v_{k,0} )_+\|_{\LL^1(\R^n)} =0,
\end{align*}
that is, $u(x,t)\leq v(x,t)$ \text{a.e.} $x\in \R^n$ and $t>0$.

The proof for the   Dirichlet and Neumann problems is similar.
\end{proof}

\begin{rem}
The comparison principles available in the literature often require additional assumptions on the regularity of solutions and initial data (c.f. Corollary 2.9 in \cite{AMRT} and Theorem 2.1 in \cite{FPL}). However, our Proposition \ref{weak.comp} holds for solutions in $W^{1,1}(0,T;\LL^q(\Omega))$ and initial data in $\LL^q(\Omega)$ with $1\leq q \leq \infty$.
\end{rem}

We recall a useful  integration formula stated in Lemma 6.5 of \cite{AMRT}.

\begin{prop}[Integration by parts] \label{parts}
For every $u,v\in \LL^p(\Omega)$, $\Omega\subseteq\R^n$, it holds that
$$
-\int_\Omega v(x) \J_{p,\Omega} u(x) \,dx = \E(u (x),v (x)).
$$
\end{prop}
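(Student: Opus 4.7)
The plan is to unfold the left-hand side as a double integral, justify a Fubini rearrangement via an $L^p$–$L^{p'}$ Hölder bound, and then symmetrize in $x,y$ to produce the factor $\tfrac12 (v(x)-v(y))$ appearing in $\mathcal{E}_{p,\Omega}$.

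First, I would write out
\[
-\int_\Omega v(x)\, \J_{p,\Omega} u(x)\,dx = -\int_\Omega\!\!\int_\Omega J(x-y)\,|u(y)-u(x)|^{p-2}(u(y)-u(x))\,v(x)\,dy\,dx.
\]
To apply Fubini, I need absolute integrability of the double integrand. I would verify this by a Hölder estimate with conjugate exponents $p'=p/(p-1)$ and $p$ against the finite measure $J(x-y)\,dx\,dy$: since
\[
\int_\Omega\!\!\int_\Omega J(x-y)\,|u(x)-u(y)|^{p}\,dx\,dy \le 2^p\|J\|_{\LL^1(\R^n)}\|u\|_{\LL^p(\Omega)}^p,
\]
and similarly for $v$, the integrability follows from $u,v\in \LL^p(\Omega)$ together with $\|J\|_{\LL^1}=1$.

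Next, I would use two structural properties of the integrand: the oddness of $t\mapsto |t|^{p-2}t$, which gives $|u(y)-u(x)|^{p-2}(u(y)-u(x))=-|u(x)-u(y)|^{p-2}(u(x)-u(y))$, and the symmetry of $J$, i.e.\ $J(-z)=J(z)$ (implicit in the convolution structure and used consistently earlier in the paper, e.g.\ in the proof of $\LL^q$-contractivity). Interchanging the dummy variables $x\leftrightarrow y$ in the double integral then yields the alternative representation
\[
-\int_\Omega v(x)\, \J_{p,\Omega} u(x)\,dx = -\int_\Omega\!\!\int_\Omega J(x-y)\,|u(x)-u(y)|^{p-2}(u(x)-u(y))\,v(y)\,dx\,dy.
\]

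Finally, I would average the original expression (after applying oddness) with this rewritten one. The two integrals combine to give precisely
\[
\tfrac{1}{2}\int_\Omega\!\!\int_\Omega J(x-y)\,|u(x)-u(y)|^{p-2}(u(x)-u(y))\,(v(x)-v(y))\,dx\,dy = \mathcal{E}_{p,\Omega}(u,v),
\]
which is the desired identity. There is no genuine obstacle here; the only point that requires care is the Fubini justification and the bookkeeping of signs through the oddness of $L_p$ and the symmetry of $J$.
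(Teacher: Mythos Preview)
Your argument is correct and is exactly the standard symmetrization proof; the paper itself does not give a proof but simply cites Lemma~6.5 of \cite{AMRT}, where the same computation appears. The only minor caveat is that the evenness of $J$ (i.e.\ $J(-z)=J(z)$) is not listed among the paper's explicit hypotheses, but you rightly observe that it is used elsewhere in the paper and is a standing assumption in \cite{AMRT}.
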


We prove now a  proposition in the spirit of the celebrated  B\'enilan-Crandall estimates \cite{BC81}.

\begin{prop}[Benilan-Crandall type estimates when $p>2$] \label{BC}
Let $p>2$ and let $u\ge 0$ be a solution of \eqref{eqC}. We have that
\begin{equation}\label{BC0}
u_t(x,t)\geq -\frac{u(x,t)}{(p-2)t}, \quad \emph{ a.e. } x\in \R^n \text{ and } t>0.
\end{equation}
The same holds for solutions of \eqref{eqD} or of \eqref{eq} when $\Omega\subset \R^n$ is a bounded set.
\end{prop}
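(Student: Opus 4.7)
The idea is the classical Bénilan–Crandall scaling argument, which exploits the homogeneity of the operator $\J_p$ together with the weak comparison principle already established in Proposition \ref{weak.comp}.

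First I would observe that $\J_p$ is homogeneous of degree $p-1$: for any $\lambda>0$,
$$
\J_p(\lambda u)(x,t) = \lambda^{p-1}\,\J_p u(x,t),
$$
since the kernel factor $|u(y,t)-u(x,t)|^{p-2}(u(y,t)-u(x,t))$ scales as $\lambda^{p-1}$. Consequently, if $u$ solves \eqref{eqC}, then for every $\lambda>0$ the rescaled function
$$
u_\lambda(x,t):=\lambda\,u(x,\lambda^{p-2}t)
$$
is also a solution: indeed $\partial_t u_\lambda(x,t) = \lambda^{p-1} u_t(x,\lambda^{p-2}t) = \lambda^{p-1}\J_p u(x,\lambda^{p-2}t) = \J_p u_\lambda(x,t)$. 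The initial datum of $u_\lambda$ is $\lambda u_0(x)$.

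Next I would fix $\lambda\ge 1$ and use nonnegativity of $u_0$ together with Proposition \ref{weak.comp}: since $\lambda u_0 \ge u_0 \ge 0$, comparison yields
$$
u_\lambda(x,t)=\lambda\,u(x,\lambda^{p-2}t)\ \ge\ u(x,t)\qquad\text{for a.e. }x\in\R^n\text{ and all }t>0.
$$
Therefore the function $\lambda\mapsto \varphi_x(\lambda):=\lambda\,u(x,\lambda^{p-2}t)$ attains its minimum on $[1,\infty)$ at $\lambda=1$. Since $u\in W^{1,1}(0,T;\LL^q)$, for a.e.\ $x$ the map $t\mapsto u(x,t)$ is absolutely continuous and differentiable a.e., so $\varphi_x$ is differentiable from the right at $\lambda=1$ for a.e.\ $x\in\R^n$ and a.e.\ $t>0$. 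Computing the right derivative at $\lambda=1$,
$$
0\ \le\ \frac{d}{d\lambda}\Big|_{\lambda=1^{+}}\!\!\bigl[\lambda u(x,\lambda^{p-2}t)\bigr]
= u(x,t) + (p-2)\,t\,u_t(x,t),
$$
which, since $p>2$, is exactly the desired inequality \eqref{BC0}.

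The only delicate point is the justification of the pointwise differentiation in $\lambda$; this is handled by passing to the (a.e.\ defined) derivative of the absolutely continuous representative in time and using the $\LL^q$-continuity in $\lambda$ of $u_\lambda$ that follows from the contraction property. For the Dirichlet problem \eqref{eqD} the scaling preserves the zero exterior condition ($\lambda\cdot 0=0$) and comparison (Proposition \ref{weak.comp}) is available on $\Omega$, so the same argument applies verbatim. The Neumann case \eqref{eq} is identical since $\J_{p,\Omega}$ is likewise $(p-1)$-homogeneous and comparison holds.
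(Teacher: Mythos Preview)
Your proof is correct and follows essentially the same scaling/comparison strategy as the paper; the only cosmetic difference is that the paper parametrizes the rescaling as $u_\lambda(x,t)=\lambda^{1/(p-2)}u(x,\lambda t)$ and takes $\lambda=(t+h)/t$ to form an explicit difference quotient in $h$, whereas you differentiate directly in $\lambda$ at $\lambda=1$. Up to this reparametrization the two arguments are identical.
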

\begin{proof}
Take $\lambda\geq 1$ and consider the rescaled (in time) solution to \eqref{eqC}
$$
u_\lambda(x,t)=\lambda^\frac{1}{p-2} u(x,\lambda t) \quad \text{ with }\quad  u_\lambda(x,0)=\lambda^\frac{1}{p-2}u_0(x) \geq u_0(x).
$$
By the comparison principle (Lemma  \ref{weak.comp}), since $\lambda \geq 1$ it follows that $u_\lambda(x,t)\geq u(x,t)$ \text{a.e.}  $x\in\Omega$ and $t>0$. Moreover, taking $\lambda=\frac{t+h}{t}\geq 1$ and adding and substracting $u_\lambda(x,t)$ we obtain
\begin{align*}
\frac{u(x,t+h)-u(x,t)}{h}
&=
\frac1h \left[\left(\frac{t+h}{t}\right)^{-\frac{1}{p-2}}- 1\right]u_\lambda(x,t)+ \frac{u_\lambda(x,t)-u(x,t)}{h}\\
&\geq
\frac1h \left[\left(\frac{t+h}{t}\right)^{-\frac{1}{p-2}}- 1\right]u_\lambda(x,t)\\
&=\frac{(t+h)^{-\frac{1}{p-2}} - t^{-\frac{1}{p-2}}}{h} (t+h)^\frac{1}{p-2} u(x,t+h).
\end{align*}
The result follows by letting $h\to 0^+$ . The reasoning for solutions of \eqref{eqD} or of \eqref{eq} is identical.
\end{proof}

\begin{lema}[Time-monotonicity  for $p>2$]  \label{time.mono}
Let $p>2$ and $u$ be a solution corresponding to the datum $u_0$ to any of the problems \eqref{eqC}, \eqref{eqD} or  \eqref{eq}. Then, the map
\[
t\mapsto t^\frac{1}{p-2}u(x,t)\qquad\mbox{is nondecreasing for all $t>0$ and \emph{a.e.} $x\in \R^n$},
\]
for solutions to \eqref{eqC}. For solutions to \eqref{eqD} or  \eqref{eq} the same holds for \emph{a.e.} $x\in \Omega$.
\end{lema}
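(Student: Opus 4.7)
The strategy is essentially a one-line consequence of the Benilan--Crandall estimate \eqref{BC0} of Proposition \ref{BC}, namely
$$
u_t(x,t)\ge -\frac{u(x,t)}{(p-2)t}\qquad\mbox{for a.e. $x$ and all $t>0$.}
$$
Setting $\varphi(x,t):=t^{1/(p-2)}u(x,t)$, the idea is to compute $\partial_t \varphi$ directly and observe that the Benilan--Crandall inequality is exactly the statement that $\partial_t\varphi\ge 0$. Concretely, for fixed $x$ in the set of full measure where $u(x,\cdot)$ is differentiable, the product rule yields
$$
\partial_t\varphi(x,t)=\frac{t^{1/(p-2)}}{(p-2)t}\,u(x,t)+t^{1/(p-2)}u_t(x,t)=t^{1/(p-2)}\left[u_t(x,t)+\frac{u(x,t)}{(p-2)t}\right]\ge 0,
$$
by \eqref{BC0}. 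Since $u(x,\cdot)$ inherits from $u\in W^{1,1}(0,T;\LL^q)$ the property of being absolutely continuous on compact subintervals of $(0,\infty)$ for a.e.\ $x$, and $t\mapsto t^{1/(p-2)}$ is smooth on $(0,\infty)$, the product $\varphi(x,\cdot)$ is absolutely continuous on compact subintervals of $(0,\infty)$, so non-negativity of its derivative a.e.\ implies monotonicity. This would take care of all three problems \eqref{eqC}, \eqref{eqD}, \eqref{eq} simultaneously, since Proposition \ref{BC} is already stated for all three.

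The only mild subtlety, and likely the main place to be careful, is the nonnegativity assumption: Proposition \ref{BC} is stated for $u\ge 0$, so one should either restrict to nonnegative solutions (which is the natural hypothesis, since the time-scaling argument used to prove \eqref{BC0} relies on the comparison $u_\lambda\ge u$ coming from $u_\lambda(\cdot,0)=\lambda^{1/(p-2)}u_0\ge u_0$ when $u_0\ge 0$), or observe that the very same rescaling and comparison arguments run for sign-changing solutions provided one works with $|u|$ or splits into $u_\pm$. For the purpose of this lemma, which will be invoked in the smoothing-effect proof sketched above (where nonnegative solutions suffice), assuming $u\ge 0$ is harmless and makes the argument a direct corollary.

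In summary, the plan is: (i) recall \eqref{BC0} from Proposition \ref{BC}; (ii) differentiate $t^{1/(p-2)}u(x,t)$ in $t$ using the $W^{1,1}$-in-time regularity of solutions; (iii) identify the resulting expression with the left-hand side of \eqref{BC0} multiplied by the positive factor $t^{1/(p-2)}$; (iv) conclude non-decrease via the absolute-continuity characterization of monotone functions. No new ingredient beyond Proposition \ref{BC} is needed, and the argument is identical in the Cauchy, Dirichlet, and Neumann settings.
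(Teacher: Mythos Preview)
Your approach is correct and essentially the same as the paper's: both derive the monotonicity as an immediate consequence of the Benilan--Crandall estimate \eqref{BC0}. The paper phrases it as ``integrating in time the Benilan--Crandall estimates \eqref{BC0} on $[t,t+h]$'', which is exactly your computation $\partial_t\big(t^{1/(p-2)}u\big)\ge 0$ followed by integration; you are just a bit more explicit about the absolute-continuity justification. Your remark on the implicit nonnegativity hypothesis (inherited from Proposition~\ref{BC}) is a valid caveat that the paper leaves tacit.
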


\begin{proof}Integrating in time the Benilan-Crandall estimates \eqref{BC0} on $[t,t+h]$  we immediately obtain that $t^\frac{1}{p-2} u(x,t)\leq (t+h)^\frac{1}{p-2}u(x,t+h)$ for all $h\geq 0$, as required.
\end{proof}

\subsection{The weak cone condition} \label{sect.weak.cone}

We close this section stating a technical condition we use to deal with the smoothing effect for the Neumann problem.

Given $x\in \Omega$, let $R(x)$ consist of all points $y\in \Omega$ such that the line segment joining $x$ to $y$ lies entirely in $\Omega$; thus $R(x)$ is a union of rays and line segments emanating from $x$. Let
$$
\Gamma(x)=\{y\in R(x)\colon |y-x|<1\},
$$
and let $|\Gamma(x)|$ denote the $n-$dimensional Lebesgue measure of $\Gamma(x)$. We say that $\Omega$ satisfies the \emph{weak cone condition} if there exists a number $\delta>0$ such that
$$
|\Gamma(x)| \geq \delta \quad \text{for all }x\in\Omega.
$$
Clearly, the cone condition implies the weak cone condition.

\begin{lema} \cite[Lemma 1]{AF}
Let $\Omega\subset \R^n$ satisfy the weak cone condition. Then there exist positive constants $\eta \leq 1$, $A$ and $B$ depending on $n$ and $\delta$, and for each $x\in \Omega$ a subset $P_{x,\eta}\subset \mathcal{S}^{n-1}$, such that $\lambda(P_{x,n})=A$ and $x+t\sigma \in \Omega$ if $\sigma\in P_{x,\eta}$ and $0<t<\eta$. In particular, for each $x\in \Omega$ and each $\xi$ satisfying $0<\xi\leq \eta$, the \emph{generalized cone} $C_{x,\xi}=\{y=x+t\sigma \in \R^n \colon \sigma \in P_{x,n}, 0<t<\xi \}$ satisfies $C_{x,\xi}\subset \Omega$ and $|C_{x,\xi}|=B \xi^n$.
\end{lema}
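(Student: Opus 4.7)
The plan is to reduce the claim to a Chebyshev-type estimate on the sphere via polar coordinates. For each $x\in\Omega$ and $\sigma\in\mathcal{S}^{n-1}$, define the radial reach
\[
r(x,\sigma):=\sup\{t>0\,:\, x+s\sigma\in\Omega\ \text{for all}\ 0<s<t\},
\]
with the convention $r(x,\sigma)=0$ if the supremum is empty. Since $\Omega$ is open, small perturbations of $\sigma$ keep $x+s\sigma\in\Omega$ for $s$ bounded away from $r(x,\sigma)$, so $\sigma\mapsto r(x,\sigma)$ is lower semicontinuous and in particular measurable. With this, the union of rays $R(x)$ and the set $\Gamma(x)$ are exactly $\Gamma(x)=\{x+t\sigma:0<t<\min(1,r(x,\sigma)),\ \sigma\in\mathcal{S}^{n-1}\}$, and polar coordinates give the identity
\[
|\Gamma(x)|=\frac{1}{n}\int_{\mathcal{S}^{n-1}}\min(1,r(x,\sigma))^{n}\,d\sigma.
\]

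Next, for $\eta\in(0,1]$, I will introduce the ``good'' set of directions $\tilde P_{x,\eta}:=\{\sigma\in\mathcal{S}^{n-1}:r(x,\sigma)\geq \eta\}$ and exploit the hypothesis $|\Gamma(x)|\geq\delta$. Splitting the integral above over $\tilde P_{x,\eta}$ and its complement and bounding the integrand by $1$ on $\tilde P_{x,\eta}$ and by $\eta^{n}$ on the complement yields
\[
n\delta\ \leq\ \lambda(\tilde P_{x,\eta})\cdot(1-\eta^{n})+\omega_{n-1}\,\eta^{n},
\]
where $\lambda$ is the surface measure and $\omega_{n-1}=\lambda(\mathcal{S}^{n-1})$. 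Choosing $\eta:=\min\bigl\{1,(n\delta/(2\omega_{n-1}))^{1/n}\bigr\}$, which depends only on $n$ and $\delta$, the right-hand term absorbs at most half of $n\delta$, so $\lambda(\tilde P_{x,\eta})\geq n\delta/2$. Then I take $P_{x,\eta}$ to be any Borel subset of $\tilde P_{x,\eta}$ of measure exactly $A:=n\delta/2$ (possible by inner regularity of $\lambda$ on the sphere).

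Finally, the cone property is automatic from the construction: for $\sigma\in P_{x,\eta}\subset\tilde P_{x,\eta}$ and $0<t<\xi\leq\eta\leq r(x,\sigma)$, the definition of $r$ gives $x+t\sigma\in\Omega$, hence $C_{x,\xi}\subset\Omega$. Polar coordinates then give
\[
|C_{x,\xi}|=\int_{P_{x,\eta}}\int_{0}^{\xi}t^{n-1}\,dt\,d\sigma=\frac{A}{n}\,\xi^{n}=B\,\xi^{n},\qquad B:=A/n.
\]
The main subtlety, rather than a genuine obstacle, lies in the measurability of the radial reach $r(x,\cdot)$ and the fact that the set $\tilde P_{x,\eta}$ may have no nice geometric structure; lower semicontinuity of $r$ (from the openness of $\Omega$) settles this cleanly and makes the Chebyshev step rigorous. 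All constants $\eta,A,B$ are uniform in $x\in\Omega$, depending only on $n$ and the parameter $\delta$ of the weak cone condition, as required.
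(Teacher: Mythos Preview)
The paper does not prove this lemma; it merely quotes it from \cite[Lemma~1]{AF}. Your argument is correct and is essentially the standard proof (the one in Adams--Fournier): compute $|\Gamma(x)|$ in polar coordinates, then use a Chebyshev-type bound on the sphere to find a uniformly large set of directions with radial reach at least $\eta$. Two minor remarks: (i) the case $\eta=1$ in your choice never actually occurs, since $\delta\le |\Gamma(x)|\le |B_1|=\omega_{n-1}/n$ forces $(n\delta/(2\omega_{n-1}))^{1/n}<1$, so there is no degeneracy in the factor $1-\eta^n$; (ii) extracting a subset of \emph{exact} measure $A$ from $\tilde P_{x,\eta}$ is justified by the nonatomicity of surface measure on $\mathcal S^{n-1}$ for $n\ge 2$ (Sierpi\'nski's theorem), not inner regularity per se.
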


\section{$\LL^q$--$\LL^\infty$ smoothing effect for solutions} \label{section4}
In this section, we prove an $\LL^q$--$\LL^\infty$ smoothing effect for solutions of \eqref{eqC}, \eqref{eq}, and \eqref{eqD}, for all $q\in [1,\infty]$.

\medskip

For the reader's convenience, we recall the following version of the  \emph{Young's inequality}. For $\varepsilon>0$, $a,b>0$ and $p,q>1$ such that $\frac{1}{p}+\frac{1}{q}=1$ it holds that
$$
ab\leq \varepsilon a^p+(\varepsilon p)^{-\frac{q}{p}}q^{-1} b^q.
$$

The following numerical estimate is key in our arguments.
\begin{lema} \label{pointw.ineq}
Given $p\geq 2$, for all $a,b\geq 0$ it holds that
$$
a^{p-1}-|a-b|^{p-2}(a-b)\leq (p-1)\max\{a^{p-2},b^{p-2}\}b.
$$
\end{lema}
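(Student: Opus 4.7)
The natural approach is to split according to the sign of $a-b$, since the left-hand side collapses to different expressions in each regime, and the function $f(t) = t^{p-1}$ is convex and non-decreasing on $[0,\infty)$ for $p \geq 2$.

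\emph{Case $a \geq b \geq 0$.} Here $|a-b|^{p-2}(a-b) = (a-b)^{p-1}$, so the left-hand side is $a^{p-1} - (a-b)^{p-1}$. I would apply the mean value theorem to $f(t) = t^{p-1}$ on the interval $[a-b, a]$: there exists $\xi \in [a-b, a]$ with
\[
a^{p-1} - (a-b)^{p-1} = (p-1)\xi^{p-2} b.
\]
Since $p-2 \geq 0$ and $\xi \leq a$, we have $\xi^{p-2} \leq a^{p-2} = \max\{a^{p-2}, b^{p-2}\}$, which yields the desired bound.

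\emph{Case $0 \leq a < b$.} Now $|a-b|^{p-2}(a-b) = -(b-a)^{p-1}$, so the left-hand side becomes $a^{p-1} + (b-a)^{p-1}$, and $\max\{a^{p-2}, b^{p-2}\} = b^{p-2}$. The key observation is that the function $f(t) = t^{p-1}$, being convex on $[0,\infty)$ with $f(0)=0$, is superadditive: $f(x) + f(y) \leq f(x+y)$ for $x,y \geq 0$ (which follows by the two convex-combination bounds $f(x) \leq \tfrac{x}{x+y} f(x+y)$ and $f(y) \leq \tfrac{y}{x+y} f(x+y)$ and adding). Applying this with $x=a$, $y=b-a$ gives
\[
a^{p-1} + (b-a)^{p-1} \leq b^{p-1} \leq (p-1)\, b^{p-2} \cdot b,
\]
where the last inequality uses $p \geq 2$.

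I do not expect any real obstacle here: the hypothesis $p \geq 2$ is used precisely twice (monotonicity of $t \mapsto t^{p-2}$ in Case 1, and $(p-1) \geq 1$ together with convexity of $t^{p-1}$ in Case 2), and both are elementary. The only thing to watch is the boundary configurations $a=0$ or $b=0$, but both reduce to trivial identities compatible with the claimed inequality, so they can be absorbed into the two main cases without separate treatment.
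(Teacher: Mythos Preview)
Your proof is correct and follows the same overall structure as the paper's: split by the sign of $a-b$ and use elementary convexity/calculus in each case. The paper first performs the homogeneity reduction $t=b/a$ and then, for $t\le 1$, uses a second-order Taylor expansion of $(1-t)^{p-1}$ (which is exactly your mean value theorem argument in disguise), while for $t>1$ it compares $f(t)=1+(t-1)^{p-1}$ and $g(t)=(p-1)t^{p-1}$ via $f(1)\le g(1)$ and $f'\le g'$. Your use of superadditivity of the convex function $t\mapsto t^{p-1}$ vanishing at the origin is a cleaner replacement for that derivative comparison, and avoiding the substitution $t=b/a$ spares you the (implicit) separate treatment of $a=0$. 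So the two proofs are equivalent in spirit, with yours being slightly more streamlined.
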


\begin{proof}
We assume $p>2$ since the inequality is trivial when  $p=2$. We have to prove that
\begin{align*}
a^{p-1}\left( 1-\left|1-\frac{b}{a} \right|^{p-2}\left(1-\frac{b}{a}\right)\right)\leq
\begin{cases}
(p-1) a^{p-2}b &\text{ if } b\leq a,\\
(p-1) b^{p-1} &\text{ if } a<b.
\end{cases}
\end{align*}
Denoting $t=\frac{b}{a}$, the last expression is equivalent to
\begin{align*}
1-\left|1-t \right|^{p-2}\left(1-t\right) \leq
\begin{cases}
(p-1) t &\text{ if } t\leq 1,\\
(p-1) t^{p-1} &\text{ if } t>1.
\end{cases}
\end{align*}
When $t\leq 1$, by using Taylor's expansion we have
\begin{align*}
1-|1-t|^{p-2}(1-t)&=1-(1-t)^{p-1}\\
&=1-\left( 1-(p-1)t + (p-1)(p-2) (1-\tilde t)^{p-3} \frac{t^2}{2}\right)
 \leq (p-1)t
\end{align*}
where  $\tilde t\in [0,t]$, which gives the result when $t\le 1$. When $t>1$ we have
\begin{align*}
1-|1-t|^{p-2}(1-t)=1+(t-1)^{p-1} \leq (p-1)t^{p-1}
\end{align*}
where in the last inequality we used that  $f(t)=1+(t-1)^{p-1}$ and $g(t)=(p-1)t^{p-1}$ are such that $f(1)=g(1)$ and $f'(t)\leq g'(t)$ for $t>1$. This gives the result for $t>1$ and concludes the proof.
\end{proof}

\begin{thm}[Cauchy] \label{main}
Let $u$ be the solution of \eqref{eqC}  corresponding to the initial datum $u_0$. Then the following holds:

\begin{enumerate}[leftmargin=15pt, label=(\roman*)]\itemsep2pt \parskip3pt \parsep2pt
\rm \item \it For any $p\in (2,\infty)$ and $u_0\in \LL^q(\R^n)$, $q\in [1,\infty]$
$$
\|u( t)\|_{\LL^\infty(\R^n)} \leq
\frac{  \mathsf{\tilde K}_p}{t^{\frac{1}{p-2}}}   + \mathsf{K}_{p,q,J}\|u( t_0)\|_{\LL^q(\R^n)}  \quad \text{ for all } 0<t_0\leq  t\leq T.
$$
When $u_0\in \LL^\infty(\R^n)$, we can set $t_0=0$  and get the estimate below.

\rm \item \it For any $p\in (1,\infty)$ and $u_0\in \LL^\infty(\R^n)$
$$
\|u(t)\|_{\LL^\infty(\R^n)}   \leq \|u(t_0)\|_{\LL^\infty(\R^n)}  \quad \text{ for all }  0\leq t_0 \leq t.
$$
\end{enumerate}
Here, $\mathsf{\tilde K}_p$ and $\mathsf{K}_{p,q,J}$ are the explicit constants depending of $p$, $q$, $n$ and $\|J\|_{\LL^\infty(\R^n)}$ given in \eqref{ctes}.

\end{thm}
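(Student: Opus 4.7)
The proof splits into the two parts, with (ii) following immediately from Proposition~\ref{norm.decreasing.difference} applied with $v\equiv 0$ and $q\to\infty$, so I focus on the smoothing estimate in (i). The overall plan is to exploit the strict convexity of the nonlinearity via Lemma~\ref{pointw.ineq} together with the B\'enilan--Crandall estimate of Proposition~\ref{BC} (which is precisely what requires $p>2$), producing a self-improving pointwise inequality that closes via Young's inequality. By the comparison principle (Proposition~\ref{weak.comp}) applied to the solution $w\ge 0$ starting from $|u_0|$, it suffices to prove the bound for nonnegative solutions with nonnegative data, whose $\LL^q$-norm is controlled by that of $u_0$.

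Applying Lemma~\ref{pointw.ineq} with $a=u(x,t)$, $b=u(y,t)$, multiplying by $J(x-y)$ and integrating in $y\in\R^n$, and using $\|J\|_{\LL^1(\R^n)}=1$ together with the identity $\int_{\R^n} J(x-y)|a-b|^{p-2}(a-b)\,dy=-\J_p u(x,t)=-u_t(x,t)$, one arrives at
\begin{equation*}
u(x,t)^{p-1}+u_t(x,t)\le(p-1)\int_{\R^n} J(x-y)\max\{u(x,t)^{p-2},u(y,t)^{p-2}\}\,u(y,t)\,dy.
\end{equation*}
Invoking the B\'enilan--Crandall bound $-u_t(x,t)\le u(x,t)/((p-2)t)$, the term $u_t$ can be moved to the right-hand side and replaced by $u(x,t)/((p-2)t)$, giving
\begin{equation*}
u(x,t)^{p-1}\le \frac{u(x,t)}{(p-2)t}+(p-1)\int_{\R^n} J(x-y)\bigl[u(x,t)^{p-2}u(y,t)+u(y,t)^{p-1}\bigr]\,dy.
\end{equation*}

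The next step is Young's inequality: applied with conjugate exponents $(p-1)/(p-2)$ and $p-1$ to the mixed term $u(x,t)^{p-2}u(y,t)$, and with exponents $p-1$ and $(p-1)/(p-2)$ to $u(x,t)/((p-2)t)$, each produces a summand $\varepsilon u(x,t)^{p-1}$ that can be absorbed on the left, plus a term of order $C_\varepsilon t^{-(p-1)/(p-2)}$, plus a convolution remainder bounded by $\|J\|_{\LL^\infty(\R^n)}\|u(t)\|_{\LL^{p-1}(\R^n)}^{p-1}$. Taking essential supremum over $x\in\R^n$ yields the basic $\LL^{p-1}$--$\LL^\infty$ smoothing
\begin{equation*}
\|u(t)\|_{\LL^\infty(\R^n)}^{p-1}\le \mathsf{\bar c}_{p,J}\bigl(t^{-(p-1)/(p-2)}+\|u(t)\|_{\LL^{p-1}(\R^n)}^{p-1}\bigr).
\end{equation*}
The case of general $q\in[1,p-1]$ then follows from the interpolation inequality $\|u\|_{\LL^{p-1}}^{p-1}\le\|u\|_{\LL^\infty}^{p-1-q}\|u\|_{\LL^q}^q$ and a further Young's/absorption step, after which $\LL^q$-contractivity (Proposition~\ref{norm.decreasing.difference}) replaces $\|u(t)\|_{\LL^q}$ by $\|u(t_0)\|_{\LL^q}$. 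Cases $q\in(p-1,\infty]$ are handled by interpolation between the $\LL^{p-1}$ and $\LL^\infty$ estimates. All constants arising from the Young/absorption steps are tracked explicitly, producing the $\mathsf{\tilde K}_p$ and $\mathsf{K}_{p,q,J}$ of the statement.

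The principal obstacle is that the dissipation $\J_p u=u_t$ appearing in the pointwise inequality is unsigned, so without the time-monotonicity of Proposition~\ref{BC} - which is driven by the homogeneity of degree $p-1$ of the operator and is only meaningful when $p>2$ on nonnegative data - there is no way to eliminate it. This is exactly why no such smoothing can hold in the linear case $p=2$, as underlined in the Introduction; conversely, the strict convexity ($p>2$) is harnessed both through Lemma~\ref{pointw.ineq} and through the very existence of the B\'enilan--Crandall bound.
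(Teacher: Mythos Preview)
Your approach is essentially that of the paper (and in fact mirrors the sketch given in the Introduction). The core mechanism---Lemma~\ref{pointw.ineq} plus the B\'enilan--Crandall estimate, followed by Young/absorption---is exactly right, and your reduction to nonnegative data via the solution with datum $|u_0|$ is a clean variant of the paper's split into $u_0^\pm$.

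Two points require more care. First, the absorption step (moving $\varepsilon\|u(t)\|_{\LL^\infty}^{p-1}$ to the left) presupposes that $\|u(t)\|_{\LL^\infty}<\infty$, which is not known a priori for $u_0\in\LL^q$ with $q<\infty$; the paper handles this by first working with the truncated data $u_{0,k}=\min\{u_0,k\}$ (for which $u_k(t)\in\LL^\infty$ by part~(ii)), obtaining the estimate uniformly in $k$, and then passing to the limit via monotone convergence and lower semicontinuity of the norm. Second, your treatment of $q>p-1$ by ``interpolation between the $\LL^{p-1}$ and $\LL^\infty$ estimates'' is not quite right, since on $\R^n$ there is no inclusion $\LL^q\subset\LL^{p-1}$ and no $\LL^\infty$ bound is yet available. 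The paper instead applies H\"older with a $q$-dependent exponent directly to the convolution terms $\int J(x-y)u(y,t)\,dy$ and $\int J(x-y)u(y,t)^{p-1}\,dy$, bounding them by $\|J\|_{\LL^\infty}^{(p-1)/(p+q-2)}\|u(t)\|_{\LL^{p+q-2}}^{p-1}$ via \eqref{J.interp}; the interpolation $\|u\|_{\LL^{p+q-2}}^{p+q-2}\le\|u\|_{\LL^\infty}^{p-2}\|u\|_{\LL^q}^q$ then closes the loop uniformly for all $q\ge1$.
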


\begin{proof}

Let $u$ be the solution of \eqref{eqC} with $p\in (1,\infty)$ corresponding to the initial datum $u_0$.

When $u_0\in \LL^\infty(\R^n)$,   inequality $(ii)$ is just a consequence of the $\LL^\infty$--contractivity property of solutions stated in Proposition \ref{norm.decreasing.difference}. Hence, we focus in $(i)$ for a fixed $p\in (2,\infty)$. For the sake of simplicity we split the proof in several steps.

\medskip

\noindent$\circ~$\textsc{Step 1}. \textit{Reduction argument}. Given a nonnegative function $u_0\in \LL^q(\R^n)$, $q\geq 1$, we consider the sequence of approximations $\{u_{0,k}\}_{k\in\N}$ given by $u_{0,k}=\min\{u_0,k\}$.

We denote by $u_k$ the solution of \eqref{eqC} corresponding to the datum $u_{0,k}$. Observe that by Proposition \ref{weak.comp}, $u_k(x,t)\geq 0$ for \text{a.e.} $x\in \R^n$ and $t\in (0,T)$. Moreover, by the contraction principle stated in Proposition \ref{norm.decreasing.difference} we have that $u_k( t) \in \LL^\infty(\R^n)$ for every $t\in [0,T]$.

\medskip

\noindent$\circ~$\textsc{Step 2}. \textit{$\LL^{p+q-2}$--$\LL^\infty$ smoothing effect}. By definition of solution  we have that $(u_k)_t(x,t)=\J_pu_k(x,t)$ \text{a.e} in $\R^n\times (0,T)$. Adding and subtracting $u_k^{p-1}(x,t)$,  making use of the B\'enilan-Crandall estimate given in Lemma \ref{BC} and the fact that $\|J\|_{\LL^1(\R^n)}=1$, it is obtained that
$$
-\frac{u_k(x,t)}{(p-2)t} \leq (u_k)_t(x,t) = -u_k^{p-1}(x,t) + \mathcal{I}(u_k), \quad \text{ a.e. }x\in \R^n \text{ and } t\in (0,T)
$$
where  we have denoted
$$
\mathcal{I}(u_k):=\int_{\R^n} J(x-y)\left\{|u_k(y,t)-u_k(x,t)|^{p-2}(u_k(y,t)-u_k(x,t))+u_k^{p-1}(x,t)\right\} \,dy.
$$
This gives the following relation for \text{a.e.} $x\in \R^n$ and $t\in (0,T)$
$$
u_k^{p-1}(x,t) \leq  \frac{u_k(x,t)}{(p-2)t}+\mathcal{I} (u_k).
$$
From this,  using  Young's inequality with $\varepsilon>0$ to determine, we get
$$
u_k^{p-1}(x,t) \leq  \varepsilon u_k^{p-1}(x,t) + \mathsf{c}_{\varepsilon} t^{-\frac{p-1}{p-2}} +\mathcal{I}(u_k)
$$
with $\mathsf{c}_{\varepsilon}:=\varepsilon^{-\frac{1}{p-2}}(p-2)^{-\frac{1}{p-2}}(p-1)^{-\frac{p-1}{p-2}}$, from where
\begin{equation} \label{eq.u.1}
u_k^{p-1}(x) \leq   \frac{\mathsf{c}_{\varepsilon}}{1-\varepsilon} t^{-\frac{p-1}{p-2}}+\frac{1}{1-\varepsilon}\mathcal{I}(u_k).
\end{equation}
Let us estimate $\mathcal{I}(u_k)$. Using Lemma \ref{pointw.ineq} we get
\begin{align*}
\mathcal{I}(u_k) &\leq
(p-1)\int_{\R^n} J(x-y)\,\max\{ u_k(x,t)^{p-2}, u_k(y,t)^{p-2}\} u_k(y,t)\,dy\\
&\leq
(p-1) u_k(x,t)^{p-2}\int_{\R^n} J(x-y) u_k(y,t)\,dy+(p-1)\int_{\R^n} J(x-y) u_k(y,t)^{p-1}\,dy:= (i)+ (ii).
\end{align*}
Since $J\in \LL^1(\R^n)\cap \LL^\infty(\R^n)$ and $\|J\|_{\LL^1(\R^n)}=1$, using interpolation, for any $1\leq r \leq \infty$ we have that
\begin{equation} \label{J.interp}
\|J\|_{\LL^r(\R^n)} \leq \|J\|_{\LL^1(\R^n)}^\frac{1}{r} \|J\|_{\LL^\infty(\R^n)}^{1-\frac{1}{r}} =  \|J\|_{\LL^\infty(\R^n)}^{1-\frac{1}{r}}.
\end{equation}
We use Young's inequality with $\varepsilon_1>0$ to determinate, H\"older's inequality  and \eqref{J.interp} to get that
\begin{align*}
(i)&\leq \varepsilon_1 u_k(x,t)^{p-1} + \mathsf{c}_{\varepsilon_1} \left(\int_{\R^n} J(x-y) u_k(y,t)\,dy\right)^{p-1}\\
&\leq \varepsilon_1 u_k(x,t)^{p-1} + \mathsf{c}_{\varepsilon_1} \left(\int_{\R^n} J(x-y)^\frac{p+q-2}{p+q-3}\,dy\right)^\frac{(p-1)(p+q-3)}{p+q-2} \left(\int_{\R^n} u_k(y,t)^{p+q-2}\,dy \right)^\frac{p-1}{p+q-2}\\
&\leq \varepsilon_1 u_k(x,t)^{p-1} +
\mathsf{c}_{\varepsilon_1}\|J\|_{\LL^\infty(\R^n)}^\frac{p-1}{p+q-2} \|u_k(t)\|_{\LL^{p+q-2}(\R^n)}^{p-1},
\end{align*}
where $\mathsf{c}_{\varepsilon_1}$ is given by
$$
\mathsf{c}_{\varepsilon_1}:=\left(\frac{p-2}{\varepsilon_1}\right)^{p-2}.
$$
Moreover, using \eqref{J.interp} we have that
\begin{align*}
(ii) &\leq (p-1) \left( \int_{\R^n} u_k(y,t)^{p+q-2}\,dy\right)^\frac{p-1}{p+q-2} \left( \int_{\R^n} J(x-y)^\frac{p+q-2}{q-1}\,dy\right)^\frac{q-1}{p+q-2}\\
&\leq (p-1)\|J\|_{\LL^\infty(\R^n)}^\frac{p-1}{p+q-2} \|u_k(t)\|_{\LL^{p+q-2}(\R^n)}^{p-1}.
\end{align*}
Gathering the last two bounds yields
\begin{equation}\label{eq.I.1}
\mathcal{I}(u_k) \leq  \varepsilon_1 u_k(x,t)^{p-1} +  (1+\mathsf{c}_{\varepsilon_1} ) \mathsf{c}_1 \|u_k(t)\|_{\LL^{p+q-2}(\R^n)}^{p-1},
\end{equation}
where $\mathsf{c}_1:=(p-1)\|J\|_{\LL^\infty(\R^n)}^\frac{p-1}{p+q-2}$. Inserting \eqref{eq.I.1} into \eqref{eq.u.1} yields
\begin{align*}
u_k^{p-1}(x,t) \leq   \frac{\mathsf{c}_{\varepsilon}}{1-\varepsilon} t^{-\frac{p-1}{p-2}}+
\frac{\varepsilon_1}{1-\varepsilon} u_k(x,t)^{p-1} + \frac{ \mathsf{c}_1 (1+\mathsf{c}_{\varepsilon_1})  }{1-\varepsilon}\|u_k(t)\|_{\LL^{p+q-2}(\R^n)}^{p-1}
\end{align*}
and then, denoting $\kappa_{\varepsilon_1}=\mathsf{c}_1(1+\mathsf{c}_{\varepsilon_1}) $, we get
\begin{align} \label{step2}
u_k^{p-1}(x,t) \leq   \frac{\mathsf{c}_{\varepsilon} t^{-\frac{p-1}{p-2}} }{1-\varepsilon-\varepsilon_1} +
\frac{\kappa_{\varepsilon_1}}{1-\varepsilon-\varepsilon_1}  \|u_k(t)\|_{\LL^{p+q-2}(\R^n)}^{p-1}, \quad  \text{ a.e. }x\in \R^n,\ \text{ and } t\in (0,T).
\end{align}

\medskip

\noindent$\circ~$\textsc{Step 3}. \textit{$\LL^{q}$--$\LL^\infty$ smoothing}. We use now Young's inequality with $\varepsilon_2>0$ to determinate, to get that
\begin{align*}
\|u_k(t)\|_{\LL^{p+q-2}(\R^n)}^{p-1} = \left(\int_{\R^n} u_k(x,t)^{p-2} u_k (x,t)^q \,dx\right)^\frac{p-1}{p+q-2} &\leq \|u_k(t)\|_{\LL^\infty(\R^n)}^\frac{(p-1)(p-2)}{p+q-2} \|u_k(t)\|_{\LL^q(\R^n)}^\frac{q(p-1)}{p+q-2}\\
&\leq \varepsilon_2 \|u_k(t)\|_{\LL^\infty(\R^n)}^{p-1} + \kappa_{\varepsilon_2} \|u_k(t)\|_{\LL^q(\R^n)}^{p-1},
\end{align*}
where $\kappa_{\varepsilon_2}= \left( \frac{p-2}{\varepsilon_2}\right)^\frac{p-2}{q} q (p+q-2)^{-\frac{p+q-2}{q}}$. Hence, inserting this expression into \eqref{step2}, we get
\begin{align*}
\left(1- \frac{\kappa_{\varepsilon_1}}{1-\varepsilon-\varepsilon_1}  \varepsilon_2 \right) \|u_k(t)\|_{\LL^\infty(\R^n)}^{p-1} \leq   \frac{\mathsf{c}_{\varepsilon}}{1-\varepsilon-\varepsilon_1} t^{-\frac{p-1}{p-2}}
+
\frac{\kappa_{\varepsilon_1} \kappa_{\varepsilon_2} }{1-\varepsilon-\varepsilon_1}  \|u_k(t)\|_{\LL^q(\R^n)}^{p-1},
\end{align*}
which gives
\begin{align} \label{step3}
 \|u_k(t)\|_{\LL^\infty(\R^n)}^{p-1} \leq
\frac{\mathsf{c}_{\varepsilon}t^{-\frac{p-1}{p-2}}  + \kappa_{\varepsilon_1}\kappa_{\varepsilon_2} \|u_k(t)\|_{\LL^q(\R^n)}^{p-1}}{1-\varepsilon-\varepsilon_1-\varepsilon_2 \kappa_{\varepsilon_1}} .
\end{align}

\medskip

\noindent$\circ~$\textsc{Step 4}. \textit{Choosing $\varepsilon$, $\varepsilon_1$ and $\varepsilon_2$}. We choose $\varepsilon=\varepsilon_1=\frac18$ and $\varepsilon_2=\frac{1}{4 \kappa_{\varepsilon_1}}$. This election gives that  $\varepsilon+\varepsilon_1+\varepsilon_2 \kappa_{\varepsilon_1} =\frac12$. Therefore,  \eqref{step3}  and the  sub-additivity of the concave function $0\leq x\mapsto x^\frac{1}{p-1}$ yield
\begin{align*} %\label{step4}
 \|u_k(t)\|_{\LL^\infty(\R^n)}  \leq
 \mathsf{\tilde K}_p t^{-\frac{1}{p-2}}  + \mathsf{K}_{p,q,J} \|u_k(t)\|_{\LL^q(\R^n)}
\end{align*}
where the constants can be taken as
\begin{equation} \label{ctes}
\mathsf{\tilde K}_p= 2 \left( \frac{8}{p-2}\right)^\frac{1}{(p-2)(p-1)}, \quad
(\mathsf{K}_{p,q,J})^{p-1}=
q(8p)^\frac{p(p+q)}{q} \|J\|_{\LL^\infty(\R^n)}^\frac{p-1}{q}.
\end{equation}

\medskip

\noindent$\circ~$\textsc{Step 5}. \textit{Limit as $k\to\infty$}. By construction  we have that $0\leq u_{0,k} \leq u_{0,k+1}$ \text{a.e.} in $\R^n$ for all $k\in\N$ and
$$
\lim_{k\to\infty} u_{0,k} = u_0 \quad \text{a.e. monotonically from below.}
$$
Then, Proposition \ref{weak.comp} yields
$$
0\leq u_k(x,t) \leq u_{k+1}(x,t) \quad \text{a.e. } x\in \R^n \text{ and } t\in [0,T).
$$
By monotonicity, the pointwise limit of $\{u_k(x,t)\}_{k\in\N}$ always exists (possibly being $+\infty$ on a set of measure zero), and then we define the candidate to limit solution as
$$
u(x,t):=\liminf_{k\to\infty} u_k(x,t).
$$
Observe that by the uniqueness of solution, we have that the limit function $u$ is indeed a solution of \eqref{eqC} with datum $u_0$. Moreover, by the lower semicontinuity of the norm, the Monotone Convergence  Theorem  and Proposition \ref{norm.decreasing.difference}, we have that
$$
\|u(t)\|_{\LL^q(\R^n)}\leq \liminf_{k\to\infty} \|u_k(t)\|_{\LL^q(\R^n)}\leq \|u(t)\|_{\LL^q(\R^n)} \leq \|u_0\|_{\LL^q(\R^n)}.
$$
As a consequence, the set of $(x,t)\in \R^n\times[0,T]$ where $u(x,t)=+\infty$ has measure zero, and then the convergence above holds almost everywhere.  Finally, the above estimate together with the lower semicontinuity of the norm and  Proposition \ref{norm.decreasing.difference} yield
\begin{align*}
\|u(t)\|_{\LL^\infty(\R^n)} \leq \liminf_{k\to\infty} \|u_k(t)\|_{\LL^\infty(\R^n)} &\leq
\mathsf{\tilde K}_{p} t^{-\frac{1}{p-2}}  + \mathsf{K}_{p,q,J} \liminf_{k\to\infty} \|u_k(t)\|_{\LL^q(\R^n)}\\
&\leq
\mathsf{\tilde K}_{p} t^{-\frac{1}{p-2}}   + \mathsf{K}_{p,q,J} \|u_0\|_{\LL^q(\R^n)}.
\end{align*}
Therefore, for any nonnegative solution corresponding to the nonnegative initial datum $u_0\in \LL^q(\R^n)$ with $q\in [1,\infty]$ it holds that
\begin{equation} \label{step6}
\|u(t)\|_{\LL^\infty(\R^n)}\leq
\mathsf{\tilde K}_{p} t^{-\frac{1}{p-2}} + \mathsf{K}_{p,q,J}\|u_0\|_{\LL^q(\R^n)}.
\end{equation}

\medskip

\noindent$\circ~$\textsc{Step 6}. \textit{$\LL^q$--$\LL^\infty$ Smoothing for signed solutions}.  In this final step we get rid of the nonnegative assumption on solutions. To do so, let $u$ be a solution of \eqref{eqC} with initial datum $u_0\in \LL^q(\R^n)$.  We remark  that both $u$ and $u_0$ may change sign.

By definition we have that $u_0\leq u_0^+:=\max\{u_0,0\}$ in $\R^n$. Consider the solution $\widetilde{u^+}$ of \eqref{eqC} corresponding to the initial datum $u_0^+$. The comparison principle given in Proposition \ref{weak.comp} yields that $\widetilde{u^+}$ is nonnegative and $u\leq \widetilde{u^+}$ for \text{a.e.} $x\in \R^n$ and $t\in [0,T)$. Therefore, by using the smoothing obtained in \eqref{step6} for nonnegative solutions we get
$$
u(x,t) \leq \|u(t)\|_{\LL^\infty(\R^n)} \leq \|\widetilde{u^+}(\cdot,t)\|_{\LL^\infty(\R^n)} \leq
\mathsf{\tilde K}_{p} t^{-\frac{1}{p-2}}  + \mathsf{K}_{p,q,J} \|u_0^+\|_{\LL^q(\R^n)}  \quad \forall  t\in (0,T].
$$
Observe that by the oddness of $\J_p$, $-u$ is also solution of \eqref{eqC} with initial datum $-u_0$. In this case  we have that $-u_0 \leq u_0^-:=\max\{-u_0,0\}$ in $\R^n$, and we can consider the solution $\widetilde{u^-}$ of \eqref{eqC} with initial datum $u_0^-$. Again, by comparison, $\widetilde{u^-}$ is nonnegative and $-u \leq \widetilde{u^-}$ for \text{a.e.} $x\in \R^n$ and $t\in [0,T)$, from where \eqref{step6} gives that
$$
-u(x,t) \leq \widetilde{u^-}(x,t) \leq \|\widetilde{u^-}(\cdot,t)\|_{\LL^\infty(\R^n)} \leq
\mathsf{\tilde K}_{p}  t^{-\frac{1}{p-2}}  + \mathsf{K}_{p,q,J} \|u_0^-\|_{\LL^q(\R^n)}  \quad \forall  t\in (0,T].
$$
The last two estimates give that
\begin{align*}
\|u(t)\|_{\LL^\infty(\R^n)} &\leq  2 \mathsf{\tilde K}_p t^{-\frac{1}{p-2}}  + \mathsf{K}_{p,q,J} \|u_0^-\|_{\LL^q(\R^n)} +\mathsf{K}_{p,q,J} \|u_0^+\|_{\LL^q(\R^n)} \\
&=  2 \mathsf{\tilde K}_{p} t^{-\frac{1}{p-2}}  + \mathsf{K}_{p,q,J} \|u_0\|_{\LL^q(\R^n)}
\end{align*}
since $u_0^+$ and $u_0^-$ have disjoint supports. The proof is now complete.
\end{proof}

Using similar arguments, we can obtain the smoothing for both the Neumann and homogeneous Dirichlet problems. For the Neumann problem, as mentioned in the introduction, we additionally assume condition \eqref{HJ} on $J$.

\begin{thm}[Neumann and Dirichlet] \label{main2}
Let $\Omega\subset \R^n$ be open and bounded and let $u_D$ and $u_N$ be the solutions of \eqref{eqD} and \eqref{eq}, respectively,  with   initial data $u_{D,0}$ and $u_{N,0}$. Then the following holds:
\begin{enumerate}[leftmargin=15pt, label=(\roman*)]\itemsep2pt \parskip3pt \parsep2pt
\rm \item \it for any $p\in (2,\infty)$ and $u_{D,0}\in \LL^q(\Omega)$, $q\in [1,\infty]$
$$
\|u_D( t)\|_{\LL^\infty(\Omega)} \leq
\frac{\mathsf{\tilde K}_p}{t^{\frac{1}{p-2}}}   + \mathsf{K}_{p,q,J}\|u_D( t_0)\|_{\LL^q(\Omega)}  \quad \text{ for all } 0\le t_0\leq  t\leq T.
$$
When $\Omega$ additionally satisfies condition \eqref{HJ}, then
for any $u_{N,0}\in \LL^q(\Omega)$, $q\in [1,\infty]$
$$
\|u_N( t)\|_{\LL^\infty(\Omega)} \leq
\kappa_{J,\Omega}^{-1} \left(  \frac{\mathsf{\tilde K}_p}{t^{\frac{1}{p-2}}}   + \mathsf{K}_{p,q,J}\|u_N( t_0)\|_{\LL^q(\Omega)} \right) \quad \text{ for all } 0\le t_0\leq  t\leq T.
$$

\rm \item \it For any $p\in (1,\infty)$ and $u_0\in \LL^\infty(\R^n)$
$$
\|u_D(t)\|_{\LL^\infty(\Omega)}   \leq \|u_D(t_0)\|_{\LL^\infty(\Omega)}  \quad \text{ for all }  0\leq t_0 \leq t
$$
and the same holds for $u_N$.
\end{enumerate}
Here,  $\mathsf{\tilde K}_p$ and $\mathsf{K}_{p,q,J}$ are the explicit constants  given in \eqref{ctes}.
\end{thm}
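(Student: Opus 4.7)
The plan is to imitate the six-step structure of the proof of Theorem~\ref{main} for the Cauchy problem, with two adjustments dictated by the geometry. First, the $\LL^\infty$ stability statement (part (ii)) is immediate from the $\LL^\infty$-contractivity already established in Proposition~\ref{norm.decreasing.difference} for the Dirichlet and Neumann semigroups. The real work is thus the $\LL^q$--$\LL^\infty$ smoothing in (i), and as in Theorem~\ref{main} I would first reduce to nonnegative solutions via the truncation $u_{0,k}=\min\{u_0,k\}$, and then remove the sign restriction by splitting $u_0=u_0^+-u_0^-$ and applying the weak comparison principle of Proposition~\ref{weak.comp}, both of which are already available for \eqref{eqD} and \eqref{eq}.

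For the Dirichlet problem I would rewrite the equation in the extended form
\[
u_t(x,t)=\widetilde\J_{p,\Omega} u(x,t)=\int_{\R^n}J(x-y)|\tilde u(y,t)-u(x,t)|^{p-2}(\tilde u(y,t)-u(x,t))\,dy,\qquad x\in\Omega,
\]
so that adding and subtracting $u(x,t)^{p-1}$ and using $\|J\|_{\LL^1(\R^n)}=1$ produces the same decomposition
\[
-u^{p-1}(x,t)+\mathcal{I}_D(u)=u_t(x,t)\ge -\frac{u(x,t)}{(p-2)t}
\]
as in Step 2 of the Cauchy proof, with the Benilan--Crandall estimate being Proposition~\ref{BC} for the Dirichlet problem. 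The integrand $\mathcal{I}_D(u)$ is bounded exactly as $\mathcal{I}(u)$ was, by Lemma~\ref{pointw.ineq} and iterated H\"older/Young inequalities, since extending $\tilde u$ by zero outside $\Omega$ only shrinks the positive integrand. The resulting chain produces the same constants $\mathsf{\tilde K}_p$, $\mathsf{K}_{p,q,J}$ from \eqref{ctes}, and the passage to the limit $k\to\infty$ is unchanged.

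For the Neumann problem the integral is intrinsic to $\Omega$, so the same manipulation now reads
\[
u_t(x,t)=\J_{p,\Omega} u(x,t)=-u(x,t)^{p-1}\int_\Omega J(x-y)\,dy+\mathcal{I}_N(u).
\]
The coefficient in front of $-u^{p-1}$ is no longer $1$ but $\int_\Omega J(x-y)\,dy$, and this is precisely where hypothesis~\eqref{HJ} enters: it gives $\int_\Omega J(x-y)\,dy\ge \kappa_{J,\Omega}>0$ uniformly in $x\in\Omega$. Combining with Benilan--Crandall yields
\[
\kappa_{J,\Omega}\, u^{p-1}(x,t)\le \frac{u(x,t)}{(p-2)t}+\mathcal{I}_N(u),
\]
and the Cauchy argument goes through with the prefactor $\kappa_{J,\Omega}^{-1}$ appearing at the very end, matching the stated bound. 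The bound on $\mathcal{I}_N$ uses the same Lemma~\ref{pointw.ineq}, since integration of $J$ over $\Omega$ is dominated by the integral over $\R^n$ in the H\"older step, so no extra geometric hypothesis is needed for those pieces.

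The main (only) obstacle is the degeneration of the coefficient of $-u^{p-1}(x,t)$ in the Neumann setting near $\partial\Omega$: without \eqref{HJ} one cannot absorb the $u^{p-1}$ term after Young's inequality and the whole smoothing machinery collapses. Everything else — the reduction to nonnegative data, the $k\to\infty$ monotone passage, the treatment of signed data via $u_0=u_0^+-u_0^-$ and comparison, and the Benilan--Crandall estimate — is already in place from Section~\ref{section3} and the proof of Theorem~\ref{main}, so once \eqref{HJ} is invoked in the Neumann case the argument is essentially a transcription of the Cauchy proof.
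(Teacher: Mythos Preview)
Your proposal is correct and follows essentially the same approach as the paper: the Dirichlet case is a verbatim transcription of the Cauchy proof via the zero extension, while in the Neumann case the key modification is exactly the appearance of the coefficient $\int_\Omega J(x-y)\,dy$ in front of $-u^{p-1}(x,t)$, which hypothesis~\eqref{HJ} bounds below by $\kappa_{J,\Omega}$, producing the factor $\kappa_{J,\Omega}^{-1}$ in the final estimate. The paper additionally rescales the Young parameters $\varepsilon,\varepsilon_1,\varepsilon_2$ by $\kappa_{J,\Omega}$ to make the absorption step work cleanly, but this is a bookkeeping detail you would encounter and resolve when carrying out the computation.
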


\begin{proof}
The proof in the homogeneous Dirichlet case runs in the very same way as in the Cauchy problem.

We will now highlight the main differences and provide a brief outline of the proof for the Neumann case. Given a nonnegative function $u_0\in \LL^q(\Omega)$, $1\leq q\leq \infty$, we consider the sequence of approximations $\{u_{0,k}\}_{k\in\N}$ given by $u_{0,k}=\min\{u_0,k\}$. Denote by $u_k$ the solution of \eqref{eq} corresponding to the datum $u_{0,k}$. By Proposition \ref{weak.comp} $u_k(x,t)\geq 0$ \text{a.e.} $x\in \Omega$ and $t\in (0,T)$, and  $u_k(t)\in \LL^\infty(\Omega)$ for every $t\in [0,T]$.

By definition of solution  we have that $(u_k)_t(x,t)=\J_{p,\Omega}u_k(x,t)$ \text{a.e.} in $\Omega\times (0,T)$. Adding and subtracting $u_k^{p-1}(x,t)$,  and using  Lemma \ref{BC} it is obtained that
$$
-\frac{u_k(x,t)}{(p-2)t} \leq (u_k)_t(x,t) = -\mathsf{c}_{J,\Omega}(x) u_k^{p-1}(x,t) + \mathcal{I}(u_k)  \quad \text{ a.e. }x\in \Omega t\in (0,T)
$$
where  we have denoted
$$
\mathcal{I}(u_k):=\int_{\Omega} J(x-y)\left\{|u_k(y,t)-u_k(x,t)|^{p-2}(u_k(y,t)-u_k(x,t))+u_k^{p-1}(x,t)\right\} \,dy,
$$
and for $x\in \Omega$, $\mathsf{c}_{J,\Omega}(x)$ is given by $\mathsf{c}_{J,\Omega}(x):= \int_{\Omega} J(x-y)\,dy$. Then,  assumption \eqref{HJ} gives
$$
u_k^{p-1}(x,t) \leq  \left( \frac{u_k(x,t)}{(p-2)t}+\mathcal{I} (u_k) \right) \frac{1}{\kappa_{J,\Omega}}.
$$
Proceeding as in Steps 2 and 3 of the proof of Theorem \ref{main} we get
\begin{align} \label{step3'}
\|u_k(t)\|_{\LL^\infty(\Omega)}^{p-1} \leq
\frac{\mathsf{c}_\varepsilon t^{-\frac{p-1}{p-2}}  + \kappa_{\varepsilon_1}\kappa_{\varepsilon_2} \|u_k(t)\|_{\LL^q(\Omega)}^{p-1}}{\kappa_{J,\Omega}-\varepsilon-\varepsilon_1-\varepsilon_2 \kappa_{\varepsilon_1}} .
\end{align}
Choosing $\varepsilon=\varepsilon_1=\frac18 \kappa_{J,\Omega}$  and $\varepsilon_2=\frac{\kappa_{J,\Omega}}{4 \kappa_{\varepsilon_1}}$ gives $\kappa_{J,\Omega}-\varepsilon-\varepsilon_1-\varepsilon_2 \kappa_{\varepsilon_1} =\frac12 \kappa_{J,\Omega}$. Therefore,  \eqref{step3'}  and the  sub-additivity of the concave function $0\leq x\mapsto x^\frac{1}{p-1}$ yield
\begin{equation} \label{stepp'}
\|u_k(t)\|_{\LL^\infty(\Omega)}  \leq
 \frac{\mathsf{\tilde K}_p}{\kappa_{J,\Omega}} t^{-\frac{1}{p-2}}  + \frac{\mathsf{K}_{p,q,J}}{\kappa_{J,\Omega}} \|u_k(t)\|_{\LL^q(\Omega)}
\end{equation}
where $\mathsf{\tilde K}_p$ and $\mathsf{K}_{p,q,J}$ are the constants given  in \eqref{ctes}.
Then, as in Steps 6 and 7 of the proof of Theorem \ref{main}, we can take $k\to\infty$ to obtain that \eqref{stepp'} holds true for signed solutions. This completes the proof.
\end{proof}

\subsection{Alternative form of the smoothing estimate}
By using the time-monotonicity and time-scaling property of solutions, we provide for an alternative form of Theorem \ref{main}.

\begin{prop}
Let $u$ be a solution of \eqref{eqC} with $p>2$  corresponding to the initial datum $u_0\in \LL^q(\R^n)$ with $q\in [1, \infty]$ and let $\mathsf{\tilde K}_p$ and $\mathsf{K}_{p,q,J}$ be the explicit constants  given in \eqref{ctes}.  Then
\begin{align*}
\|u(t)\|_{\LL^\infty(\R^n)}\leq
\begin{cases}
2 \mathsf{\tilde K}_p t^{-\frac{1}{p-2}} & \text{ if } 0<t\leq t_*,\\
2 \mathsf{K}_{p,q,J} \|u_0\|_{\LL^q(\R^n)} & \text{ if } t>t_*,
\end{cases}
\end{align*}
where
$$
t_*=\left(\frac{\mathsf{K}_{p,q,J}}{\mathsf{\tilde K}_p}\|u_0\|_{\LL^q(\R^n)} \right)^{2-p}.
$$
Similarly, when $\Omega\subset \R^n$ is an open  bounded set and $u_0\in \LL^q(\R^n)$ with $q\in [1,  \infty]$, the same holds for solutions of \eqref{eqD} by replacing $\R^n$ with $\Omega$. When $\Omega$ also satisfies \eqref{HJ}, the result holds for solutions of \eqref{eq} up to the multiplicative constant $\kappa_{J,\Omega}^{-1}$.

\end{prop}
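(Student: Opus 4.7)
I view the claim as a simple case-split reformulation of the single combined estimate already proved in Theorem \ref{main}, obtained by identifying the critical time $t_*$ at which the two summands on its right-hand side balance exactly. Accordingly, the only ingredients I need are Theorem \ref{main} itself (or Theorem \ref{main2} in the bounded-domain cases) together with the $\LL^q$-contractivity of Proposition \ref{norm.decreasing.difference}; no new analytic input is required, and the role of the time-monotonicity/time-scaling alluded to in the statement is really already absorbed into the B\'enilan--Crandall bound used to prove Theorem \ref{main}.

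First I would apply contractivity to the pair $(u,0)$, noting that $v\equiv 0$ is a solution, to obtain $\|u(t_0)\|_{\LL^q(\R^n)} \le \|u_0\|_{\LL^q(\R^n)}$ for every $t_0 \ge 0$. Inserted into the smoothing bound of Theorem \ref{main} applied at any admissible $t_0\in (0,t]$, this yields the $t_0$-free estimate
$$
\|u(t)\|_{\LL^\infty(\R^n)} \le \mathsf{\tilde K}_p\, t^{-\frac{1}{p-2}} + \mathsf{K}_{p,q,J}\|u_0\|_{\LL^q(\R^n)}, \qquad t>0,
$$
which is the object I will then compare termwise against the threshold $t_*$.

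The definition of $t_*$ is engineered precisely so that $\mathsf{\tilde K}_p\, t_*^{-1/(p-2)} = \mathsf{K}_{p,q,J}\|u_0\|_{\LL^q(\R^n)}$; monotonicity of $t\mapsto t^{-1/(p-2)}$ then makes exactly one term dominate on each of the subintervals $(0,t_*]$ and $(t_*,\infty)$. For $t\le t_*$ both summands are bounded by $\mathsf{\tilde K}_p\, t^{-1/(p-2)}$, so the sum is bounded by $2\mathsf{\tilde K}_p\, t^{-1/(p-2)}$; for $t>t_*$ both are bounded by $\mathsf{K}_{p,q,J}\|u_0\|_{\LL^q(\R^n)}$, so the sum is bounded by $2\mathsf{K}_{p,q,J}\|u_0\|_{\LL^q(\R^n)}$. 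The Dirichlet and Neumann versions follow by repeating the same two-case comparison starting from Theorem \ref{main2} instead of Theorem \ref{main}, which automatically carries the factor $\kappa_{J,\Omega}^{-1}$ in the Neumann case.

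There is essentially no analytic obstacle here: the entire argument is a bookkeeping rearrangement of a previously established inequality. The one point worth verifying at the outset is that for $q\in [1,\infty)$ the hypothesis $t_0>0$ in Theorem \ref{main} is harmless, since contractivity renders the right-hand side independent of $t_0$, so the bound holds uniformly for all admissible $t_0$ and in particular in the limit $t_0\to 0^+$.
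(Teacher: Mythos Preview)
Your proof is correct, and it is genuinely simpler than the paper's. You observe that the two summands in the smoothing estimate of Theorem \ref{main} balance exactly at $t_*$, so a direct case split on the single inequality
\[
\|u(t)\|_{\LL^\infty(\R^n)} \le \mathsf{\tilde K}_p\, t^{-\frac{1}{p-2}} + \mathsf{K}_{p,q,J}\|u_0\|_{\LL^q(\R^n)}
\]
(with $\|u(t_0)\|_{\LL^q}$ reduced to $\|u_0\|_{\LL^q}$ via contractivity) immediately gives both branches with the stated constants. The paper instead first applies Theorem \ref{main} to the rescaled solution $u_\lambda(x,t)=\lambda^{1/(p-2)}u(x,\lambda t)$ and optimizes in $\lambda$ to obtain the bound $\|u(t_*)\|_{\LL^\infty}\le 2\mathsf{K}_{p,q,J}\|u_0\|_{\LL^q}$ at the single time $t_*$; it then propagates this backward to $t\le t_*$ using the time-monotonicity of Lemma \ref{time.mono} (i.e.\ $u(x,t)\le (t_*/t)^{1/(p-2)}u(x,t_*)$), and forward to $t>t_*$ using the $\LL^\infty$-contractivity. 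Your argument bypasses both the scaling step and Lemma \ref{time.mono} entirely, which is a net gain here; the paper's route has the minor conceptual payoff of illustrating how the homogeneity/monotonicity structure alone suffices to transfer a single-time bound to all times, but for this particular statement your direct comparison is the cleaner proof.
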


\begin{proof}
Let $u$ be solution of \eqref{eqC} corresponding to the initial datum $u_0\in \LL^q(\R^n)$. Given $\lambda>0$, consider the scaled function $u_\lambda(x,t):=\lambda^\frac{1}{p-2}u(x,\lambda t)$. Then, Theorem \ref{main} together with the time-scaling property for solutions
%stated in  Lemma \ref{time.scaling},
and Proposition \ref{norm.decreasing.difference} gives
$$
\|u_\lambda(\tau)\|_{\LL^\infty(\R^n)} \leq
\mathsf{\tilde K}_p   \tau^{-\frac{1}{p-2}}   + \mathsf{K}_{p,q,J}\|u_\lambda(0)\|_{\LL^q(\R^n)}  \qquad \forall \tau>0,
$$
that is,
$$
\lambda^\frac{1}{p-2}\|u (\lambda \tau)\|_{\LL^\infty(\R^n)} \leq \lambda^\frac{1}{p-2}
\mathsf{\tilde K}_p   (\lambda \tau)^{-\frac{1}{p-2}}   + \lambda^\frac{1}{p-2}\mathsf{K}_{p,q,J}\|u_0\|_{\LL^q(\R^n)}  \qquad \forall \tau>0.
$$
We can optimize by choosing
$$
(\lambda \tau)^{-\frac{1}{p-2}} = \frac{\mathsf{K}_{p,q,J}}{\mathsf{\tilde K}_p}\|u_0\|_{\LL^q(\R^n)} \quad \text{i.e. } \quad \lambda \tau = \left(\frac{\mathsf{K}_{p,q,J}}{\mathsf{\tilde K}_p}\|u_0\|_{\LL^q(\R^n)} \right)^{2-p}:=t_*,
$$
to obtain $
\|u(t_*)\|_{\LL^\infty(\R^n)} \leq 2 \mathsf{K}_{p,q,J} \|u_0\|_{\LL^q(\R^n)}$.
Using the time-monotonicity given in Lemma \ref{time.mono} we have that $u(x,t)\leq \left(\tfrac{t_*}{t}\right)^\frac{1}{p-2}u(x,t_*)$ holds \text{a.e. }$x\in \R^n$ and $0<t\leq t_*$, from where we are lead to
\begin{align*}
\|u(t)\|_{\LL^\infty(\R^n)}&\leq \left(\frac{t_*}{t}\right)^\frac{1}{p-2} \|u(t_*)\|_{\LL^\infty(\R^n)}\leq \left(\frac{t_*}{t}\right)^\frac{1}{p-2}
2 \mathsf{K}_{p,q,J} \|u_0\|_{\LL^q(\R^n)}
\leq
2 \mathsf{\tilde K}_p t^{-\frac{1}{p-2}}.
\end{align*}
For $t> t_*$,  using Proposition \ref{norm.decreasing.difference} we get $\|u(t)\|_{\LL^\infty(\R^n)}\leq \|u(t_*)\|_{\LL^\infty(\R^n)} \leq 2 \mathsf{K}_{p,q,J} \|u_0\|_{\LL^q(\R^n)}$.

This concludes the proof for the Cauchy problem.
The Neumann and  homogeneous Dirichlet cases follow analogously by using Theorem \ref{main2} instead of Theorem \ref{main}.
\end{proof}

\subsection{Smoothing for $u_t$}
We prove now that the smoothing effect is also valid for the time derivative of solutions.

\begin{thm}[Smoothing for $u_t$] \label{smooth.ut}
Let  $u$ be a solution of \eqref{eqC} with $p\in (1,\infty)$ corresponding to the initial datum $u_0\in \LL^1_{loc}(\R^n)$. The following holds:

\begin{enumerate}[leftmargin=15pt, label=(\roman*)]\itemsep2pt \parskip3pt \parsep2pt
\rm \item For any $p\in (1,\infty)$ and  $u_0\in \LL^\infty(\R^n)$, then $u_t(t)\in \LL^\infty(\R^n)$ for any $t\geq t_0\geq 0$ and
\begin{align*}
\|u_t(t)\|_{\LL^\infty(\R^n)} \leq
2^{p} \|u(t_0)\|_{\LL^\infty(\R^n)}^{p-1}.
\end{align*}

\rm \item For any $p\in (2,\infty)$ and $u_0\in \LL^q(\R^n)$ with $q\in [1,\infty]$, then $u_t(t) \in \LL^r (\R^n)$ with $r\in [1,\infty]$ for any $t\geq t_0>0$ and
$$
\|u_t(t)\|_{\LL^r(\R^n)} \leq  2^p
 \|u(t_0)\|_{\LL^q(\R^n)}^\frac{q}{r} \|u(t_0)\|_{\LL^\infty(\R^n)}^{p-1-\frac{q}{r}}.
$$
In this expression we can take $t_0=0$ when $u_0\in \LL^\infty(\R^n)$.

\end{enumerate}

When $\Omega\subset \R^n$ is open and bounded the same estimates hold for the solution of \eqref{eqD} by replacing $\R^n$ with $\Omega$. If $\Omega$ also fulfills \eqref{HJ}, then the same  holds for problem \eqref{eq}, and additionally (ii) reads as
$$
\|u_t(t)\|_{\LL^r(\Omega)}
\leq 2^p |\Omega|^\frac1r  \|u(t_0)\|_{\LL^\infty(\R^n)}^{p-1},
$$
for any $p\in (2,\infty)$ and $u_0\in \LL^q(\Omega)$ with $q\in [1,\infty)$ for any $t\geq t_0>0$. In this expression we can take $t_0=0$ when $u_0\in \LL^\infty(\Omega)$.
\end{thm}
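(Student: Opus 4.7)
The plan is to combine a direct pointwise estimate on $\J_p u$ with the contractivity of the $\LL^q$ norms (Proposition \ref{norm.decreasing.difference}) and, where needed, the $\LL^q$--$\LL^\infty$ smoothing effect (Theorems \ref{main} and \ref{main2}). For part (i), I would start from $u_t(x,t)=\J_p u(x,t)$ and bound pointwise
\[
|u_t(x,t)| \leq \int_{\R^n} J(x-y)|u(y,t)-u(x,t)|^{p-1}\,dy \leq (2\|u(t)\|_{\LL^\infty(\R^n)})^{p-1}\|J\|_{\LL^1(\R^n)} = 2^{p-1}\|u(t)\|_{\LL^\infty(\R^n)}^{p-1},
\]
using only $\|J\|_{\LL^1}=1$ and $|u(y,t)-u(x,t)|\le 2\|u(t)\|_{\LL^\infty}$. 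The $\LL^\infty$-contractivity from Proposition \ref{norm.decreasing.difference} then replaces $\|u(t)\|_{\LL^\infty}$ by $\|u(t_0)\|_{\LL^\infty}$, yielding the claim with the slightly better constant $2^{p-1}$.

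For part (ii), $p>2$ together with Theorem \ref{main} guarantees $u(t_0)\in \LL^\infty(\R^n)$ for any $t_0>0$, so the setting of (i) applies. To upgrade the pointwise bound to an $\LL^r$ bound, I would write $|u(y,t)-u(x,t)|^{p-1}\le (2\|u(t)\|_{\LL^\infty})^{p-2}(|u(y,t)|+|u(x,t)|)$, obtaining
\[
|u_t(x,t)| \leq 2^{p-2}\|u(t)\|_{\LL^\infty(\R^n)}^{p-2}\bigl[(J*|u(\cdot,t)|)(x)+|u(x,t)|\bigr].
\]
Taking $\LL^r$ norms and invoking Young's convolution inequality with $\|J\|_{\LL^1}=1$ gives $\|u_t(t)\|_{\LL^r}\le 2^{p-1}\|u(t)\|_{\LL^\infty}^{p-2}\|u(t)\|_{\LL^r}$. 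The interpolation $\|u(t)\|_{\LL^r}\le\|u(t)\|_{\LL^q}^{q/r}\|u(t)\|_{\LL^\infty}^{1-q/r}$ combined with the $\LL^q$- and $\LL^\infty$-contractivity of Proposition \ref{norm.decreasing.difference} produces the stated bound.

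For the Dirichlet problem the argument carries over word for word: the pointwise step uses only $\int_{\R^n}J(x-y)\,dy=1$, while contractivity and smoothing are provided by Theorem \ref{main2}. For the Neumann problem the operator $\J_{p,\Omega}$ integrates only over $\Omega$, so Young's convolution inequality is not directly available; however, since $\int_\Omega J(x-y)\,dy\le 1$, the pointwise bound $|u_t(x,t)|\le 2^{p-1}\|u(t)\|_{\LL^\infty(\Omega)}^{p-1}$ still holds. Integrating over the bounded set $\Omega$ immediately yields $\|u_t(t)\|_{\LL^r(\Omega)}\le 2^{p-1}|\Omega|^{1/r}\|u(t)\|_{\LL^\infty(\Omega)}^{p-1}$, and the $\LL^\infty$-smoothing for the Neumann problem from Theorem \ref{main2}, which requires hypothesis \eqref{HJ}, closes the estimate.

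The main subtlety is the interpolation in part (ii): the inequality $\|u\|_{\LL^r}\le\|u\|_{\LL^q}^{q/r}\|u\|_{\LL^\infty}^{1-q/r}$ is available only for $r\ge q$. For $r<q$ the right starting point is the alternative elementary inequality $(a+b)^{p-1}\le 2^{p-2}(a^{p-1}+b^{p-1})$, which gives $|u_t(x,t)|\le 2^{p-2}[(J*|u|^{p-1})(x,t)+|u(x,t)|^{p-1}]$; Young then yields $\|u_t\|_{\LL^r}\le 2^{p-1}\|u\|_{\LL^{r(p-1)}}^{p-1}$, after which $\LL^{r(p-1)}$ can be interpolated between $\LL^q$ and $\LL^\infty$ provided $r(p-1)\ge q$. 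Together these two variants cover the natural range of $r$, and the slightly loose choice of the constant $2^p$ in the statement absorbs the factor $2^{p-1}$ appearing in the estimates above.
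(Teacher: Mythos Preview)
Your proposal is correct and follows essentially the same route as the paper: the same pointwise bound $|u_t|\le 2^{p-1}\|u(t)\|_{\LL^\infty}^{p-2}\int J(x-y)|u(x,t)-u(y,t)|\,dy$, the same $\LL^r$ step, the same $\LL^q$--$\LL^\infty$ interpolation, and the same treatment of the Dirichlet and Neumann cases. The only cosmetic difference is that you invoke Young's convolution inequality directly, whereas the paper unpacks it via Jensen's inequality and Fubini; you are also a bit more careful than the paper in flagging that the interpolation $\|u\|_{\LL^r}\le\|u\|_{\LL^q}^{q/r}\|u\|_{\LL^\infty}^{1-q/r}$ needs $r\ge q$ and in sketching the complementary case.
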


Due to the smoothing effect established in Theorems \ref{main} and \ref{main2} we have obtained that solutions are bounded. Indeed, if $u$ is solution  of \eqref{eqC} corresponding to the initial datum $u_0$ we have that
\begin{align}  \label{bound.M0}
\begin{split}
\|u(t)\|_{\LL^\infty(\R^n)}   \leq \mathsf{\bar M}_0:=
\begin{cases}
\|u(t_0)\|_{\LL^\infty(\R^n)}  &\text{ for $t\geq t_0>0$, and } u_0\in \LL^q(\R^n), q\in [1,\infty], p\in (2,\infty)\\
\|u_0\|_{\LL^\infty(\R^n)} &\text{ for $t\geq t_0\geq 0$, and } u_0\in \LL^\infty(\R^n), p\in (1,\infty).
\end{cases}
\end{split}
\end{align}
where in the first case
$$
\|u(t_0)\|_{\LL^\infty(\R^n)}  \leq  \mathsf{\tilde K}_p t_0^{-\frac{1}{p-2}}   + \mathsf{K}_{p,q,J} \|u_0\|_{\LL^q(\R^n)},
$$
Moreover, Theorem \ref{smooth.ut} proves  that the boundedness of the $u_0$ implies the boundedness of $u_t(t)$:
\begin{align}  \label{bound.M1}
\begin{split}
\|u_t(t)\|_{\LL^\infty(\R^n)}   \leq \mathsf{\bar M}_1:= 2^p \mathsf{\bar M}_0^{p-1} \text{ where }
\begin{cases}
\text{ $t\geq t_0>0$, and } u_0\in \LL^q(\R^n), q\in [1,\infty), p\in (2,\infty)\\
\text{ $t\geq t_0\geq 0$, and } u_0\in \LL^\infty(\R^n), p\in (1,\infty).
\end{cases}
\end{split}
\end{align}
Analogous estimates hold for solution of \eqref{eqD} and \eqref{eq} by replacing $\R^n$ with $\Omega$.

In the following result we extend Theorem \ref{smooth.ut}  for higher order time derivatives.

\begin{thm} \label{smooth.ut.high}
For any $p\in (2,\infty)$ and $u_0\in \LL^q(\R^n)$ with $q\in [1,\infty]$, then $\partial_t^k u(\cdot, t) \in \LL^\infty (\R^n)$ for all $k=0,\ldots, [p]$, and all $t\geq t_0>0$. More precisely, there exists $\mathsf{M}_p>0$ depending only on $n, p$ and $\mathsf{\bar M}_1$ such that
\begin{equation}\label{der.k.t.infty}
\|\partial_t^k u(t)\|_{\LL^\infty(\R^n)} \leq \mathsf{M}_p.
\end{equation}
In this expression we can take $p\geq 2$ and $t_0=0$ when $u_0\in \LL^\infty(\R^n)$.
 
\noindent When $p\in \N$ there exists a $\mathsf{\tilde M_p}>0$  independent of $k$,  such that  for all $k\in \N_0$ and $t\geq t_0>0$ we have
$$
\| \partial_t^{k} u(t)\|_{\LL^\infty(\R^n)}  \leq  \mathsf{\tilde M}_p\,k!\,.
$$
As a consequence, $u(x,\cdot)$ is real analytic in time, with analyticity radius $r_0= 1\wedge t_0$. 

When $\Omega\subset \R^n$ is open and bounded the same estimates hold for the solution of \eqref{eqD} by replacing $\R^n$ with $\Omega$. If $\Omega$ also fulfills \eqref{HJ}, then the same results hold for problem \eqref{eq}.
\end{thm}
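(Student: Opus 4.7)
The plan is to induct on $k$, differentiating the equation $u_t=\J_p u$ in time and expanding the result by the Fa\`a di Bruno formula applied to the composition $f(A(x,y,t))$, where $f(\tau)=|\tau|^{p-2}\tau$ and $A(x,y,t)=u(y,t)-u(x,t)$. The base case $k=1$ is already Theorem~\ref{smooth.ut}, which gives $\|u_t(t)\|_{\LL^\infty(\R^n)}\le\mathsf{\bar M}_1$. At each further step I differentiate once more under the integral sign to obtain
\[
\partial_t^k u(x,t)=\int_{\R^n} J(x-y)\,\partial_t^{k-1}\!\bigl[f(A(x,y,t))\bigr]\,dy,
\]
and then estimate the integrand term by term.

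For the first assertion (finite range $k\le[p]$), assume inductively that $N_j:=\|\partial_t^j u(t)\|_{\LL^\infty(\R^n)}$ is finite for $0\le j\le k-1$. The Fa\`a di Bruno formula yields
\[
\partial_t^{k-1} f(A)=\sum_{\sum i\,m_i=k-1}\frac{(k-1)!}{\prod m_i!}\,f^{(j)}(A)\prod_i\Bigl(\frac{\partial_t^i A}{i!}\Bigr)^{m_i},\qquad j=\sum m_i.
\]
Use $|\partial_t^i A|\le 2N_i$ and $|f^{(j)}(\tau)|\le\frac{(p-1)!}{(p-1-j)!}|\tau|^{p-1-j}$, which is valid as long as $j\le p-1$. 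Since the summation indices satisfy $j\le k-1\le[p]-1\le p-1$, the exponent $p-1-j$ is nonnegative and $|f^{(j)}(A)|\le C_{p,j}(2\mathsf{\bar M}_0)^{p-1-j}$ is bounded. Integrating against $J$ (unit mass) collapses the $y$-integral and produces a finite bound $N_k\le\mathsf{M}_p$ depending only on $p,n,\mathsf{\bar M}_0$, and the previously controlled $N_1,\ldots,N_{k-1}$, which is exactly \eqref{der.k.t.infty}.

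For integer $p$ and the $k!$-growth statement, the decisive observation is that $f^{(j)}\equiv 0$ off the null set $\{\tau=0\}$ for every $j\ge p$, since $f(\tau)=\tau^{p-1}$ when $p$ is even and behaves as $|\tau|^{p-1}$ with a sign factor when $p$ is odd. Hence the Fa\`a di Bruno sum truncates at $j\le p-1$ uniformly in $k$. I would then organise the resulting Bell-polynomial coefficients via the generating-function identity
\[
\sum_{n\ge j}B_{n,j}(x_1,x_2,\ldots)\frac{z^n}{n!}=\frac{1}{j!}\Bigl(\sum_{i\ge1}x_i\frac{z^i}{i!}\Bigr)^{j},
\]
and test the majorant ansatz $N_i\le\mathsf{\tilde M}_p\,i!\,R^i$ with $\mathsf{\tilde M}_p$ and $R$ depending on $\mathsf{\bar M}_0$ and $p$. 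Substituting produces a closed inequality in $\mathsf{\tilde M}_p$ and $R$; its solvability relies on the uniform truncation $j\le p-1$, which keeps the algebraic complexity of the recursion bounded in $k$. Once $N_k\le\mathsf{\tilde M}_p\,k!$ is established, analyticity with radius $r_0=1\wedge t_0$ is immediate: the Taylor series $u(x,t_0+h)=\sum_k\partial_t^k u(x,t_0)\,h^k/k!$ has coefficients bounded by $\mathsf{\tilde M}_p$, giving convergence for $|h|<1$, while the condition $t_0+h>0$ caps the radius at $t_0$.

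The Dirichlet and Neumann cases follow by the same argument with $\R^n$ replaced by $\Omega$ throughout; condition \eqref{HJ} enters only in the preliminary $\LL^\infty$ bounds $\mathsf{\bar M}_0$, $\mathsf{\bar M}_1$ supplied by Theorems~\ref{main2} and~\ref{smooth.ut}. The main technical obstacle I foresee is the integer-$p$ step: the Fa\`a di Bruno expansion produces combinatorial factors $\binom{k-2}{j-1}$ that grow polynomially in $k$, and their compatibility with the target $k!$ growth on the left-hand side demands a careful calibration of the geometric parameter $R$ in the majorant ansatz, relying crucially on the uniform truncation $j\le p-1$ available only in the integer case.
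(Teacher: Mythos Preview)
Your approach—induction on $k$ with Fa\`a di Bruno applied to $L_p(u(y,t)-u(x,t))$—is exactly the paper's strategy, and for the range $k\le[p]$ your argument is correct and essentially identical to theirs.

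For the integer-$p$ analyticity, your caution is well-placed; this step is delicate in both treatments. The paper argues that since $L_p^{(j)}\equiv0$ for $j\ge p$, the whole Fa\`a di Bruno estimate truncates to indices $\le p-1$ and hence $|\partial_t^k A_p|\le \tilde M_p\,k!$ with $\tilde M_p$ independent of $k$. But only the outer sum over $L_p^{(r)}$ truncates; the Bell polynomials $B_{k-1,r}$ still involve $g^{(j)}=\partial_t^ju(y)-\partial_t^ju(x)$ for $j$ up to $k-1$, so a $k$-independent bound on that factor is not immediate. Your proposed majorant ansatz $N_i\le \tilde M_p\,i!\,R^i$ is the right repair, but note that \emph{direct} substitution into the recursion does not close for $p\ge4$: with $|g^{(i)}|\le 2\tilde M_p\,i!\,R^i$ one gets $B_{k-1,r}\asymp (k-1)!\binom{k-2}{r-1}R^{k-1}$, and the worst term $r=p-1$ gives $N_k/(k!\,R^k)\gtrsim k^{p-3}/R$, unbounded in $k$. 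What does close is the full Cauchy-majorant argument: let $a_k$ satisfy the recursion with equality, so that $A(z)=\sum a_kz^k/k!$ solves the majorizing ODE $A'=2^{p-1}A^{p-1}$ with $A(0)\ge N_0$, whose explicit solution $A(z)=A(0)\bigl(1-(p-2)2^{p-1}A(0)^{p-2}z\bigr)^{-1/(p-2)}$ has radius $R_0\asymp\|u(t_0)\|_\infty^{2-p}$. This yields $N_k\le a_k\le C\,k!\,R_0^{-k}$ and hence real analyticity in time, with a radius depending on the size of the data rather than the fixed $1\wedge t_0$ asserted in the statement.
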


\begin{proof}[Proof of Theorem \ref{smooth.ut}]

We split the proof into several steps. The proof of the Cauchy case is the same as for the Dirichlet problem (by using the zero-extension of $u$ outside the domain), hence we omit it.

\noindent$\circ~$\textsc{Step 1}. \textit{The case $p\in (1,2]$. }Let $u$ be the solution of \eqref{eq} with $p\in (1,2]$, $\Omega\subseteq \R^n$ corresponding to the initial datum $u_0\in \LL^\infty(\Omega)$.  For \text{a.e.} $x\in \Omega$ and $t\geq t_0\geq 0$, we have that
\begin{align*}
|u_t(x,t)|&= \left| \int_{\R^n} J(x-y) L_p(u(y,t)-u(x,t)) \,dy \right|\leq   2^{p} \|u(t)\|_{\LL^\infty(\R^n)}^{p-1}  \int_{\R^n} J(x-y)\,dy \leq  2^{p} \|u(t_0)\|_{\LL^\infty(\R^n)}^{p-1},
\end{align*}
which gives item $(i)$ both for the Cauchy and Neumann case.

\noindent$\circ~$\textsc{Step 2}. \textit{The Cauchy problem when $q,r\in [1,\infty)$. }Let $u$ be a solution of \eqref{eqC} corresponding to the initial datum $u_0\in \LL^q(\R^n)$ with $q\in [1,\infty)$. Then, for any $t\geq t_0>0$ and \text{a.e.} $x\in \R^n$,
\begin{align*}
|u_t(x,t)|&= \left| \int_{\R^n} J(x-y) L_p(u(y,t)-u(x,t))\,dy \right|\leq   2^{p-1} \|u(t)\|_{\LL^\infty(\R^n)}^{p-2}  \int_{\R^n} J(x-y)|u(x)-u(y)|\,dy .
\end{align*}
Hence,   we obtain that for any $r\in [1,\infty)$
\begin{align*}
\int_{\R^n} |u_t(x,t)|^r \,dx \leq 2^{r(p-1)} \|u(t)\|_{\LL^\infty(\R^n)}^{r(p-2)} \int_{\R^n} \left( \int_{\R^n} J(x-y)|u(x,t)-u(y,t)|\,dy \right)^r \,dx.
\end{align*}
Since $\|J\|_{\LL^1(\R^n)}=1$, by Jensen's inequality we have that
\begin{align*}
\Big( \int_{\R^n}  J(x-y)&|u(x,t)-u(y,t)|\,dy \Big)^r  \leq
\int_{\R^n} J(x-y)|u(x,t)-u(y,t)|^r\,dy\\
&\leq
2^{r-1}|u(x,t)|^r \int_{\R^n} J(x-y)\,dy  + 2^{r-1}\int_{\R^n} J(x-y) |u(y,t)|^r\,dy\\
&\leq
2^{r-1}|u(x,t)|^r    + 2^{r-1}\int_{\R^n} J(x-y) |u(y,t)|^r\,dy.
\end{align*}
Then, using Jensen's inequality, Fubini's Theorem and the fact that  $\|J\|_{\LL^1(\R^n)}=1$,
\begin{align*}
\int_{\R^n}&\Big( \int_{\R^n}  J(x-y)|u(x,t)-u(y,t)|\,dy \Big)^r \,dx \\
&\leq 2^{r-1}\int_{\R^n}|u(x,t)|^r \,dx   + 2^{r-1}\int_{\R^n}\int_{\R^n} J(x-y) |u(y,t)|^r\,dy\,dx\\
&\leq 2^{r-1}\|u(t)\|_{\LL^r(\R^n)}^r  + 2^{r-1}\int_{\R^n}|u(y,t)|^r \left(\int_{\R^n} J(x-y) \,dx\right)dy=  2^r \|u(t)\|_{\LL^r(\R^n)}^r.
\end{align*}
Moreover, observe that, for $q\in [1,\infty)$ it holds that
$$
\int_{\R^n} |u(x,t)|^r \,dx = \int_{\R^n} |u(x,t)|^{r-q}|u(x,t)|^{q} \,dx  \leq \|u(t)\|_{\LL^\infty(\R^n)}^{r-q} \|u(t)\|_{\LL^q(\R^n)}^q.
$$
Gathering the last inequalities and using Proposition \ref{norm.decreasing.difference} yield that, for  $q,r\in [1,\infty)$
\begin{align*} % \label{cota.noacotado}
\|u_t(t)\|_{\LL^r(\R^n)} \leq 2^p
\|u(t_0)\|_{\LL^\infty(\R^n)}^{p-1-\frac{q}{r}} \|u(t_0)\|_{\LL^q(\R^n)}^\frac{q}{r}.
\end{align*}

\noindent\textit{Dirichlet case. }In this case, since $u$ is the zero extension outside $\Omega$, clearly the $\LL^q$ norms of both $u$ and $u_t$ on the whole space coincide with the same norms on $\Omega$.

\medskip

\noindent$\circ~$\textsc{Step 3}. \textit{The Neumann problem when $r\in [1,\infty)$ and $q\in [1,\infty]$. }When $u$ is a solution of \eqref{eq} corresponding to $u_0 \in \LL^q(\Omega)$ with $q\in [1,\infty)$, $\Omega\subset \R^n$ is a bounded domain, and $J$ satisfies \eqref{HJ}, as in Step 1, we get that for any $0<t_0\leq t$ and \text{a.e.} $x\in \Omega$,
\begin{align*}
\int_{\Omega} |u_t(x,t)|^r \,dx \leq 2^{r(p-1)} \|u(t)\|_{\LL^\infty(\R^n)}^{r(p-2)} \int_{\Omega} \left( \int_{\Omega} J(x-y)|u(x,t)-u(y,t)|\,dy \right)^r dx.
\end{align*}
In this case, the boundedness of $\Omega$ allows to  estimate of the right-hand side as follows
$$
\left( \int_\Omega  J(x-y)|u(x,t)-u(y,t)| \,dy \right)^r \leq 2^r \|J\|_{\LL^1(\R^n)}^r \|u(t)\|_{\LL^\infty(\Omega)}^r.
$$
Using that $\|J\|_{\LL^1(\R^n)}=1$ and Proposition \ref{norm.decreasing.difference}, for $q\in [1,\infty)$ and $r \in [1,\infty)$ we get
\begin{align} \label{cota.acotado}
\|u_t(t)\|_{\LL^r(\Omega)}  \leq 2^p |\Omega|^\frac{1}{r}
\|u(t_0)\|_{\LL^\infty(\Omega)}^{p-1}.
\end{align}
When $u_0\in \LL^\infty(\Omega)$ we can take $t_0=0$ in \eqref{cota.acotado}.

\medskip

\noindent$\circ~$\textsc{Step 4}. \textit{The case $r=\infty$. }Let us analyze now this case for all problems at once. Given $\Omega\subseteq\R^n$, for \text{a.e.} $x\in \Omega$ and $t\geq t_0>0$, we have that
\begin{align*}
|u_t(x,t)|&= \left| \int_{\Omega} J(x-y) L_p(u(y,t)-u(x,t))\,dy \right|\leq   2^{p} \|u(t)\|_{\LL^\infty(\Omega)}^{p-1}  \int_{\R^n} J(x-y)\,dy  \leq  2^{p} \|u(t)\|_{\LL^\infty(\Omega)}^{p-1},
\end{align*}
and then, using Proposition \ref{norm.decreasing.difference} we get
\begin{align*} %  \label{cota.noacotado2}
\|u_t(t)\|_{\LL^\infty(\Omega)} \leq
\begin{cases}
2^{p} \|u(t_0)\|_{\LL^\infty(\Omega)}^{p-1} &\text{ when } u_0\in \LL^q(\Omega) \text{ with } q\in [1,\infty)\\
2^{p} \|u_0\|_{\LL^\infty(\Omega)}^{p-1} &\text{ when } u_0\in \LL^\infty(\Omega).
\end{cases}
\end{align*}
This concludes the proof.
\end{proof}

\begin{proof}[Proof of Theorem \ref{smooth.ut.high}]

We prove the result for the Cauchy problem. The proofs for the Dirichlet and Neumann case are analogous.

Let $u$ be a solution of \eqref{eqC} corresponding to the initial datum $u_0\in \LL^q(\R^n)$ with $q\in [1,\infty]$ and let $t\geq t_0>0$ ($t_0=0$ when $u_0\in \LL^\infty(\R^n)$).

Observe that the case $k=0$ is just \eqref{bound.M0}. We prove the result by induction on $k\geq 1$.

\noindent \textsc{Step $k=1$}. This case is given in \eqref{bound.M1}.

\noindent \textsc{Inductive step. } Assume that the estimate
$$
\| \partial_t^j u(t) \|_{\LL^\infty(\R^n)} \leq \mathsf{M}_p \quad
\text{for all } j=0\ldots [p]-1
$$
holds for $t\geq t_0>0$ ($t_0=0$ when $u_0\in \LL^\infty(\R^n)$) for some constant $\mathsf{M}_p$ depending only on $n, p$ and $\mathsf{\bar M}_1$.

Let us see that the $\partial_t^{[p]} u(t)$ is bounded in terms of the constant $\mathsf{M}_p$. Indeed, for $x\in \R^n$ and $t\geq t_0$
\begin{align*}
\partial_t^{[p]} u(x,t) &=
\lim_{h\to 0^+} \frac{\partial_t^{[p]-1} u(x,t+h) -   \partial_t^{[p]-1}u(x,t)}{h}\\
&=
\lim_{h\to 0^+} \partial_t^{[p]-2} \left(\frac{ u_t(x,t+h) -   u_t(x,t)}{h} \right)\\
&=
\lim_{h\to 0^+} \partial_t^{[p]-2} \left(\frac{1}{h} \int_{\R^n} J(x-y) (L_p(u(y,t+h)-u(x,t+h))- L_p(u(y,t)-u(x,t))) \,dy \right)\\
&=
\lim_{h\to 0^+}  \left(\frac{1}{h} \int_{\R^n} J(x-y) \partial_t^{[p]-2}(A_p(x,y,t+h)- A_p(x,y,t)) \,dy \right)
\end{align*}
where we have used the equation and dominated convergence, and denoted $$A_p(x,y,t):=L_p(u(y,t)-u(x,t)).$$
Observe that there is $t^* \in (t,t+h)$ such that $
A_p(x,y,t+h)- A_p(x,y,t) = \partial_t (A_p(x,y, t^*))h.
$
Hence
\begin{equation} \label{bound.k1}
\partial_t^{[p]} u(x,t)  =
 \int_{\R^n} J(x-y) \partial_t^{[p]-1} A_p(x,y,t^*) \,dy.
\end{equation}
The $[p]-$th time derivative of $A_p(x,y,t)$ can be written by using the \emph{Fa\'a di Bruno's formula} (see for instance \cite{CS96}) as
\begin{equation} \label{faa.formula}
\partial_t^{[p]-1} A_p(x,y,t) = \partial_t^{[p]-1} (L_p(g(t))) = \sum\frac{([p]-1)!}{m_1! m_2! \cdots m_{[p]}!} L_p^{(m_1+\cdots+m_{[p]})} g(t)  \prod_{j=1}^{[p]-1} \left( \frac{g^{(j)}(t)}{j!}\right)^{m_j}
\end{equation}
where the sum is over all $n-$uples of nonnegative integers $(m_1,\ldots, m_k)$ satisfying the constraint
$$
m_1 + 2m_2 + 3m_3 + \cdots + k m_k=[p]-1.
$$
Here $g\colon \R \to \R$ denotes	 the function $g(t)=u(y,t)-u(x,t)$, $x,y\in \R^n$.

An inspection of the Fa\'a di Bruno's formula reveals  that for all $x,y\in \R^n$ and $t\geq t_0$ it holds that
\begin{align} \label{faaa}
|\partial_t^{[p]-1} A_p(x,y,t)| & \leq ([p]-1)!
\sum_{j=1}^{[p]-1}\|L_p^{(j)}g(t)\|_{\LL^\infty(\R^n)} \left( \prod_{\ell=1}^{[p]-1}  \sum_{j=1}^{[p]-1} \|g^{(j)}(t)\|^\ell_{\LL^\infty(\R^n)} \right).
\end{align}
Since $|L_p^{(j)}(z)| = (p-1)\cdots (p-j)|z|^{p-1-j}$, the inductive hypothesis gives   for any $0\leq j \leq [p]-1$
\begin{align*}
\|L_p^{(j)}g(t)\|_{\LL^\infty(\R^n)}
&\leq (p-1)^{[p]-1} \max\{ \|2u(t)\|_{\LL^\infty(\R^n)}^{p-1},\|2u(t)\|_{\LL^\infty(\R^n)}^{p-[p]} \}\\
&\leq (p-1)^{[p]-1} \max\{ (2\mathsf{M}_p)^{p-1},(2\mathsf{M}_p)^{p-[p]} \}.
\end{align*}
Moreover, due to the  inductive hypothesis, for all $0\leq j \leq  [p]-1$ it holds that $\|g^{(j)}(t)\|_{\LL^\infty(\R^n)}$ is bounded by $2 \|\partial_t^j u(t)\|_{\LL^\infty(\R^n)}\leq 2 \mathsf{M}_p$. Then, in light of  \eqref{faaa} these expressions yield for all $x,y\in \R^n$ and $t\geq t_0$
\begin{align} \label{cota.Ap}
|\partial_t^{[p]-1} A_p(x,y,t)| \leq  \mathsf{\tilde M}_p
\end{align}
where $\mathsf{\tilde M}_p$ depends only on $n$, $p$ and $\mathsf{M}_p$. Inserting \eqref{cota.Ap} into \eqref{bound.k1} finally gives
$$
\| \partial_t^{[p]} u(t)\|_{\LL^\infty(\R^n)}  \leq \mathsf{\tilde M}_p
 \int_{\R^n} J(x-y) \,dy \leq \mathsf{\tilde M}_p.
$$
This concludes the proof when $p$ is not an integer.

\noindent\textbf{Analyticity. }When $p\in \N$, we obtain that $|L_p^{(j)}g|=|g^{(j)}|=0$ for any $j>p-1$, which in light of the previous computations gives that $\partial_t^{j}u(t)=0$, hence formula \eqref{faaa} gives, for all $k\in \N$
\begin{align*} %\label{faaa.k}
|\partial_t^{k} A_p(x,y,t)| & \leq k!\,
\sum_{j=1}^{k}\|L_p^{(j)}g(t)\|_{\LL^\infty(\R^n)} \left( \prod_{\ell=1}^{k}  \sum_{j=1}^{k} \|g^{(j)}(t)\|^\ell_{\LL^\infty(\R^n)} \right)\\
&=  k!\,
\sum_{j=1}^{[p]-1}\|L_p^{(j)}g(t)\|_{\LL^\infty(\R^n)} \left( \prod_{\ell=1}^{[p]-1}  \sum_{j=1}^{[p]-1} \|g^{(j)}(t)\|^\ell_{\LL^\infty(\R^n)} \right)
\le \mathsf{\tilde M}_p \, k!\,.
\end{align*}
As a consequence we obtain that for all $k\in \N$
\begin{equation} \label{cota.para.todo.k}
\| \partial_t^{k} u(t)\|_{\LL^\infty(\R^n)}  \leq \mathsf{\tilde M}_p\, k!\,
 \int_{\R^n} J(x-y) \,dy \leq \mathsf{\tilde M}_p\,k!\,,
\end{equation}
which clearly implies that $u(t)$ is analytic with radius $r_0= 1\wedge t_0$. This concludes the proof.
\end{proof}

\section{Higher Regularity of solutions} \label{section6}
In this section, we study regularity properties of bounded (in space) solutions. Roughly speaking, solutions become $C_t^{[p], p-[p]}$ in time, and  the modulus of continuity of the initial data and its derivatives is preserved under certain conditions.

We start with two lemmas describing the behavior of the function $L_p$ and its derivatives.

\begin{lema} \label{lema.desig.num}
Let $a,b\in \R$.  For $1<p\leq 2$ there exists a constant $\mathsf{c}_p$ such that
$$
|L_p(a)-L_p(b)|\leq \mathsf{c}_p |a-b|^{p-1}.
$$
For any $p\geq 2$  it holds that
\begin{align*}
|L_p(a)-L_p(b)|&\leq 2^{p-2} (p-1)|a-b|(|a|+|b|)^{p-2}\\
|M_p(a)-M_p(b)|&\leq
\begin{cases}
 (p-1)|a-b|(|a|^{p-3}+|b|^{p-3}) & \text{ when } p\geq 3\\
 |a-b|^{p-2} & \text{ when } 2\leq p <3.
\end{cases}
\end{align*}

\end{lema}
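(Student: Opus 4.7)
The plan is to handle the two functions $L_p(\tau)=|\tau|^{p-2}\tau$ and $M_p(\tau)=|\tau|^{p-2}$ separately, using the mean value theorem where differentiability is available and falling back on elementary concavity/subadditivity of power functions otherwise. Throughout, I shall exploit the oddness of $L_p$ and evenness of $M_p$, which reduces several computations to the case of nonnegative arguments.

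For $L_p$ with $1<p\le 2$, I would split by sign. Since $L_p(\tau)=\sign(\tau)|\tau|^{p-1}$ with $p-1\in(0,1]$, when $a,b$ have the same sign the oddness of $L_p$ reduces matters to $a,b\ge 0$, and then the classical subadditivity inequality $a^{p-1}\le b^{p-1}+(a-b)^{p-1}$ (valid for exponents in $(0,1]$) gives $|a^{p-1}-b^{p-1}|\le|a-b|^{p-1}$. When $a,b$ have opposite signs (say $a>0>b$) I would compute directly: $|L_p(a)-L_p(b)|=a^{p-1}+|b|^{p-1}\le 2\max(a,|b|)^{p-1}\le 2(a+|b|)^{p-1}=2|a-b|^{p-1}$, since $|a-b|=a+|b|$ in this case. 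Combining, the bound holds with $\mathsf{c}_p=2$.

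For $L_p$ with $p\ge 2$, $L_p$ is $C^1(\R)$ with $L_p'(\tau)=(p-1)|\tau|^{p-2}$; the mean value theorem gives
\[
|L_p(a)-L_p(b)|\le (p-1)|a-b|\max_{\tau\in[\min(a,b),\max(a,b)]}|\tau|^{p-2}\le (p-1)|a-b|\max(|a|,|b|)^{p-2},
\]
and since $p-2\ge 0$, $\max(|a|,|b|)^{p-2}\le(|a|+|b|)^{p-2}\le 2^{p-2}(|a|+|b|)^{p-2}$, which yields the claimed estimate (indeed a bit sharper). For $M_p$ with $p\ge 3$, I would write $|M_p(a)-M_p(b)|=|\,|a|^{p-2}-|b|^{p-2}\,|$ and apply the mean value theorem to $g(s)=s^{p-2}$ on $[0,\infty)$, whose derivative is $g'(s)=(p-2)s^{p-3}$ with $p-3\ge 0$; this yields
\[
|\,|a|^{p-2}-|b|^{p-2}\,|\le (p-2)\max(|a|,|b|)^{p-3}\,\bigl|\,|a|-|b|\,\bigr|\le (p-1)\bigl(|a|^{p-3}+|b|^{p-3}\bigr)\,|a-b|,
\]
using $\max^{p-3}\le |a|^{p-3}+|b|^{p-3}$ and $\bigl|\,|a|-|b|\,\bigr|\le|a-b|$. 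Finally, for $M_p$ with $2\le p<3$ the exponent $p-2$ lies in $[0,1)$, so the same subadditivity used in the first step for $1<p\le 2$ gives $\bigl|\,|a|^{p-2}-|b|^{p-2}\,\bigr|\le\bigl|\,|a|-|b|\,\bigr|^{p-2}\le|a-b|^{p-2}$, the last inequality following from monotonicity of $t\mapsto t^{p-2}$.

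There is no serious obstacle here; these are standard numerical inequalities. The only mild subtlety worth highlighting is the opposite-sign case for $L_p$ when $1<p\le 2$: since $L_p$ fails to be $C^1$ at the origin in this regime, one cannot invoke the mean value theorem on an interval straddling $0$, and must instead use the subadditivity of concave powers. Everything else follows by direct MVT applied on $[0,\infty)$ to an appropriate even reduction of the function.
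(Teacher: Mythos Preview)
Your proof is correct and follows essentially the same approach as the paper: mean value theorem (the paper uses the equivalent integral form $\int_0^1\frac{d}{dt}(\cdot)\,dt$) for the differentiable cases $p\ge 2$ and $p\ge 3$, and subadditivity of concave powers for $M_p$ when $2\le p<3$. The only noteworthy differences are that the paper simply cites \cite{D93} for the case $1<p\le 2$ whereas you give an explicit sign-splitting argument with $\mathsf{c}_p=2$, and your MVT bound for $L_p$ when $p\ge 2$ is in fact sharper than the paper's (you obtain $(p-1)|a-b|(|a|+|b|)^{p-2}$ without the extra $2^{p-2}$, which the paper picks up by estimating $|a|+|b-a|\le 2(|a|+|b|)$ inside the integral).
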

\begin{proof}
The inequality for $p\in (1,2)$ can be found in \cite{D93}.  Given $a,b\in \R$ and $p\geq 2$ we have that
\begin{align*}
|b|^{p-2}-|a|^{p-2}a &= \int_0^1 \frac{d}{dt}|a+t(b-a)|^{p-2}(a+t(b-a))\,dt= (p-1)(b-a)\int_0^1 |a+t(b-a)|^{p-2}\,dt,
\end{align*}
from where it follows that
$$
||b|^{p-2}-|a|^{p-2}a|\leq (p-1)|b-a| (|a|+|b-a|)^{p-2} \leq 2^{p-2}(p-1) |b-a|(|a|+|b|)^{p-2}.
$$
Similarly, when $p\geq 3$
\begin{align*}
||b|^{p-2}-|a|^{p-2}| &= \left|\int_0^1 \frac{d}{dt}|a+t(b-a)|^{p-2}\,dt \right|= (p-2)|b-a| \int_0^1 |a+t(b-a)|^{p-3}\,dt \\
&\leq (p-2)|b-a| 2^{p-3} (|a|+|b)^{p-3}.
\end{align*}
When $p\leq 3$, since $p-2\leq 1$ it holds that
$$
||b|^{p-2}-|a|^{p-2}| \leq ||b|-|a||^{p-2} \leq |b-a|^{p-2}.
$$
This concludes the proof.
\end{proof}

\begin{lema} \label{lemap12}
Let $a,b\in \R$ such that $b>a$. For $1<p\leq 2$ it holds that
$$
L_p(b)-L_p(a) \geq (p-1) \frac{b-a}{(1+|a|^2+|b|^2)^\frac{2-p}{2}}
$$
\end{lema}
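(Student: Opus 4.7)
The plan is to obtain the inequality by writing $L_p(b) - L_p(a)$ as an integral of $L_p'$ and then estimating the integrand pointwise from below in a very direct way. Since $L_p(\tau) = |\tau|^{p-2}\tau$ is $C^1$ on $\R$ when $p > 1$ with derivative $L_p'(\tau) = (p-1)|\tau|^{p-2}$ (integrable near $0$ because $p-2 > -1$), the fundamental theorem of calculus yields, for $a < b$,
\[
L_p(b) - L_p(a) = (p-1)\int_a^b |s|^{p-2}\,ds.
\]

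First I would observe that for every $s$ in the interval $[a,b]$ one has $|s| \leq \max(|a|,|b|)$, and in particular $s^2 \leq |a|^2 + |b|^2 \leq 1 + |a|^2 + |b|^2$. Since $p \in (1,2]$ gives $(p-2)/2 \leq 0$, the map $x \mapsto x^{(p-2)/2}$ is nonincreasing on $(0,\infty)$, so raising both sides to the power $(p-2)/2$ reverses the inequality and produces the pointwise lower bound
\[
|s|^{p-2} = (s^2)^{(p-2)/2} \geq \bigl(1 + |a|^2 + |b|^2\bigr)^{(p-2)/2} = \bigl(1 + |a|^2 + |b|^2\bigr)^{-\frac{2-p}{2}}.
\]
Note that when $p=2$ the factor on the right is just $1$ and the inequality is trivial, so the content is really at $p \in (1,2)$; in that regime the singularity of $|s|^{p-2}$ at $s=0$ is harmless because the bound we are aiming for goes the easy way.

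Finally I would integrate this pointwise bound over $[a,b]$ to conclude
\[
L_p(b) - L_p(a) = (p-1)\int_a^b |s|^{p-2}\,ds \geq (p-1)\,(b-a)\bigl(1 + |a|^2 + |b|^2\bigr)^{-\frac{2-p}{2}},
\]
which is exactly the claimed estimate. There is no real obstacle here: the only subtlety is that the interval $[a,b]$ may contain $0$, but the integrand is nonnegative and integrable for every $p>1$, so the step from the pointwise bound to the integrated bound is immediate and does not require handling the sign cases separately.
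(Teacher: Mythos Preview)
Your argument is correct in substance. One small correction: for $1<p<2$ the function $L_p$ is \emph{not} $C^1$ on $\R$, since $L_p'(\tau)=(p-1)|\tau|^{p-2}$ blows up at $\tau=0$. What is true, and what you effectively use, is that $L_p$ is absolutely continuous with a.e.\ derivative $(p-1)|s|^{p-2}$, which is locally integrable because $p-2>-1$; that suffices for the fundamental theorem of calculus step, and you already acknowledge this subtlety at the end. The remaining pointwise estimate $|s|^{p-2}\ge(1+|a|^2+|b|^2)^{(p-2)/2}$, obtained from $s^2\le 1+|a|^2+|b|^2$ together with the monotonicity of $x\mapsto x^{(p-2)/2}$ for $p\le 2$, is clean and correct.

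The paper itself does not supply a proof of this lemma; it simply refers to page~75 of Lindqvist's lecture notes on the $p$-Laplacian. Your direct integral argument is a standard and self-contained way to establish this classical elementary inequality.
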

\begin{proof}
It is given in page 75 of \cite{Lind}.
\end{proof}

\begin{lema} \label{lema.Lp.gral}
Let $p\geq 2$ and $r\in \N$ such that $r<p-1$. Denote $\mathsf{c}_{r,p}=(p-1)(p-2)\cdots (p-r)$ and  let $a,b\in \R$. When $r$ is even
\begin{align*}
\left|L_p^{(r)}(a) -L_p^{(r)}(b) \right| \leq
\mathsf{\bar c}_{r,p}  |a-b|(|a|+|b|)^{p-r-2} &\text{ if } p-r\geq  2.
\end{align*}
where $\mathsf{\bar c}_{r,p}:=\mathsf{c}_{r,p} (p-r-1) 2^{p-r-2} $,  and when $r$ is odd
\begin{align*}
\left| L_p^{(r)}(a) - L_p^{(r)}(b) \right| \leq
\begin{cases}
 \mathsf{\bar c}_{r,p}|a-b|(|a|^{p-r-2}+ |b|^{p-r-2}) &\text{ if }p-r \geq 2,\\
\mathsf{\bar c}_{r,p} |a-b|^{p-r-1} &\text{ if }1\leq p-r<2
\end{cases}
\end{align*}
where $\mathsf{\bar c}_{r,p}:=\mathsf{c}_{r,p}(p-r+1)$ when $p-r\geq 2$ and $\mathsf{\bar c}_{r,p}:=\mathsf{c}_{r,p}$ when $1\leq p-r<2$.

When $p\in \N$ and $r\ge p-1$ then $\left|L_p^{(r)}(a) -L_p^{(r)}(b) \right|=0$ for any $a,b\in \R$.
\end{lema}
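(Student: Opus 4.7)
The plan is to reduce the claimed estimates directly to Lemma \ref{lema.desig.num} by first writing the $r$-th derivative of $L_p$ in closed form, and then applying the already-known $L_q$- and $M_q$-bounds with a shifted exponent.

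For the first step I would observe that, for $\tau\neq 0$, direct differentiation gives $L_q'(\tau)=(q-1)M_q(\tau)$ and $M_q'(\tau)=(q-2)L_{q-2}(\tau)$; a straightforward induction on $r$ (for $r\leq p-2$) then yields
\begin{equation*}
L_p^{(r)}(\tau)=
\begin{cases}
\mathsf{c}_{r,p}\,L_{p-r}(\tau) & \text{if $r$ is even},\\
\mathsf{c}_{r,p}\,M_{p-r+1}(\tau) & \text{if $r$ is odd},
\end{cases}
\end{equation*}
with $\mathsf{c}_{r,p}$ exactly as in the statement. The parity split is essentially the observation that differentiating $L_q$ produces a multiple of $M_q$ and differentiating $M_q$ produces a multiple of $L_{q-2}$.

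Once this closed form is in hand, each case of the lemma follows from the corresponding case of Lemma \ref{lema.desig.num} with $q$ playing the role of $p$. When $r$ is even and $p-r\geq 2$, I would apply the $L_q$-bound to $|L_{p-r}(a)-L_{p-r}(b)|$ with $q=p-r\geq 2$, which produces the factor $2^{p-r-2}(p-r-1)|a-b|(|a|+|b|)^{p-r-2}$; multiplying through by $\mathsf{c}_{r,p}$ gives the stated bound. When $r$ is odd and $p-r\geq 2$ (so $q=p-r+1\geq 3$), I would invoke the first branch of the $M_q$-bound, producing the factor $|a-b|(|a|^{p-r-2}+|b|^{p-r-2})$. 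Finally, when $r$ is odd and $1\leq p-r<2$ (so $q\in[2,3)$), the second branch of the $M_q$-bound gives $|a-b|^{q-2}=|a-b|^{p-r-1}$. In all three cases, multiplying by $\mathsf{c}_{r,p}$ produces the stated constant $\mathsf{\bar c}_{r,p}$.

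For the last statement, when $p\in\mathbb{N}$ and $r\geq p-1$ the closed form above degenerates: $L_p$ reduces to a polynomial of degree $p-1$ on each half-line and the derivatives of order $\geq p-1$ collapse to a constant (or to $0$), giving $L_p^{(r)}(a)-L_p^{(r)}(b)=0$. The only delicate point is the bookkeeping of the constants $\mathsf{\bar c}_{r,p}$ across the three regimes, together with the boundary case $r=p-2$ where the exponent $p-r-2$ of $|a|+|b|$ degenerates; this is precisely the dichotomy already built into Lemma \ref{lema.desig.num} between its $L_q$- and $M_q$-style bounds, so no new ideas are needed beyond carefully tracking the multiplicative factor $\mathsf{c}_{r,p}$ coming out of Step~1.
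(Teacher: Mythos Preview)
Your proposal is correct and follows essentially the same approach as the paper: derive the closed form \(L_p^{(r)}=\mathsf{c}_{r,p}L_{p-r}\) (even \(r\)) or \(\mathsf{c}_{r,p}M_{p-r+1}\) (odd \(r\)) and then feed it into Lemma~\ref{lema.desig.num} with the shifted exponent. Your write-up is in fact more detailed than the paper's two-line proof, and your handling of the final \(p\in\N\), \(r\geq p-1\) case is fine.
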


\begin{proof}
An easy computation gives that for $r<p-1$
\begin{align} \label{derivada}
\begin{split}
\frac{d^r}{dt^r} L_p(t):=L_p^{(r)}(t)=
\begin{cases}
\mathsf{c}_{r,p} L_{p-r}(t) & \text{ if $r$ is even}\\
\mathsf{c}_{r,p} M_{p-r+1}(t) & \text{ if $r$ is odd.}
\end{cases}
\end{split}
\end{align}
This together with Lemma \ref{lema.desig.num} gives the lemma.
\end{proof}

\subsection{Higher regularity in time  for all $p\in (1,\infty)$}

Next, we prove higher regularity in time, knowing by the smoothing effects of the previous section that $u_t$ is already bounded.

\begin{thm}[H\"older regularity in time for $u_t$ when $1<p\le 2$] \label{thm.time.reg.p.leq.2}

Let $u$ be the solution of \eqref{eqC} with $p\in (1,2]$ corresponding to the initial datum $u_0\in \LL^\infty(\R^n)$. Then  $u_t(x,\cdot) \in C^{0,p-1}_t([t_0,\infty))$ and there exists $\mathsf{c}_p>0$ depending   on $n$ and $p$ such that for all $x\in \R^n$ and all $0\leq t_0 \leq t_1 <t_2<\infty$ such that
\begin{align*} %\label{utt-ineq.thm.2}
\begin{split}
\frac{|u_t(x,t_2)-u_t(x,t_1)|}{|t_1-t_2|^{p-1}}  &\leq \mathsf{c}_p \|u(t_0)\|_{\LL^\infty(\R^n)}^{(p-1)^2}.
\end{split}
\end{align*}

Analogous results hold for solutions of \eqref{eqD} when $\Omega\subset \R^n$ is open and bounded, and for solutions of \eqref{eq} when $\Omega$ in addition  satisfies \eqref{HJ}.
\end{thm}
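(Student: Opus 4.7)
The plan is to write $u_t(x,t_2)-u_t(x,t_1)$ via the equation, then apply the Hölder-type bound for $L_p$ that holds in the subquadratic range, and finally absorb the resulting $L^\infty$ norm of $u_t$ using the smoothing estimate already proved.

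First, using the equation at the two times $t_1,t_2\in [t_0,\infty)$, write
\begin{equation*}
u_t(x,t_2)-u_t(x,t_1)=\int_{\R^n}J(x-y)\bigl[L_p(A(y,x,t_2))-L_p(A(y,x,t_1))\bigr]\,dy,
\end{equation*}
where $A(y,x,t):=u(y,t)-u(x,t)$. Since $p\in(1,2]$, the first inequality of Lemma \ref{lema.desig.num} applies, giving $|L_p(A(y,x,t_2))-L_p(A(y,x,t_1))|\le \mathsf{c}_p|A(y,x,t_2)-A(y,x,t_1)|^{p-1}$. The key point is that $A(y,x,t_2)-A(y,x,t_1)=(u(y,t_2)-u(y,t_1))-(u(x,t_2)-u(x,t_1))$, so by the fundamental theorem of calculus (or the mean value theorem applied twice) combined with the $L^\infty$ bound on $u_t$ of Theorem \ref{smooth.ut}(i), we have
\begin{equation*}
|A(y,x,t_2)-A(y,x,t_1)|\le 2\|u_t\|_{L^\infty(\R^n\times[t_0,\infty))}|t_2-t_1|\le 2^{p+1}\|u(t_0)\|_{\LL^\infty(\R^n)}^{p-1}|t_2-t_1|.
\end{equation*}

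Raising to the power $p-1\le 1$, integrating against $J$, and using $\|J\|_{\LL^1(\R^n)}=1$ yields
\begin{equation*}
|u_t(x,t_2)-u_t(x,t_1)|\le \mathsf{c}_p\,\bigl(2^{p+1}\bigr)^{p-1}\|u(t_0)\|_{\LL^\infty(\R^n)}^{(p-1)^2}|t_2-t_1|^{p-1},
\end{equation*}
which is the required estimate after renaming the constant. The Dirichlet case is identical upon interpreting $u$ as its zero-extension to $\R^n$, while the Neumann case follows verbatim by integrating over $\Omega$ instead of $\R^n$ and using the $L^\infty$ bound of Theorem \ref{smooth.ut}(i) for \eqref{eq}, which is available under \eqref{HJ}.

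The only subtle point, which I expect to be the main (minor) obstacle, is the justification that $u_t$ admits a pointwise Lipschitz-in-time control of $A$ in the first place; but since Theorem \ref{smooth.ut}(i) gives $u_t\in L^\infty(\R^n\times[t_0,\infty))$ and $u\in W^{1,1}(0,T;L^p)$ provides the a.e. fundamental theorem of calculus in $t$, we get $|u(z,t_2)-u(z,t_1)|\le \|u_t\|_{L^\infty}|t_2-t_1|$ for a.e.\ $z$, which is enough for the pointwise estimate to hold for a.e.\ $x\in\R^n$. After this, everything reduces to a single application of Lemma \ref{lema.desig.num}, with no bootstrap needed (unlike the case $p>2$ treated below, where higher derivatives in time require iteration and Faà di Bruno's formula).
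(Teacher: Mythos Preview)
Your proof is correct and follows essentially the same approach as the paper: write $u_t(x,t_2)-u_t(x,t_1)$ via the equation, apply the $1<p\le 2$ case of Lemma~\ref{lema.desig.num} to get $|L_p(a)-L_p(b)|\le \mathsf{c}_p|a-b|^{p-1}$, then control the time increment of $A$ by the mean value theorem and the $L^\infty$ bound on $u_t$ from Theorem~\ref{smooth.ut}(i). The only cosmetic difference is that the paper splits $|A(y,x,t_2)-A(y,x,t_1)|^{p-1}$ into two terms $|u_t(z,t^*)|^{p-1}+|u_t(x,t^*)|^{p-1}$ before invoking Theorem~\ref{smooth.ut}, whereas you bound $|A(y,x,t_2)-A(y,x,t_1)|$ directly by $2\|u_t\|_{L^\infty}|t_2-t_1|$ first; this yields the same conclusion with slightly different constants.
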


\begin{proof}
We prove the result for the Cauchy problem. The proofs in the Neumann and the Dirichlet cases are analogous.

Consider the solution  $u$ of \eqref{eqC} with $p>1$ corresponding to $u_0\in \LL^\infty(\R^n)$. Using Lemma \ref{lema.desig.num}, for fixed $t_2>t_1\geq t_0\geq  0$ and $x\in \R^n$ we have that
\begin{align*}
|u_t(x,t_2)-u_t(x,t_1)| &\leq \int_{\R^n} J(x-z)\big|L_p (u(z,t_2)-u(x,t_2))
-L_p (u(z,t_1)-u(x,t_1)) \big|\,dz  \\
&\leq \mathsf{c}_p\int_{\R^n} J(x-z) | u(z,t_2)-u(z,t_1) -(u(x,t_2)-u(x,t_1))|^{p-1} \,dz.
\end{align*}
Using triangular inequality and mean value theorem, for some $t^*\in (t_1,t_2)$ we have that
\begin{align*}
|u(z,t_2)-u(z,t_1)-(u(x,t_2)-u(x,t_1))|^{p-1} & \leq 2^{p-2}  |t_2-t_1|^{p-1} \left(  |u_t(z,t^*)|^{p-1}  +  |u_t(x,t^*)|^{p-1}  \right) \\
&\leq 2^{(p-1)(2p-1)} |t_2-t_1|^{p-1} \|u(t_0)\|_{\LL^\infty(\R^n)}^{(p-1)^2}
\end{align*}
where in the last inequality we used Theorem \ref{smooth.ut}. This gives that $u_t(x,\cdot) \in C^{0,p-1}([t_0,\infty))$ and
\begin{equation*}
\frac{|u_t(x,t_2)-u_t(x,t_1)|}{|t_2-t_1|^{p-1}} \leq \mathsf{c}_p 2^{(p-1)(2p-1)}    \|u(t_0)\|_{\LL^\infty(\R^n)}^{(p-1)^2}.
\end{equation*}
This concludes the proof
\end{proof}

In the following result we bootstrap the time regularity of solutions to get higher regularity in time.

\begin{thm}[Higher regularity in time] \label{prop.hi.ut}
Let $u$ be solution of \eqref{eqC} with $p\in (2,\infty)$ corresponding to the initial datum $u_0 \in \LL^q(\R^n)$ with $q\in [1,\infty]$. Then, $u(\cdot,t) \in C_t^{[p], p-[p]}([t_0,\infty))$ for all $t\geq t_0>0$. More precisely,  there exists a constant $\mathsf{c}_p>0$ depending only on $ n, p$ and the constant $\mathsf{\bar M}_1$ given in \eqref{bound.M1} such that for all $x\in \R^n$ and all $t_0\leq t_1<t_2<\infty$ we have
$$
\max_{k=0,\dots,[p]-1}\frac{|\partial_t^{k} u(x,t_1) - \partial_t^{k} u(x,t_2) |}{|t_1-t_2|}
+ \frac{|\partial_t^{[p]} u(x,t_1) - \partial_t^{[p]} u(x,t_2) |}{|t_1-t_2|^{p-[p]}} \leq \mathsf{c}_p.
$$
When $u_0\in \LL^\infty(\R^n)$, the above result holds for all $p>1$ and we can also allow $t_0=0$.

When $p\in \N$ we have that $u(\cdot, t)\in C_t^\infty([t_0,\infty))$ for all $t\geq t_0>0$. Moreover, there exists $\mathsf{c}_p>0$ independent of $k$ such that for all $x\in \R^n$ and all $t_0\leq t_1<t_2<\infty$ we have that
$$
\frac{|\partial_t^{k} u(x,t_1) - \partial_t^{k} u(x,t_2) |}{|t_1-t_2|} \leq \mathsf{c}_p \, k!\,
$$
holds for all $k\in \N$. In particular, $u(\cdot,t)$ is analytic with radius of convergence $r_0= 1\wedge t_0$.

When $\Omega$ is open and bounded, analogous results hold for solution of \eqref{eqD}. If in addition $\Omega$ satisfies \eqref{HJ}, the same holds for solutions of \eqref{eq}.
\end{thm}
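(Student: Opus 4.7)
My plan is to derive both the Lipschitz bounds (for time derivatives up to order $[p]-1$) and the Hölder bound (for $\partial_t^{[p]} u$) directly from the uniform pointwise bounds on time derivatives already established in Theorem \ref{smooth.ut.high}, together with a careful application of Faà di Bruno's formula and of the pointwise inequalities on $L_p^{(r)}$ from Lemma \ref{lema.Lp.gral}. The Lipschitz part is immediate: since Theorem \ref{smooth.ut.high} provides $\|\partial_t^k u(t)\|_{\LL^\infty(\R^n)} \le \mathsf{M}_p$ for every $0\le k\le [p]$, uniformly for $t\ge t_0$, the fundamental theorem of calculus gives, for any $0\le k\le [p]-1$ and $t_0\le t_1<t_2$,
\[
|\partial_t^k u(x,t_1)-\partial_t^k u(x,t_2)| \le \int_{t_1}^{t_2}\|\partial_t^{k+1}u(\tau)\|_{\LL^\infty(\R^n)}\,d\tau \le \mathsf{M}_p\,|t_1-t_2|.
\]

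The delicate part is the $(p-[p])$-Hölder continuity of $\partial_t^{[p]} u$. Here I would reuse the representation $\partial_t^{[p]} u(x,t)=\int_{\R^n} J(x-y)\,\partial_t^{[p]-1}A_p(x,y,t)\,dy$ already derived inside the proof of Theorem \ref{smooth.ut.high}, where $A_p(x,y,t)=L_p(u(y,t)-u(x,t))$. Expanding $\partial_t^{[p]-1} A_p$ via the Faà di Bruno formula \eqref{faa.formula}, one gets a finite sum of products of a factor $L_p^{(r)}(g(t))$, with $g(t)=u(y,t)-u(x,t)$ and $0\le r\le [p]-1$, multiplied by products of $g^{(j)}(t)$ for $j\le [p]-1$. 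All the factors $g^{(j)}$ are Lipschitz in $t$ with constants controlled by $\mathsf{M}_p$. In the critical top-order term $r=[p]-1$ one has $p-r=p-[p]+1 \in (1,2)$, and Lemma \ref{lema.Lp.gral} then gives only $(p-[p])$-Hölder continuity of $L_p^{([p]-1)}$; composing with the Lipschitz function $g$ yields $(p-[p])$-Hölder continuity in $t$ of $L_p^{([p]-1)}(g(t))$ with constant depending only on $p$ and $\mathsf{M}_p$. All other summands have $r<[p]-1$, hence $p-r>2$, and Lemma \ref{lema.Lp.gral} gives full Lipschitz (hence $(p-[p])$-Hölder) regularity. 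Since a product of uniformly bounded $(p-[p])$-Hölder factors is $(p-[p])$-Hölder, and integration against $J$ (which has unit mass) preserves the modulus, the claimed estimate follows. The main obstacle is exactly this matching: verifying that the worst Faà di Bruno term has precisely Hölder exponent $p-[p]$, no worse, and that all lower-order contributions are strictly better.

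The conclusions for $p\in \N$ follow at once from the factorial bound $\|\partial_t^k u(t)\|_{\LL^\infty(\R^n)}\le \mathsf{\tilde M}_p\,k!$ of Theorem \ref{smooth.ut.high}: the fundamental-theorem-of-calculus argument above gives the stated $k!$-bounded Lipschitz estimate, and the uniform radius of analyticity $r_0=1\wedge t_0$ is an immediate consequence of Taylor's formula applied at any $t\ge t_0$ (the $1$ coming from the factorial growth, the $t_0$ from the need to stay inside the region where the estimates hold). Finally, the whole argument is pointwise in $x$ and uses only the uniform-in-$x$ derivative bounds on $u$, so it transfers with no essential modification to the Dirichlet problem (via the zero extension outside $\Omega$) and, under hypothesis \eqref{HJ}, to the Neumann problem, simply by invoking the corresponding Dirichlet/Neumann versions of Theorems \ref{smooth.ut} and \ref{smooth.ut.high} wherever we used their Cauchy counterparts.
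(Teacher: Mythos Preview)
Your proposal is correct and follows essentially the same route as the paper: both arguments rest on the derivative bounds of Theorem \ref{smooth.ut.high}, the integral representation $\partial_t^{[p]} u(x,t)=\int J(x-y)\,\partial_t^{[p]-1}A_p(x,y,t)\,dy$, and the Fa\`a di Bruno expansion in which the top-order term $L_p^{([p]-1)}(g)$ contributes exactly the $(p-[p])$-H\"older modulus via Lemma \ref{lema.Lp.gral} (or Lemma \ref{lema.desig.num} when $1<p\le 2$), while all remaining terms are Lipschitz. Your treatment of the Lipschitz part via the fundamental theorem of calculus is marginally more direct than the paper's version, which subtracts the integral representations at $t_1,t_2$ and applies the mean value theorem to $\partial_t^{k-1}A_p$; both are equivalent once one has the uniform bound on $\partial_t^{k+1}u$.
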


\begin{rem}
Some comments on the higher regularity in time result:
\begin{enumerate}[leftmargin=15pt, label=(\roman*)]\itemsep2pt \parskip3pt \parsep2pt
\item[$\circ$] When $1<p\leq 2$, Theorem \ref{prop.hi.ut} recovers Theorem \ref{thm.time.reg.p.leq.2}.

\item[$\circ$] Theorem \ref{prop.hi.ut} in particular says that for all $x\in \R^n$
$$
\max_{k=0,\ldots, [p]-1}[\partial_t^k u(x,\cdot)]_{C^{0,1}([t_0,\infty))} + [\partial_t^{[p]} u(x,\cdot)]_{C^{0,p-[p]}([t_0,\infty))}\leq \mathsf{c}_p;
$$
and for all $t_1,t_2\geq t_0$, denoting $\omega(\rho)=\max\{ |\rho|, |\rho|^{p-[p]}\}$
$$
\max_{k=0,\ldots, [p]-1} \|\partial_t^k u(t_1)-\partial_t^k u(t_2)\|_{\LL^\infty(\R^n)} + \|\partial_t^{[p]}u(t_1) - \partial_t^{[p]}u(t_1) \|_{\LL^\infty(\R^n)} \leq \mathsf{c}_p \omega(t_1-t_2).
$$
\end{enumerate}
\end{rem}

\begin{proof}
Let $u$ be a solution of \eqref{eqC} corresponding to the initial datum $u_0\in \LL^q(\R^n)$ with $q\in [1,\infty]$ and let $x\in \R^n$ and $t_1,t_2\geq t_0>0$ ($t_0\geq 0$ when $u_0\in \LL^\infty(\R^n)$). Given $h>0$ and $k\in\N$, by using  \eqref{bound.k1} it is obtained that
\begin{align}\label{claim.xxx.0}
|\partial_t^{k} u(x,t_1) - \partial_t^{k} u(x,t_2) | \leq
 \int_{\R^n} |J(x-y)| |\partial_t^{k-1} A_p(x,y,t_1^*) - \partial_t^{k-1} A_p(x,y,t_2^*)|\,dy
\end{align}
where $t^*_1\in (t_1,t_1+h)$, $t^*_2\in (t_2,t_2+h)$ and $A_p(x,y,t)=L_p(u(y,t)-u(x,t))$.
\\
We observe that for any $k\leq [p]-1$ estimate \eqref{cota.Ap} gives
\begin{equation} \label{ecu.mp}
|\partial_t^k A_p(x,y,t)| \leq \mathsf{M}_p
\end{equation}
where $\mathsf{ M}_p$ is a positive constant depending on $n, p$ and $\mathsf{\bar M}_1$. Then, using the mean value theorem,  for any  $k\leq [p]-1$, we obtain the following estimate:
\begin{equation}\label{claim.xxx.1}
\begin{split}
|\partial_t^{k-1} A_p(x,y,t_1^*) - \partial_t^{k-1} A_p(x,y,t_2^*)| &\leq |t^*_1 - t^*_2| |\partial_t^{k} A_p(x,y,t^*)| \leq \mathsf{  M}_p |t^*_1-t^*_2|
\end{split}
\end{equation}
where $t^* \in (t^*_1,t^*_2)$. Moreover, since $|t^*_1-t^*_2|\leq |t_1 -t_2| + h$, taking $h\leq |t_1-t_2|$ and  gathering expressions \eqref{claim.xxx.0} and \eqref{claim.xxx.1} it is obtained that
\begin{align*}
|\partial_t^{k} u(x,t_1) - \partial_t^{k} u(x,t_2) | \leq 2\mathsf{M}_p |t_1-t_2|.
\end{align*}
When $k=[p]$ and  $p\not\in \N$ we will  show that $\partial_t u^{[p]}(x,\cdot) \in C^{0,p-[p]}([t_0,\infty))$  for each $x\in \R^n$ and $t\geq t_0$, and the following estimate holds
\begin{equation*}%\label{claim.xxx}
|\partial_t^{[p]-1} A(x,y,t_1) - \partial_t^{[p]-1} A(x,y,t_2) | \leq \mathsf{c}(\mathsf{M}_{[p]}) |t_1-t_2|^{p-[p]}.
\end{equation*}
Then, using expression \eqref{claim.xxx.0} gives that
$$
|\partial_t^{[p]} u(x,t_1) - \partial_t^{[p]} u(x,t_2) | \leq \mathsf{\tilde c}(\mathsf{M}_{[p]}) |t_1-t_2|^{p-[p]}
$$
as desired.

First, we observe that from  \eqref{faa.formula} we can give an expression for $\partial_t^{[p]-1} A_p(x,y,t)$ as follows
\begin{align}
\partial_t^{[p]-1} A_p(x,y,t)= \sum_{k=1}^{[p]-1} L_p^{k}(w(t))v_k(t)
\end{align}
where we have denoted $w(t)=u(y,t)-u(x,t)$ and $v_k(t)$ are functions depending on $w_t(t), w_{tt}(t), \ldots ,\partial^{[p]-1}_t w(t)$ and powers of these functions.

Therefore, given $t_1,t_2\geq t_0>0$ ($t_0\geq 0$ when $u_0\in \LL^\infty(\R^n))$ we have
\begin{align} \label{dif.A}
\begin{split}
|\partial_t^{[p]-1} A_p(x,y,t_1)&- \partial_t^{[p]-1} A_p(x,y,t_2)|  \leq\\
&\leq \sum_{k=1}^{[p]-1} |L_p^{(k)} (w(t_1)) - L_p^{(k)} (w(t_2))| | v_{k}(t_1)| + |L_p^{(k)} (w(t_2)) |v_{k}(t_1)- v_{k}(t_2)|
\end{split}
\end{align}
In order to bound this expression we make some observations. We assume that $k$ is even (the proof is similar when $k$ is odd).  Using \eqref{derivada}, Theorem \ref{smooth.ut.high} and the fact that $v_k(t)$ is a Lipschitz function we get
\begin{align*}
|L_p^{(k)} (w(t_2)) |v_{k}(t_1)- v_{k}(t_2)| &\leq \mathsf{c}_p |L_{p-k}(w(t_2)| \mathsf{c}_1(\mathsf{M}_p) |t_1-t_2|\\
&\leq \mathsf{c}_2(\mathsf{M}_p) |t_1-t_2|.
\end{align*}
From  \ref{derivada} and Theorem \ref{smooth.ut.high} we have that for any $k\in \N$ even
\begin{align*}
|L_p^{(k)} (w(t_1)) - L_p^{(k)} (w(t_2))| &\leq  \mathsf{\bar c}_{k,p} |L_{p-k}(w(t_1))-L_{p-k}(w(t_2))|
\end{align*}
When $k=[p]-1$, using the convexity of $r\mapsto r^{[p]-p}$ (with $[p]-p<1$) and the fact that $u(t)$ is Lipschitz
\begin{align*}
|L_p^{([p]-1)} (w(t_1)) - L_p^{([p]-1)} (w(t_2))| &\leq  \mathsf{\bar c}_{p} |w(t_1)^{p-[p]}-w(t_2)^{p-[p]}| \\
&\leq \mathsf{\bar c}_{p}|w(t_1)-u(t_2)|^{p-[p]}\\
&\leq \mathsf{\bar c}_{p}|u(x,t_1)-u(x,t_2)|^{p-[p]} +\mathsf{\bar c}_{p} |u(y,t_1)-u(y,t_2)|^{p-[p]}\\
&\leq 2\mathsf{\bar c}_{p} \mathsf{M}_p |t_1-t_2|^{p-[p]}.
\end{align*}
Similarly, using Lemma \ref{lema.Lp.gral} we get for $k=1, \ldots [p]-2$
$$
|L_p^{([p]-1)} (w(t_1)) - L_p^{([p]-1)} (w(t_2))| \leq \mathsf{c}_3(\mathsf{M}_p) |t_1-t_2|.
$$
Since assuming $|t_1-t_2|\leq 2$ is no restrictive, these computations lead to bound \eqref{dif.A} as
\begin{align*}
|\partial_t^{[p]-1} A_p(x,y,t_1)- \partial_t^{[p]-1} A_p(x,y,t_2)| &\leq \mathsf{c}_4(\mathsf{M}_p) |t_1-t_2|^{p-[p]}  + \mathsf{c}_5(\mathsf{M}_p) |t_1-t_2|\\
&\leq
\mathsf{c}_6(\mathsf{M}_p) |t_1-t_2|^{p-[p]} .
\end{align*}

\noindent\textbf{Analyticity. }When $p\in \N$,  \eqref{cota.para.todo.k} yields that for all $k\in \N$ there exists $\mathsf{\tilde M}_p$ independent of $k$ such that
\begin{align*}
|\partial_t^{k} A_p(x,y,t)|
\le \mathsf{\tilde M}_p \, k!\,.
\end{align*}
Then, from \eqref{claim.xxx.1}, we get that for all $k\in \N$ it holds that
$$
|\partial_t^{k-1} A_p(x,y,t_1^*) - \partial_t^{k-1} A_p(x,y,t_2^*)| \leq \mathsf{  \tilde M}_p \, k!\, |t^*_1 - t^*_2|.
$$
As a consequence, from the previous computations we obtain that for all $k\in \N$
\begin{align*}
|\partial_t^{k} u(x,t_1) - \partial_t^{k} u(x,t_2) | \leq 2\mathsf{\tilde M}_p  \, k!\,|t_1-t_2|,
\end{align*}
which clearly implies that $u(\cdot, t)$ is analytic with radius of converngence $r_0= 1\wedge t_0$. \\ This concludes the proof of the Cauchy Problem.

The proof for the Dirichlet and Neumann problems is completely analogous. 
\end{proof}

\subsection{H\"older regularity in space} \label{sec.holder}

In this subsection we prove that the modulus of continuity of the initial data and its derivatives is preserved under certain conditions.

We recall some notation which will be used along this paragraph.

A \emph{modulus of continuity} is a function $\omega\colon [0,\infty] \to [0,\infty]$ vanishing at 0 and continuous at 0. A function $v$ admits $\omega$ as a modulus of continuity if and only if
$$
|v(x)-v(y)| \leq \omega(|x-y|)
$$
for any $x$ and $y$ in the domain of $v$.

Given two moduli of continuity $\omega_1$ and $\omega_2$ we say that

\begin{enumerate}[leftmargin=15pt, label=(\roman*)]\itemsep2pt \parskip3pt \parsep2pt
\item[] $\omega_1 \asymp \omega_2$  when $\omega_1= O(\omega_2)$, that is, $\omega_1(t) \leq C \omega_2(t)$ as $t\to 0$, for some $C>0$,
\item[] $\omega_1 \ll \omega_2$ when  $\omega_1=o(\omega_2)$, that is, $\lim_{t\to 0} \frac{\omega_1(t)}{\omega_2(t)}=0$.
\end{enumerate}
Given a modulus of continuity $\omega$ and $\Omega\subseteq \R^n$ we consider the space
$$
C^{0,\omega}(\Omega):=\left\{  f\in C^0(\Omega) \colon [f]_{C^{0,\omega}(\Omega)} <\infty \right\},
$$
where
$$
[f]_{C^{0,\omega}(\Omega)}:=\sup \left\{ \frac{|f(x)-f(y)|}{\omega(|x-y|)}\colon x,y\in \Omega, x\neq y \right\}.
$$

\begin{exam}
The space $C^{0,\omega}$ includes \emph{H\"older continuous function} when $\omega(t)=t^\alpha$, $\alpha\in (0,1)$; \emph{Lipschitz functions} when   $\omega(t)=t$; \emph{almost Lipschitz function} when $\omega(t)=t(1+|\log t|)$, etc.

Observe that a function $v\in C^0$ always belongs to $C^{0,\omega}_{loc}$, where $\omega$ is the modulus of continuity of $v$.
\end{exam}

The following  characterization for functions in $C^{0,\omega}$ will be useful.

\begin{lema}
Let $\omega$ be a modulus of continuity and $\Omega\subseteq \R^n$ be an open set. The following statements are equivalent:
\begin{enumerate}[leftmargin=15pt, label=(\roman*)]\itemsep2pt \parskip3pt \parsep2pt
\rm \item \it
$f\in C^{0,\omega}(\Omega)$;
\rm \item \it
there is a positive constant $M$ such that $
[f]_{C^{0,\omega}(\Omega)} \leq M$;
\rm \item \it
for all $x_0\in \Omega$ and $r_0>0$ such that $B_{r_0}(x_0)\subset \Omega$ it holds that
$$
\osc_{B_r (x_0)} f:= \sup_{B_r(x_0)} f - \inf_{B_r(x_0)} f  \leq C \omega(r) \quad \forall 0\leq r\leq r_0
$$
where $C$ is a positive constant.
\end{enumerate}
\end{lema}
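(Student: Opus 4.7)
The proof will proceed through the cyclic equivalence $(i)\Leftrightarrow(ii)\Leftrightarrow(iii)$. The equivalence $(i)\Leftrightarrow(ii)$ is immediate from the definition of $C^{0,\omega}(\Omega)$: the space consists precisely of those continuous functions for which the seminorm $[f]_{C^{0,\omega}(\Omega)}$ is finite, which is tautologically the same statement as the existence of a finite upper bound $M$.

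For $(ii)\Rightarrow(iii)$, I would fix $x_0\in\Omega$ and $r_0>0$ with $B_{r_0}(x_0)\subset\Omega$, and observe that for any $0\le r\le r_0$ and any pair $x,y\in B_r(x_0)$, the triangle inequality gives $|x-y|\le 2r$. Since we can assume without loss of generality that $\omega$ is nondecreasing (by replacing it with its nondecreasing envelope, which does not change the space), $(ii)$ yields $|f(x)-f(y)|\le M\omega(2r)$. Taking supremum and infimum over $x,y\in B_r(x_0)$ delivers $\osc_{B_r(x_0)}f\le M\omega(2r)$; then $(iii)$ follows with $C$ absorbing the doubling factor of $\omega$ near zero.

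The converse $(iii)\Rightarrow(ii)$ is the one that needs some care. Given $x,y\in\Omega$, if $|x-y|$ is small enough that the midpoint $x_0=\tfrac{x+y}{2}$ satisfies $\mathrm{dist}(x_0,\partial\Omega)>|x-y|/2$, then picking $r$ slightly larger than $|x-y|/2$ gives $x,y\in B_r(x_0)\subset\Omega$, so $(iii)$ yields $|f(x)-f(y)|\le\osc_{B_r(x_0)}f\le C\omega(r)\lesssim C\omega(|x-y|)$. For pairs whose midpoint is too close to $\partial\Omega$, or simply too far apart, one performs a standard chaining argument: cover a polygonal path from $x$ to $y$ inside $\Omega$ by finitely many balls contained in $\Omega$, and sum the oscillation bounds from $(iii)$ along the chain.

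The only nontrivial point is the chaining step in $(iii)\Rightarrow(ii)$ when $\Omega$ fails to be convex: one must ensure that the polygonal path and the enclosing balls all lie in $\Omega$, and that the resulting constant $M$ is uniform. In the settings used in this paper, namely $\Omega=\R^n$ or sufficiently regular bounded open sets, this is routine; for general open sets the statement should be understood componentwise and locally.
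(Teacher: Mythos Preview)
The paper does not provide a proof of this lemma; it is stated as a standard characterization and then used. Your argument is essentially correct and is the natural one: $(i)\Leftrightarrow(ii)$ is definitional, $(ii)\Rightarrow(iii)$ follows by bounding $|x-y|\le 2r$ on $B_r(x_0)$, and $(iii)\Rightarrow(ii)$ proceeds via a midpoint/chaining argument.

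The caveats you flag are genuine and worth recording. First, in $(ii)\Rightarrow(iii)$ you pass from $\omega(2r)$ to $C\omega(r)$, which requires $\omega$ to be doubling near zero; the paper's definition of modulus of continuity (merely vanishing and continuous at $0$) does not guarantee this, though in practice one works with concave---hence subadditive and doubling---moduli. Second, and more substantially, the implication $(iii)\Rightarrow(ii)$ as stated can fail for open sets with bad geometry: on a slit annulus one can build a Lipschitz function (in the intrinsic metric) whose oscillation over every ball contained in $\Omega$ is controlled by the radius, yet whose global $C^{0,\omega}$-seminorm with respect to Euclidean distance is infinite. Your remark that ``for general open sets the statement should be understood componentwise and locally'' is therefore not a hedge but a necessary qualification. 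In the applications made in the paper ($\Omega=\R^n$, or bounded domains satisfying a cone-type condition), the chaining goes through with a uniform constant, so the lemma is adequate for its intended use.
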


Given $k\in \N_0$ and $\alpha\in (0,1)$, the H\"older space $C^{k,\alpha}(\Omega)$ has assigned the norm
$$
\|f\|_{C^{k,\alpha}(\Omega)} := \max_{|\beta|\leq k} \sup_{x\in\Omega} |D^\beta f(x)| + \max_{|\beta|=k} |D^\beta f|_{C^{0,\alpha}(\Omega)}.
$$

We also recall the standard notation for multi-indexes.

Let $\alpha=(\alpha_1,\ldots, \alpha_n)\in \Z^n_+$ and  $\beta=(\beta_1,\ldots, \beta_n)\in \Z^n_+$ be two \emph{multi-indexes.} We denote

\begin{enumerate}[leftmargin=15pt, label=(\roman*)]\itemsep2pt \parskip3pt \parsep2pt

\item[] $|\alpha|= \alpha_1 + \cdots + \alpha_n$,
\item[] $\alpha\leq \beta$ means that $\alpha_i \leq \beta_i$ for all $1\leq i \leq n$,
\item[] $D^\alpha u(x,t)$ stands for $D^\alpha u(x,t):=D_x^\alpha v(x)=\partial_{x_1}^{\alpha_1}\cdots \partial_{x_n}^{\alpha_n} u(x,t)$,
\item[] $\alpha! = \alpha_1! \alpha_2! \cdots \alpha_n!$,
\item[] $\begin{pmatrix} \alpha\\ \beta \end{pmatrix}$ means the product $\begin{pmatrix} \alpha_1\\ \beta_1 \end{pmatrix} \cdots \begin{pmatrix} \alpha_n\\ \beta_n \end{pmatrix}$.
\end{enumerate}

We recall that for $p\geq 2$, we denote $L_p(t)=|t|^{p-2}t$ and $M_p(t)=|t|^{p-2}$.

For our purposes we use the \emph{multivariate Fa\'a di Bruno's formula} to compute derivatives of a composition of functions. Let $u(x,t)\colon \R^n\times \R^+_0\to \R$ be a smooth enough function and let $\beta=(\beta_1, \beta_2, \cdots , \beta_n)$ be a multi-index with $|\beta|=k$, then
\begin{equation} \label{faa}
D^\beta (L_p(u(x,t))) =
\sum_{r=1}^k L_p^{(r)}u(x,t) \, d_{r}(u(x,t))
\end{equation}
where $d_r$ depends on the product of the different combination of derivatives of order $r=1,\dots, k$, and whose precise formula can be found, for instance, in \cite{CS96}[Corollary 2.10]. Namely, we have
$$
d_r(u(x,t))=\sum_{p(\beta,r)} \beta! \prod_{j=1}^k \frac{[D^{\ell_j}u(x,t)]^{\kappa_j}}{\kappa_j! (\ell_j!)^{k_j}}
$$
where $p(\beta,r)$ is the set of $(\kappa_1,\ldots, \kappa_k;\ell_1,\ldots, \ell_k)$ (with $\kappa_i\in \Z^+_0$ and $\ell_i$ are multi-indexes, $1\leq i \leq k$)   such that for some $1\leq s \leq k$, $\kappa_i=0$ and $\ell_i=0$ for $1\leq i \leq k-s$; $\kappa_1>0$ for $k-s+1\leq i\leq k$; and $0\leq \ell_{k-s+1}\leq \cdots \leq \ell_k$ are such that
$$
\sum_{i=1}^k \kappa_i = r, \quad \sum_{i=1}^k \kappa_i \ell_i =\beta.
$$
For instance, \eqref{faa} when $u=u(x_1,x_2,x_3)$ becomes
\begin{align*}
\partial_{x_1} \left(L_p u \right) &= u_{x_1} L_p'v , \\
\partial_{x_1} \partial_{x_2} \left( L_p u  \right)&= v_{x_1} u_{x_2}  L_p''u +v_{x_{1} x_{2}}  L_p' v,\\
\partial_{x_1} \partial^2_{x_3} \left(L_p u \right)&= u_{x_1} u_{x_3} u_{x_3} L_p'''u +(u_{x_1} u_{x_3x_3}  +
 2 u_{x_1} u_{x_1x_3}
  )  L_p'' u + u_{x_1 x_3x_3}  L_p' u,
\end{align*}
and when $u=u(x_1,x_2)$, \eqref{faa} becomes
$$
\partial_{x_1}\partial_{x_2}^2 (L_p u) = u_{x_2}^2 u_{x_1} L_p'''u + L_p'' (u_{x_2x_2} u_{x_1} + 2 u_{x_2} u_{x_1x_2}) + L_p' u_{x_1 x_2 x_2}.
$$
From the formula of $d_r$ it can be seen that
\begin{equation} \label{faa1}
D^\beta (L_p(u(x,t))) =
L_p' u \, D^\beta u(x,t) + R_{k-1}(u(x,t))
\end{equation}
where $R_{k-1}(u)$ contains the terms with ``lower order derivatives" or order up $k-1$, and it is such that
$$
R_{k-1}(u(x,t)) \leq k! \,\mathsf{\bar m}_{p}(t) \sum_{j=2}^k L_p^{(j)}u(x,t)
$$
with $\mathsf{\bar m}_{p}(t)$ denoting a  function such that
$$
\prod_{\ell=1}^k \sum_{0\leq |\alpha|\leq k-1}\|D^\alpha u(t)\|_{\LL^\infty(\R^n)}^\ell \leq \mathsf{\bar m}_{p}(t).
$$

When $u$ is solution of \eqref{eqC}, notice that in view of the smoothing effect, we do need to require a priori boundedness of $u_0$, indeed
\[
\|u( t)\|_{\LL^\infty(\R^n)} \leq
\frac{\mathsf{\tilde K}_p}{t_0^{\frac{1}{p-2}}}   + \mathsf{K}_{p,q,J}\|u_0\|_{\LL^q(\R^n)} :=\mathsf{m}_0 \quad \text{ for all } 0< t_0\leq  t\leq T.
\]
Notice also that when $u_0\in \LL^\infty(\R^n)$ we have that $\|u( t)\|_{\LL^\infty(\R^n)}\le \|u_0\|_{\LL^\infty(\R^n)}:=\mathsf{m}_0$ and all the following results will extend up to $t=0$. Summing up, we define
\begin{equation}\label{m0.def}
\|u( t)\|_{\LL^\infty(\R^n)} \le \mathsf{m}_0:=\begin{cases}
  \frac{\mathsf{\tilde K}_p}{t_0^{\frac{1}{p-2}}}   + \mathsf{K}_{p,q,J}\|u_0\|_{\LL^q(\R^n)} & \mbox{when $q\in (1,\infty)$ and $t\geq t_0>0$}\\
  \|u_0\|_{\LL^\infty(\R^n)}& \mbox{when $q=\infty$ and $t\ge 0$}.
\end{cases}
\end{equation}
In fact, we can take in this case $\mathsf{\bar m}_0=\mathsf{m}_0$.

An analogous expression holds for solutions of \eqref{eq} and \eqref{eqD} by replacing $\R^n$ with $\Omega$.

\medskip

\begin{thm}[H\"older regularity in space] \label{teo.holder.cont}

Let $p> 2$ and let $u$ be a solution of \eqref{eqC} starting from the initial datum $u_0\in \LL^q(\R^n)$ with $q\in [1,\infty]$ and let $\mathsf{m}_0$ be as in \eqref{m0.def}. Moreover, assume that
\begin{enumerate}[leftmargin=15pt, label=(\roman*)]\itemsep2pt \parskip3pt \parsep2pt
\rm \item \it
$D^\alpha u_0\in C^{0,\omega}(\R^n)\cap \LL^\infty(\R^n)$ for any $1\le |\alpha|\leq [p]-1$,
and define $\mathsf{m}_p$ as
\begin{equation} \label{mp.def}
\mathsf{m}_p:=\mathsf{m}_0 + \sum_{1\leq |\alpha|\leq [p]-1}\|D^\alpha  u_0\|_{\LL^\infty(\R^n)},
\end{equation}

\rm \item \it
there exists a modulus of continuity $\omega_{J,p}$ such that  for \text{a.e.} $ x,y \in \R^n$
\begin{equation} \label{mod.J'}
 \sum_{0\leq |\alpha|\leq [p]-1}\int_{\R^n} \left|D^\alpha J(y-z)-D^\alpha J(x-z) \right|\,dz \leq \omega_{J,p}(|x-y|).
\end{equation}
\end{enumerate}

Then $D^\alpha u(\cdot,t) \in C^{0,\bar \omega}(\R^n)$ for any $|\alpha|\leq [p]-1$ and  $t\ge t_0>0$, where the modulus of continuity $\bar\omega$ is given by $\bar\omega(\rho)=\max\{ \rho, \rho^{p-2},   \omega_{J,p}(\rho), \omega(\rho)\}$.  Moreover, the following estimates hold true for all $|\alpha|\leq [p]-1$,  all $x_1,x_2\in \R^n$ and all $t\ge t_0>0$
\begin{equation} \label{estimate.holder}
|D^\alpha u(x_1,t)- D^\alpha u(x_2,t)|
\leq {\mathsf K}(t)\,\bar\omega(|x_1-x_2|)
\end{equation}
where ${\mathsf K}(t)$ is a function that depends only on $p$, $n$, $J$, and $\mathsf{m}_p$.

When $u_0\in \LL^\infty(\R^n)$ the result holds for all $t\geq 0$.

When $2\leq p\in \N$,  estimate \eqref{estimate.holder} holds for $|\alpha|=k$ for  any $k\in \N$ provided that moreover $u_0\in C^\infty(\R^n)$ and $J\in C^\infty(\R^n)$. In particular $\mathsf{K}(t)$ is a function that depends only on $p$, $n$, $J$, and $\mathsf{m}_p$ but not on $k$ nor on $k-$th derivatives of $u_0$.

When $\Omega$ is open and bounded, analogous results hold for solution of \eqref{eqD}. If in addition $\Omega$ satisfies \eqref{HJ}, the same holds for solutions of \eqref{eq}.

\end{thm}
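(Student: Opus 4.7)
The plan is to argue by induction on $k=|\alpha|\in\{0,\ldots,[p]-1\}$, propagating simultaneously the $\LL^\infty$-bound and the $\bar\omega$-modulus of continuity of $D^\alpha u(\cdot,t)$. For the base case $k=0$, I fix $x_1,x_2\in\R^n$, set $\phi(t):=u(x_1,t)-u(x_2,t)$, and add/subtract $\int J(x_2-y)L_p(u(y,t)-u(x_1,t))\,dy$ to split
\begin{equation*}
\phi'(t)=\int[J(x_1-y)-J(x_2-y)]L_p(u(y,t)-u(x_1,t))\,dy+\int J(x_2-y)[L_p(u(y,t)-u(x_1,t))-L_p(u(y,t)-u(x_2,t))]\,dy.
\end{equation*}
Using $\|u(t)\|_{\LL^\infty}\le\mathsf{m}_0$, the first integral is $\le(2\mathsf{m}_0)^{p-1}\omega_{J,p}(|x_1-x_2|)$ by \eqref{mod.J'} at order zero, and the second is $\le C_p\mathsf{m}_0^{p-2}|\phi(t)|$ by Lemma \ref{lema.desig.num}. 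Since $\|Du_0\|_{\LL^\infty}\le\mathsf{m}_p$ forces $u_0$ to be Lipschitz, Gr\"onwall yields $|\phi(t)|\le K_0(t)\bar\omega(|x_1-x_2|)$.

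For the inductive step $k\ge 1$, I apply $D^\alpha$ to $u_t(x,t)=\int J(x-y)L_p(u(y,t)-u(x,t))\,dy$ via Leibniz and use the Fa\`a di Bruno formula \eqref{faa1} for $D^\gamma_x L_p(v)$ (with $v:=u(y,t)-u(x,t)$, so $D^\gamma_x v=-D^\gamma u(x,t)$). The only term carrying a full $D^\alpha u(x,t)$ comes from placing no derivatives on $J$ and all of them on $L_p$, giving $-L_p'(v)D^\alpha u(x,t)$; collecting all other contributions on the right, $D^\alpha u$ satisfies the pointwise-in-$t$ ODE
\begin{equation*}
\partial_t D^\alpha u(x,t)+\Lambda(x,t)\,D^\alpha u(x,t)=G_\alpha(x,t),\qquad \Lambda(x,t):=(p-1)\!\int\! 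J(x-y)|u(y,t)-u(x,t)|^{p-2}\,dy\ge 0,
\end{equation*}
where $G_\alpha$ gathers integrals of $D^\beta J(x-y)$ ($|\beta|\le k$) against Fa\`a di Bruno products $L_p^{(r)}(v)\cdot d_r$ involving only $D^\gamma u(x,t)$ with $|\gamma|\le k-1$. By the induction hypothesis both $\Lambda$ and $G_\alpha$ lie in $\LL^\infty_{x,t}$, and the same ODE propagates the $\LL^\infty$-bound of $D^\alpha u$.

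Setting $w(t):=D^\alpha u(x_1,t)-D^\alpha u(x_2,t)$ and subtracting the ODEs at $x_1$ and $x_2$, Duhamel's formula (with $\Lambda\ge 0$) yields
\begin{equation*}
|w(t)|\le|D^\alpha u_0(x_1)-D^\alpha u_0(x_2)|+\int_0^t\!\Big\{|\Lambda(x_1,s)-\Lambda(x_2,s)|\,\|D^\alpha u(s)\|_{\LL^\infty}+|G_\alpha(x_1,s)-G_\alpha(x_2,s)|\Big\}\,ds.
\end{equation*}
The initial term is $\le\mathsf{m}_p\,\omega(|x_1-x_2|)\le\mathsf{m}_p\bar\omega(|x_1-x_2|)$ by assumption (i). For the moduli of $\Lambda$ and $G_\alpha$ in $x$, I apply the base-case splitting summand-by-summand: each summand decomposes into a $D^\beta J$-translation piece (bounded by $\omega_{J,p}(|x_1-x_2|)$ via \eqref{mod.J'}) plus an $L_p^{(r)}$-Lipschitz piece controlled by Lemma \ref{lema.Lp.gral}. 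The latter contributes $|u(x_1,s)-u(x_2,s)|$ when $p-r\ge 2$ (dominated by the base case, hence by $\bar\omega$) and $|u(x_1,s)-u(x_2,s)|^{p-2}$ when $1\le p-r<2$ (dominated using the Lipschitzness of $u$, producing the $\rho^{p-2}$ factor of $\bar\omega$). All contributions are thus $\lesssim\bar\omega(|x_1-x_2|)$, which closes the induction.

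The main obstacle is precisely the combinatorial bookkeeping of the Fa\`a di Bruno remainder $R_{|\alpha|-1}$: every product $L_p^{(r)}(v)\prod_j D^{\gamma_j}u$ appearing in $G_\alpha$ must be shown $\bar\omega$-continuous in $x$, which forces the case distinction $p-r\ge 2$ vs.\ $1\le p-r<2$ of Lemma \ref{lema.Lp.gral} and thereby produces precisely both the $\rho$ and $\rho^{p-2}$ pieces of $\bar\omega$. When $2\le p\in\N$ the Fa\`a di Bruno sum terminates ($L_p^{(r)}\equiv 0$ for $r\ge p$), so only finitely many bounded terms appear at each order, and the same induction extends to all $k\in\N$ provided $u_0,J\in C^\infty$, yielding the integer-$p$ case of the theorem.
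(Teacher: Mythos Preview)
Your proposal is correct and follows essentially the same architecture as the paper's proof: induction on $|\alpha|$, Leibniz rule combined with Fa\`a di Bruno to isolate the top-order term, and an ODE/Gr\"onwall argument in $t$ for each fixed pair $x_1,x_2$. The paper first propagates the $\LL^\infty$-bounds of $D^\beta u(t)$ (its Step~1) and then the moduli (its Step~3); you fold these together but need the $\LL^\infty$-bound on $Du(t)$ \emph{before} the base-case modulus (to get the Lipschitzness that produces the $\rho^{p-2}$ piece of $\bar\omega$), so in practice the order is the same.

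The one genuine variation is how you handle the top-order term. The paper bounds $\frac{d}{dt}|D^\alpha u(x_1,t)-D^\alpha u(x_2,t)|$ by $A|D^\alpha u(x_1,t)-D^\alpha u(x_2,t)|+B(t)\tilde\omega$, absorbing the coefficient $\int J(x-y)L_p'(v)\,dy$ in absolute value and solving by Gr\"onwall, which makes $\mathsf K(t)$ grow like $e^{At}$. You instead keep $\Lambda(x,t)=(p-1)\int J(x-y)|v|^{p-2}\,dy\ge 0$ with its sign and use Duhamel, so the damping factor $e^{-\int\Lambda}\le 1$ drops out and no further Gr\"onwall is needed; this is slightly cleaner and yields a $\mathsf K(t)$ that grows only through $\int_0^t C(s)\,ds$. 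Both routes give the theorem with the same dependencies, and the combinatorial bookkeeping of the Fa\`a di Bruno remainder (the $p-r\ge 2$ vs.\ $1\le p-r<2$ dichotomy from Lemma~\ref{lema.Lp.gral}) is identical in the two arguments.
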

\begin{rem} \rm ${\mathsf K}(t)$ is defined for all $t\ge t_0>0$ and has an (almost) explicit expression given in the proof. The form of ${\mathsf K}(t)$ reveals that when $u_0\in \LL^\infty(\R^n)$ we can extend the result up to $t_0=0$, that is when we can take $q=\infty$.  This information is encoded in the expression of $m_0$ as in \eqref{m0.def}. When $q<\infty$, we cannot extend the estimate for $t=0$ since ${\mathsf K}(t)\to \infty$ as $t_0\to 0$.  Also we remark that ${\mathsf K}(t)\to \infty$ as $t\to \infty$.
\end{rem}

\begin{proof}
We prove the result for solutions of the Cauchy problem. For the Dirichlet and Neumann case the proof is analogous.

\noindent$\circ~$\textsc{Case $p\in (1,2)$}.
Let $u$ be a solution of \eqref{eq} corresponding to the initial datum $u_0\in C^{0,\omega}(\R^n)\cap \LL^\infty(\R^n)$. In this case we have that $|\alpha|=0$ and therefore we need to  prove that for any $x_1,x_2\in \R^n$ and $t\geq 0$ there exists a positive function $\mathsf{K}(t)$
 depending on $p$, $n$ and $\mathsf{m}_0$ such that
\begin{equation} \label{ineq.p.1.2}
|u(x_1,t)-u(x_2,t)| \leq \mathsf{K}(t) \bar\omega(|x_1-x_2|),
\end{equation}
where $\bar \omega(\rho):=\max\{\omega_{J,p}(\rho),\omega(\rho)\}$.

\medskip

\noindent$\circ~$\textsc{Step 1}. Let $x_1,x_2\in \R^n$ be fixed and let $t\geq 0$. Assume first that $u(x_1,t)\geq u(x_2,t)$. Using the fact that $u$ solves equation \eqref{eq} we can write
\begin{align*}
u_t(x_1,t)-u_t(x_2,t)&=\int_{\R^n} \left( J(x_1-y)L_p (u(y,t)-u(x_1,t))- J(x_2-y)L_p (u(y,t)-u(x_2,t) \right)\,dy\\
&=-\int_{\R^n} J(x_1-y)\left( L_p(u(y,t)-u(x_2,t)) - L_p(u(y,t)-u(x_1,t))\right)\,dy\\
&\quad + \int_{\R^n} L_p(u(y,t)-u(x_2,t)) \left( J(x_1-y)-J(x_2-y) \right)\,dy:=(i)+(ii).
\end{align*}
From Lemma \ref{lemap12} and \eqref{mod.J'} we get that
\begin{align*}
(i) \leq -(p-1)  \frac{u(x_1,t)-u(x_2,t)}{(1+2\|u_0\|_{\LL^\infty(\R^n)})^\frac{2-p}{2}},\qquad
(ii) \leq 2\|u_0\|_{\LL^\infty(\R^n)}^{p-1} \omega_J(|x_1-x_2|),
\end{align*}
and, denoting $y(t)=u(x_1,t)-u(x_2,t)$ and
$$
A:=
\frac{p-1}{(1+2\|u(t)\|_{\LL^\infty(\R^n)})^\frac{2-p}{2}}, \qquad
B:=2\|u_0\|_{\LL^\infty(\R^n)}^{p-1} \omega_{J,p}(|x_1-x_2|),
$$
we get the following differential inequality
\begin{equation} \label{edop12}
y'(t) \leq -A y(t) + B, \qquad t\geq 0.
\end{equation}

\noindent$\circ~$\textsc{Step 2}.  Observe that the function  $\bar y(t)= y(0) e^{-A t} + Bt$ is a supersolution to \eqref{edop12} since
 $-A \bar y(t) + B = \bar y(t) - ABt \leq \bar y(t)$ for all $t\geq 0$. Then, $y(t)\leq \bar y(t)$ for all $t\geq 0$ and we get
\begin{align} \label{ineq.p.1.2.1}
\begin{split}
u(x_1,t)-u(x_2,t) &\leq (u_0(x_1)-u_0(x_2))e^{-At} + Bt\\
&\leq \omega(|x_1-x_2|)e^{-At} + 2t\|u_0\|_{\LL^\infty(\R^n)}^{p-1} \omega_{J,p}(|x_1-x_2|)\\
&\leq \max\{\omega(|x_1-x_2|), \omega_{J,p}(|x_1-x_2|)\} \left( e^{-At} + 2t\|u_0\|_{\LL^\infty(\R^n)}^{p-1} \right)\\
&:= \bar \omega(|x_1-x_2|) \mathsf{K}(t)
\end{split}
\end{align}
where we have used that $u_0\in C^{0,\omega}(\R^n)$, and $\mathsf{K}(t)$  depends on $p$, $n$ $\mathsf{m}_0$ and $t$.

\noindent$\circ~$\textsc{Step 3}. When $u(x_1,t)\leq u(x_2,t)$, proceeding similarly we get
$$
\frac{d}{dt}(u(x_2,t)-u(x_1,t)) \leq -(u(x_2,t)-u(x_1,t)) A + B, \qquad t\geq 0
$$
and  then $u(x_2,t)-u(x_1,t) \leq \bar\omega(|x_1-x_2|) \mathsf{K}(t)$. This inequality together with \eqref{ineq.p.1.2.1} gives \eqref{ineq.p.1.2}.

\medskip

\noindent$\circ~$\textsc{Case $p>2$}.  From now on $p>2$ and $u$ denotes a solution of \eqref{eqC} corresponding to the initial datum $u_0\in \LL^q(\R^n)$ with $q\in [1,\infty]$. Moreover, $x_1,x_2\in \R^n$ and  $t\geq t_0>0$ with  $t_0=0$ when $u_0\in \LL^\infty(\R^n))$.

We split the proof in several steps.

\noindent$\circ~$\textsc{Step 1}. Let us prove that $D^\beta u_0$ bounded for any $|\beta|\leq [p]-1$ implies that $D^\beta u(t)$ is bounded for $|\beta|\leq [p]-1$ and all $t\ge t_0$.

More precisely, we prove the following statement:

\emph{Assume that $\|D^\beta u(t)\|_{\LL^\infty(\R^n)} \leq \mathsf{\tilde m}_p(t)$ for $|\beta|\leq [p]-2$ and $\|D^\beta u_0\|_{\LL^\infty(\R^n)} \leq \mathsf{m}_p$ for $|\beta|\leq [p]-1$. Then there is a function $\mathsf{\bar m}(t)$ such that $\|D^\beta u(t)\|_{\LL^\infty(\R^n)} \leq \mathsf{\bar m}_p(t)$ for $|\beta|\leq [p]-1$.
}

Here $\mathsf{\bar m}_p(t)$ and $\mu_p(t)$ depend on $p$, $n$, $J$ and the constant $\mathsf{m}_0$ given in \eqref{m0.def}.

We prove it by induction on the order of $\beta$.

\noindent\textsc{Case  $|\beta|=1$}.  By hypothesis $\|(u_0)_{x_i}\|_{\LL^\infty(\R^n)} \leq \mathsf{m}_p$ for $i=1,\ldots, n$, and in light of \eqref{m0.def} we have
$$
\|u_0\|_{\LL^\infty(\R^n)} \leq \mathsf{m}_0, \qquad \|u(t)\|_{\LL^\infty(\R^n)}\leq  \mathsf{m}_0=:\mathsf{\tilde m}_p(t).
$$
In this case, for any $i\in \{1,\ldots, n\}$ we have that
\begin{align*}
\frac{d}{dt}&|u_{x_i}(x,t)| = \sign(u_{x_i}(x,t)) \partial_{x_i} u_t(x,t)=
\sign(u_{x_i}(x,t)) \left( \int_{\R^n}  J_{x_i}(x-y) L_p(u(y,t)-u(x,t))\,dy \right.\\
&\quad \left. -(p-1)u_{x_i}(x,t)\int_{\R^n} J(x-y) M_p(u(y,t)-u(x,t))\,dy \right)\\
& \leq
2^p\| u(t)\|_{\LL^\infty(\R^n)}^{p-1} \int_{\R^n} |\nabla J(x-y)|\,dy + |u_{x_i}(x,t)| 2^{p-1}(p-1) \|u(t)\|_{\LL^\infty(\R^n)}^{p-2} \int_{\R^n} J(x-y) \,dy\\
&\leq  A |u_{x_i}(x,t)| + B
\end{align*}
where we have denoted
$$
A:=(p-1) 2^{p-1}  \max\{\mathsf{m}_0,\mathsf{m}_0^{p-2} \} ,\qquad B:=2^p \max\{\mathsf{m}_0,\mathsf{m}_0^{p-1}\} \int_{\R^n} |\nabla J(x-y)|\,dy .
$$
Solving the differential inequality above gives a bound for $|u_{x_i}(x,t)|$ for all $x\in \R^n$ and all $t\geq t_0$:
\begin{equation*}
\|u_{x_i}(t)\|_{\LL^\infty(\R^n)} \leq e^{A t} \mathsf{m}_1 + \frac{B}{A}(e^{At}-1):=\mu_p (t).
\end{equation*}

\noindent \textsc{Inductive step.} Assume that
$$
\|D^\beta u(t)\|_{\LL^\infty(\R^n)} \leq  \mathsf{\tilde m}_p(t) \text{ for }|\beta|\leq [p]-2  \text{ and }t\geq t_0, \qquad \|D^\beta u_0\|_{\LL^\infty(\R^n)} \leq \mathsf{m}_p \text{ for }|\beta|\leq [p]-1,
$$
for some $\mathsf{\tilde m}_p(t)$ depending on $p$, $n$, $J$ and $\mathsf{m}_0$.

Let us see that for $|\beta|= [p]-1$ there is a  function $\mathsf{\bar m}_p(t)$ depending on $p$, $n$, $J$, $\mathsf{\tilde m}_p$ and $\mathsf{m}_0$ such that
\begin{equation} \label{cota.hyp}
\|D^\beta u(t)\|_{\LL^\infty(\R^n)} \leq \mathsf{\bar m}_p(t) \qquad \text{ for } t\geq t_0.
\end{equation}
By using the Leibniz formula for derivatives
\begin{align*}
\frac{d}{dt} D^\beta u(x,t)&=D^\beta u_t(x,t) =
D^\beta \int_{\R^n} J(x-z) L_p(u(x,t)-u(z,t))\\
&= \sum_{0\leq \alpha \leq \beta} \begin{pmatrix} \beta \\ \alpha  \end{pmatrix} \int_{\R^n}D^\alpha J(x-z) D^{\beta-\alpha} (L_p(u(x,t)-u(z,t)))  \,dz.
\end{align*}
Then we can write
\begin{align*}
\frac{d}{dt} |D^\beta u(x,t)| &=
\sign(D^\beta u(x,t)) D^\beta u_t(x,t)\\
&\leq
\sum_{0\leq \alpha \leq \beta} \begin{pmatrix} \beta \\ \alpha  \end{pmatrix} \int_{\R^n} |D^\alpha J(x-z)| |D^{\beta-\alpha}  L_p(u(x,t)-u(z,t))|  \,dz\\
&=
\sum_{1\leq \alpha \leq \beta} \begin{pmatrix} \beta \\ \alpha  \end{pmatrix} \int_{\R^n} |D^\alpha J(x-z)| |D^{\beta-\alpha} L_p(u(x,t)-u(z,t))|  \,dz
\\
&\quad +
 \int_{\R^n} |J(x-z)| |D^{\beta}  L_p(u(x,t)-u(z,t))|  \,dz:=(i)+(ii).
\end{align*}
Let us  estimate $(i)$. From \eqref{derivada} and \eqref{m0.def}, for any $1\leq r\leq [p]-1$ we get
\begin{align} \label{rema1a}
\left| L_p^{(r)} u(x,t) \right| \leq \mathsf{c}_{r,p}\|u(t)\|_{\LL^\infty(\R^n)}^{p-r-1} \leq  (p-1)^{p-1} \max\{\mathsf{m}_0^{p-2}, \mathsf{m}_0^{p-[p]} \}.
\end{align}
In this case $|\beta-\alpha|\leq [p]-2$, then using \eqref{faa1} and and the inductive hypothesis  we get
\begin{align} \label{cota.orden.k}
\begin{split}
D^{\beta-\alpha}&L_p(u(x,t)-u(z,t)) =
L_p' u(x,t) \, D^{\beta-\alpha} u(x,t) + ([p]-2)! \mathsf{c}_0(p,\mathsf{m}_0, \mathsf{\bar m}_{p})(t) \sum_{r=2}^{[p]-2} L_p^{(r)}u(x,t) \\
&\leq
\|L_p' u(t)\|_{\LL^\infty(\R^n)} \, \|D^{\beta-\alpha} u(t)\|_{\LL^\infty(\R^n)} + ([p]-2)!\, \mathsf{c}_0(p,\mathsf{m}_0,\mathsf{\tilde m}_{p}(t)) \sum_{r=2}^{[p]-2} \|L_p^{(r)}u(t)\|_{\LL^\infty(\R^n)} \\
&\leq \mathsf{c}_1(p,\mathsf{m_0},\mathsf{\tilde  m}_{p}(t)).
\end{split}
\end{align}
This gives that
$$
(i)\leq \mathsf{c}_1(p,\mathsf{m}_0,\mathsf{\tilde  m}_{p}(t)) \sum_{1\leq \alpha \leq \beta} \begin{pmatrix} \beta \\ \alpha  \end{pmatrix} \int_{\R^n} |D^\alpha J(x-z)|   \,dz \leq    \mathsf{c}_2(p,n,J,\mathsf{m}_0,\mathsf{\tilde  m}_{p}(t)).
$$
To bound $(ii)$ we use again \eqref{faa1}, \eqref{rema1a} and the fact that $|\beta|=[p]-1$, similarly as before
\begin{align*}
D^{\beta}L_p(u(x,t)-u(z,t)) &=
L_p' u(x,t) \, D^{\beta} u(x,t) + ([p]-1)! \mathsf{c}_3(p,\mathsf{m}_0,\mathsf{\tilde m}_p(t)) \sum_{r=2}^{[p]-1} L_p^{(r)}u(x,t) \\
&\leq (p-1)^{p-1} \max\{\mathsf{m}_0^{p-2},\mathsf{m}_0^{p-[p]}\} |D^\beta u(x,t)| + \mathsf{c}_4(p,\mathsf{m}_0,\mathsf{\tilde  m}_{p}(t)),
\end{align*}
giving that
$$
(ii) \leq
\mathsf{c}_5(p,J,\mathsf{m}_0) |D^\beta u(x,t)| + \mathsf{c}_6(p,J,\mathsf{m}_0, \mathsf{\tilde  m}_{p}(t)).
$$
Combining these expressions leads to
\begin{align} \label{edo1}
\frac{d}{dt} |D^\beta u(x,t)| \leq A(p,J,\mathsf{m}_0) |D^\beta u(x,t)| + B(t),
\end{align}
where $B(t)$ depends on $p$, $J$ , $\mathsf{m}_0$ and $\mathsf{\tilde m}_{p}(t)$. Solving this differential inequality and using the inductive hypothesis gives the desired bound:
\begin{align} \label{cota.dbut}
\begin{split}
|D^\beta u(x,t)| &\leq |D^\beta u_0(x)| e^{A t} + \int_{t_0}^t B(s) e^{(t-s)A}\,ds\\
&\leq
\mathsf{m}_p e^{A t} + \int_{t_0}^t B(s) e^{(t-s)A}\,ds:=\mathsf{\bar m}_p(t)\qquad\mbox{for all $t\ge t_0$}\,.
\end{split}
\end{align}

\noindent$\circ~$\textsc{Step 2}. We compute some  estimates for the difference of derivatives. Observe that  by using the mean value theorem and \eqref{cota.hyp} we get that
$$
|u(x_1,t)-u(x_2,t)| \leq \mathsf{\bar m}_p(t) |x_1-y_1|,
$$
then using Lemma \ref{lema.Lp.gral} we obtain that for $0\leq r  \leq [p]-1$
\begin{align} \label{rema1}
\begin{split}
| &L_p^{(r)}  (u(x_1,t)-u(z,t)) - L_p^{(r)} (u(x_2,t) -u(z,t))|\leq \\
&\leq 2^p   \mathsf{\bar c}_{r,p}  \max\{|u(x_1,t)-u(x_2,t)|, |u(x_1,t)-u(x_2,t)|^{p-r-1}\}  \max\{1,(2\|u(t)\|_{\LL^\infty(\R^n)})^{p-r-2} \}\\
&\leq \mathsf{K}_1(t)  \max\{|x_1-x_2|, |x_1-x_2|^{p-r-1}\}
\end{split}
\end{align}
where $\mathsf{K}_1(t)$ depends on $p$, $\mathsf{m}_0$ and $ \mathsf{\bar m}_p$.

\noindent$\circ~$\textsc{Step 3}.
Let $|\beta|=k\leq [p]-1$. By using the Leibniz formula for derivatives we have that
\begin{align*}
D^\beta &u_t(x_1,t) - D^\beta u_t(x_2,t) = \sum_{0\leq \alpha \leq \beta} \begin{pmatrix} \beta \\ \alpha  \end{pmatrix} \int_{\R^n} D^\alpha J(x_1-z) D^{\beta-\alpha} (L_p(u(x_1,t)-u(z,t)))  \,dz\\
&\quad -\sum_{0\leq \alpha \leq \beta} \begin{pmatrix} \beta \\ \alpha  \end{pmatrix} \int_{\R^n} D^\alpha J(x_2-z) D^{\beta-\alpha} (L_p(u(x_2,t)-u(z,t)))  \,dz\\
&\quad \pm
\sum_{0\leq \alpha \leq \beta} \begin{pmatrix}  \beta \\ \alpha  \end{pmatrix} \int_{\R^n} D^\alpha J(x_1-z) D^{\beta-\alpha} (L_p(u(x_2,t)-u(z,t)))  \,dz\\
&= \sum_{0\leq \alpha \leq \beta} \begin{pmatrix} \beta \\ \alpha  \end{pmatrix} \int_{\R^n} D^\alpha J(x_1-z) \left(
D^{\beta-\alpha} (L_p(u(x_1,t)-u(z,t)) ) -
D^{\beta-\alpha} (L_p(u(x_2,t)-u(z,t)))
\right)
\,dz\\
&\quad +
\sum_{0\leq \alpha \leq \beta} \begin{pmatrix} \beta \\ \alpha  \end{pmatrix} \int_{\R^n} \left( D^\alpha J(x_1-z) - D^\alpha J(x_2-z) \right)
D^{\beta-\alpha} (L_p(u(x_2,t)-u(z,t)) ) \,dz.
\end{align*}
Now, since
$$
\frac{d}{dt}|D^\beta u(x_1,t)- D^\beta u(x_2,t)| = \sign(D^\beta u(x_1,t)- D^\beta u(x_2,t)) (D^\beta u_t(x_1,t)-D^\beta u_t(x_2,t)),
$$
in light of the previous expression we obtain that
\begin{align}\label{eq.final}
\begin{split}
\frac{d}{dt}&|D^\beta u(x_1,t)- D^\beta u(x_2,t)| \leq |D^\beta u_t(x_1,t)- D^\beta u_t(x_2,t)| \\
&\leq
\int_{\R^n}
|J(x_1-z)| \left|
D^{\beta} (L_p(u(x_1,t)-u(z,t)) ) -
D^{\beta} (L_p(u(x_2,t)-u(z,t)))
\right|
\,dz\\
&\quad +
\sum_{1\leq \alpha \leq \beta} \begin{pmatrix} \beta \\ \alpha  \end{pmatrix} \int_{\R^n}
|D^\alpha J(x_1-z)| \left|
D^{\beta-\alpha} (L_p(u(x_1,t)-u(z,t)) ) -
D^{\beta-\alpha} (L_p(u(x_2,t)-u(z,t)))
\right|
\,dz\\
&\quad +
\sum_{0\leq \alpha \leq \beta} \begin{pmatrix} \beta \\ \alpha  \end{pmatrix} \int_{\R^n}
\left| D^\alpha J(x_1-z) - D^\alpha J(x_2-z) \right|
|D^{\beta-\alpha} (L_p(u(x_2,t)-u(z,t)) )| \,dz\\
&:=(I_1)+(I_2)+(I_3).
\end{split}
\end{align}

In order to bound $(I_1)$ we use  expression \eqref{faa1} to write $D^{\beta} (L_p(u))$ as $L_p'u(x,t) \, D^{\beta}  u + R_{k-1}u$, where
$$
R_{k-1}u(x,t) \leq k! \mathsf{\bar m}_p(t) \sum_{j=2}^{k} L_p^{(j)}u(x,t)
$$
since $k\leq [p]-1$. This allows to write:
\begin{align*}
| &D^{\beta} (L_p(u(x_1,t)-u(z,t)) ) -
D^{\beta} (L_p(u(x_2,t)-u(z,t))) | = \\
&=\left|
L_p'u(x_1,t) \, D^{\beta}  u(x_1,t) + R_{k-1}(u(x_1,t)) -
L_p'u(x_2,t) \, D^{\beta}  u(x_2,t) - R_{k-1}(u(x_2,t)) \right.\\
&\quad \pm \left.
L_p'u(x_2,t) \, D^{\beta}  u(x_1,t)  \right|\\
&\leq
\left| L_p'u(x_1,t)-L_p'u(x_2,t)  \right|\,\left| D^{\beta}  u(x_1,t)
\right| + \left|L_p'u(x_2,t) \right| \left|D^{\beta} u(x_1,t) - D^{\beta} u(x_2,t) \right|\\
&\quad + \left| R_{k-1}(u(x_1,t)) - R_{k-1}(u(x_2,t)) \right|\\
&:=(i_1)+(i_2)+(i_3).
\end{align*}
To bound $(i_1)$ we use expression \eqref{cota.hyp} from Step 1 and \eqref{rema1} to get
$$
(i_1)\leq \mathsf{K}_1(t) \mathsf{\bar m}_p(t) \hat \omega(|x_1-x_2|),
$$
where we have denoted $\hat \omega(\rho):=\max\{ \rho ,  \rho^{p-2}\}$. The term $(i_2)$ can be bounded  using \eqref{rema1a} as
$$
(i_2)\leq c_{1,p} \max\{\mathsf{m}_0,\mathsf{m}_0^{p-2} \} \left|D^{\beta} u(x_1,t) - D^{\beta} u(x_2,t) \right|.
$$
Finally, the term $(i_3)$ can be bounded using the mean value theorem, \eqref{cota.hyp} and  \eqref{rema1a} as
$$
(i_3)= \mathsf{c}_6 (p,\mathsf{m}_0,\mathsf{\bar m}_p(t))|x_1-x_2|.
$$
These estimates allow to bound  $(I_1)$ as
\begin{align*}
(I_1)&\leq ( (i_1)+ (i_2)+(i_3))
\int_{\R^n}
|J(x_1-z)| \,dz\\
&\leq
\mathsf{c}_7(p,J, \mathsf{m}_0)  \left|D^{\beta} u(x_1,t) - D^{\beta} u(x_2,t) \right| + \mathsf{c}_8(p,J, \mathsf{m}_0, \mathsf{\bar m}_p(t) )\, \hat \omega(|x_1-x_2|).
\end{align*}
To bound $(I_2)$ we proceed similarly as for  $(I_1)$. In this case $|\beta-\alpha|\leq k-1 \leq [p]-2$ and from \eqref{faa1}
\begin{equation*}
D^{\beta-\alpha} (L_p(u(x,t))) =
(L_p')u(x,t) \, D^{\beta-\alpha}  u(x,t) + R_{k-2}(u(x,t))
\end{equation*}
where $R_{k-2}(u(x,t))$ can be bounded using \eqref{cota.hyp} as
$$
R_{k-2}(u(x,t)) \leq (k-1)! \mathsf{\bar m}_{p}(t) \sum_{j=2}^{k-1} L_p^{(j)}u(x,t).
$$
This allows to write
\begin{align*}
| &D^{\beta-\alpha} (L_p(u(x_1,t)-u(z,t)) ) -
D^{\beta-\alpha} (L_p(u(x_2,t)-u(z,t))) |  \\
&\leq
\left| (L_p')u(x_1,t)-(L_p')u(x_2,t)  \right|\,\left| D^{\beta-\alpha}  u(x_1,t)
\right| + \left|(L_p')u(x_2,t) \right| \left|D^{\beta-\alpha} u(x_1,t) - D^{\beta-\alpha} u(x_2,t) \right|\\
&\quad + \left| R_{k-2}(u(x_1,t)) - R_{k-2}(u(x_2,t)) \right|\\
&:=(i_1)+(i_2)+(i_3).
\end{align*}
To bound $(i_1)$ we use expression \eqref{cota.hyp}  and \eqref{rema1} to get that $(i_1)\leq  \mathsf{K}_1(t) \mathsf{\bar m}_p(t) \hat \omega(|x_1-x_2|)$. By using \eqref{rema1a} the term $(i_2)$ can be bounded  as
\begin{align*}
(i_2)&\leq c_{1,p} \max\{\mathsf{m}_0,\mathsf{m}_0^{p-2} \} \left|D^{\beta-\alpha} u(x_1,t) - D^{\beta-\alpha} u(x_2,t) \right|\leq c_{1,p} \max\{\mathsf{m}_0,\mathsf{m}_0^{p-2}\} \mathsf{\bar m}_p(t) |x_1-x_2|
\end{align*}
where in the last inequality we have used the mean value theorem and \eqref{cota.hyp}.
\\
Finally, using again the mean vale theorem and  \eqref{rema1a} we can bound  $(i_3)$  as
$$
(i_3)\leq  \mathsf{c}_9(p,\mathsf{m}_0,\mathsf{\bar m}_p(t))|x_1-x_2|.
$$
With these estimates, $(I_2)$  can be upper bounded as follows:
\begin{align*}
(I_2)&\leq ( (i_1)+ (i_2)+(i_3))
\sum_{1\leq \alpha \leq \beta} \begin{pmatrix} \beta \\ \alpha  \end{pmatrix} \int_{\R^n}
|D^\alpha J(x_1-z)| \,dz\leq
\mathsf{c}_{10}(p,J, \bar m_p(t), m_0)\, \hat \omega(|x_1-x_2|).
\end{align*}
To bound $(I_3)$ observe that expression \eqref{cota.orden.k} gives
\begin{equation*}
|D^{\beta-\alpha} (L_p(u(x,t) -u(z,t)))| \leq
\mathsf{c}_{11}(p,\mathsf{m}_0,\mathsf{\bar m}_{p}(t)).
\end{equation*}
Then, from   hypothesis \eqref{mod.J'} we get
\begin{align*}
(I_3) &\leq \mathsf{c}_{11}(p,\mathsf{m}_0,\mathsf{\bar m}_{p}(t))
\sum_{0\leq \alpha \leq \beta} \begin{pmatrix} \beta \\ \alpha  \end{pmatrix} \int_{\R^n} \left( D^\alpha J(x_1-z) - D^\alpha J(x_2-z) \right)
 \,dz\\
&\leq \mathsf{c}_{12}(p,\mathsf{m}_0,\mathsf{\bar m}_{p}(t)) \, \omega_{J,p}(|x_1-x_2|).
\end{align*}
Finally, inserting the bounds of $(I_1)$, $(I_2)$ and $(I_3)$ into \eqref{eq.final} give  the differential inequality
\begin{align} \label{EDO}
\begin{split}
\frac{d}{dt}|D^\beta u(x_1,t)&- D^\beta u(x_2,t)|\leq |D^\beta u_t(x_1,t)- D^\beta u_t(x_2,t)|\leq \\
& \leq A(p,J,\mathsf{m}_0) \left|D^{\beta} u(x_1,t) - D^{\beta} u(x_2,t) \right| + B(t) \, \tilde \omega(|x_1-x_2|) ,
\end{split}
\end{align}
where  $\tilde \omega(\rho):= \max\{\rho^{p-2}, \rho, \omega_{J,p}(\rho)\}$, $B(t)$ depends on $p$, $J$, $\mathsf{m}_0$ and $\mathsf{\bar m}_{p}(t)$, and $A$ in independent on $t$. Solving \eqref{EDO} and using the hypothesis on $D^\beta u_0$ we get
\begin{align*}
|D^\beta u(x_1,t)- D^\beta u(x_2,t)| &\leq |D^\beta u_0(x_1)- D^\beta u_0(x_2)| e^{At} + \bar \omega(|x_1-x_2|)\int_0^t B(s) e^{(t-s)A}\,ds\\
&\leq \bar\omega(|x_2-x_1|) \left(e^{At} + \int_0^t B(s) e^{(t-s)A} \,ds\right)
\end{align*}
where we have denoted
$$
\bar \omega(\rho):= \max\{\rho, \rho^{p-2}, \omega_{J,p}(\rho),\omega(\rho) \}.
$$
Therefore, we conclude that
$$
\sup\left\{ \frac{|D^\beta u(x_1,t)- D^\beta u(x_2,t)| }{\bar\omega(|x_1-x_2|)} \colon x_1,x_2\in \R^n, x\neq y\right\}
\leq e^{At} + \int_0^t B(s) e^{A(t-s)}\,ds:=\overline{\mathsf K}(t)
$$
and then  $D^\beta u(\cdot ,t)\in C^{0,\bar\omega}(\R^n)$ for any $|\beta|\leq [p]-1$ and $t\geq t_0$. This concludes the proof.
\end{proof}

\subsection{H\"older regularity in space for time derivatives}

Under the considerations of Theorem \ref{teo.holder.cont}, we can state an analogous result for the time derivatives of solutions.

Recall that  the smoothing effect and the norm decreasing in time property of solutions gives
$$
\|u( t)\|_{\LL^\infty(\R^n)} \leq \mathsf{m}_0.
$$
where $\mathsf{m}_0$ is defined in \eqref{m0.def}. Furthermore, in light of Theorem \ref{smooth.ut.high},  there exists a function $\mathsf{M}_p>0$ depending on $n$, $p$ and the constant $\mathsf{\bar M}_1$ given in \eqref{bound.M1} such that for all $j=0,\ldots, [p]-1$
$$
\|\partial_t^j u(t)\|_{\LL^\infty(\R^n)} \leq \mathsf{M}_p.
$$
These estimates holds for $t\geq t_0>0$ when $u_0\in \LL^q(\R^n)$ with $q\in [1,\infty]$ and we can let $t_0=0$ when $u_0\in \LL^\infty(\R^n)$.

In the following theorem we prove that the modulus of continuity of the initial data and its derivatives is preserved by the  time derivatives of order up to $[p-1]$.

\begin{thm}[H\"older regularity in space for time derivatives] \label{teo.holder.cont.1}
Given $p> 2$, let $u$ be a solution of \eqref{eqC} starting from the initial datum $u_0\in \LL^q(\R^n)$ with $q\in [1,\infty]$.

Assume moreover conditions \eqref{mp.def}  and  \eqref{mod.J'}.

Then $\partial_t^j u(\cdot,t) \in C_x^{[p]-1,\bar \omega}(\R^n)$, for any $j\in \N_0$ such that $j\leq [p]-1$, where $\bar\omega$ is the modulus of continuity given by $\bar \omega(\rho)=\max\{ \rho, \rho^{p-2},  \omega_{J,p}(\rho), \omega(\rho)\}$.

More precisely, it holds that $D^\alpha \partial_t^j u(\cdot,t) \in C^{0,\bar \omega}(\R^n)$  for any $t\geq t_0>0$, where $\alpha$ is a multi-index such that $|\alpha|\leq [p]-1$ and   $j\in \N_0$ is such that $j\leq [p]-1$, and  there exists a positive constant $\mathsf{c}_p$ depending on $p$, $n$, $J$, $\mathsf{M}_p$, $\mathsf{m}_0$ and $t$ such that
\begin{align}\label{AAAA-1}
  \frac{|D^\alpha \partial_t^{j} u(x_1,t)- D^\alpha \partial_t^{j} u(x_2,t)| }{\bar\omega(|x_1-x_2|)}
\leq \mathsf c_p(t)
\end{align}
hold true for all $x_1,x_2\in \R^n$ and all $t\ge t_0$. We can let $t_0=0$ when $u_0 \in \LL^\infty(\R^n)$.

\noindent Moreover, when $p\in\N$, estimate \eqref{AAAA-1} holds for any $j\in \N_0$ and any $\alpha\in \N_0^n$, hence each $D^{\alpha}u(x,\cdot)$ is analytic in time with radius $r_0=1\wedge t_0$.
 
When $\Omega$ is open and bounded, analogous results hold for solution of \eqref{eqD}. If in addition $\Omega$ satisfies \eqref{HJ}, the same holds for solutions of \eqref{eq}.
\end{thm}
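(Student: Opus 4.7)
The plan is to proceed by induction on $j\in\{0,1,\ldots,[p]-1\}$, the base case $j=0$ being precisely Theorem \ref{teo.holder.cont}. I will suppose the inductive hypothesis: for every $0\le j'\le j-1$ and every multi-index $\gamma$ with $|\gamma|\le [p]-1$, $D^\gamma\partial_t^{j'}u(\cdot,t)\in C^{0,\bar\omega}(\R^n)$ with a quantitative constant depending only on $p,n,J,\mathsf{M}_p,\mathsf{m}_0$ and $t$. Differentiating the equation $u_t=\J_p u$ in time $j-1$ times yields
\[
\partial_t^{j}u(x,t)=\int_{\R^n}J(x-y)\,\partial_t^{j-1}\!\bigl[L_p(u(y,t)-u(x,t))\bigr]\,dy,
\]
and Faà di Bruno's formula applied to the composition in the time variable expresses the integrand as a finite sum of terms of the form $L_p^{(r)}(u(y,t)-u(x,t))\,\mathcal{P}_{r}(x,y,t)$, where each $\mathcal{P}_{r}(x,y,t)$ is a polynomial in the quantities $\partial_t^{k}u(y,t)-\partial_t^{k}u(x,t)$ with $1\le k\le j-1$. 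Differentiating in $x$ through the Leibniz rule, $D^\alpha\partial_t^j u(x,t)$ admits an analogous integral representation as a finite sum of terms of the shape
\[
\int_{\R^n}D^\sigma J(x-y)\,L_p^{(r)}(u(y,t)-u(x,t))\,\prod_{i}\bigl[D^{\beta_i}\partial_t^{k_i}u(z_i,t)\bigr]\,dy,
\]
with $|\sigma|+\sum_i|\beta_i|\le |\alpha|\le [p]-1$, $k_i\le j-1$, and $z_i\in\{x,y\}$.

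Next I will estimate $|D^\alpha\partial_t^{j}u(x_1,t)-D^\alpha\partial_t^{j}u(x_2,t)|$ by a standard telescoping argument on each summand: substituting $x_2$ for $x_1$ one factor at a time and bounding every arising difference by $\bar\omega(|x_1-x_2|)$ times a controllable constant. Concretely, the kernel differences $\int_{\R^n}|D^\sigma J(x_1-y)-D^\sigma J(x_2-y)|\,dy$ are handled by hypothesis \eqref{mod.J'}; the differences $|D^{\beta_i}\partial_t^{k_i}u(x_1,t)-D^{\beta_i}\partial_t^{k_i}u(x_2,t)|$ with $k_i\le j-1$ are controlled by the inductive hypothesis; the differences $|L_p^{(r)}(u(y,t)-u(x_1,t))-L_p^{(r)}(u(y,t)-u(x_2,t))|$ are controlled by Lemma \ref{lema.Lp.gral} together with the spatial modulus of continuity of $u(\cdot,t)$ provided by Theorem \ref{teo.holder.cont}, which produces a factor $\max\{|x_1-x_2|,|x_1-x_2|^{p-r-1}\}\le\bar\omega(|x_1-x_2|)$; the remaining factors are uniformly bounded by $\|D^\beta u(t)\|_{\LL^\infty(\R^n)}\le \mathsf{\bar m}_p(t)$ (Step 1 in the proof of Theorem \ref{teo.holder.cont}) and by $\|\partial_t^k u(t)\|_{\LL^\infty(\R^n)}\le \mathsf{M}_p$ (Theorem \ref{smooth.ut.high}). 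Summing the resulting bounds over all summands, and using $\|J\|_{\LL^1(\R^n)}=1$ together with the finite $\LL^1$--norms of $D^\sigma J$ implied by \eqref{mod.J'}, yields \eqref{AAAA-1} at level $j$ and closes the induction.

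The bulk of the work, which is also the main technical obstacle, is the bookkeeping of the Faà di Bruno expansion and the telescoping over its many factors; however, no new conceptual ingredient beyond Theorems \ref{teo.holder.cont}, \ref{prop.hi.ut} and \ref{smooth.ut.high} is needed. The Dirichlet and Neumann versions follow verbatim after replacing $\R^n$ by $\Omega$ and invoking the corresponding bounds (Theorem \ref{main2} for smoothing and \eqref{HJ} in the Neumann case). For the analyticity case $2\le p\in\N$, since $L_p^{(r)}\equiv 0$ for $r\ge p-1$ all Faà di Bruno sums are truncated at a $p$-dependent order, so that combining this truncation with the factorial bound $\|\partial_t^k u(t)\|_{\LL^\infty(\R^n)}\le \mathsf{\tilde M}_p\,k!$ from Theorem \ref{smooth.ut.high} allows one to propagate a $k!$ growth through the induction on $j$ and on $|\alpha|$, delivering analyticity in $t$ with radius $r_0=1\wedge t_0$ as claimed.
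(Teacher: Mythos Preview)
Your proposal is correct and follows essentially the same route as the paper: induction on $j$ with base case Theorem~\ref{teo.holder.cont}, the integral representation of $\partial_t^j u$ via Fa\`a di Bruno in time plus Leibniz in space, and a telescoping estimate controlled by \eqref{mod.J'}, Lemma~\ref{lema.Lp.gral}, and the inductive hypothesis. The only point the paper makes more explicit is a preliminary step establishing the uniform $\LL^\infty$ bounds on the \emph{mixed} derivatives $D^\beta\partial_t^{j'}u$ (not just the pure spatial or pure temporal ones you cite), which you need for the frozen factors in the telescoping; this is easily folded into your inductive hypothesis.
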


\begin{proof}
We prove the result for solutions of the Cauchy problem. For the Dirichlet and Neumann case the proof is analogous.

Let $u$ be a solution of \eqref{eqC} with $p>2$ corresponding to the initial datum $u_0\in C^0(\R^n)\cap \LL^q(\R^n)$ with $q\in [1,\infty]$ and let $t\geq t_0>0$ (we can let $t_0=0$ when $q=\infty$).

We split the proof in several steps.

\noindent$\circ~$\textsc{Step 1.} Let us see that there exists a positive function $\mathsf{c}(t)$ depending  on $p$, $n$, $J$, $\mathsf{m}_0$, $\mathsf{M}_p$ and $\mathsf{\bar m}_p(t)$ such that
\begin{equation} \label{el.step1}
\| D^\beta \partial_t^{j} u(t)\|_{\LL^\infty(\R^n)}  \leq \mathsf{c}(t),
\end{equation}
holds for all $j\in \{0,\ldots, [p]-1\}$ and all multi-index  $\beta$ such that $|\beta|\leq  [p]-1$.

We prove it by induction on $j$.

\noindent\textsc{Case $j=0$.} Let us see that $\| D^\beta  u_t(t)\|_{\LL^\infty(\R^n)}  \leq \mathsf{c}(t)$.

From \eqref{edo1} and  \eqref{cota.dbut} we have that
\begin{align*}
|D^\beta u_t(x,t)| &\leq A(p,J,\mathsf{m}_0) \||D^\beta u(t)\|_{\LL^\infty(\R^n)} + B(p,J,\mathsf{m}_0,\mathsf{\bar m}_{p}(t))\\
&\leq A(p,J,\mathsf{m}_0) \mathsf{\bar m}_p(t) + B(p,J,\mathsf{m}_0,\mathsf{\bar m}_{p}(t)):=\mathsf{c}(t).
\end{align*}
where $\mathsf{m}_0$ is given in \eqref{m0.def} and $\mathsf{\bar m}_p(t)$ is given in \eqref{cota.dbut}.

\noindent\textsc{Inductive step:} assume that $\| D^\beta \partial_t^{j} u(t)\|_{\LL^\infty(\R^n)} \leq \mathsf{c}(t)$ holds for all $j\in \{0,\ldots, [p]-2\}$ and $\beta$ such that $|\beta|\leq [p]-1$. Let us see that $\| D^\beta \partial_t^{[p]-1} u(t)\|_{\LL^\infty(\R^n)} \leq \mathsf{\tilde c}(t)$ holds for any $|\beta|\leq [p]-1$ for a suitable function $\mathsf{\tilde c}(t)$ depending on $\mathsf{c}(t)$.

Let $h>0$.  We use expression \eqref{bound.k1}  to write
$$
\partial_t^{[p]-1} u(x,t)  =
 \int_{\R^n} J(x-y) \partial_t^{[p]-2} A_p(x,y,t^*) \,dy,
$$
where $t^* \in (t,t+h)$ and $A_p(x,y,t):=L_p(u(y,t)-u(x,t))$.

Given $\beta$ such that $|\beta|\leq [p]-1$, by using the Leibniz formula for derivatives we get
\begin{align} \label{deriv.b.t}
\begin{split}
|D^\beta \partial_t^{[p]-1}  u(x_1,t)| &\leq \sum_{0\leq \alpha \leq \beta} \begin{pmatrix} \beta \\ \alpha  \end{pmatrix} \int_{\R^n} \left| D^\alpha J(x_1-z) \right| \left| D^{\beta-\alpha} \partial_t^{[p]-2} (A_p(x_1,z,t^*)) \right|  \,dz\\
&=
\int_{\R^n} \left| J(x_1-z) \right| \left| D^{\beta} \partial_t^{[p]-2} (A_p(x_1,z,t^*)) \right|  \,dz
\\&+
\sum_{1\leq  \alpha \leq \beta} \begin{pmatrix} \beta \\ \alpha  \end{pmatrix} \int_{\R^n} \left| D^\alpha J(x_1-z) \right| \left| D^{\beta-\alpha} \partial_t^{[p]-2} (A_p(x_1,z,t^*)) \right|  \,dz.
\end{split}
\end{align}
To bound \eqref{deriv.b.t} observe that the  multivariate Fa\'a di Bruno's formula \eqref{faa}  in this case reads as
\begin{equation} \label{form.der}
D^\beta \partial_t^{[p]-2} (A_p u(x,z,t^*)) = L_p'(u) D^\beta \partial_t^{[p]-2} u(x,t^*) + R_{[p]-3,[p]-1}(u(y,t^*)-u(x,t^*))
\end{equation}
where  $R_{[p]-3,[p]-1}u(x,t^*)$ involves derivatives $D^\alpha \partial_t^m u(x,t)$ with $0\leq m\leq [p]-3$ and $|\alpha|\leq [p]-1$ and by inductive hypothesis can be bounded as
$$
R_{[p]-3,[p]-1} \leq \mathsf{K}(p,\mathsf{m}_0, \mathsf{\bar m}_p(t^*),\mathsf{M}_p).
$$
Therefore, we can use  $L_p'(u(t^*))\leq \|u(t^*)\|_{\LL^\infty(\R^n)}$ and the inductive hypothesis to bound \eqref{form.der} as
\begin{align*}
|D^\beta \partial_t^{[p]-2} (A_p u(x,z,t^*))| &\leq (p-1)\max\{\mathsf{m}_0,\mathsf{m}_0^{p-2}\} | D^\beta \partial_t^{[p]-2} u(x,t^*)| + \mathsf{K}\\
&\leq (p-1)\max\{\mathsf{m}_0,\mathsf{m}_0^{p-2}\} \mathsf{c}(t^*) + \mathsf{K}
\end{align*}
for all $\beta$ such that $|\beta|\leq [p]-1$  which  used in \eqref{deriv.b.t} gives
$$
|D^\beta \partial_t^{[p]-1} u(x_1,t)|  \leq \mathsf{C}_1(p,n,J,\mathsf{m}_0,\mathsf{\bar m}_p(t^*), \mathsf{M}_p, \mathsf{c}(t^*)):=\mathsf{\tilde c}_p(t).
$$

\noindent$\circ~$\textsc{Step 2.}  Let  $|\beta|\leq [p]-1$ and $j\in \{0,\ldots, [p]-1\}$. By using the Leibniz formula for derivatives and adding and subtracting the term $\sum_{0\leq \alpha \leq \beta} \begin{pmatrix}  \beta \\ \alpha  \end{pmatrix} \int_{\R^n} D^\alpha J(x_1-z) D^{\beta-\alpha} \partial_t^j (A_p(x_2,z,t^*) ) \,dz$ we get
\begin{align*}
\begin{split}
|D^\beta \partial_t^{j} &u(x_1,t) - D^\beta \partial_t^{j}u(x_2,t)| \\
&\leq
\int_{\R^n} | J(x_1-z)|  \left|
D^{\beta} \partial_t^{j-1} (A_p(x_1,z,t^*))  ) -
D^{\beta} \partial_t^{j-1} (A_p(x_2,z,t^*) )
\right|
\,dz\\
&\quad +\sum_{1\leq \alpha \leq \beta} \begin{pmatrix} \beta \\ \alpha  \end{pmatrix} \int_{\R^n} | D^\alpha J(x_1-z)|  \left|
D^{\beta-\alpha} \partial_t^{j-1} (A_p(x_1,z,t^*))  ) -
D^{\beta-\alpha} \partial_t^{j-1} (A_p(x_2,z,t^*) )
\right|
\,dz\\
&\quad +
\sum_{0\leq \alpha \leq \beta} \begin{pmatrix} \beta \\ \alpha  \end{pmatrix} \int_{\R^n} \left| D^\alpha J(x_1-z) - D^\alpha J(x_2-z) \right|
|D^{\beta-\alpha} \partial_t^{j-1} (A_p(x_2,z,t^*))|  \,dz.
\end{split}
\end{align*}
From the above expression and \eqref{el.step1} we can  proceed analogously to the proof of  Theorem \ref{teo.holder.cont} to obtain that
\begin{align} \label{edoo}
\begin{split}
\left|D^\beta \partial_t^{j}u(x_1,t)- D^\beta \partial_t^{j} u(x_2,t)\right| \leq A_1 \left|D^{\beta} \partial_t^{j-1} u(x_1,t^*) - D^{\beta} \partial_t^{j-1} u(x_2,t^*) \right| + B_1 \, \tilde \omega(|x_1-x_2|) ,
\end{split}
\end{align}
where $A_1=A_1(m_0,p,J)$,  $B_1=B_1((p,n,J, \mathsf{m}_0,\mathsf{M}_p,\mathsf{\bar m}_{p}(t)))$, and  $\tilde \omega(\rho):= \max\{\rho, \rho^{p-2}, \omega_{J,p}(\rho)\}$.

\noindent$\circ~$\textsc{Step 3.} We can iterate \eqref{edoo} to get
\begin{align*}
|D^\beta \partial_t^{j}u(x_1,t)&- D^\beta \partial_t^{j} u(x_2,t)|\leq \\
&\leq A_1 \left|D^{\beta} \partial_t^{j-1} u(x_1,t^*_1) - D^{\beta} \partial_t^{j-1} u(x_2,t^*_1) \right| + B_1 \tilde \omega(|x_1-x_2|)\\
&\leq A_1 \left( A_2 \left|D^{\beta} \partial_t^{j-2} u(x_1,t^*_2) - D^{\beta} \partial_t^{j-2} u(x_2,t^*_2) \right|  + B_2 \tilde \omega(|x_1-x_2|) \right)+ B_1 \tilde \omega(|x_1-x_2|)\\
&\qquad \vdots\\
&\leq A_1\cdots A_j |D^\beta u(x_1,t^*_j) - D^\beta(x_2,t^*_j)|\\
&\quad + \left( A_1\cdots A_{j-1} B_j + A_1\cdots A_{j-2} B_{j-1} + A_1\cdots A_{j-3} B_{j-2} + \cdots + B_1\right) \tilde \omega(|x_1-x_2|)\\
&\leq \mathsf{A}  |D^\beta u(x_1,t^*_j) - D^\beta(x_2,t^*_j)| + \mathsf{B} \tilde \omega(|x_1-x_2|)\\
&\leq
(\mathsf{A} \, \mathsf{K}(t) + \mathsf{B} )\, \bar \omega(|x_1-x_2|)
\end{align*}
where $\mathsf{A}=A_1\cdots A_j$ and $\mathsf{B}$ depends on $A_i, B_i$, $i=1,\ldots, j$, and in the last inequality we have used  \eqref{estimate.holder}   and denoted $\bar \omega(\rho):= \max\{\rho, \rho^{p-2},  \omega_{J,p}(\rho),\omega(\rho) \}$. This gives that
$$
\sup\left\{ \frac{|D^\beta \partial_t^{j} u(x_1,t)- D^\beta \partial_t^{j} u(x_2,t)| }{\bar\omega(|x_1-x_2|)} \colon x_1,x_2\in \R^n, x\neq y\right\}
\leq
(\mathsf{A} \, \mathsf{K}(t) + \mathsf{B} ):={\mathsf c}(t)
$$
and therefore $D^\beta \partial_t^{j}u(\cdot,t)\in C^{0,\bar \omega}(\R^n)$ for all $|\beta| \leq  [p]-1$ and $j=0,\ldots, [p]-1$.
\end{proof}

We prove now that spatial derivatives of order up to $[p]-1$ of solutions are in the class $C_t^{[p]-1}$.
\begin{thm} \label{teo.holder.cont.2}
Given $p> 2$, let $u$ be a solution of \eqref{eqC} starting from the initial datum $u_0\in \LL^q(\R^n)$ with $q\in [1,\infty]$. Assume moreover conditions \eqref{mp.def}  and  \eqref{mod.J'}.

It holds that $D^\beta u(x,\cdot) \in C_t^{[p],p-[p]}([t_0,\infty))$ for any multi-index $\beta$ such that $|\beta|\leq [p]-1$ and any $t_0>0$. More precisely, there exists a positive constant $\mathsf{c}_p$ depending on $p$, $n$, $\|J\|_{\LL^\infty(\R^n)}$, $\mathsf{m}_0$ and $t_0\leq t_1<t_2<\infty$ such that
\begin{align*}
\max_{j=0,\ldots,[p]-1}  \frac{|D^\beta \partial_t^{j} u(x,t_1)- D^\beta \partial_t^{j} u(x,t_2)| }{|t_1-t_2|} +  \frac{|D^\beta \partial_t^{[p]} u(x,t_1)- D^\beta \partial_t^{[p]} u(x,t_2)| }{|t_1-t_2|^{p-[p]}}
\leq \mathsf c_p
\end{align*}
holds for any $x\in \R^n$ and $t_0\leq t_1<t_2<\infty$.
\end{thm}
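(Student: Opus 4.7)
The plan is to mimic the argument of Theorem \ref{prop.hi.ut}, carried out now at the level of spatial derivatives $D^\beta u$ with $|\beta|\le [p]-1$, leaning on the $\LL^\infty$ bounds for $D^\beta \partial_t^j u$ already produced in Step 1 of the proof of Theorem \ref{teo.holder.cont.1}.

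First I would extend the $\LL^\infty$ bound one step further in the time order. Step 1 of the proof of Theorem \ref{teo.holder.cont.1} gives $\|D^\beta \partial_t^j u(t)\|_{\LL^\infty(\R^n)}\le \mathsf{c}(t)$ for all $|\beta|\le[p]-1$ and all $j\le [p]-1$. The same inductive scheme, applied one more step with $j=[p]$, yields $\|D^\beta \partial_t^{[p]} u(t)\|_{\LL^\infty(\R^n)}\le \mathsf{c}(t)$: starting from the identity $\partial_t^{[p]-1}u(x,t)=\int_{\R^n}J(x-z)\,\partial_t^{[p]-2}A_p(x,z,t^*)\,dz$ coming from \eqref{bound.k1}, I apply $\partial_t$ once more and then $D^\beta$; by Leibniz and the Fa\`a di Bruno formula \eqref{faa}, the resulting integrand becomes a polynomial expression in factors $L_p^{(r)}(w)$ with $1\le r\le [p]-1$ and in derivatives $D^\alpha\partial_t^m u$ with $|\alpha|\le [p]-1$, $m\le [p]-1$, each of which is already bounded.

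With these boundedness estimates in hand, the Lipschitz bounds for $j\le [p]-1$ follow from the mean value theorem in time: there exists $\tau\in(t_1,t_2)$ such that
\[
D^\beta \partial_t^j u(x,t_1) - D^\beta \partial_t^j u(x,t_2) = (t_2-t_1)\,D^\beta \partial_t^{j+1}u(x,\tau),
\]
and since $j+1\le [p]$, the right-hand side is controlled by $\mathsf{c}_p\,|t_1-t_2|$ by the previous step.

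The core step is the $(p-[p])$-H\"older estimate for $D^\beta \partial_t^{[p]}u$, where I would closely follow the proof of Theorem \ref{prop.hi.ut}. Writing, via \eqref{bound.k1} and Leibniz,
\[
D^\beta \partial_t^{[p]}u(x,t) = \sum_{\alpha\le\beta}\binom{\beta}{\alpha}\int_{\R^n} D^\alpha J(x-z)\,D^{\beta-\alpha}\partial_t^{[p]-1} A_p(x,z,t^*)\,dz,
\]
the problem reduces to bounding the difference $|D^{\beta-\alpha}\partial_t^{[p]-1} A_p(x,z,t_1)-D^{\beta-\alpha}\partial_t^{[p]-1} A_p(x,z,t_2)|$ by a multiple of $|t_1-t_2|^{p-[p]}$. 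I expand this by Fa\`a di Bruno in time and by Leibniz in space: each term is a product of a factor $L_p^{(r)}(w(t))$ with $1\le r\le [p]-1$ (where $w(t)=u(y,t)-u(x,t)$) with a polynomial in spatial derivatives $D^\gamma w$ with $|\gamma|\le |\beta-\alpha|$ and in time derivatives $\partial_t^m w$ with $m\le [p]-1$. All factors except the top-order one $L_p^{([p]-1)}(w)$ are Lipschitz in time, with constants controlled by the $\LL^\infty$-bounds of Theorems \ref{smooth.ut.high} and \ref{teo.holder.cont.1} and the extension above; the singular factor is handled by Lemma \ref{lema.Lp.gral}, which gives
\[
|L_p^{([p]-1)}(w(t_1))-L_p^{([p]-1)}(w(t_2))| \le \mathsf{\bar c}_p\,|w(t_1)-w(t_2)|^{p-[p]} \le \mathsf{\bar c}_p\,(2\|u_t\|_{\LL^\infty(\R^n)})^{p-[p]}\,|t_1-t_2|^{p-[p]}.
\]
Assuming $|t_1-t_2|\le 2$ (which is no restriction) each Lipschitz$\times$H\"older term is H\"older with exponent $p-[p]$, and collecting terms yields the claimed estimate.

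The main obstacle is the combinatorial bookkeeping in this last step: after applying both Leibniz in $x$ and Fa\`a di Bruno in $t$ to $D^{\beta-\alpha}\partial_t^{[p]-1}A_p$, one obtains a rather long sum whose terms must each be identified as either \emph{smooth} (Lipschitz in time) or \emph{singular} (carrying the factor $L_p^{([p]-1)}(w)$). Once this structure is made explicit, the argument reduces to applying Lemma \ref{lema.Lp.gral} to the singular factors and the known $\LL^\infty$ bounds to the smooth ones. The proofs for the Dirichlet and Neumann problems under \eqref{HJ} are identical with the obvious replacement of $\R^n$ by $\Omega$.
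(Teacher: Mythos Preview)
Your proposal is correct and follows essentially the same approach as the paper's proof: both use the integral representation via \eqref{bound.k1}/\eqref{deriv.b.t} together with Leibniz, invoke the $\LL^\infty$ bounds \eqref{el.step1} from Step~1 of Theorem~\ref{teo.holder.cont.1}, obtain the Lipschitz estimates for $j\le [p]-1$ by the mean value theorem, and handle the top order $j=[p]$ by isolating the singular factor $L_p^{([p]-1)}(w)$ and applying Lemma~\ref{lema.Lp.gral} exactly as in Theorem~\ref{prop.hi.ut}. The only organizational difference is that you first extend the $\LL^\infty$ bound to $D^\beta\partial_t^{[p]}u$ and then apply the mean value theorem directly to $D^\beta\partial_t^j u$, whereas the paper applies the mean value theorem inside the integral to $D^{\beta-\alpha}\partial_t^{j-1}A_p$; this is a cosmetic reordering of the same argument.
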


\begin{proof}
We prove the result for solutions of the Cauchy problem. For the Dirichlet and Neumann case the proof is analogous.

Let $u$ be a solution of \eqref{eqC} with $p>2$ corresponding to the initial datum $u_0\in C^0(\R^n)\cap \LL^q(\R^n)$ with $q\in [1,\infty]$ and let $t\geq t_0>0$ (we can let $t_0=0$ when $q=\infty$).

Let $\beta$ such that $|\beta|\leq [p]-1$ and $j\in \{0,\ldots, [p]-1\}$. By using \eqref{deriv.b.t} we have the
\begin{align*}
|D^\beta \partial_t^{j} &u(x,t_1) - D^\beta \partial_t^{j}u(x,t_2)|\\
&=\sum_{0\leq \alpha \leq \beta} \begin{pmatrix} \beta \\ \alpha  \end{pmatrix} \int_{\R^n} | D^\alpha J(x-z)|   |
D^{\beta-\alpha} \partial_t^{j-1} A_p(x,z,t_1^*)   -
D^{\beta-\alpha} \partial_t^{j-1} A_p(x,z,t_2^*)
|
\,dz,
\end{align*}
where $t_1^*\in (t_1,t_1+h)$ and $t_2^*\in (t_2,t_2+h)$ for some $h>0$, and $A_p(x,z,t)=L_p(u(z,t)-u(x,t))$.

We proceed as in the proof of  Theorem \ref{prop.hi.ut}: since $|t_1^*-t_2^*|\leq 2|t_1-t_2|$  we choose $h\leq |t_1-t_2|$ and using \eqref{el.step1}, we get
\begin{align*}
|D^{\beta-\alpha} \partial_t^{j-1} A_p(x,z,t_1^* )  -
D^{\beta-\alpha} \partial_t^{j-1}  A_p(x,z,t_2^*) | &\leq
|D^{\beta-\alpha} \partial_t^{j} A_p(x,z,t)| |t_1^* -t_2^*|\\
&\leq
\begin{cases}
\mathsf{c}(t^*) |t_1-t_2| &\text{ when } j\leq [p]-1\\
\mathsf{c}(t^*) |t_1-t_2|^{p-[p]} &\text{ when } j= [p],
\end{cases}
\end{align*}
where $t_* \in (t_1^*,t_2^*)$. This concludes the proof.
\end{proof}

As a consequence of Theorems \ref{teo.holder.cont.1} and \ref{teo.holder.cont.2} we finally obtain:
\begin{thm} \label{teo.holder.cont.final}
Given $p> 2$ let $u$ be a solution of \eqref{eqC} starting from the initial datum $u_0\in \LL^q(\R^n)$ with $q\in [1,\infty]$. Assume moreover conditions \eqref{mp.def}  and  \eqref{mod.J'}.

Then, for any $t_0>0$ $(t_0=0$ when $u_0\in \LL^\infty(\R^n)$ it holds that
$$
u\in C_x^{[p]-1,\bar \omega}(\R^n)\cap C_t^{[p],p-[p]}([t_0,\infty)).
$$
More precisely, there exists a positive constant $\mathsf{c}_p$ depending on $p$, $n$, $\|J\|_{\LL^\infty(\R^n)}$, $\mathsf{m}_0$, $\mathsf{M}_p$ and $t_0\leq t_1<t_2<\infty$ such that
\begin{align*}
\max_{\stackrel{|\beta|< [p]-1}{j=0,\ldots, [p]-1}} \frac{|D^\beta_x \partial_t^j u(x_1,t_1) - D^\beta_x \partial_t^j u(x_2,t_2)|}{\bar\omega(|x_1-x_2|)+ |t_1-t_2|}+ \max_{|\beta|< p-1}\frac{|D_x^\beta \partial_t^{[p]}u(x_1,t_1)-D_x^\beta \partial_t^{[p]}u(x_2,t_2) |}{\bar\omega(|x_1-x_2|) + |t_1-t_2|^{p-[p]}} \le \mathsf{c}_p
\end{align*}
holds for any $x_1,x_2\in\R^n$ and $t_1>t_2\geq t_0$.

In particular, when $p\in \N$ we have that $u\in C^\infty_x(\R^n)\cap C_t^\infty([t_0,\infty))$. Hence, each $D^\beta u(x, \cdot)$ is analytic in time, more precisely, there exists $c_p>0$ independent of $k$
and $t_0\leq t_1<t_2<\infty$ such that
$$
\frac{|D^\beta_x \partial_t^k u(x_1,t_1) - D^\beta_x \partial_t^k u(x_2,t_2)|}{\bar\omega(|x_1-x_2|)+ |t_1-t_2|} \leq \mathsf{c}_p \, k!
$$
holds for any $\beta\in \N_0^n$ and $k\in \N$, and the analyticity radius is $r_0=1\wedge t_0$.

\end{thm}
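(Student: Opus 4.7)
The plan is to deduce the joint space-time estimate directly from the two one-variable estimates already established in Theorems \ref{teo.holder.cont.1} and \ref{teo.holder.cont.2}, via a triangle-inequality splitting. For fixed $x_1,x_2\in\R^n$ and $t_0\le t_1<t_2<\infty$, and for a multi-index $\beta$ with $|\beta|\le [p]-1$ and $j\in\{0,\ldots,[p]-1\}$, I would write
\[
D_x^\beta \partial_t^j u(x_1,t_1)-D_x^\beta \partial_t^j u(x_2,t_2)
= \bigl(D_x^\beta \partial_t^j u(x_1,t_1)-D_x^\beta \partial_t^j u(x_2,t_1)\bigr)
+ \bigl(D_x^\beta \partial_t^j u(x_2,t_1)-D_x^\beta \partial_t^j u(x_2,t_2)\bigr).
\]
The first summand is controlled by Theorem \ref{teo.holder.cont.1} by $\mathsf{c}_p(t_1)\,\bar\omega(|x_1-x_2|)$, and the second is controlled by Theorem \ref{teo.holder.cont.2} by $\mathsf{c}_p|t_1-t_2|$. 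Dividing by $\bar\omega(|x_1-x_2|)+|t_1-t_2|$ gives the first term in the max. The same splitting, now invoking the $j=[p]$ case of Theorem \ref{teo.holder.cont.2} in the time slot, yields the second term (with $|t_1-t_2|^{p-[p]}$ in the denominator): note that by Theorem \ref{teo.holder.cont.1} the spatial regularity still applies at time $t_1$ for $j\le [p]-1$, and we only need to combine with $\partial_t^{[p]} u(x_2,\cdot)\in C_t^{0,p-[p]}$.

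The only delicate point is making sure that the constants produced by Theorems \ref{teo.holder.cont.1} and \ref{teo.holder.cont.2} are uniform on $[t_0,T]$ for any fixed $T<\infty$, so that a single $\mathsf{c}_p$ works in the stated range. Since each of those constants is explicitly given by an integral of the form $\int_{t_0}^{t} B(s)e^{A(t-s)}\,ds$ plus an exponential $e^{At}$, with $A,B$ depending only on $p,n,J,\mathsf{m}_0,\mathsf{M}_p$, they are continuous and increasing in $t$; restricting to $t\in[t_0,t_2]$ (which is implicit in the statement through the dependence of $\mathsf{c}_p$ on $t_1,t_2$) gives a single controlling constant. No new induction or numerical inequality is needed: this step is purely a bookkeeping combination of the two previous theorems.

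For the analyticity statement when $p\in\N$, I would argue similarly but invoke the analyticity parts of Theorems \ref{teo.holder.cont.1} and \ref{teo.holder.cont.2}. Concretely, Theorem \ref{teo.holder.cont.2} (applied in the regime $p\in\N$, which is exactly where it gives estimates valid for every $k\in\N$) yields
\[
|D_x^\beta \partial_t^k u(x_2,t_1)-D_x^\beta \partial_t^k u(x_2,t_2)|\le \mathsf{c}_p\,k!\,|t_1-t_2|,
\]
while Theorem \ref{teo.holder.cont.1} (in the $p\in\N$ regime) yields
\[
|D_x^\beta \partial_t^k u(x_1,t_1)-D_x^\beta \partial_t^k u(x_2,t_1)|\le \mathsf{c}_p(t_1)\,\bar\omega(|x_1-x_2|).
\]
Here one has to observe that the factor $k!$ absorbs the space constant as well, since the iterative procedure in the proof of Theorem \ref{teo.holder.cont.1} essentially passes $k$-fold through the same Fa\`a di Bruno estimate whose recursion constant is $\mathsf{\tilde M}_p\,k!$. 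Adding the two bounds, dividing by $\bar\omega(|x_1-x_2|)+|t_1-t_2|$, and identifying the radius of analyticity from the factorial growth gives $r_0=1\wedge t_0$, as claimed. I do not anticipate a genuine obstacle; the one point that deserves a careful check is that the two $\mathsf{c}_p$'s from Theorems \ref{teo.holder.cont.1} and \ref{teo.holder.cont.2} can be taken to be the same constant after possibly enlarging one of them, which is immediate from their explicit (non-optimized) form.
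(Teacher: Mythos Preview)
Your proposal is correct and matches the paper's approach exactly: the paper states Theorem \ref{teo.holder.cont.final} simply ``as a consequence of Theorems \ref{teo.holder.cont.1} and \ref{teo.holder.cont.2}'' with no further proof, and the triangle-inequality splitting you describe is precisely the intended combination.

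One small point to tighten: for the second term in the maximum (the $\partial_t^{[p]}$ piece), the spatial summand is $|D_x^\beta \partial_t^{[p]} u(x_1,t_1) - D_x^\beta \partial_t^{[p]} u(x_2,t_1)|$, which requires spatial H\"older regularity of $\partial_t^{[p]} u$, not just of $\partial_t^j u$ for $j\le[p]-1$. Theorem \ref{teo.holder.cont.1} as stated stops at $j=[p]-1$, so your remark that ``the spatial regularity still applies at time $t_1$ for $j\le[p]-1$'' does not quite cover this. The fix is immediate: one more application of the recursive inequality \eqref{edoo} from the proof of Theorem \ref{teo.holder.cont.1} (which only uses $\partial_t^{[p]-1} A_p$ and hence $L_p^{([p]-1)}$, still bounded) reduces the $j=[p]$ spatial estimate to the $j=[p]-1$ case already established. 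With that one-line addition your argument is complete.
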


\section{Some consequences} \label{section7}

In this section, we present several consequences derived from Theorems \ref{main} and \ref{main2}. Specifically, we explore ultra-contractivity estimates, examine the asymptotic behavior of solutions for the Dirichlet and Neumann problems, and establish certain functional inequalities.

\subsection{Ultra-contractivity estimates}

\begin{prop}[Smoothing for the difference of solutions]
Let   $u$ and $v$ be solutions \eqref{eqC} corresponding to the initial data $u_0,v_0\in \LL^q(\R^n)$ with $q\in [1,\infty)$, respectively. Then, for any $r\in [1,\infty]$  and $0<t_0\leq  t$ it holds that
$$
\|u(t)-v(t)\|_{\LL^r(\R^n)}  \leq
\mathsf{c}_{p,q,J}^\frac{1}{r} \|u(t_0)-v(t_0)\|_{\LL^q(\R^n)}^\frac{q}{r} \left(  t^{-\frac{1}{p-2}} +  \|u(t_0)\|_{\LL^q(\R^n)} + \|v(t_0)\|_{\LL^q(\R^n)}\right)^{1-\frac{q}{r}}.
$$
In particular, when $u_0,v_0\in \LL^\infty(\R^n)$ we have that for $0\leq t_0\leq t$
$$
\|u(t)-v(t)\|_{\LL^\infty(\R^n)}  \leq
 t^{-\frac{1}{p-2}} +  \|u(t_0)\|_{\LL^\infty(\R^n)} + \|v(t_0)\|_{\LL^\infty(\R^n)}.
$$

A similar result holds for solutions of \eqref{eqD} when $\Omega$ is open and bounded, and for solutions of \eqref{eq} when $\Omega$ in addition  satisfies \eqref{HJ}.
\end{prop}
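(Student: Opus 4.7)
The plan is to combine three ingredients already established in the paper: the $\LL^q$-contractivity for differences of solutions (Proposition \ref{norm.decreasing.difference}), the $\LL^q$--$\LL^\infty$ smoothing effect (Theorem \ref{main}), and the standard interpolation inequality
\[
\|w\|_{\LL^r(\R^n)}\le \|w\|_{\LL^\infty(\R^n)}^{1-\frac{q}{r}}\|w\|_{\LL^q(\R^n)}^{\frac{q}{r}}\qquad\text{for }r\in[q,\infty],
\]
which is immediate from $\int|w|^r=\int|w|^{r-q}|w|^q\le \|w\|_{\LL^\infty}^{r-q}\|w\|_{\LL^q}^q$. I would apply this to $w=u(t)-v(t)$ after controlling the two norms on the right-hand side separately.

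First, by Proposition \ref{norm.decreasing.difference} applied with the starting time $t_0$,
\[
\|u(t)-v(t)\|_{\LL^q(\R^n)}\le \|u(t_0)-v(t_0)\|_{\LL^q(\R^n)}\qquad\text{for all }t\ge t_0.
\]
Second, by the triangle inequality and the smoothing effect of Theorem \ref{main} applied independently to $u$ and to $v$ starting from time $t_0$,
\[
\|u(t)-v(t)\|_{\LL^\infty(\R^n)}\le \|u(t)\|_{\LL^\infty(\R^n)}+\|v(t)\|_{\LL^\infty(\R^n)}\le 2\mathsf{\tilde K}_p\,t^{-\frac{1}{p-2}}+\mathsf{K}_{p,q,J}\bigl(\|u(t_0)\|_{\LL^q(\R^n)}+\|v(t_0)\|_{\LL^q(\R^n)}\bigr),
\]
which, absorbing constants and using subadditivity, is bounded by $\mathsf{c}_{p,q,J}\bigl(t^{-\frac{1}{p-2}}+\|u(t_0)\|_{\LL^q(\R^n)}+\|v(t_0)\|_{\LL^q(\R^n)}\bigr)$.

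Plugging the two displayed bounds into the interpolation inequality yields the claimed estimate for every $r\in[q,\infty]$, with the explicit factor $\mathsf{c}_{p,q,J}^{1/r}$ coming from raising the $\LL^\infty$ estimate to the power $1-q/r$. For $r\in[1,q)$ one uses log-convexity of the $\LL^r$ norm to interpolate between $\LL^1$ and $\LL^q$, but since $\LL^q$-contractivity holds for every $q\ge 1$ this case reduces directly to setting $r=q$ in the inequality above, so no additional argument is needed.

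Finally, the special case $u_0,v_0\in\LL^\infty(\R^n)$ (valid for all $p>1$) follows even more cheaply: the $\LL^\infty$ stability \eqref{Thm.1.Cauchy.smooth2} gives $\|u(t)\|_{\LL^\infty}\le\|u(t_0)\|_{\LL^\infty}$ and similarly for $v$, so the triangle inequality already produces the stated bound (the $t^{-1/(p-2)}$ term then being harmless). There is no serious obstacle here; the only point requiring minor care is tracking the exponents $q/r$ and $1-q/r$ when raising the two partial estimates to the interpolation powers, and verifying that the resulting constant collapses to $\mathsf{c}_{p,q,J}^{1/r}$ after bookkeeping of $\mathsf{\tilde K}_p$ and $\mathsf{K}_{p,q,J}$ from \eqref{ctes}. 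For the Dirichlet (resp.\ Neumann under \eqref{HJ}) problem the same three ingredients are available with $\R^n$ replaced by $\Omega$ (via Theorem \ref{main2} and the corresponding contractivity), so the identical argument applies verbatim.
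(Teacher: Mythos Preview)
Your proposal is correct and follows essentially the same route as the paper: interpolate $\|u(t)-v(t)\|_{\LL^r}$ between $\LL^q$ and $\LL^\infty$, control the $\LL^q$-factor via the contractivity of Proposition \ref{norm.decreasing.difference}, and control the $\LL^\infty$-factor by the triangle inequality together with the smoothing of Theorem \ref{main} (Theorem \ref{main2} for the Dirichlet/Neumann cases). The only superfluous part is your aside on $r\in[1,q)$, which is not really needed (and not addressed in the paper either): the interpolation $\int|w|^r=\int|w|^{r-q}|w|^q$ used there tacitly requires $r\ge q$, which is the regime in which the stated estimate is meaningful.
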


\begin{proof}
Let $u$ and $v$ be solutions of \eqref{eqC} corresponding to the initial data $u_0,v_0\in \LL^q(\R^n)$ with $q\in [1,\infty)$. Then,  for any $r\in [1,\infty)$ it holds that
\begin{align*}
\|u(t)-v(t)\|_{\LL^r(\R^n)}  &= \left(\int_{\R^n} |u(x,t)-v(x,t)|^{r-q} |u(x,t)-v(x,t)|^q\,dx\right)^\frac1r\\
&\leq \|u(t)-v(t)\|_{\LL^q(\R^n)}^\frac{q}{r} \|u(t)-v(t)\|_{\LL^\infty(\R^n)}^{1-\frac{q}{r}} \\
&\leq
\|u(t)-v(t)\|_{\LL^q(\R^n)}^\frac{q}{r} \left(\|u(t)\|_{\LL^\infty(\R^n)} + \|v(t)\|_{\LL^\infty(\R^n)} \right)^{1-\frac{q}{r}}.
\end{align*}
The result follows just by applying Proposition \ref{norm.decreasing.difference} and Theorem \ref{main}. Observe that the estimate does not degenerate as $r\to\infty$.

The proof for the Dirichlet and Neumann case follows similarly by using Theorem \ref{main2}. This concludes the proof.
\end{proof}

\subsection{Asymptotic behavior on domains}
In this section we analyze the asymptotic behavior of solutions of the Dirichlet and Neumann problem. For that end, we recall the following Poincar\'e  type inequalities  established in Proposition 6.19 and 6.25 in \cite{AMRT}.
%thm 6.25
\begin{prop}[Poincar\'e type inequality] \label{poincare}
Given $p\geq 1$ and $\Omega\subset \R^n$ open and bounded, for every $u\in \LL^p(\Omega)$ it holds that
$$
\int_\Omega |u-\overline u|^p\,dx \leq \mathsf{c}_1 \iint_{\Omega\times\Omega} J(x-y)|u(x)-u(y)|^p\,dxdy
$$
where $\overline u=\frac{1}{|\Omega|}\int_\Omega u(x)\,dx$ and $\mathsf{c}_1=\mathsf{c}_1(J,p,\Omega)$.

When $u=0$ in $\R^n \setminus \Omega$, it holds that
$$
\int_\Omega |u|^p\,dx \leq \mathsf{c}_2 \iint_{\Omega\times\Omega} J(x-y)|u(x)-u(y)|^p\,dxdy
$$
with $\mathsf{c}_2=\mathsf{c}_2(J,p,\Omega)$.
\end{prop}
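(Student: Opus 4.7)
Both inequalities rest on a single quantitative consequence of $J(0)>0$ and the continuity of $J$ at the origin: there exist $r,\delta>0$ such that $J(z)\ge\delta$ for all $|z|\le r$. The plan is to first establish a local Poincar\'e inequality on small balls and then chain these local estimates through a finite cover of $\overline{\Omega}$. For the local step, if $B=B_\rho(x_0)$ is a ball with $\rho\le r/2$ and $u\in \LL^p(B)$, then $|x-y|\le r$ throughout $B\times B$, so Jensen's inequality combined with $J(x-y)\ge\delta$ yields
\[
\int_B |u-\bar u_B|^p\,dx\le \frac{1}{|B|}\iint_{B\times B}|u(x)-u(y)|^p\,dx\,dy\le \frac{1}{\delta\,|B|}\iint_{B\times B}J(x-y)|u(x)-u(y)|^p\,dx\,dy,
\]
where $\bar u_B := |B|^{-1}\int_B u$.

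To globalize, I would fix a finite cover $\{B_i\}_{i=1}^N$ of $\overline{\Omega}$ by balls of radius $\rho\le r/4$ with bounded overlap, chosen so that any two indices $i,j$ may be joined by a chain $i=i_0,\dots,i_k=j$ in which consecutive balls $B_{i_\ell},B_{i_{\ell+1}}$ meet in a set of measure comparable to $|B_i|$. For an overlapping pair, adding and subtracting the mean of $u$ on $B_i\cap B_{i+1}$ and invoking the local estimate on $B_i\cup B_{i+1}$ gives
\[
|\bar u_i-\bar u_{i+1}|^p\le C \iint_{(B_i\cup B_{i+1})\times(B_i\cup B_{i+1})}J(x-y)|u(x)-u(y)|^p\,dx\,dy.
\]
Summing along chains yields the same type of bound for $|\bar u_i-\bar u|^p$ uniformly in $i$; combined with the local estimate on each $B_i$ and $|u-\bar u|^p\le 2^{p-1}(|u-\bar u_i|^p+|\bar u_i-\bar u|^p)$, summation over $i$ with bounded overlap produces (1). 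The main obstacle is precisely this chaining step: its success requires $\Omega$ to be \emph{connected through the support of $J$}, in the sense that no two components are separated by distance larger than $r$; when $\Omega$ is connected and bounded (as is implicitly assumed in \cite{AMRT}) this is automatic by compactness, and the final constant $\mathsf{c}_1$ depends only on $|\Omega|$, $n$, $r$, and $\delta$.

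For inequality (2), I would enlarge $\Omega$ to a bounded open set $\widetilde{\Omega}\supset\overline{\Omega}$ with $|\widetilde{\Omega}|\ge 2^{p/(p-1)}|\Omega|$, still satisfying the chain-connectedness condition above. Extending $u$ by $0$ on $\widetilde{\Omega}\setminus\Omega$, inequality (1) on $\widetilde{\Omega}$ controls $\int_{\widetilde{\Omega}}|u-\bar u_{\widetilde{\Omega}}|^p$ by the nonlocal energy of $u$ on $\widetilde{\Omega}\times\widetilde{\Omega}$. By H\"older,
\[
|\widetilde{\Omega}|\,|\bar u_{\widetilde{\Omega}}|^p \le \left(\frac{|\Omega|}{|\widetilde{\Omega}|}\right)^{p-1}\int_\Omega |u|^p\,dx\le 2^{-p}\int_\Omega |u|^p\,dx,
\]
so the elementary splitting $\int_\Omega |u|^p=\int_{\widetilde{\Omega}}|u|^p\le 2^{p-1}\int_{\widetilde{\Omega}}|u-\bar u_{\widetilde{\Omega}}|^p+2^{p-1}|\widetilde{\Omega}|\,|\bar u_{\widetilde{\Omega}}|^p$ yields (2) after absorbing the last term into the left-hand side. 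This reduction to (1) introduces no obstacle beyond the one already identified.
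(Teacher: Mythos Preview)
The paper does not give its own proof of this proposition: it is quoted verbatim from \cite{AMRT}, Propositions~6.19 and~6.25, and used as a black box in the asymptotic analysis of Section~\ref{section7}. There is therefore nothing to compare against at the level of argument.

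Your strategy---local Poincar\'e on small balls via the lower bound $J\ge\delta$ near the origin, followed by a chaining argument over a finite overlapping cover of $\overline\Omega$---is the standard one and is essentially how the result is proved in \cite{AMRT}. Two remarks. First, you invoke continuity of $J$ at $0$, which the paper does not formally assume (``not necessarily smooth''); in \cite{AMRT} the working hypothesis is precisely that $J\ge c>0$ on some ball $B_r(0)$, so you may simply take that as the starting point rather than deriving it. Second, for inequality~(2) your enlargement argument produces a bound by the energy over $\widetilde\Omega\times\widetilde\Omega$, not $\Omega\times\Omega$. This is not a defect of your proof but of the statement as transcribed: the inequality with $\Omega\times\Omega$ on the right is false (take $u\equiv 1$ on $\Omega$, zero outside; the right-hand side vanishes). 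The correct version in \cite{AMRT}, and the one actually used later in the paper for the Dirichlet problem, has the energy over $\R^n\times\R^n$ (equivalently, over $\Omega_J\times\Omega_J$ for a suitable enlargement), which is exactly what your argument delivers.
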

The following result holds as a consequence of Proposition \ref{poincare} (see Theorems 6.22 and 6.35 in \cite{AMRT}).

\begin{prop}[Asymptotic behavior of solutions] \label{decay2p}
Let $1<p<\infty$ and $\Omega\subset \R^n$ be open and bounded. Given a solution $u$ of \eqref{eq} corresponding to the  initial datum $u_0\in \LL^\infty(\Omega)$, then it holds that
$$
\|u(t)-\overline u_0 \|_{\LL^p(\Omega)}^p \leq \frac{\mathsf{c}_1}{t} \|u_0\|_{\LL^2(\Omega)}^2, \qquad \forall t>0,
$$
where $\overline u_0=\frac{1}{|\Omega|}\int_\Omega u_0(x)\,dx$.  Moreover, given a solution of \eqref{eqD} with initial datum $u_0\in \LL^\infty(\Omega)$, then
$$
\|u(t)\|_{\LL^p(\Omega)}^p \leq \frac{\mathsf{c}}{t} \|u_0\|_{\LL^2(\Omega)}^2, \qquad \forall t>0,
$$
where $\mathsf{c}$ depends of $\mathsf{c}_2$, being  $\mathsf{c}_1$ and $\mathsf{c}_2$ are the constants given in Proposition \ref{poincare}.
\end{prop}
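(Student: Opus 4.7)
The plan is to exploit the energy dissipation identity for the $L^2$-norm and couple it with the Poincar\'e-type inequality of Proposition \ref{poincare}. Concretely, for the Neumann problem I would start by observing that constants are stationary solutions of \eqref{eq}, so $\overline{u(t)} = \overline{u_0}$ for all $t \geq 0$ by mass conservation, and by $\LL^p$-contractivity (Proposition \ref{norm.decreasing.difference}) applied to $u$ and the constant solution $\overline{u_0}$, the quantity $t \mapsto \|u(t) - \overline{u_0}\|_{\LL^p(\Omega)}$ is non-increasing.

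Next I would differentiate the squared $\LL^2$-norm and use Proposition \ref{parts}: since $\overline{u_0}$ is constant, $\mathcal{E}_{p,\Omega}(u,\overline{u_0}) = 0$, so
\begin{equation*}
\frac{d}{dt} \|u(t) - \overline{u_0}\|_{\LL^2(\Omega)}^2 = 2 \int_\Omega (u - \overline{u_0}) \J_{p,\Omega} u\, dx = -2 \mathcal{E}_{p,\Omega}(u,u) = -\iint_{\Omega\times\Omega} J(x-y)|u(x,t)-u(y,t)|^p \, dx\, dy.
\end{equation*}
Applying the first inequality of Proposition \ref{poincare} (with the mean $\overline{u(t)} = \overline{u_0}$) yields
\begin{equation*}
\frac{d}{dt} \|u(t) - \overline{u_0}\|_{\LL^2(\Omega)}^2 \leq -\frac{1}{\mathsf{c}_1} \|u(t) - \overline{u_0}\|_{\LL^p(\Omega)}^p.
\end{equation*}
Integrating on $[0,t]$ and dropping the non-negative term on the left-hand side gives
\begin{equation*}
\int_0^t \|u(s) - \overline{u_0}\|_{\LL^p(\Omega)}^p\, ds \leq \mathsf{c}_1 \|u_0 - \overline{u_0}\|_{\LL^2(\Omega)}^2 \leq \mathsf{c}_1 \|u_0\|_{\LL^2(\Omega)}^2.
\end{equation*}
Finally, invoking the monotonicity of $s \mapsto \|u(s) - \overline{u_0}\|_{\LL^p(\Omega)}^p$ noted above, I would bound the integrand from below by its value at time $t$, obtaining $t\|u(t)-\overline{u_0}\|_{\LL^p(\Omega)}^p \leq \mathsf{c}_1 \|u_0\|_{\LL^2(\Omega)}^2$, which is the claimed estimate.

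For the Dirichlet problem \eqref{eqD} the argument is entirely parallel, with $0$ playing the role of the stationary solution. The energy identity reads
\begin{equation*}
\frac{d}{dt} \|u(t)\|_{\LL^2(\Omega)}^2 = -2\mathcal{E}_p(u,u) = -\iint_{\R^n\times\R^n} J(x-y)|\tilde u(x,t) - \tilde u(y,t)|^p\, dx dy \leq -\iint_{\Omega\times\Omega} J(x-y)|u(x,t)-u(y,t)|^p\, dx\, dy,
\end{equation*}
and the second inequality of Proposition \ref{poincare} (applicable since $\tilde u$ vanishes on $\R^n\setminus \Omega$) gives
\begin{equation*}
\frac{d}{dt}\|u(t)\|_{\LL^2(\Omega)}^2 \leq -\frac{1}{\mathsf{c}_2}\|u(t)\|_{\LL^p(\Omega)}^p.
\end{equation*}
Again using that $\|u(t)\|_{\LL^p(\Omega)}$ is non-increasing (by $\LL^p$-contractivity against the zero solution), the same integration and monotonicity trick yields the stated decay rate. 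The only subtle point I anticipate is making sure that the energy identity holds rigorously (we are taking a time derivative of an $\LL^2$ norm for a solution in $W^{1,1}(0,T;\LL^q(\Omega))$), but this follows from the definition of solution together with Proposition \ref{parts}; no real obstacle beyond bookkeeping.
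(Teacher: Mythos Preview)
Your proposal is correct and follows precisely the standard energy--Poincar\'e--monotonicity argument that underlies the cited results (Theorems~6.22 and~6.35 in \cite{AMRT}); the paper itself does not reprove this proposition but merely invokes the reference, and your write-up reconstructs that proof faithfully. The only cosmetic remark is that in the Dirichlet case the intermediate drop from $\R^n\times\R^n$ to $\Omega\times\Omega$ is not even needed if one quotes the Poincar\'e inequality directly for the full energy, but as written the chain of inequalities is valid.
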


Using the smoothing effect of solutions stated in Theorem \ref{main2}, we extend the asymptotic behavior of solutions to integrable initial data.

\begin{thm}[Asymptotic behavior of solutions, Neumann case] \label{decay.N}
Let $p>2$ and $\Omega\subset \R^n$ be open and bounded,  and assume \eqref{HJ}. Given a solution $u$  of \eqref{eq} corresponding to the   initial datum $u_0\in \LL^1(\Omega)$, then it holds that
$$
\|u(t)-\overline u_0\|_{\LL^\infty(\Omega)} \leq \mathsf{c} t^{-\frac1p} \quad   \forall t\gg 1,
$$
where $\overline u_0=\frac{1}{|\Omega|}\int_\Omega u_0(x)\,dx$  and $\mathsf{c}$ is a positive constant depending on $J$, $n$, $p$, $\kappa_{J,\Omega}$, $|\Omega|$ and $\|u_0\|_{\LL^1(\Omega)}$.
\end{thm}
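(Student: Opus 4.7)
\medskip

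\noindent\textbf{Proof plan for Theorem \ref{decay.N}. }The strategy is to combine three ingredients already at our disposal: conservation of mean for the Neumann flow, the $\LL^p$--decay of Proposition \ref{decay2p}, and the $\LL^p$--$\LL^\infty$ smoothing effect of Theorem \ref{main2}. The key observation is that, since the operator $\J_{p,\Omega}$ annihilates constants, the function $v(x,t):=u(x,t)-\overline{u}_0$ is itself a solution of the Neumann problem \eqref{eq}, with initial datum $v_0:=u_0-\overline{u}_0$, which has zero mean and belongs to $\LL^1(\Omega)$. Therefore the problem reduces to showing the $\LL^\infty$ decay of a solution with bounded initial data and zero mean.

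The first step is to use the smoothing effect. Starting from $u_0\in \LL^1(\Omega)$, Theorem \ref{main2} (applied in the Neumann setting under hypothesis \eqref{HJ}) gives that $u(1)\in \LL^\infty(\Omega)$ with
\[
\|u(1)\|_{\LL^\infty(\Omega)}\le \kappa_{J,\Omega}^{-1}\bigl(\mathsf{\tilde K}_p+\mathsf{K}_{p,1,J}\|u_0\|_{\LL^1(\Omega)}\bigr)=: \mathsf{c}_0,
\]
and hence also $\|v(1)\|_{\LL^\infty(\Omega)}\le 2\mathsf{c}_0$. By restarting the evolution at time $1$ (which we can do thanks to the semigroup property), we may assume from now on that $v_0\in \LL^\infty(\Omega)$ with $\overline{v_0}=0$ and $\|v_0\|_{\LL^\infty(\Omega)}\le 2\mathsf{c}_0$. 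In particular $v_0\in \LL^2(\Omega)\cap \LL^p(\Omega)$ with norms bounded by $2\mathsf{c}_0|\Omega|^{1/2}$ and $2\mathsf{c}_0|\Omega|^{1/p}$ respectively.

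The second step is the $\LL^p$--decay. Applying Proposition \ref{decay2p} to the solution $v$ (whose mean is zero, so $\overline{v_0}=0$) yields, for all $t>0$,
\[
\|v(t)\|_{\LL^p(\Omega)}^p \le \frac{\mathsf{c}_1}{t}\|v_0\|_{\LL^2(\Omega)}^2 \le \frac{\mathsf{c}_2}{t},
\]
for an explicit constant $\mathsf{c}_2=\mathsf{c}_2(J,p,\Omega,\mathsf{c}_0)$; hence $\|v(t)\|_{\LL^p(\Omega)}\le \mathsf{c}_2^{1/p} t^{-1/p}$.

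The third and final step bootstraps this $\LL^p$ bound to an $\LL^\infty$ bound via the smoothing effect. Fix $t\ge 2$ and apply Theorem \ref{main2} (Neumann case with $q=p$) to the solution $v$ restarted at time $t/2$, on the interval $[t/2,t]$ of length $t/2$:
\[
\|v(t)\|_{\LL^\infty(\Omega)} \le \kappa_{J,\Omega}^{-1}\left(\mathsf{\tilde K}_p\Bigl(\tfrac{t}{2}\Bigr)^{-\frac{1}{p-2}} + \mathsf{K}_{p,p,J}\|v(t/2)\|_{\LL^p(\Omega)}\right)\le \mathsf{c}_3\bigl(t^{-\frac{1}{p-2}}+t^{-\frac{1}{p}}\bigr).
\]
Since $p>2$ gives $\frac{1}{p-2}>\frac{1}{p}$, the second term dominates for $t\gg 1$, and we conclude
\[
\|u(t)-\overline{u}_0\|_{\LL^\infty(\Omega)}=\|v(t)\|_{\LL^\infty(\Omega)}\le \mathsf{c}\, t^{-1/p}\qquad\text{for all } t\gg 1,
\]
with $\mathsf{c}$ depending only on $J$, $n$, $p$, $\kappa_{J,\Omega}$, $|\Omega|$ and $\|u_0\|_{\LL^1(\Omega)}$. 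The main (mild) obstacle is checking that $v=u-\overline{u}_0$ is indeed admissible as a solution of the Neumann problem with initial datum $u_0-\overline{u}_0$, which follows at once from the fact that $\J_{p,\Omega}$ is invariant under the addition of constants, and that the hypotheses of Proposition \ref{decay2p} (in particular $v_0\in \LL^\infty$) are realized after the preliminary smoothing step; everything else is book-keeping.
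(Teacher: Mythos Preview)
Your proof is correct and follows essentially the same strategy as the paper: reduce to the zero-mean solution $v=u-\overline{u}_0$, use the $\LL^p$ decay of Proposition~\ref{decay2p}, and upgrade to $\LL^\infty$ via the smoothing of Theorem~\ref{main2} applied on $[t/2,t]$. The only cosmetic differences are that the paper applies the smoothing with $q=1$ and then uses H\"older to pass to the $\LL^p$ norm, whereas you apply it directly with $q=p$; and you are more explicit than the paper about the preliminary smoothing step needed to ensure $v_0\in\LL^\infty(\Omega)$ before invoking Proposition~\ref{decay2p}.
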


\begin{proof}
Let $\Omega$ be open and bounded  satisfying \eqref{HJ} and  let $u$ be solution of \eqref{eq} corresponding to the  initial datum $u(t)\in \LL^1(\Omega)$. By Proposition \ref{decay2p}, for any $t\geq>0$,
$$
\| u(t)-\overline{u}_0\|_{\LL^p(\Omega)}^p \leq \frac{\mathsf{c}_1}{t}\|u_0\|_{\LL^2(\Omega)}^2
$$
where $\overline{u}_{0}=\frac{1}{|\Omega|}\int_\Omega u_0(x)\,dx$. Them, using  Theorem \ref{main2} we get that for $t>0$,
\begin{equation} \label{poincare.mejorado}
\| u(t)-\overline{u}_{0}\|_{\LL^p(\Omega)}^p
\leq
\frac{\mathsf{c}_1}{t} |\Omega|\|u_0\|_{\LL^\infty(\Omega)}^2
\leq
\frac{\mathsf{c}_1}{t} |\Omega| \left(t^{-\frac{1}{p-2}} + \|u_0\|_{\LL^1(\Omega)} \right)
\end{equation}
where $\mathsf{c}$ depends of $p$, $n$, $J$ and $\kappa_{J,\Omega}$.

Observe that   $w:= u -\overline u_0$ solves the same equation than $u$ but with initial datum $w_0=u_0-\overline u_0$. Then, from Theorem \ref{main2}, Proposition \ref{norm.decreasing.difference} and H\"older's inequality, we get that for any $t>0$
\begin{align*}
\|w(t)\|_{\LL^\infty(\Omega)} &\leq
\mathsf{c} \left(    t^{-\frac{1}{p-2}}  + \|w(\tfrac{t}{2})\|_{\LL^1(\Omega)} \right) \\
&\leq
\mathsf{c} \left(t^{-\frac{1}{p-2}}  + |\Omega|^\frac{p-1}{p}\|u(\tfrac{t}{2})-\overline u_0\|_{\LL^p(\Omega)} \right),
\end{align*}
where $\mathsf{c}$ depends of $p$, $n$, $J$ and $\kappa_{J,\Omega}$.  Finally, this together with  \eqref{poincare.mejorado} yields that for $t>2t_0$,
\begin{align*}
\|w(t)\|_{\LL^\infty(\Omega)} &\leq
\mathsf{\bar c} \left(t^{-\frac{1}{p-2}}  +  t^{-\frac{1}{p}}\left(t^{-\frac{1}{p-2}} + \|u_0\|_{\LL^1(\Omega)} \right)^\frac1p \right)\\
&\leq
\mathsf{\bar c} t^{-\frac{1}{p}} ( 2 + \|u_0\|_{\LL^1(\Omega)}^\frac1p ).
\end{align*}
where $\mathsf{\bar c}$ depends of $\mathsf{c}_1$, $\mathsf{c}$, and $|\Omega|$, since $t^{-\frac{1}{p-2}} \leq t^{-\frac1p}$ for $t\gg 1$. This completes the proof.
\end{proof}

\begin{rem}
Expression \eqref{poincare.mejorado} can be viewed as a refined version of the Poincar\'e-type inequality presented in Proposition \ref{decay2p}, now applicable to $\LL^1$ data.
\end{rem}

With the same arguments an analogous result can be obtain for solutions of \eqref{eqD}.
\begin{thm}[Asymptotic behavior of solutions,  Dirichlet case] \label{decay.D}
Let $p>2$ and $\Omega\subset \R^n$ be open and bounded. Given a solution $u$  of \eqref{eqD} corresponding to the initial datum $u_0\in \LL^1(\Omega)$, then it holds that
$$
\|u(t)\|_{\LL^\infty(\Omega)} \leq \mathsf{c} t^{-\frac1p} \quad   \forall t\gg 1,
$$
where $\mathsf{c}$ is a positive constant   depending on $J$, $n$, $p$,  $|\Omega|$ and $\|u_0\|_{\LL^1(\Omega)}$.
\end{thm}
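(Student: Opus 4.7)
The plan is to mirror the argument given in the Neumann case (Theorem \ref{decay.N}), where one combines the $L^p$-decay from Proposition \ref{decay2p} with the $L^q$--$L^\infty$ smoothing effect of Theorem \ref{main2}. The Dirichlet setting is in fact simpler, since there is no need to subtract the conserved mean $\overline{u}_0$.

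First I would use the smoothing effect to pass from $L^1$ to $L^\infty$ at a small fraction of time. Concretely, applying Theorem \ref{main2}(i) with $q=1$ to $u$ at time $t/2$ gives
\[
\|u(t/2)\|_{\LL^\infty(\Omega)} \leq \mathsf{\tilde K}_p\,(t/2)^{-\frac{1}{p-2}} + \mathsf{K}_{p,1,J}\,\|u_0\|_{\LL^1(\Omega)},
\]
so in particular $u(t/2)\in \LL^2(\Omega)$ with $\|u(t/2)\|_{\LL^2(\Omega)}^2 \leq |\Omega|\,\|u(t/2)\|_{\LL^\infty(\Omega)}^2$.

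Next, regarding $v(s):=u(s+t/2)$ as a solution of \eqref{eqD} with bounded initial datum $v(0)=u(t/2)$, I would apply the Dirichlet version of Proposition \ref{decay2p} at time $s=t/2$:
\[
\|u(t)\|_{\LL^p(\Omega)}^p = \|v(t/2)\|_{\LL^p(\Omega)}^p \leq \frac{2\mathsf{c}}{t}\,\|u(t/2)\|_{\LL^2(\Omega)}^2 \leq \frac{2\mathsf{c}\,|\Omega|}{t}\Bigl(\mathsf{\tilde K}_p(t/2)^{-\frac{1}{p-2}} + \mathsf{K}_{p,1,J}\|u_0\|_{\LL^1(\Omega)}\Bigr)^{2}.
\]
For $t\gg 1$ the first term inside the parenthesis is dominated by the second, so this yields
\[
\|u(t)\|_{\LL^p(\Omega)} \leq \mathsf{c}_1\, t^{-\frac{1}{p}} \qquad \text{for all } t\gg 1,
\]
with $\mathsf{c}_1$ depending only on $J,n,p,|\Omega|,\|u_0\|_{\LL^1(\Omega)}$. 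This plays the role of the refined Poincaré-type bound \eqref{poincare.mejorado} in the Neumann proof.

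Finally, I would bootstrap from $L^p$ back up to $L^\infty$ by a second application of the smoothing effect. Applying Theorem \ref{main2}(i) with $q=p$ to $w(s):=u(s+t)$ at time $s=t$ gives
\[
\|u(2t)\|_{\LL^\infty(\Omega)} \leq \mathsf{\tilde K}_p\, t^{-\frac{1}{p-2}} + \mathsf{K}_{p,p,J}\,\|u(t)\|_{\LL^p(\Omega)} \leq \mathsf{\tilde K}_p\, t^{-\frac{1}{p-2}} + \mathsf{K}_{p,p,J}\,\mathsf{c}_1\, t^{-\frac{1}{p}}.
\]
Since $p>2$ gives $\tfrac{1}{p-2}>\tfrac{1}{p}$, the first term is absorbed into the second for $t\gg 1$, and after renaming $2t\mapsto t$ we obtain the desired estimate $\|u(t)\|_{\LL^\infty(\Omega)}\leq \mathsf{c}\,t^{-1/p}$ for $t\gg 1$. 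No serious obstacle is expected: the whole argument is a clean combination of two already established ingredients (the $L^p$ decay of Proposition \ref{decay2p} and the $L^q$--$L^\infty$ smoothing of Theorem \ref{main2}), and the only care needed is the bookkeeping of constants when splitting the time interval and applying the two tools in succession.
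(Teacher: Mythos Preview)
Your proposal is correct and follows essentially the same route the paper indicates (``with the same arguments'' as Theorem~\ref{decay.N}): first smooth $u_0\in\LL^1$ to a bounded solution at time $t/2$, feed this into Proposition~\ref{decay2p} to obtain the $\LL^p$ decay, and then bootstrap back to $\LL^\infty$ via a second application of Theorem~\ref{main2}. The only cosmetic difference is that in the final step you apply the $\LL^p$--$\LL^\infty$ smoothing directly, whereas the paper (in the Neumann proof) first passes to $\LL^1$ by H\"older and then uses the $\LL^1$--$\LL^\infty$ smoothing; both variants are equivalent.
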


\subsection{Functional inequalities}

As a consequence of the smoothing effect, the following functional inequality can be derived.
\begin{prop}
Let $\Omega\subseteq \R^n$. Then for any $u\in \LL^q(\R^n)$, $1\leq q <\infty$. Then
$$
\|u\|_{\LL^2(\R^n)}^2 \leq (\mathsf{\tilde K}_p + 2)\max\{ \mathcal{E}_p(u,u)^\frac{1}{p-1}, \mathcal{E}_p(u,u)^{-\frac{p-2}{p-1}} \}  + \mathsf{K}_{p,q,J}\|u\|_{\LL^q(\R^n)},
$$
where $\mathsf{\tilde K}_p$ and $\mathsf{K}_{p,q,J}$ are given in Theorem \ref{main}.
\end{prop}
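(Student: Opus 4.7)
The natural approach is to exploit the gradient-flow structure underlying the operator $\mathcal{L}_p$ (with Lyapunov functional $\mathcal{I}_p=\frac{1}{p}\mathcal{E}_p(\cdot,\cdot)$, see Step 1 of Theorem~\ref{existencia}), combine the energy dissipation identity with the $\LL^q$--$\LL^\infty$ smoothing effect of Theorem~\ref{main}, and conclude by an optimization in a time parameter. The constants that appear in the statement ($\mathsf{\tilde K}_p+2$ and $\mathsf{K}_{p,q,J}$) strongly suggest this is the right route, with the $+2$ coming from the ``$2t\mathcal{E}_p$'' energy-dissipation piece and $\mathsf{\tilde K}_p$, $\mathsf{K}_{p,q,J}$ coming from the smoothing estimate.

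First, I would let $u(\cdot,t)$ be the solution of \eqref{eqC} emanating from $u_0=u$ (Theorem~\ref{existencia}). By Proposition~\ref{parts} applied to $u(t)$ tested against itself,
\[
\tfrac{1}{2}\frac{d}{ds}\|u(s)\|_{\LL^2(\R^n)}^2 = \int_{\R^n} u(s)\,\mathcal{L}_p u(s)\,dx = -\mathcal{E}_p(u(s),u(s)).
\]
Because $\mathcal{I}_p$ is the convex Lyapunov functional of the flow, $\mathcal{E}_p(u(s),u(s))$ is non-increasing in $s$, so integrating over $[0,t]$ yields the basic identity
\[
\|u\|_{\LL^2(\R^n)}^2 \le \|u(t)\|_{\LL^2(\R^n)}^2 + 2t\,\mathcal{E}_p(u,u).
\]

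Next, to control $\|u(t)\|_{\LL^2(\R^n)}^2$, I would combine the Hölder-type pointwise bound $u(t)^2\le \|u(t)\|_{\LL^\infty(\R^n)}\,|u(t)|$ (or the interpolation $\|u(t)\|_{\LL^2}^{2}\le \|u(t)\|_{\LL^\infty}^{\,2-q}\|u(t)\|_{\LL^q}^{q}$ when $q\le 2$) together with the $\LL^q$-contractivity of Proposition~\ref{norm.decreasing.difference} and the smoothing estimate $\|u(t)\|_{\LL^\infty(\R^n)} \le \mathsf{\tilde K}_p t^{-1/(p-2)} + \mathsf{K}_{p,q,J}\|u\|_{\LL^q(\R^n)}$ of Theorem~\ref{main}. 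This produces an inequality of the schematic form
\[
\|u\|_{\LL^2(\R^n)}^2 \le \mathsf{\tilde K}_p\, t^{-1/(p-2)} + 2t\,\mathcal{E}_p(u,u) + \mathsf{K}_{p,q,J}\|u\|_{\LL^q(\R^n)}.
\]

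Finally, I would pick $t_*=\mathcal{E}_p(u,u)^{-(p-2)/(p-1)}$, which balances the two $t$-dependent terms since $t_*^{-1/(p-2)}=\mathcal{E}_p(u,u)^{1/(p-1)}$ and $t_*\mathcal{E}_p(u,u)=\mathcal{E}_p(u,u)^{1/(p-1)}$, collecting the constants into $\mathsf{\tilde K}_p+2$. The $\max$ in the conclusion arises by distinguishing the regime $\mathcal{E}_p(u,u)\ge 1$ (where $\mathcal{E}_p^{1/(p-1)}\ge \mathcal{E}_p^{-(p-2)/(p-1)}$) from the regime $\mathcal{E}_p(u,u)\le 1$, where one rather chooses $t_*$ so that the exponent flips sign, giving $\mathcal{E}_p(u,u)^{-(p-2)/(p-1)}$; taking the $\max$ produces a single bound valid in both regimes. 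The main obstacle I anticipate is the interpolation step: one must combine the smoothing estimate with Hölder in such a way that $\|u\|_{\LL^q(\R^n)}$ appears only linearly (matching the stated form) rather than to a higher power, which forces a careful pairing of the Young-type inequalities with the choice of $t_*$.
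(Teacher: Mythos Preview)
Your proposal is correct and follows essentially the same route as the paper: run the Cauchy flow from $u_0=u$, use the energy identity $\tfrac{d}{ds}\|u(s)\|_{\LL^2}^2=-2\mathcal{E}_p(u(s),u(s))$ together with the monotonicity of $\mathcal{E}_p$ along the flow to obtain $\|u\|_{\LL^2}^2\le \|u(t)\|_{\LL^2}^2+2t\,\mathcal{E}_p(u,u)$, bound $\|u(t)\|_{\LL^2}^2$ via Theorem~\ref{main}, and then choose $t=\mathcal{E}_p(u,u)^{-(p-2)/(p-1)}$. The paper is in fact terser than you at the interpolation step (it passes directly from the $\LL^\infty$ smoothing bound to a bound on $\|u(t)\|_{\LL^2}^2$), so the ``obstacle'' you flag is not resolved more carefully there; your plan matches the paper's argument.
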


\begin{proof}
Let $u$ be a solution of \eqref{eqC} with initial datum $u_0\in \LL^q(\R^n)$, $1\leq q< \infty$.  By Proposition \ref{parts}, for \text{a.e.} $x\in \R^n$, $t>0$, and $v\in \LL^p(\Omega)$ we have that
$$
- \int_{\R^n} u_t(x,t) v(x)\,dx = -\int_{\R^n} v(x) \J_p u(x,t) \,dx = \mathcal{E}_p (u,v).
$$
Since  $u_t(t) \in \LL^p(\R^n)$ for any $t>0$ due to  Theorem \ref{smooth.ut}, we can use the previous relation with $v(x)=u_t(x,t)$ to get
\begin{align*}
\frac{d}{dt} \mathcal{E}_p(u(t),u(t)) &= p \mathcal{E}_p (u(t),u_t(t)) = -p\int_{\R^n} |u_t(x,t)|^2\,dx < 0.
\end{align*}
Therefore $\mathcal{E}_p (u(t),u(t))$ is decreasing in time. In particular, $\mathcal{E}_p (u(t),u(t)) \leq \mathcal{E}_p (u_0,u_0)$, which gives
\begin{align*}
\frac{d}{dt}\|u(t)\|_{\LL^2(\R^n)}^2 &=
2\int_{\R^n} u(x,t) u_t(x,t)\,dx =-2  \mathcal{E}_p (u(t),u(t))
\geq -2 \mathcal{E}_p (u_0,u_0).
\end{align*}
Integrating the inequality above and using Theorem \ref{main} gives
\begin{align*}
-2t \mathcal{E}_p (u_0,u_0) &\leq \|u(t)\|_{\LL^2(\R^n)}^2 - \|u_0\|_{\LL^2(\R^n)}^2\\
&\leq \mathsf{\tilde K}_p   t^{-\frac{1}{p-2}}   + \mathsf{K}_{p,q,J}\|u_0\|_{\LL^q(\R^n)}  - \|u_0\|_{\LL^2(\R^n)}^2,
\end{align*}
from where, choosing $t=(\mathcal{E}_p (u_0,u_0))^{-\frac{p-2}{p-1}}$ we get
$$
\|u_0\|_{\LL^2(\R^n)}^2 \leq (\mathsf{\tilde K}_p + 2)\max\{ \mathcal{E}_p(u_0,u_0)^\frac{1}{p-1}, \mathcal{E}_p(u_0,u_0)^{-\frac{p-2}{p-1}} \}  + \mathsf{K}_{p,q,J}\|u_0\|_{\LL^q(\R^n)},
$$
and this concludes the proof.
\end{proof}

\appendix

\section{Gradient flow and EVI solutions} \label{appendix.1}
The Hille-Yosida theorem is a powerful tool when studying the linear semigroups theory and its connection with linear evolution problems. The study of nonlinear physical models led to the development of nonlinear functional analysis. In particular, the nonlinear semigroup theory was dealt and developed by  Br\'ezis, Benilan, Crandall, Kato, Komura, Pazy, among others. In this context we will consider \emph{gradient flow} or \emph{mild solutions}. For a nice introduction to this subject we refer to \cite{ABS21}.
	
\begin{defn}[Gradient flow]
We say that $u\colon(0,\infty)\to D(f)$ is a \emph{gradient flow} of $f$ if $u\in AC_{loc}((0,\infty),H)$ and
$$
u'(t)\in  -\partial  f(u(t)) \quad a.e.\; t\in (0,\infty)
$$
We say that $u(t)$ starts from $u_0$ if $u(t)\to u_0$ as $t\to 0^+$.
\end{defn}

The following result provides for  gradient flow solution to   nonlinear problem. For a proof we refer for instance to \cite{ABS21}.

\begin{thm}[Brezis-Komura] \label{BK}
Assume that $f$ is convex and lower semicontinuous. For every $u_0 \in \overline{D(f)}$ there exists a unique gradient flow $u(t)=S_t u_0$ starting from $u_0$. The family of operators $S_t\colon \overline{D(f)}\to D(f)$, $t>0$, satisfies the semigroup property $S_{t+s}=S_t\circ S_s$ and the contractivity property
$$
\| S_t u_0 - S_t y_0 \| \leq \|u_0-y_0\| \quad u_0,y_0 \in \overline{D(f)}.
$$
\end{thm}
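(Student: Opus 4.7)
The plan is to follow the classical Brezis--Komura strategy via Yosida regularization, which works in any Hilbert space $H$ for a convex lower semicontinuous functional $f$. First, exploiting convexity and lower semicontinuity of $f$, for every $\lambda>0$ the map $v\mapsto f(v)+\tfrac{1}{2\lambda}\|v-u\|^2$ is strictly convex, coercive, and lower semicontinuous, hence admits a unique minimizer. This defines the resolvent $J_\lambda u$ and, in turn, the Yosida approximation $f_\lambda(u):=f(J_\lambda u)+\tfrac{1}{2\lambda}\|u-J_\lambda u\|^2$. A standard calculation shows that $f_\lambda$ is convex, of class $C^{1,1}$, with $\nabla f_\lambda(u)=\tfrac{1}{\lambda}(u-J_\lambda u)$ Lipschitz of constant $1/\lambda$, and $f_\lambda(u)\nearrow f(u)$ as $\lambda\to 0$ for all $u\in \overline{D(f)}$.

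Next, for each $\lambda>0$ the Cauchy--Lipschitz theorem in $H$ provides a unique global solution $u_\lambda\in C^1([0,\infty);H)$ of
\[
u_\lambda'(t)=-\nabla f_\lambda(u_\lambda(t)),\qquad u_\lambda(0)=u_0.
\]
Using monotonicity of $\nabla f_\lambda$ one derives the a priori bounds
\[
\|u_\lambda(t)-u_\mu(t)\|^2\;\le\; C(\lambda+\mu)\bigl(f(u_0)+1\bigr)t,
\]
so that $\{u_\lambda\}_{\lambda>0}$ is Cauchy in $C([0,T];H)$ for every $T>0$, with limit $u\in C([0,\infty);H)$. Together with the uniform bound on $\|u_\lambda'\|_{L^2}$ and Alaoglu's theorem, this yields weak convergence $u_\lambda'\rightharpoonup u'$ in $L^2$, and Minty's monotonicity trick lets one identify $-u'(t)\in \partial f(u(t))$ for a.e.\ $t>0$. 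Thus $u$ is a gradient flow starting from $u_0$, and $u\in AC_{\rm loc}((0,\infty);H)$ with $u(t)\in D(f)$ for all $t>0$. For $u_0\in \overline{D(f)}\setminus D(f)$ one first approximates $u_0$ by $J_\lambda u_0\in D(f)$ and uses the contractivity proved below.

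Uniqueness and contractivity come from the monotonicity of the subdifferential: if $u_1,u_2$ are two gradient flows starting from $u_0,y_0$, then $u_1-u_2\in AC_{\rm loc}$ and for a.e.\ $t>0$
\[
\tfrac12\tfrac{d}{dt}\|u_1(t)-u_2(t)\|^2
=\langle u_1'(t)-u_2'(t),\,u_1(t)-u_2(t)\rangle
=-\langle \xi_1-\xi_2,\,u_1-u_2\rangle\le 0,
\]
with $\xi_i(t)\in\partial f(u_i(t))$, since $\partial f$ is monotone. Integrating gives $\|S_tu_0-S_ty_0\|\le \|u_0-y_0\|$, which in particular yields uniqueness (taking $u_0=y_0$), and the semigroup property $S_{t+s}=S_t\circ S_s$ then follows from the autonomous nature of the equation combined with uniqueness: $t\mapsto u(t+s)$ is a gradient flow starting from $u(s)$.

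The main technical obstacle is the compactness/passage to the limit step: in a general (infinite-dimensional) Hilbert space one only has weak convergence of $u_\lambda'$, so the nonlinear inclusion $-u'(t)\in\partial f(u(t))$ cannot be obtained by pointwise limits. The standard remedy, which I would follow, is Minty's trick: for any $(v,\eta)$ with $\eta\in\partial f(v)$, monotonicity gives $\langle \nabla f_\lambda(u_\lambda)-\eta,\,J_\lambda u_\lambda - v\rangle\ge 0$; passing to the weak limit and using $J_\lambda u_\lambda\to u$ strongly yields $\langle -u'-\eta,u-v\rangle\ge 0$, whence maximality of $\partial f$ gives $-u'\in\partial f(u)$. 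All remaining details are bookkeeping and can be carried out exactly as in \cite{ABS21}.
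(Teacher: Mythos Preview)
The paper does not actually prove this theorem: it is stated in the appendix as a classical result and the reader is referred to \cite{ABS21} for a proof. Your proof plan follows the standard Brezis--Komura strategy via Yosida regularization and is essentially correct; it is precisely the argument one finds in the cited reference and in the classical literature, so there is nothing to compare.
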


The \emph{evolution variational inequality} (EVI) formulation of gradient flows provides for a useful mechanism to find solutions.
\begin{defn}
A curve $u\in AC_{loc}((0,\infty),H)$ is called an EVI solution if for any $v\in Dom(f)$ one has
\begin{equation} \label{es.evi}
\frac12\frac{d}{dt}\left(\|u(t)-v\|^2 \right) \leq f(v)-f(u(t)) \quad a.e.\; t\in (0,\infty).
\end{equation}
We say that $u(t)$ starts from $u_0$ if $u(t)\to u_0$ as $t\to 0^+$.
\end{defn}
We recall that gradient flow solutions are equivalent to EVI solutions.

\begin{prop} \label{equivalence}
For a convex and lower semicontinuous function $f$ with $D(f)\neq\emptyset$, a locally absolutely continuous curve $u$ is a gradient flow if and only if it is an $EVI$ solution.
\end{prop}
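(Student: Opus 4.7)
The plan is to prove the biconditional by matching the two formulations through the defining inequality of the subdifferential, combined with the chain rule identity
\[
\tfrac{1}{2}\tfrac{d}{dt}\|u(t)-v\|^2 \;=\; \langle u'(t),\, u(t)-v\rangle
\]
which holds at every point $t$ of differentiability of $u$ (and hence a.e. on $(0,\infty)$ since $u\in AC_{loc}((0,\infty),H)$). Both implications then reduce to algebra.

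For the forward direction (gradient flow $\Rightarrow$ EVI), I would fix $v\in D(f)$ and any $t>0$ at which $u'(t)$ exists and $-u'(t)\in\partial f(u(t))$. By the very definition of the subdifferential of a convex function,
\[
f(v) \;\ge\; f(u(t)) + \langle -u'(t),\, v-u(t)\rangle \;=\; f(u(t)) + \langle u'(t),\, u(t)-v\rangle.
\]
Substituting the chain rule identity above yields $\tfrac{1}{2}\tfrac{d}{dt}\|u(t)-v\|^2 \le f(v)-f(u(t))$, which is exactly \eqref{es.evi}.

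For the reverse direction (EVI $\Rightarrow$ gradient flow), I first need to know that $u(t)\in D(f)$ for a.e.\ $t$: taking any fixed $v_0\in D(f)$ in \eqref{es.evi} and integrating over a small interval, the EVI forces $f(u(t))\le f(v_0) + C/t$-type bounds, so $u(t)\in D(f)$ a.e. At a time $t$ of differentiability of both $u$ and of $\|u(\cdot)-v\|^2$ (for each fixed $v\in D(f)$), the EVI together with the chain rule gives
\[
\langle u'(t),\, u(t)-v\rangle \;\le\; f(v)-f(u(t)) \qquad\text{for all }v\in D(f),
\]
which rearranges to $f(v)\ge f(u(t)) + \langle -u'(t),\, v-u(t)\rangle$ for every $v\in H$ (trivially true when $f(v)=+\infty$). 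This is precisely the subgradient inequality, i.e.\ $-u'(t)\in\partial f(u(t))$.

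The main delicate point will be making the ``a.e.'' quantifier uniform in $v$: the set of non-differentiability times may depend on $v$, and one has to intersect over a countable dense family in $D(f)$, using lower semicontinuity of $f$ and continuity of $t\mapsto u(t)$ to extend the resulting inequality to all $v\in D(f)$ on the common good set. Beyond that, everything is a direct manipulation of the subdifferential definition and the chain rule, so no substantial further obstacle is expected.
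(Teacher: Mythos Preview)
Your argument is correct and is essentially the standard proof of this equivalence. Note, however, that the paper does not actually prove Proposition~\ref{equivalence}: it is merely \emph{recalled} in the appendix as a known fact from the gradient-flow literature (the surrounding discussion refers to \cite{ABS21}), so there is no ``paper's own proof'' to compare against. Your write-up supplies exactly the classical argument one finds in that reference: the forward implication is immediate from the subdifferential inequality plus the chain rule, and the reverse implication amounts to reading the same inequality backwards once one knows $u(t)\in D(f)$ a.e.\ (which, as you indicate, follows by plugging a fixed $v_0\in D(f)$ into the EVI at a differentiability point, giving $f(u(t))\le f(v_0)-\langle u'(t),u(t)-v_0\rangle<\infty$). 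The point you flag about making the exceptional null set independent of $v$ via a countable dense subset of $D(f)$ and lower semicontinuity is indeed the only genuine technicality, and you handle it correctly.
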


\section*{Acknowledgments}

M. Bonforte was partially supported by the Projects MTM2017-85757-P and PID2020-113596GB-I00 (Spanish Ministry of Science and Innovation). M. Bonforte moreover acknowledges financial support from the Spanish Ministry of Science and Innovation, through the ``Severo Ochoa Programme
for Centres of Excellence in R\&D'' (CEX2019-000904-S) and by the European Union’s Horizon 2020 research and innovation programme under the Marie Sk\l odowska-Curie grant agreement no. 777822.
Part of this work was done while M.B. was visiting A. Figalli at ETH Z\"urich (CH) in the year 2023. M.B. would like to thank the FIM (Institute for Mathematical Research) at ETH Z\"urich for the kind hospitality and for the financial support. We are deeply grateful to A. Figalli for stimulating discussion that originated many results of this work, especially the higher regularity part.

A. Salort was partially supported by CONICET under grant PIP 11220150100032CO, by ANPCyT under grants PICT 2016-1022 and PICT 2019-3530 and by the University of Buenos Aires under grant 20020170100445BA. This article was developed during the stay at Universidad Aut\'onoma de Madrid supported by CONICET under the program ``Becas externas postdoctorales para j\'ovenes investigadores".

This work has been supported by the Madrid Government (Comunidad de Madrid – Spain) under the multiannual Agreement with UAM in the line for the Excellence of the University Research Staff in the context of the V PRICIT (Regional Programme of Research and Technological Innovation).

Finally, the authors are grateful to F. Del Teso for the stimulating discussions, that originated the Figures \ref{fig.1} and \ref{fig.2}, obtained as a modification of the MatLab code developed in the paper   \cite{DTL3}.

\noindent{\bf Conflict of interest statement. }On behalf of all authors, the corresponding author states that there is no conflict of interest.

\noindent{\bf Data Availability Statements. }All data generated or analysed during this study are included in this published article.

\end{document}